\newtheorem{thm}{Theorem}[section]
\newtheorem{prop}[thm]{Proposition}
\newtheorem{cor}[thm]{Corollary}
\newtheorem{lem}[thm]{Lemma}
\newtheorem{defn}[thm]{Definition}
\newtheorem{rem}[thm]{Remark}
\newtheorem{example}[thm]{Example}
\newtheorem{pb}[thm]{Problem}
\newtheorem{conj}[thm]{Conjecture}
\newenvironment{rmk}{\begin{rem}\rm}{\end{rem}}
\numberwithin{equation}{section}
\newcommand{\supp}{{\rm supp\,}}
\newcommand{\bmo}{{\rm bmo}}
\newcommand{\h}{{\rm h}}
\newcommand{\M}{\mathcal M}
\newcommand{\N}{\mathcal N}
\newcommand{\F}{\mathcal F}
\newcommand{\R}{\mathbb{R}^d}
\newcommand{\0}{\varphi_0}
\newcommand{\e}{\varepsilon}
\newcommand{\fk}{\mathsf{k} }
\newcommand{\un}{{\mathds {1}}}
\begin{document}

\title{Operator-valued Triebel-Lizorkin spaces}

\author{Runlian  XIA}

\address{Laboratoire de Math{\'e}matiques, Universit{\'e} de Franche-Comt{\'e},
25030 Besan\c{c}on Cedex, France, and Instituto de Ciencias Matem{\'a}ticas, 28049 Madrid, Spain}
\email{runlian91@gmail.com}

\thanks{{\it 2000 Mathematics Subject Classification:} Primary: 46L52, 42B30. Secondary: 46L07, 47L65}

\thanks{{\it Key words:} Noncommutative $L_p$-spaces, operator-valued Triebel-Lizorkin spaces, operator-valued Hardy spaces, Fourier multipliers, interpolation, characterizations, atomic decomposition}

\author{Xiao XIONG}

\address{Department of Mathematics and Statistics, University of Saskatchewan, Saskatoon, Saskatchewan, S7N 5E6, Canada}
\email{ufcxx56@gmail.com}

\maketitle

\markboth{R. Xia and X. Xiong}%
{Inhomogeneous Triebel-Lizorkin spaces}

\begin{abstract}
This paper is devoted to the study of operator-valued Triebel-Lizorkin spaces. We develop some Fourier multiplier theorems for square functions as our main tool, and then study the operator-valued Triebel-Lizorkin spaces on $\R$. As in the classical case, we connect these spaces with operator-valued local Hardy spaces via Bessel potentials. We show the lifting theorem, and get interpolation results for these spaces. We obtain Littlewood-Paley type, as well as the Lusin type square function characterizations in the general way. Finally, we establish smooth atomic decompositions for the operator-valued Triebel-Lizorkin spaces. These atomic decompositions play a key role in our recent study of mapping properties of pseudo-differential operators with operator-valued symbols.
\end{abstract}

\tableofcontents

\setcounter{section}{-1}

 \section{Introduction and preliminaries}
 Let $\varphi$ be a Schwartz function on $\R$ such that $\supp \varphi \subset \lbrace \xi:\frac{1}{2}\leq |  \xi |  \leq 2\rbrace$, $\varphi >0$ on $\lbrace \xi:\frac{1}{2}< |  \xi |  < 2\rbrace$, and $\sum_{k\in \mathbb{Z}}\varphi (2^{-k}\xi)=1$ for all $\xi \neq 0$.
For each $k \in \mathbb{N}$, let $\varphi_k$ be the function whose Fourier transform is equal to $\varphi(2^{-k}\cdot)$, and let $\varphi_0$ be the function whose Fourier transform is equal to $1-\sum_{k>0}\varphi (2^{-k}\cdot)$. Then $\{\varphi_k\}_{k\geq 0}$ gives a Littlewood-Paley decomposition on $\mathbb{R}^d$. The classical (inhomogeneous) Triebel-Lizorkin spaces $F_{p,q}^\alpha(\R)$ for $0<p<\infty$, $0<q\leq \infty$ and $\alpha\in \mathbb{R}$ are defined as 
$$F_{p,q}^\alpha(\R)  =\{f\in \mathcal{S}'(\R): \|f\|_{F_{p,q}^\alpha}   <\infty \}$$
 with the (quasi-)norm 
 $$\|f\|_{F_{p,q}^\alpha}    =       \big\| (\sum_{j\geq 0 }    2^{qj\alpha} |\varphi_j  *f |^q )^{\frac 1 q} 
 \big\|_p  \, .$$
  We refer the reader to Triebel's books \cite{Tri} and \cite{Tri1992} for more concrete definition and properties of Triebel-Lizorkin spaces on $\R$.
 This kind of function spaces is closely related to other function spaces, such as Sobolev and Besov spaces. In particular, Triebel-Lizorkin spaces can be viewed as generalizations of Hardy spaces, since the Bessel potential $J^\alpha$ is known to be an isomorphism between $F_{p,2}^\alpha(\R)$ and $\h_p(\R)$ (local Hardy spaces introduced in \cite{Goldberg1979}). All these spaces are basic for many branches of mathematics such as harmonic analysis, PDE, functional analysis and approximation theory.
 
 \medskip

This paper is devoted to the study of operator-valued Triebel-Lizorkin spaces. As in the classical case, it can be viewed as an extension of our recent work \cite{XX18} on operator-valued local Hardy spaces. On the other hand, the Triebel-Lizorkin spaces studied here are Euclidean counterparts of those on usual and quantum tori studied in \cite{XXY17}. Our main motivation is to build a kind of function spaces where we can carry out the investigation of pseudo-differential operators with operator-valued symbols.

\medskip

Due to  noncommutativity, there are several obstacles on our route, which do not appear in the classical case. First of all, in the noncommutative integration, the simple replacement of the usual absolute value by the
modulus of operators in the formula $\big\|  (\sum_{j\geq 0}2^{qj\alpha}|  {\varphi}_j*f | ^q)^\frac{1}{q}\big\| _p$ does not give a norm except for $q = 2$. Even though one could use Pisier's definition of $\ell_q$-valued noncommutative $L_p$-spaces by complex interpolation (see \cite{Pisier}), we will not study that kind of spaces and will focus only on the case $q = 2$. The reason for this choice is that, for $q=2$, the Triebel-Lizorkin spaces of operator-valued distributions are isomorphic to the Hardy spaces developed in \cite{XX18}, as mentioned above. Another difficulty is the lack of pointwise maximal functions in the noncommutative case. As is well known, the maximal functions play a crucial role in the classical theory; but they are no longer at our disposal in the noncommutative setting. In \cite{XXY17}, when studying the Triebel-Lizorkin spaces on quantum tori, we use Calder\'on-Zygmund and Fourier multiplier theory as substitution. In this paper, we will still rely heavily on this theory. However, we have to consider its local (or inhomogeneous) counterpart, since the theory used in \cite{XXY17} for quantum tori are nonlocal (or homogeneous) ones. Besides the local nature, we also develop Hilbert space valued Fourier multiplier theory, which will be used to deduce general characterizations of operator-valued Triebel-Lizorkin spaces by the Lusin type square function.

\medskip

Our definition of (column) operator-valued Triebel-Lizorkin spaces is 
$$
F_p^{\alpha,c}(\R,\M)=\lbrace f\in \mathcal{S}'(\R; L_1(\M)+\M): \|  f\| _{F_p^{\alpha,c}}<\infty\rbrace,
$$
where
$$
\|  f\| _{F_p^{\alpha,c}}= \big\|  (\sum_{j\geq 0}2^{2j\alpha}|  {\varphi}_j*f | ^2)^\frac{1}{2}\big\| _p.
$$
Here the norm $\|\cdot\|_p$ is the norm of the semi-commutative $L_p$-space $L_p(L_\infty(\R) \overline{\otimes} \M)$. Different from the classical case, we have also row and mixture versions; see section \ref{section-TL} for concrete definitions.

We present here two major results of this paper. The first one gives general characterizations of $F^{\alpha,c}_p (\R,\M)$ by any reasonable convolution kernels in place of the Littlewood-Paley decomposition $\{  \varphi_j\}_{j\geq 0}$. These characterizations can be realized either by the Littlewood Paley type $g$-function or by the Lusin type integral function, with the help of the Calder\'on-Zygmund theory and Fourier multiplier theory mentioned above. The second major result is the atomic decomposition of $F^{\alpha,c}_1 (\R,\M)$. When $\alpha=0$, in \cite{XX18}, we deduce from the $\h_1$-$\bmo$ duality an atomic decomposition of $\h_1^c(\R,\M)$, which does not require any smooth condition on each atom. In this paper, we refine the smoothness of that atomic decomposition by the Calder\'on reproducing identity, via tent spaces. Using the same trick, we extend that refinement to $F^{\alpha, c}_1 (\R,\M)$; but compared with the case of local Hardy spaces, subatoms enter in the game. These smooth atomic decompositions will play a crucial role in the study of pseudo-differential operators in the forthcoming paper \cite{XX18PDO}.

\bigskip

In the following, let us recall some notation and background in the interface between harmonic analysis and operator algebras that we will need throughout the paper, although they are probably well-known to experts.

\subsection*{Noncommutative $L_{p}$-spaces}
 We start with a brief introduction of noncommutative $L_p$ spaces.
Let $\M$ be  a von Neumann algebra equipped with a
normal semifinite faithful trace $\tau$; for $1\leq p\leq\infty$, let $L_p(\M)$ be the noncommutative $L_p$-space associated to $(\M,\tau)$.
The norm of  $L_p(\M)$ will be often denoted simply by $\|\cdot \|_p$. But if different  $L_p$-spaces  appear in a same context, we will sometimes precise the respective $L_p$-norms in order to avoid possible ambiguity. The reader is referred to \cite{PX2003} and  \cite{Xu2007} for more information on noncommutative $L_p$-spaces. We will also need Hilbert space-valued noncommutative $L_p$-spaces (see \cite{JLX2006} for more details).
Let $H$ be a Hilbert space and  $v \in H$ with $\|v\|=1$. Let  $p_v$ be the orthogonal projection onto
the one-dimensional subspace generated by $v$. Define
 $$L_p(\M; H^{r})=(p_v\otimes 1_{\M}) L_p(B(H)\overline\otimes \M)\;\textrm{ and }\;
 L_p(\M; H^{c})= L_p(B(H)\overline\otimes \M)(p_v\otimes 1_{\M}).$$
These are the row and column noncommutative $L_p$-spaces.   Like the classical $L_p$-spaces, noncommutative $L_p$-spaces form an interpolation scale with respect to the complex interpolation method: For $1\leq p_0<p_1\leq \infty$ and $0<\eta<1$, we have
 $$\big( L_{p_0}(\M),\, L_{p_1}(\M) \big)_{\eta}= L_{p}(\M)\;\text{ with equal norms},$$
where $\frac{1}{p}=\frac{1-\eta}{p_0}+\frac{\eta}{p_1}$. Since $ L_p(\M; H^c)$ and $ L_p(\M; H^r)$ are 1-complemented subspaces of $L_p(B(H)\overline\otimes \M)$, for the same indicies, we have 
 \begin{equation*} 
 \big( L_{p_0}(\M; H^c),\, L_{p_1}(\M; H^c) \big)_{\eta}= L_{p}(\M; H^c)\;\text{ with equal norms}.
 \end{equation*}

 \subsection*{Fourier analysis}
Fourier multipliers will be one of the most important tools of this paper. Let us give some Fourier multipliers that will be frequently used. They are all very well known in the classical harmonic theory.

First, we recall the symbols of Littlewood-Paley decomposition on $\R$.
Fix a Schwartz function $\varphi$ on $\mathbb{R}^d$ satisfying:
\begin{equation}\label{condition-phi}
\begin{cases}
\supp \varphi \subset \lbrace \xi:\frac{1}{2}\leq |  \xi |  \leq 2\rbrace.\\
\varphi >0 \mbox{ on } \lbrace \xi:\frac{1}{2}< |  \xi |  < 2\rbrace,\\
\sum_{k\in \mathbb{Z}}\varphi (2^{-k}\xi)=1, \forall \, \xi \neq 0.
\end{cases}
\end{equation}
For each $k \in \mathbb{N}$, let $\varphi_k$ be the function whose Fourier transform is equal to $\varphi(2^{-k}\cdot)$, and let $\varphi_0$ be the function whose Fourier transform is equal to $1-\sum_{k>0}\varphi (2^{-k}\cdot)$. Then $\{\varphi_k\}_{k\geq 0}$ gives a Littlewood-Paley decomposition on $\mathbb{R}^d$ such that
\begin{equation}
{\supp} \widehat\varphi_k\subset\{\xi\in\mathbb{R}^{d}:2^{k-1}\leq  |\xi |\leq2^{k+1}\},\quad \forall \, k\in\mathbb{N},\;\;\;\supp \widehat\varphi_0\subset\{\xi\in\mathbb{R}^{d}:  |\xi |\leq2\}\label{eq: supp varphi_j}
\end{equation}
and that
\begin{equation}
\sum_{k=0}^{\infty}\widehat\varphi_k(\xi)=1\quad \forall \,\xi\in\mathbb{R}^{d}.\label{eq:resolution of unity}
\end{equation}
The homogeneous counterpart of the above decomposition is given by $\{\dot \varphi_k\}_{k\in \mathbb{Z}}$. This time, for every $k\in \mathbb{Z}$, these functions are given by $\widehat{\dot{\varphi}}_k(\xi)=\varphi (2^{-k}\xi)$. We have
\begin{equation}\label{eq:resolution of unity homo}
\sum_{k\in \mathbb{Z}} \widehat{\dot{\varphi}}_k(\xi)=1\quad \forall \,\xi \neq 0.
\end{equation}

The Bessel potential and the Riesz potential are $J^\alpha =(1-(2\pi)^{-2}\Delta  )^{\frac \alpha  2}$ and  $I^\alpha =(-(2\pi)^{-2}\Delta  )^{\frac \alpha  2}$, respectively. If $\alpha=1$, we will abbreviate $J^1$ as $J$ and $I^1$ as $I$. We denote also $J_\alpha (\xi)=(1+|\xi|^2 )^{\frac \alpha  2}$ on $\mathbb{R}^d$ and $I_\alpha (\xi)=|\xi|^ \alpha  $ on $\mathbb{R}^d \setminus \{0\}$. Then $J_\alpha (\xi)$ and $I_\alpha (\xi)$ are the symbols of the Fourier multipliers $J^\alpha $ and $I^\alpha $, respectively.

Given a Banach space $X$, let $\mathcal{S}(\R; X)$ be the space of  $X$-valued rapidly decreasing  functions on $\R$ with the standard Fr\'{e}chet topology, and $\mathcal{S}'(\R; X)$  be the space of continuous linear maps from $\mathcal{S}(\R)$ to $X$. All operations on $\mathcal{S}(\R)$ such as derivations, convolution and Fourier transform transfer to $\mathcal{S}'(\R; X)$ in the usual way. On the other hand, $L_p(\R; X)$ naturally embeds into $\mathcal{S}'(\R; X)$ for $1\leq p \leq \infty$, where $L_p(\R; X)$ stands for the space of strongly $p$-integrable functions from $\R$ to $X$. By this definition, Fourier multipliers on $\R$, in particular the Bessel and Riesz potentials, extend to vector-valued tempered distributions in a natural way.

We denote by $H_2^\sigma(\R)$ the potential Sobolev space, consisting of all tempered distributions $f$ such that $J^\sigma(f)\in L_2(\R)$. If $\sigma >\frac  d  2 $, we have
 \begin{equation*}\begin{split}
  \big\|\mathcal F^{-1}(f)\big\|_1
  &=\int_{|s|\leq 1}\big|\mathcal F^{-1}(f)(s)\big|ds +\sum_{k\geq 0}\int_{2^{k}<|s|\leq 2^{k+1}}\big|\mathcal F^{-1}(f)(s)\big|ds \\
  &\leq C_1 \Big(\int_{|s|\leq 1}\big|\mathcal F^{-1}(f)(s)\big|^2ds +\sum_{k\geq 0}2^{2k \sigma}\int_{2^{k}<|s|\leq 2^{k+1}}\big|\mathcal F^{-1}(f)(s)\big|^2ds\Big)^{\frac12} \\
  &\leq C_2 \big\| f\big\|_{H_2^\sigma}\,,
  \end{split}\end{equation*}
where $C_1$ and $C_2$ are uniform constants. Therefore, if $\widehat{\phi }\in H_2^\sigma(\R) $, the following Young's inequality
\begin{equation}\label{Young-Banach}
\| \phi * g \|_{L_p(\R; X)} \leq  \|  \phi \|_{1}  \| g\|_{L_p(\R; X)}  \leq  C_2 \| \widehat{\phi} \|_{H_2^{\sigma}}  \| g\|_{L_p(\R; X)} 
\end{equation}
holds for any $g\in L_p(\R; X)$ with $1\leq p \leq \infty$. Here $X$ is an arbitrary Banach space. Inequality \eqref{Young-Banach} indicates that functions in $H_2^\sigma(\R)$ are the symbols of bounded Fourier multipliers, even in the vector-valued case.

In the sequel, we will mainly consider the case $X= L_1(\M) + \M$, i.e., consider operator-valued functions or distributions on $\R$. We will frequently use the following Cauchy-Schwarz type inequality
for operator-valued square function,
\begin{equation}\label{eq: 2-1}
  \big|\int_{\mathbb{R}^d}\phi (s)f(s)ds \big|^{2}\leq\int_{\mathbb{R}^d}  |\phi(s) |^{2}ds\int_{\mathbb{R}^d}  |f(s) |^{2}ds,
\end{equation}
where $\phi:\mathbb{R}^d\rightarrow\mathbb{C}$ and $f:\mathbb{R}^d\rightarrow L_{1}(\mathcal{M} )+\mathcal{M}$
are functions such that all integrations of the above inequality make sense.
We also require the operator-valued version of the Plancherel formula. For sufficiently nice functions $f: \mathbb{R}^{d}\rightarrow L_{1}  (\mathcal{M} )+\M$,
for example, for $f \in L_{2}  (\mathbb{R}^{d} )\otimes L_{2}  (\mathcal{M} )$,
we have
\begin{equation}
\int_{\mathbb{R}^d} |f  (s )|^2 ds=\int_{\mathbb{R}^d}  |  \widehat{f}  (\xi )|^2  d\xi.\label{eq: Planchel}
\end{equation}

\medskip
 
Throughout, we will use the notation $A\lesssim B$,
which is an inequality up to a constant: $A\leq cB$ for some constant
$c>0$. The relevant constants in all such inequalities may depend
on the dimension $d$, the test function $\Phi$ or $p$, etc, but
never on the function $f$ in consideration. The equivalence $A\approx B$
will mean $A\lesssim B$ and $B\lesssim A$ simultaneously.

\bigskip

The layout of this paper is the following. In the next section, we briefly introduce the definition of local Hardy spaces, and the main results in \cite{XX18}. In section \ref{section: Multiplier theorems}, we develop several Fourier multiplier theorems: the first one is the inhomogeneous version of the Fourier multiplier theorem proved in \cite{XXY17}, fitted to local Hardy spaces; the second is a Hilbertian Fourier multiplier theorem, in order to deal with the Lusin area square functions. In section \ref{section-TL}, we give the definition of Triebel-Lizorkin spaces, and some immediate properties. Section \ref{section-charact} is devoted to different characterizations of Triebel-Lizorkin spaces. The proofs in this section are technical and tedious, based on Calder\'on-Zygmund theory and Fourier multiplier theorems. In the last section, we demonstrate the smooth atomic decompositions of $F_p^{\alpha,c}(\R,\M)$: we begin with the space $F_p^{0,c}(\R,\M)= \h_p^c(\R,\M)$, and then extend the result to general $\alpha$ by a similar argument.

%%%%%%%%%%%%%%%%%%%%%%%%%%%%%%%
\section{Operator-valued local Hardy spaces}\label{section-oHardy}
%%%%%%%%%%%%%%%%%%%%%%%%%%%%%%%%

Let us review the operator-valued local Hardy spaces studied in \cite{XX18}, and collect some of the main results there that will be useful in this paper. We keep the following notation: $(\M,\tau)$ is a von Neumann algebra with n.s.f. trace, and $\N=L_\infty(\R)\overline\otimes\M$ is equipped with the tensor trace; letters $s, t$ are used to denote variables of $\R$, while letters $x, y$ are reserved for operators in noncommutative $L_p$-spaces.

Let $\mathrm{P}$ be the Poisson kernel on $\R$:
 $$\mathrm{P}(s)=c_d\,\frac{1}{(|s|^2+1)^{\frac{d+1}2}}$$
with $c_d$ the usual normalizing constant and $|s|$ the Euclidean norm of $s$. Let
  $$\mathrm{P}_\e(s)=\frac1{\e^d}\, \mathrm{P}(\frac s\e)=c_d\,\frac{\e}{(|s|^2+\e^2)^{\frac{d+1}2}}\,.$$
For any function $f$ on $\R$ with values in $L_1(\M)+\M$,  its Poisson integral, whenever it exists, will be denoted by $\mathrm{P}_\e(f)$:
 $$\mathrm{P}_\e(f)(s)=\int_{\mathbb{R}^d}\mathrm{P}_\e(s-t)f(t)dt, \quad (s,\e)\in \mathbb{R}^{d+1}_+.$$
The truncated Lusin area square function of $f$
by
\begin{equation*}
s^{c}  (f )  (s )  =  \Big(\int_{\widetilde{\Gamma}}  \big|\frac{\partial}{\partial\varepsilon}\mathrm{P}_{\varepsilon}  (f )  (s+t ) \big|^{2}\frac{dtd\varepsilon}{\varepsilon^{d-1}}\Big)^{\frac{1}{2}},\,s\in\mathbb{R}^{d},
\end{equation*}
where $\widetilde{\Gamma}$ is the truncated cone $  \{ (t,\varepsilon )\in\mathbb{R}_{+}^{d+1}:  |t |<\varepsilon<1 \} $. Denote by $\mathrm{R}_d$ the Hilbert space $L_2(\R,\frac{dt}{1+|t|^{d+1}})$.
For $1\leq p<\infty$, define the column local Hardy space $\h _p^c(\R,\M)$ to be
\[
\h _p^c(\R,\M)=\{f\in L_1(\M; \mathrm{R}_d^c)+L_{\infty}(\M;\mathrm{R}_d^c) : \|  f\| _{\h_p^c} <\infty\},
\]
where the $\h_{p}^{c}  (\mathbb{R}^{d},\mathcal{M} )$-norm of $f$
is defined by
\begin{equation*}
  \|  f \|  _{\h_{p}^{c}  (\mathbb{R}^{d},\mathcal{M} )}  =    \|  s^{c}  (f )  \|  _{L_{p}  (\mathcal{N} )}+  \| \mathrm{P}*f \| _{L_{p}  (\mathcal{N} )}.
\end{equation*}
The row local Hardy space $\h_{p}^{r}(\R,\M)$ is the space of all $f$ such that $f^*\in \h_{p}^{c}(\R,\M)$, equipped with the norm $ \|  f\| _{\h_p^r}= \|  f^*\| _{\h_p^c}$.
Moreover, define the mixture space $\h_{p}  (\mathbb{R}^{d},\mathcal{M} )$
as follows:
\[
\h_{p}  (\mathbb{R}^{d},\mathcal{M} )=\h_{p}^{c}  (\mathbb{R}^{d},\mathcal{M} )+\h_{p}^{r}  (\mathbb{R}^{d},\mathcal{M} )\mbox{ for }1\leq p\leq2
\]
 equipped with the sum norm
\[
  \|  f \|  _{\h_{p}  (\mathbb{R}^{d},\mathcal{M} )}=\inf  \{   \|  g \|  _{\h_{p}^{c}}+  \|  h \|  _{\h_{p}^{r}}:\, f=g+h,g\in \h_{p}^{c}  (\mathbb{R}^{d},\mathcal{M} ),h\in \h_{p}^{r}  (\mathbb{R}^{d},\mathcal{M} ) \} ,
\]
 and
\[
\h_{p}  (\mathbb{R}^{d},\mathcal{M} )=\h_{p}^{c}  (\mathbb{R}^{d},\mathcal{M} )\cap \h_{p}^{r}  (\mathbb{R}^{d},\mathcal{M} )\mbox{ for }2<p<\infty
\]
equipped with the intersection norm
\[
  \|  f \|  _{\h_{p} }=\max  \{   \|  f \|  _{\h_{p}^{c}},  \|  f \|  _{\h_{p}^{r}} \} .
\]
The local analogue of the Littlewood-Paley $g$-function of $f$ is
defined by
\begin{equation*}
g^{c}  (f )  (s )  = \big(\int_{0}^1  |\frac{\partial}{\partial\varepsilon} \mathrm{P}_{\varepsilon}  (f )  (s ) |^{2}\varepsilon d\varepsilon \big)^{\frac{1}{2}},\,s\in\mathbb{R}^{d}.
\end{equation*}
It is proved in \cite{XX18} that
$$   \|  f \|  _{\h_{p}^{c}} \approx  \|  g^{c}  (f ) \|  _p +\| \mathrm{P}*f \| _p$$
for all $1\leq p <\infty$.

\medskip

The dual of $\h_1^c(\R,\M)$ is characterized as a local version of bmo space, defined as follows. For any cube $Q\subset \mathbb{R}^{d}$, we denote its volume by $  |Q |$.
Let $f\in L_{\infty}  (\mathcal{M};\mathrm{R}^c_d)$. The
mean value of $f$ over $Q$ is denoted by $f_{Q}:=\frac{1}{  |Q |}\int_{Q}f  (s )ds$.
Set
\begin{equation}\label{eq: def bmo}
  \|  f \|  _{{\bmo}^{c}  (\mathbb{R}^{d},\mathcal{M} )}  = \max   \Big\{ \sup_{  |Q |<1}  \big\|  (\frac{1}{  |Q |}\int_{Q}  |f-f_{Q} |^{2}dt )^{\frac{1}{2}} \big\|  _{\mathcal{M}} , \sup_{  |Q |=1}  \big\|  (\int_{Q}  |f |^{2}dt)^{\frac{1}{2}} \big\|  _{\mathcal{M}} \Big\}  .
\end{equation}
The local version of bmo spaces are defined as
\[
{\bmo}^{c}  (\mathbb{R}^{d},\mathcal{M} )=  \{ f\in L_{\infty}  (\mathcal{M};\mathrm{R}_{d}^{c} ):\,  \|  f \|  _{{\bmo}^{c}}<\infty \} .
\]
Define ${\bmo}^{r}  (\mathbb{R}^{d},\mathcal{M} )$
to be the space of all $f\in L^{\infty}  (\mathcal{M};\mathrm{R}_{d}^{r} )$
such that $  \|  f^{*} \|  _{{\bmo}^{c}  (\mathbb{R}^{d},\mathcal{M} )} $ is finite,
with the norm $  \|  f \|  _{{\bmo}^{r}}=  \|  f^{*} \|  _{{\bmo}^{c}}.$
And ${\bmo}  (\mathbb{R}^{d},\mathcal{M} )$ is defined
as the intersection of ${\bmo}^{c}  (\mathbb{R}^{d},\mathcal{M} )$ and ${\bmo}^{r}  (\mathbb{R}^{d},\mathcal{M} )$, equipped with the intersection norm.

\medskip

The above Hardy and bmo type spaces are local analogues of the spaces studied by Mei \cite{Mei2007}. They turn out to have similar properties with their non-local versions, such as duality and interpolation. The following two theorems are quoted from \cite{XX18}.

\begin{thm}\label{dual-hardy}
We have $  \h_{1}^{c}  (\mathbb{R}^{d},\mathcal{M} )^*={\bmo}^{c}  (\mathbb{R}^{d},\mathcal{M} )$
with equivalent norms. If $1< p <2$ and $q$ is its conjugate index, then $  \h_{p}^{c}  (\mathbb{R}^{d},\mathcal{M} )^*={\h}_q^{c}  (\mathbb{R}^{d},\mathcal{M} )$
with equivalent norms.
\end{thm}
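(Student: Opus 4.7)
The plan is to reduce the stated duality to a noncommutative, truncated tent-space duality in the spirit of Coifman-Meyer-Stein, adapted as in Mei's work \cite{Mei2007}. First I would introduce the column tent spaces $T_1^c(\R,\M)$ and $T_\infty^c(\R,\M)$ on the truncated upper half-space $\R\times(0,1)$: the $T_1^c$-norm of $F$ is the $L_1(\N)$-norm of the operator-valued Lusin square function of $F$, while the $T_\infty^c$-norm is the Carleson-type expression
\[
\|F\|_{T_\infty^c}=\max\Big\{\sup_{|Q|<1}\Big\|\tfrac{1}{|Q|}\!\!\int_0^{\ell(Q)}\!\!\int_Q|F|^2\tfrac{dt\,d\e}{\e}\Big\|_\M^{1/2},\;\sup_{|Q|=1}\Big\|\int_0^1\!\!\int_Q|F|^2\tfrac{dt\,d\e}{\e}\Big\|_\M^{1/2}\Big\},
\]
mirroring the two regimes in \eqref{eq: def bmo}. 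A key preliminary lemma, proved by a noncommutative Carleson/stopping-time argument using spectral projections in place of the classical truncations, is the tent-space duality $T_1^c(\R,\M)^{*}=T_\infty^c(\R,\M)$. In parallel, the Lusin area characterization of $\h_p^c$ together with the Poisson component yields a norm-equivalent embedding $\iota:f\mapsto(\partial_\e\mathrm{P}_\e(f),\,\mathrm{P}*f)\in T_p^c(\R,\M)\oplus L_p(\N)$.

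Given $\iota$, both directions of the $\h_1^c$-$\bmo^c$ duality proceed through a local Calder\'on reproducing identity of the schematic form
\[
f=\mathrm{P}*(\mathrm{P}*f)+c\int_0^1\partial_\e\mathrm{P}_\e\big(\e^2\,\partial_\e\mathrm{P}_\e(f)\big)\frac{d\e}{\e^{d-1}}+R(f),
\]
where $R(f)$ is a smooth remainder that is absorbed into the Poisson term. For $\bmo^c\hookrightarrow(\h_1^c)^*$, fix $g\in\bmo^c$ and set $\ell_g(f)=\tau\!\int fg^{*}$ on a dense subspace; the reproducing identity rewrites $\ell_g(f)$ as a tent-space pairing between $\iota(f)$ and $\iota(g)$, and then Cauchy-Schwarz \eqref{eq: 2-1} together with the fact that $\partial_\e\mathrm{P}_\e(g)\in T_\infty^c$ for $g\in\bmo^c$ gives $|\ell_g(f)|\lesssim\|g\|_{\bmo^c}\|f\|_{\h_1^c}$. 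For $(\h_1^c)^*\hookrightarrow\bmo^c$, I would extend $\ell\in(\h_1^c)^*$ by Hahn-Banach from $\iota(\h_1^c)$ to a functional on $T_1^c\oplus L_1(\N)$ of the same norm, represent it via the tent duality by some $(G,h)\in T_\infty^c\oplus L_\infty(\N)$, and recover $g$ by $g=\mathrm{P}*h+c\int_0^1\partial_\e\mathrm{P}_\e(G(\cdot,\e))\frac{d\e}{\e^{d-1}}$; the definition \eqref{eq: def bmo} is then verified by bounding the local $L_2$-oscillation of $g$ on each cube $Q$ by the $T_\infty^c$-mass of $G$ restricted to the tent above $Q$.

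For the $\h_p^c$-$\h_q^c$ duality with $1<p<2$, I would invoke bilinear complex interpolation between the just-established endpoint $(1,\infty)$ and the trivial endpoint $(2,2)$, where $\h_2^c=L_2(\N)$. This requires the interpolation identities $[\h_1^c,L_2(\N)]_\theta=\h_p^c$ and $[\bmo^c,L_2(\N)]_\theta=\h_q^c$ for $\theta=2-2/p$, both of which reduce via $\iota$ to the known complex interpolation of truncated tent spaces. The main obstacle throughout is that the classical proofs of the tent-space duality and of the Carleson description of $\bmo^c$ rely on pointwise maximal function estimates that are unavailable in the noncommutative setting; these have to be replaced by Cuculescu-type projection decompositions and the semicommutative Calder\'on-Zygmund machinery developed in \cite{Mei2007,XX18}.
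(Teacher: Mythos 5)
This theorem is stated in the paper without proof---it is quoted verbatim from \cite{XX18}---so the comparison is necessarily with the argument given there, which is the local analogue of Mei's \cite{Mei2007}. For the endpoint duality $\h_1^c$--$\bmo^c$ your route is essentially that one: truncated tent spaces, the duality $T_1^c{}^*=T_\infty^c$, the operator-valued Carleson-measure description of $\bmo^c$, and a local Calder\'on reproducing identity to pass between $f$ and its square-function data. The caveat is that the two hard ingredients (the tent-space duality and the fact that $|\e\,\partial_\e\mathrm{P}_\e(g)|^2\frac{dt\,d\e}{\e}$ is an operator-valued Carleson measure for $g\in\bmo^c$) are precisely where all the noncommutative work lies, and your sketch delegates them to ``Cuculescu-type'' substitutes without detail; the actual arguments avoid stopping times altogether and run through $L_2(T(Q))$ localization and duality, much as in Lemma \ref{lem: atomic decomp Tent} of the present paper. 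For $1<p<2$ you genuinely deviate: the source obtains $\h_p^c{}^*=\h_q^c$ directly from $T_p^c{}^*=T_q^c$ together with the complementation of $\h_p^c$ in $T_p^c\oplus L_p(\N)$ furnished by the pair $(\iota,\pi_\Phi)$, whereas you interpolate bilinearly between the $(1,\infty)$ and $(2,2)$ endpoints. Your route can be made to work, but only if the identities $[\h_1^c,L_2(\N)]_\theta=\h_p^c$ and $[\bmo^c,L_2(\N)]_\theta=\h_q^c$ are established independently of the duality being proved (otherwise the argument is circular); reducing them to tent-space interpolation via $\iota$ does achieve this, at the cost of an extra layer that the direct complementation argument avoids. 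Finally, the normalizations in your schematic reproducing identity are off: the factor $\e^{-(d-1)}$ belongs to the cone average defining $s^c$, not to the reproducing integral, which should read $f=\phi*\psi*f+c\int_0^1\e\,\partial_\e\mathrm{P}_\e\big(\partial_\e\mathrm{P}_\e(f)\big)\,d\e$ in the spirit of \eqref{eq: reproduce} and \eqref{eq: reproduce 2}.
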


\begin{thm}\label{interpolation-hardy}
Let $1<p<\infty$. We have
\begin{enumerate}[$\rm (1)$]
\item $
\big({\bmo}^c(\R,\M), \h_1^c(\R,\M)\big)_{\frac{1}{p}}=\h_p^c(\R,\M).$
\item $\big(X, Y \big)_{\frac 1 p } = L _p(\N)$, where $X = \bmo(\R,\M)$ or $L_\infty(\N)$, and $Y =\h_1(\R,\M)$ or $L_1(\N)$.
\end{enumerate}

\end{thm}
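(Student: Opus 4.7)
My plan is to prove (1) first and derive (2) from it together with the classical interpolation $(L_\infty(\N), L_1(\N))_{1/p} = L_p(\N)$ and suitable sandwich estimates. The strategy for (1) is a two-step argument: first establish complex interpolation of $\h_p^c$ spaces at finite indices by a Calder\'on-type retraction, then extend to the $\bmo^c$ endpoint by Wolff's reiteration theorem.

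For $1 \le p_0 < p_1 < \infty$ and $1/p = (1-\theta)/p_0 + \theta/p_1$, the $g^c$-function characterization recalled just before Theorem \ref{dual-hardy} realizes $\h_p^c(\R,\M)$ as a complemented subspace of $L_p(\N; H^c) \oplus L_p(\N)$, where $H = L_2((0,1), \e\, d\e)$ and the two summands encode $g^c(f)$ and $\mathrm{P}*f$; a bounded left inverse is furnished by a reproducing formula. Since complex interpolation of column noncommutative $L_p$-spaces is standard (as recalled in the preliminaries), the retraction-coretraction principle gives
\begin{equation*}
(\h_{p_0}^c(\R,\M), \h_{p_1}^c(\R,\M))_\theta = \h_p^c(\R,\M).
\end{equation*}
Now fix $1 < p_0 < p_1 < \infty$. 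The display with $p_0=1$ gives $(\h_1^c, \h_{p_1}^c)_{\alpha} = \h_{p_0}^c$ for $\alpha = (1-1/p_0)/(1-1/p_1)$. Dualizing the finite identity $(\h_{p_1'}^c, \h_1^c)_{1-\beta} = \h_{p_0'}^c$ via Theorem \ref{dual-hardy} and the classical Calder\'on duality theorem for complex interpolation (applicable since $\h_{p_1'}^c$ is reflexive) yields $(\bmo^c, \h_{p_0}^c)_\beta = \h_{p_1}^c$ with $\beta = p_0/p_1$. Wolff's reiteration theorem then produces both $\h_{p_0}^c$ and $\h_{p_1}^c$ as complex interpolates of $(\h_1^c, \bmo^c)$; sweeping $(p_0, p_1)$ over $(1,\infty)^2$ one obtains $(\h_1^c, \bmo^c)_{1/p} = \h_p^c$ for every $1 < p < \infty$, proving (1).

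For (2), the continuous embeddings $L_\infty(\N) \hookrightarrow \bmo^c$ (immediate from \eqref{eq: def bmo}, by testing on a unit cube) and $\h_1^c \hookrightarrow L_1(\N)$ (via the atomic decomposition established later in the paper), together with their row analogues and the mixture analogue of (1) (obtained by intersecting or summing the column and row identities according to whether $p>2$ or $p\le 2$), sandwich the four candidate interpolation spaces of (2) around $L_p(\N) = (L_\infty(\N), L_1(\N))_{1/p}$. All four identities $(X,Y)_{1/p} = L_p(\N)$ then drop out, with the equivalence $\h_p(\R,\M) = L_p(\N)$ for $1 < p < \infty$ as a byproduct.

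The main obstacle is the Wolff step, specifically the duality bookkeeping: Calder\'on's complex-interpolation duality requires at least one reflexive endpoint and compatibility of the underlying pairings, and one must verify that the bilinear form implementing $(\h_1^c)^* \cong \bmo^c$ from Theorem \ref{dual-hardy} coincides, on the dense intersection, with the canonical interpolation pairing between $\h_{p_1'}^c$ and $\h_1^c$. A secondary subtlety is passing from column to mixture in part (2), where the sum-norm definition of $\h_p$ for $1 < p \le 2$ interacts with complex interpolation in a less automatic way than the intersection definition does for $p > 2$. Once these compatibilities are settled, the remainder is index arithmetic and standard interpolation machinery.
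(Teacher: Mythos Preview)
This theorem is not proved in the present paper; the line just before it reads ``The following two theorems are quoted from \cite{XX18}'', so there is no in-paper argument to compare your proposal against. That said, your route to (1) is a standard and viable one: the retraction of $\h_p^c$ into $L_p(\N;H^c)\oplus L_p(\N)$ via the $g$-function and a reproducing formula gives the finite-index interpolation, and then duality (Theorem~\ref{dual-hardy}) plus Wolff reiteration reaches the $\bmo^c$ endpoint. The compatibility issues you flag are real but routine once one works on a common dense subspace such as $\mathcal S\otimes(L_1(\M)\cap\M)$.

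Your plan for (2), however, does not close. The embeddings $L_\infty(\N)\hookrightarrow\bmo$ and $\h_1\hookrightarrow L_1(\N)$ give only one-directional monotonicity: from $(L_\infty,L_1)_{1/p}=L_p$ you obtain
\[
(L_\infty,\h_1)_{1/p}\subset L_p\subset(\bmo,L_1)_{1/p},
\]
but never the reverse containments by sandwiching alone. Even if you grant the mixture analogue of (1), namely $(\bmo,\h_1)_{1/p}=\h_p$ (itself nontrivial, since interpolation of sums and intersections is not automatic), the identity $\h_p(\R,\M)=L_p(\N)$ for $1<p<\infty$ is not a ``byproduct'' of this sandwich: it is an independent substantial result---the local analogue of Mei's theorem that noncommutative Hardy spaces coincide with $L_p$---and must be established first. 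And even with $\h_p=L_p$ in hand, the mixed pairs $(L_\infty,\h_1)_{1/p}$ and $(\bmo,L_1)_{1/p}$ are still only half-determined by monotonicity. The usual treatment (as in \cite{Musat}, \cite{Mei2007} for the non-local case, adapted in \cite{XX18}) instead proves $(\bmo,L_q)_\theta=L_p$ directly via a $\bmo$--$L_p$ interpolation inequality or via duality with the already-established $\h_1^c$--$\h_p^c$ interpolation, and derives $\h_p=L_p$ separately from square-function estimates, not from the interpolation statement.
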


\subsection*{Calder\'on-Zygmund theory}
The usual Calder\'on-Zygmund operators which satisfy the H\"ormander condition are not  necessarily bounded on local Hardy spaces. In order to guarantee the boundedness of a Calder\'on-Zygmund operator on $\h_p^c(\R,\M)$, an extra decay at infinity is imposed on the kernel  in \cite{XX18}.
Let $K\in \mathcal{S}' (\R; L_{1}  (\mathcal{M} )+\M)$ coincide on $\mathbb{R}^{d}\setminus  \{ 0 \} $
with a locally integrable $L_{1}  (\mathcal{M} )+\mathcal{M}$-valued
function. We define the left singular integral operator $K^{c}$ associated to $K$ by
\[
K^{c}  (f )  (s )=\int_{\mathbb{R}^{d}}K  (s-t )f  (t )dt,
\]
 and the right singular integral operator $K^{r}$ associated to $K$ by
\[
K^{r}  (f )  (s )=\int_{\mathbb{R}^{d}}f  (t )K  (s-t )dt.
\]
Both   $K^{c}  (f )$ and $K^{r}  (f )$ are well-defined
for sufficiently nice functions $f$ with values in $L_{1}  (\mathcal{M} )\cap \M$,
for instance, for $f\in\mathcal{S}\otimes  (L_{1}  (\mathcal{M} )\cap M )$.

Let ${\bmo}_{0}^{c}  (\mathbb{R}^{d},\mathcal{M} )$ denote the subspace of 
${\bmo}^{c}  (\mathbb{R}^{d},\mathcal{M} )$ consisting of compactly supported 
functions. The extra decay of the kernel $K$ given in \cite{XX18} is condition (2) in the following lemma.

\begin{lem}
\label{C-Z lem}
Assume that
\begin{enumerate}[\rm(1)]
\item the Fourier transform of $K$ is
bounded: $\sup_{\xi\in\mathbb{R}^{d}}  \|  \widehat{K}  (\xi ) \|  _{\mathcal{M}}<\infty$;
\item $K$ satisfies a size estimate: there exist $C_1$ and $\rho >0$ such that 
\[
  \|  K  (s ) \|  _{\mathcal{M}}\leq \frac{C_1}{  |s |^{d+\rho}},\thinspace\forall |s|\geq 1;
\]
\item $K$ has the Lipschitz regularity:
there exist a constant $C_2$ and $\gamma >0$ such that
\[
  \|  K  (s-t )-K  (s ) \|  _{\mathcal{M}}\leq C_2\frac{  |t |^\gamma}{  |s-t |^{d+\gamma}},\thinspace\forall  |s |>2  |t |.
\]
\end{enumerate}
Then $K^{c}$ is bounded on $\h_{p}^{c}  (\mathbb{R}^{d} ,\mathcal{M} )$ for $1\leq p<\infty$
and from ${\bmo}_{0}^{c}  (\mathbb{R}^{d},\mathcal{M} )$
to ${\bmo}^{c}  (\mathbb{R}^{d},\mathcal{M} )$.

A similar statement also holds for $K^{r}$ and the corresponding
row spaces.
 \end{lem}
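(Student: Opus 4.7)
The plan is to reduce the theorem to two endpoint estimates---boundedness on $L_2(\N)=\h_2^c(\R,\M)$ and boundedness $\bmo_0^c\to\bmo^c$---and then reach the full scale $1\leq p<\infty$ by combining the complex interpolation in Theorem~\ref{interpolation-hardy} with the duality $\h_1^{c*}=\bmo^c$ from Theorem~\ref{dual-hardy}. The $L_2$-bound is immediate from condition~(1): for nice $f$ one has $\widehat{K^c(f)}(\xi)=\widehat K(\xi)\widehat f(\xi)$, so the operator-valued Plancherel identity \eqref{eq: Planchel} together with $\sup_\xi\|\widehat K(\xi)\|_{\M}<\infty$ yields $\|K^c f\|_{L_2(\N)}\lesssim\|f\|_{L_2(\N)}$.

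The main step is to show $\|K^c f\|_{\bmo^c}\lesssim\|f\|_{\bmo_0^c}$ for compactly supported $f$. Fix a cube $Q$ and a suitably large concentric dilate $Q^*$ (side length multiplied by, say, $4\sqrt d$), and write $f=f_1+f_2+f_{Q^*}$ with $f_1=(f-f_{Q^*})\un_{Q^*}$ and $f_2=(f-f_{Q^*})\un_{(Q^*)^c}$. When $|Q|<1$, one subtracts the constant $c_Q:=K^c(f_2)(s_Q)$ with $s_Q$ the centre of $Q$, and $K^c(f_{Q^*})$ is absorbed into the free additive constant allowed by the $\bmo^c$-norm. The $L_2$-bound controls the first piece via $\int_{Q^*}|f-f_{Q^*}|^2\lesssim|Q^*|\,\|f\|_{\bmo_0^c}^2$, where for $|Q^*|\geq 1$ one covers $Q^*$ by unit cubes and invokes the second clause in \eqref{eq: def bmo}. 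The Lipschitz estimate~(3), applied to $K(s-t)-K(s_Q-t)$ and combined with the operator-valued Cauchy--Schwarz inequality \eqref{eq: 2-1}, controls the second piece on $Q$. When $|Q|=1$ no constant may be subtracted, so instead one bounds $\frac1{|Q|}\int_Q|K^c f_2|^2$ by splitting $(Q^*)^c$ into the intermediate annulus $\{|s-t|\leq 2\}$, where the $L_2$-bound is used again, and the region $\{|s-t|>2\}$, where the size decay~(2) applies and gives an $L^1$-type convolution estimate against $(f-f_{Q^*})\un_{(Q^*)^c}$, whose $L_2$ mass on dyadic unit cubes is controlled by $\|f\|_{\bmo_0^c}$.

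With both endpoints established, $(\bmo^c,\h_1^c)_{1/p}=\h_p^c$ from Theorem~\ref{interpolation-hardy} furnishes $K^c:\h_p^c\to\h_p^c$ once an $\h_1^c$-bound is produced. For the latter I would argue by duality: the $L_2$-adjoint of $K^c$ is $\widetilde K^c$ with kernel $\widetilde K(s)=K(-s)^*$, which satisfies (1)--(3) with the same constants; applying the already proved $\bmo_0^c\to\bmo^c$ estimate to $\widetilde K^c$ and dualising via Theorem~\ref{dual-hardy} yields $K^c:\h_1^c\to\h_1^c$ on a dense subspace, which suffices for interpolation. Interpolation then gives $K^c:\h_p^c\to\h_p^c$ for every $1\leq p<\infty$, and the row version follows by applying the column version to the kernel $s\mapsto K(s)^*$ and taking adjoints.

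The step I expect to be the main obstacle is the unit-cube case of the $\bmo_0^c\to\bmo^c$ bound: unlike the homogeneous theory, the $\bmo^c$-norm on unit cubes subtracts no mean, so a pure Hörmander-type cancellation is not enough and the quantitative decay in condition~(2) is genuinely needed. Splitting $(Q^*)^c$ into the annular region where only $L_2$-boundedness applies and the far region where (2) applies, and gluing the two estimates while keeping track of the noncommutative structure through \eqref{eq: 2-1}, is the delicate technical point of the proof.
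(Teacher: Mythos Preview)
The paper does not prove this lemma: it is quoted from \cite{XX18} (see the paragraph immediately preceding the statement), so there is no in-paper argument to compare your proposal against. Your sketch is the natural Calder\'on--Zygmund route for local spaces---$L_2$ via Plancherel, $\bmo_0^c\to\bmo^c$ via the standard near/far decomposition around each cube, $\h_1^c$ by duality with the adjoint kernel $\widetilde K(s)=K(-s)^*$, and then the interpolation of Theorem~\ref{interpolation-hardy}---and you correctly single out the unit-cube clause of \eqref{eq: def bmo} (where no mean is subtracted) as the place where the size decay~(2) is genuinely needed, which is exactly the local modification of the homogeneous argument. One minor technical point worth making explicit: the duality step requires that testing $K^c f$ against compactly supported $g\in\bmo_0^c$ recovers the full $\h_1^c$-norm; this is standard but should be stated.
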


\subsection*{Characterizations}
Next, we are going to present the characterizations of local Hardy spaces obtained in \cite{XX18}, which will play an important role when studying the characterizations of Triebel-Lizorkin spaces in this paper. 

The main idea of these characterizations is to replace the Poisson kernel by good enough Schwartz functions. 
Let $\Phi$ be a Schwartz function on $\mathbb{R}^{d}$ of vanishing mean, and set $\Phi_{\varepsilon}(s)=\varepsilon^{-d}\Phi(\frac{s}{\varepsilon})$ for positive $\e$. $\Phi$ is said to be
nondegenerate if:
\begin{equation}\label{nondegenerate-Phi}
\forall\xi\in\mathbb{R}^{d}\setminus\{0\}  \,\,\exists\,\varepsilon>0\;\,\text{ s.t. }\,\widehat{\Phi}(\varepsilon\xi)\neq0.
\end{equation}
Then there exists a Schwartz function $\Psi$ of vanishing mean such that
\begin{equation}\label{eq: reproduce}
\int_{0}^{\infty}\widehat{\Phi}(\varepsilon\xi)\overline{\widehat{\Psi}(\varepsilon\xi)}\frac{d\varepsilon}{\varepsilon}=1,\quad \forall\xi\in\mathbb{R}^{d}\setminus\left\{ 0\right\} .
\end{equation}
Furthermore, we can find two functions $\phi$, $\psi$ such that  $\widehat{\phi}, \widehat{\psi}\in H_2^\sigma(\R)$, $\widehat{\phi}(0)>0, \widehat{\psi}(0)> 0$ and
\begin{equation}\label{eq: reproduce 2}
\widehat{\phi}(\xi)\overline{\widehat{\psi}(\xi)}=1-\int_{0}^{1}\widehat{\Phi}(\varepsilon\xi)\overline{\widehat{\Psi}(\varepsilon\xi)}\frac{d\varepsilon}{\varepsilon}.
\end{equation}

For any $f\in L_{1}  (\mathcal{M};\mathrm R_{d}^{c} )+L_{\infty}  (\mathcal{M};\mathrm R_{d}^{c} )$,
we define the local versions of the conic and radial square functions
of $f$ associated to $\Phi$ by
\begin{equation*}
\begin{split}
s_{\Phi}^{c}  (f )  (s ) & =  \Big(\iint_{\widetilde{\Gamma}}  |\Phi_{\varepsilon}*f  (s+t ) |^{2}\frac{dtd\varepsilon}{\varepsilon^{d+1}} \Big)^{\frac{1}{2}},\thinspace s\in\mathbb{R}^{d},\\
g_{\Phi}^{c}  (f )  (s ) & =  \Big(\int_{0}^{1}  |\Phi_{\varepsilon}*f  (s ) |^{2}\frac{d\varepsilon}{\varepsilon} \Big)^{\frac{1}{2}},\thinspace s\in\mathbb{R}^{d}.
\end{split}
\end{equation*}

Fix the four test functions $\Phi, \Psi, \phi, \psi$ as above. The following theorem is proved in \cite{XX18}.

\begin{thm}\label{thm main1}
Let $1\leq p<\infty$ and $\phi$, $\Phi$ be as above. For any $f\in L_{1}  (\mathcal{M};\mathrm R_{d}^{c} )+L_{\infty}  (\mathcal{M};\mathrm R_{d}^{c} )$, $f\in \h_{p}^{c}  (\mathbb{R}^{d},\mathcal{M} )$  if and only if
$s_{\Phi}^{c}  (f )\in L_{p}  (\mathcal{N} )$ and
$\phi*f\in L_{p}  (\mathcal{N} )$  if and only if $g_{\Phi}^{c}  (f )\in L_{p}  (\mathcal{N} )$
and $\phi*f\in L_{p}  (\mathcal{N} )$. If this is the
case, then
\begin{equation}\label{eq: main}
\|  f \|  _{\h_{p}^{c}}\approx \|  s_{\Phi}^{c}  (f )  \|  _{p}+  \|  \phi*f \|  _{p} \approx   \|  g_{\Phi}^{c}  (f )  \|  _{p}+  \|  \phi*f \|  _{p}
\end{equation}
 with the relevant constants depending only on $d, \Phi$ and $\phi$.
\end{thm}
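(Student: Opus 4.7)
The plan is to establish the chain
\begin{equation*}
\|f\|_{\h_p^c} \;\lesssim\; \|g_\Phi^c(f)\|_p + \|\phi*f\|_p \;\lesssim\; \|s_\Phi^c(f)\|_p + \|\phi*f\|_p \;\lesssim\; \|f\|_{\h_p^c},
\end{equation*}
which yields both equivalences in \eqref{eq: main}. The engine is the Calder\'on reproducing identity
\begin{equation*}
f \;=\; \psi*\phi*f \;+\; \int_0^1 \Psi_\e*\Phi_\e*f \,\tfrac{d\e}{\e}
\end{equation*}
coming from \eqref{eq: reproduce} and \eqref{eq: reproduce 2}, together with the local Calder\'on-Zygmund Lemma \ref{C-Z lem} and the Hilbertian Fourier multiplier theorems of Section \ref{section: Multiplier theorems}; the operator-valued Cauchy-Schwarz inequality \eqref{eq: 2-1} is used throughout to translate pointwise $|\cdot|^2$-bounds into $L_p(\N)$-estimates.

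For the first inequality I would convolve the reproducing identity with $\tfrac{\partial}{\partial\delta}\mathrm{P}_\delta$ and apply \eqref{eq: 2-1} to obtain
\begin{equation*}
\bigl|\tfrac{\partial}{\partial\delta} \mathrm{P}_\delta*f(s)\bigr|^2 \;\lesssim\; \int_0^1 \omega(\delta,\e)\,|\Phi_\e*f(s)|^2 \,\tfrac{d\e}{\e} \;+\; \bigl|\tfrac{\partial}{\partial\delta}\mathrm{P}_\delta*\psi*\phi*f(s)\bigr|^2,
\end{equation*}
where the weight $\omega(\delta,\e)$ is extracted from the decay $\|\tfrac{\partial}{\partial\delta}\mathrm{P}_\delta*\Psi_\e\|_1 \lesssim \min(\e/\delta,\delta/\e)^N$ afforded by the vanishing moments of $\Psi$ and the Schwartz decay of the Poisson kernel. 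Multiplying by $\delta\,d\delta$, integrating over $(0,1)$ and using a Schur-type bound on $\int_0^1 \omega(\delta,\e)\,\delta\,d\delta$ produces $g^c(f)(s)^2 \lesssim g_\Phi^c(f)(s)^2 + |G*f(s)|^2$ for some auxiliary Schwartz $G$; combining this with the bound $\|\mathrm{P}*f\|_p\lesssim \|\phi*f\|_p$ (an instance of \eqref{Young-Banach}) and the $g$-function characterisation of $\h_p^c$ recalled just before the theorem completes the first step.

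The middle inequality follows from a Fourier-side factorisation $\widehat{\Phi}=\widehat{\widetilde\Phi}\,\widehat{\chi}$ with $\widehat{\chi}\equiv 1$ on $\supp\widehat{\Phi}$: Cauchy-Schwarz together with the rapid decay of $\chi$ yields the pointwise estimate $|\Phi_\e*f(s)|^2 \lesssim |B(0,\e)|^{-1}\int_{|t|<\e}|\widetilde\Phi_\e*f(s+t)|^2\,dt$ plus a tail; one then replaces $\widetilde\Phi$ by $\Phi$ inside the conic square function via the Hilbertian multiplier theorem. For the last inequality, the key step is to view $f\mapsto (\Phi_\e*f(s+t))_{(t,\e)\in\widetilde\Gamma}$ as a singular integral with values in the Hilbert space $H=L_2(\widetilde\Gamma,\tfrac{dt\,d\e}{\e^{d+1}})$ and kernel $K(u)(t,\e)=\Phi_\e(u+t)\un_{\widetilde\Gamma}(t,\e)$, and check the three hypotheses of Lemma~\ref{C-Z lem} in its $B(H)$-valued form: the Fourier bound (1) reduces to the classical $\int_0^1|\widehat\Phi(\e\xi)|^2\tfrac{d\e}{\e}<\infty$, while (2) and (3) follow because the truncation $\e<1$ in $\widetilde\Gamma$ upgrades the Schwartz decay of $\Phi$ and $\nabla\Phi$ to the required $|u|^{-d-\rho}$ size and Lipschitz estimates uniformly in $\e$. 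The low-frequency piece $\|\phi*f\|_p\lesssim\|f\|_{\h_p^c}$ is absorbed via \eqref{Young-Banach}, using that $\widehat{\phi}\in H_2^\sigma(\R)$ and a factorisation of $\phi$ through the Poisson kernel.

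The main obstacle is the last step: verifying the three $B(H)$-valued Calder\'on-Zygmund conditions for the conic kernel, where the \emph{local} nature of the cone (the restriction $\e<1$ in $\widetilde\Gamma$) is exactly what produces the extra decay in $|u|$ demanded by the inhomogeneous Lemma~\ref{C-Z lem} and that is absent for the full cone. A secondary difficulty is the sharpness of the weight $\omega(\delta,\e)$ in Step 1, which forces careful separation of the two regimes $\e\leq\delta$ and $\e>\delta$ before the Schur integration in $\delta$ can converge.
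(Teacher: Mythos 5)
The paper does not actually prove Theorem \ref{thm main1}: it is quoted from \cite{XX18}, so there is no in-paper proof to compare against, and I judge your proposal on its own terms. Your architecture is the standard (and almost certainly the intended) one: the Calder\'on reproducing identity plus almost-orthogonality/Schur estimates for $\|f\|_{\h_p^c}\lesssim\|g_\Phi^c(f)\|_p+\|\phi*f\|_p$; pointwise domination of the radial by the conic square function (with a change-of-aperture argument hidden in your ``tail'') for the middle inequality; and Hilbert-space-valued \emph{local} Calder\'on--Zygmund theory for $\|s_\Phi^c(f)\|_p+\|\phi*f\|_p\lesssim\|f\|_{\h_p^c}$, where your observation that the truncation $\e<1$ in $\widetilde\Gamma$ supplies the extra decay at infinity demanded by the inhomogeneous kernel conditions is exactly the right point. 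In outline this is correct.

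Two steps are wrong as stated, however. First, you claim $\|\mathrm{P}*f\|_p\lesssim\|\phi*f\|_p$ ``as an instance of \eqref{Young-Banach}'', and later $\|\phi*f\|_p\lesssim\|f\|_{\h_p^c}$ via ``a factorisation of $\phi$ through the Poisson kernel''. Young's inequality bounds $\|\phi*g\|_p$ by $\|g\|_p$, not the reverse; to compare $\phi*f$ with $\mathrm{P}*f$ in either direction you would need one kernel to be an $L_1$-convolution factor of the other, i.e. $\widehat{\mathrm{P}}/\widehat{\phi}$ or $\widehat{\phi}/\widehat{\mathrm{P}}$ to be an integrable multiplier, and neither holds: $\widehat{\phi}$ decays (and may vanish) where $\widehat{\mathrm{P}}=e^{-2\pi|\cdot|}$ does not, and $\widehat{\phi}\,e^{2\pi|\cdot|}$ is certainly not controlled by the hypothesis $\widehat{\phi}\in H_2^\sigma$. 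Both comparisons must go through reproducing identities: $\mathrm{P}*f=\mathrm{P}*\psi*(\phi*f)+\int_0^1 \mathrm{P}*\Psi_\e*(\Phi_\e*f)\frac{d\e}{\e}$ yields $\|\mathrm{P}*f\|_p\lesssim\|\phi*f\|_p+\|g_\Phi^c(f)\|_p$ (which still suffices for your chain), and the bound on $\|\phi*f\|_p$ requires the Poisson-kernel Calder\'on identity plus another application of the Hilbert-valued multiplier machinery, not Young alone. Second, Lemma \ref{C-Z lem} gives boundedness $\h_p^c\to\h_p^c$ for a square operator-valued kernel, whereas your conic operator must land in $L_p(\N;H^c)$; you need the column-kernel variant, obtained by showing the adjoint maps $L_\infty(\N;H^c)$ into $\bmo^c$ and then interpolating and dualizing — precisely the scheme of Lemmas \ref{lem:p to p}, \ref{lem:p=1} and \ref{lem:p=1-conic} of this paper — rather than a direct citation of Lemma \ref{C-Z lem}. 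With these repairs the argument goes through.
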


We have a discrete version of Theorem \ref{thm main1}.
The square functions $s_{\Phi}^c$ and $g_{\Phi}^c$ can be discretized as follows:
\begin{equation*}
\begin{split}
 g_{\Phi}^{c,D}(f)(s) & =   \Big(\sum_{j\geq 1} |\Phi_j*f (s)|^2\Big)^{\frac 1 2},\\
 s_{\Phi}^{c,D}(f)(s) & = ,\Big(\sum_{j\geq 1} 2^{dj}\int_{B(s, 2^{-j})} |\Phi_j*f (t)|^2 dt\Big)^{\frac 1 2}.
\end{split}
\end{equation*}
Here $\Phi_j$ is the inverse Fourier transform of $\Phi(2^{-j}\cdot)$. This time, to get a resolvent of the unit on $\R$, we need to assume that $\Phi$ satisfies
 \begin{equation*}
  \forall\,\xi\in\R\setminus\{0\}\,\, \exists\, 0<2a\le b<\infty\;\text { s.t. }\; \widehat{\Phi}(\e\xi)\neq0,\;\forall\; \e\in (a,\,b].
 \end{equation*}
 Then adapting the proof of \cite[Lemma V.6]{Stein1993} , we can find a Schwartz function $\Psi$ of vanishing mean  such that
 \begin{equation}\label{eq: reproduceD}
  \sum_{j= -\infty}^{+\infty}\widehat{\Phi} (2^{-j}\xi)\, \overline{\widehat{\Psi} (2^{-j}\xi)} = 1,\quad \forall\xi\in\R\setminus\{0\}.
 \end{equation}
Again, there exist two functions $\phi$ and $\psi$ such that $\widehat{\varphi}, \widehat{\psi}\in H_2^\sigma(\R)$, $\widehat{\phi}(0)>0, \widehat{\psi}(0)> 0$  and
 \begin{equation}\label{eq: reproduceD 2}
  \sum_{j=1}^\infty\widehat{\Phi} (2^{-j}\xi)\, \overline{\widehat{\Psi} (2^{-j}\xi)}+\widehat{\phi}(\xi)\overline{\widehat{\psi}(\xi)}=1, \quad \forall \xi \in \R.
 \end{equation}
Now we fix the pairs $(\Phi, \Psi)$ and $(\phi,\psi)$ satisfying \eqref{eq: reproduceD} and \eqref{eq: reproduceD 2}.

\begin{thm}\label{thm: equivalence hpD}
Let $\phi$ and $\Phi$ be test functions as in \eqref{eq: reproduceD 2} and  $1\leq p<\infty$. Then for any $f\in L_{1}  (\mathcal{M};\mathrm R_{d}^{c} )+L_{\infty}  (\mathcal{M};\mathrm R_{d}^{c} )$,
 $f\in \h_p^c(\R,\M)$  if and only if  $s_{\Phi}^{c, D}(f)\in L_{p}(\N)$ and $\phi *f\in L_{p}(\N)$   if and only if $g_{\Phi}^{c, D}(f)\in L_{p}(\N)$ and $\phi *f\in L_{p}(\N)$. Moreover,
 $$\|f\|_{\h^c_p} \approx \|s_{\Phi}^{c, D}(f)\|_{L_{p}(\N)}+\|\phi *f\|_{p} \approx \|g_{\Phi}^{c, D}(f)\|_{p}+\|\phi *f\|_{p} $$
with the relevant constants depending only on $ d, \Phi$ and $\phi $.
\end{thm}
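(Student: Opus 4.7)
The plan is to reduce Theorem \ref{thm: equivalence hpD} to its continuous counterpart Theorem \ref{thm main1}, by showing that the dyadic square functions $g_\Phi^{c,D}(f)$ and $s_\Phi^{c,D}(f)$ are $L_p(\N)$-equivalent, up to the low-frequency term $\phi*f$, to their continuous analogues $g_\Phi^c(f)$ and $s_\Phi^c(f)$. Note that the dyadic nondegeneracy condition preceding \eqref{eq: reproduceD} implies in particular \eqref{nondegenerate-Phi}, so Theorem \ref{thm main1} can be applied to the same $\Phi$ as in the statement.

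For the direction $\|g_\Phi^{c,D}(f)\|_p + \|\phi*f\|_p \lesssim \|f\|_{\h_p^c}$, I would view $f \mapsto (\Phi_j*f)_{j\geq 1}$ as a left singular integral operator whose convolution kernel $K(s) = (\Phi_j(s))_{j\geq 1}$ takes values in $\M \overline\otimes B(\ell_2^c)$. Hypothesis (1) of Lemma \ref{C-Z lem} then reduces to $\sup_\xi \sum_{j\geq 1} |\widehat\Phi(2^{-j}\xi)|^2 < \infty$, which is immediate from the dyadic localization and Schwartz decay of $\widehat\Phi$. Conditions (2) and (3), measured in the $\ell_2^c$-norm, come down to routine dyadic summations using the Schwartz estimates on $\Phi$ and its gradient. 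The low-frequency piece $\|\phi*f\|_p \lesssim \|f\|_{\h_p^c}$ follows from \eqref{Young-Banach} and $\widehat\phi \in H_2^\sigma(\R)$.

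For the reverse direction, the reproducing identity \eqref{eq: reproduceD 2} gives
\begin{equation*}
f \;=\; \sum_{j\geq 1} \Psi_j * (\Phi_j*f) \;+\; \psi * (\phi*f).
\end{equation*}
Inserting this into the continuous radial square function $g_\Phi^c(f)(s) = \bigl(\int_0^1 |\Phi_\varepsilon*f(s)|^2 \,d\varepsilon/\varepsilon\bigr)^{1/2}$ and using the Schwartz decay of the multipliers $\Phi_\varepsilon*\Psi_j$ to produce almost-orthogonality in $j$ (the contribution of each pair $(j,\varepsilon)$ decays rapidly in $|\log_2\varepsilon + j|$), a Cauchy--Schwarz argument yields
\begin{equation*}
\|g_\Phi^c(f)\|_p \;\lesssim\; \|g_\Phi^{c,D}(f)\|_p + \|\phi*f\|_p,
\end{equation*}
and analogously for $s_\Phi^c$. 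Theorem \ref{thm main1} then closes the argument.

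The main obstacle I expect is the $s_\Phi^{c,D}$ case. Unlike the radial function, the coupling between the frequency scale $2^j$ and the spatial ball $B(s, 2^{-j})$ forces one to treat the kernel as taking values in the Hilbert module $\ell_2\bigl(j;\, L_2(B(0, 2^{-j}))\bigr)$, so the Calder\'on--Zygmund argument has to be run in a Hilbert-space-valued setting, i.e.\ via a Hilbertian Fourier multiplier theorem. Verifying the size and Lipschitz conditions in this Hilbert-valued norm, and matching the dyadic local averages with the continuous truncated cone $\widetilde\Gamma$ in the reverse implication, are the genuinely technical steps; everything else parallels the proof of Theorem \ref{thm main1}.
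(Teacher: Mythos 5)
A preliminary remark: the paper does not actually prove Theorem \ref{thm: equivalence hpD} here --- it is quoted from \cite{XX18} --- so your proposal can only be measured against the method of that reference and against the multiplier machinery of Section \ref{section: Multiplier theorems}, which is built precisely to exploit this theorem. Your forward direction is correctly architected: realizing $f\mapsto(\Phi_j*f)_{j\ge1}$ (resp.\ its conic variant) as a Calder\'on--Zygmund operator with column kernel valued in $\ell_2$ (resp.\ in $H=\oplus_j L_2(B(0,2^{-j}),2^{jd}dt)$), checking the size and regularity conditions in the Hilbert norm, and deducing $\h_p^c\to L_p(\N;H^c)$ boundedness from the $L_2\to L_2$ and $L_\infty\to\bmo$ endpoints by interpolation and duality is exactly what Lemmas \ref{lem:C-Z l_2}, \ref{lem:p to p}, \ref{lem:p=1} and \ref{lem:p=1-conic} of this paper carry out.

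The gap is in the reverse direction. After inserting the reproducing identity \eqref{eq: reproduceD 2}, you propose to conclude by ``almost-orthogonality plus Cauchy--Schwarz''. In the operator-valued setting this only works for $p\ge2$: the scheme yields a pointwise operator inequality of the form $\int_0^1|\Phi_\e*f|^2\frac{d\e}{\e}\lesssim\sum_j K_j*|\Phi_j*f|^2$ with $\sup_j\|K_j\|_1<\infty$, and passing from this to the $L_p(\N)$ inequality requires taking $L_{p/2}$-norms and dualizing against a noncommutative maximal function; for $1\le p<2$ the space $L_{p/2}(\N)$ is not normed and this route collapses. Since $p=1$ is the crucial case (it is the one the rest of the paper relies on, e.g.\ in Theorem \ref{lem: Multiplier p=1}), this is a real missing step rather than a technicality. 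The standard repair --- and the one used in \cite{XX18} and its predecessors --- is to prove the reverse inequality by duality: pair $f$ with $g\in\bmo^c$ (resp.\ $\h_q^c$) through the discrete Calder\'on identity and control the pairing by $\|g_\Phi^{c,D}(f)\|_1+\|\phi*f\|_1$ times the discrete Carleson-measure norm of $g$ as in \eqref{eq: discrete Carleson}, or equivalently to pass through tent spaces as in Section \ref{section-atom}. Your reduction of the conic case to a Hilbertian multiplier theorem is the right instinct for the forward direction, but it does not resolve this endpoint issue in the reverse one.
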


\subsection*{Atomic decomposition}
Finally, let us include the atomic decomposition of the local Hardy space $\h_{1}^{c}  (\mathbb{R}^{d},\mathcal{M} )$.
Let $Q$ be a cube in $\R$ with $| Q|\leq 1$. If $| Q|=1$, an $\h _1^c$-atom associated with $Q$ is a function $a\in L_{1}  (\mathcal{M};L_{2}^{c}  (\mathbb{R}^{d} ) )$ such that
\begin{itemize}
\item $\supp a\subset Q$;
\item $\tau\big(\int_{Q}  |a  (s ) |^{2}ds\big)^{\frac{1}{2}}\leq  |Q |^{-\frac{1}{2}}$.
\end{itemize}
If $|Q|<1$, we assume additionally: 
\begin{itemize}
\item $\int_{Q}a(s)ds=0.$
\end{itemize}
Let $\h_{1,at}^{c}  (\mathbb{R}^{d},\mathcal{M} )$ be the
space of all $f$ admitting a representation of the form
\[
f=\sum_{j=1}^\infty \lambda_ja_j,
\]
where the $a_{j}$'s are $\h_1^{c}$-atoms and $\lambda_{j}\in\mathbb{C}$ such that $\sum_{j=1}^\infty  |\lambda_{j} |<\infty$. The above
series  converges in the sense of distribution.
We equip $\h_{1,at}^{c}  (\mathbb{R}^{d},\mathcal{M} )$ with
the following norm:
\[
  \|  f \|  _{\h_{1,at}^{c}}=\inf  \{ \sum_{j=1}^\infty  |\lambda_{j} |: f=\sum_{j=1}^\infty\lambda_{j}a_{j};\,\mbox{\ensuremath{a_{j}}'s are \ensuremath{\h_1^{c}} -atoms, }\mbox{\ensuremath{\lambda}}_{j}\in\mathbb{C} \} .
\]
Similarly, we can define the row and mixture versions. The following theorem is also proved in \cite{XX18}.

\begin{thm}\label{thm:atomic h1}
We have $\h_{1,at}^{c}  (\mathbb{R}^{d},\mathcal{M} )=\h_{1}^{c}  (\mathbb{R}^{d},\mathcal{M} )$
with equivalent norms.
\end{thm}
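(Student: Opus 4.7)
The plan is to prove the two inclusions $\h_{1,at}^c\subset\h_1^c$ and $\h_1^c\subset\h_{1,at}^c$ separately. The first is obtained by a uniform atomic estimate, and the second by duality with $\bmo^c$ via Theorem \ref{dual-hardy}.

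For the first inclusion, I would verify that $\|a\|_{\h_1^c}\lesssim 1$ uniformly for every $\h_1^c$-atom $a$ supported in a cube $Q$ with $|Q|\le 1$. Using the $g$-function characterization $\|a\|_{\h_1^c}\approx \|g^c(a)\|_1+\|\mathrm{P}*a\|_1$ from section \ref{section-oHardy}, I split the outer integration into $2Q$ and $(2Q)^c$. On $2Q$, Cauchy--Schwarz on a set of finite measure together with the Plancherel identity \eqref{eq: Planchel} and the $L_2(\N)$-size bound $\tau(\int|a|^2)^{1/2}\le|Q|^{-1/2}$ control both contributions. On $(2Q)^c$, the pointwise Poisson decay $|\mathrm{P}_\e(s-t)|\lesssim|s-t|^{-d-1}$ (uniformly in $\e\le 1$) is already integrable in $s$ when $|Q|=1$; when $|Q|<1$, the cancellation $\int a=0$ allows replacing $\mathrm{P}_\e(s-t)$ by $\mathrm{P}_\e(s-t)-\mathrm{P}_\e(s-s_Q)$ and gaining an extra factor $|Q|^{1/d}/|s-s_Q|$ via a first-order Taylor expansion. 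Summing over an atomic expansion $f=\sum_j\lambda_j a_j$ then yields $\|f\|_{\h_1^c}\le C\sum_j|\lambda_j|$, hence $\|f\|_{\h_1^c}\lesssim\|f\|_{\h_{1,at}^c}$.

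For the reverse inclusion, I would first show that every $g\in\bmo^c(\R,\M)$ induces a bounded functional on $\h_{1,at}^c$ via the trace pairing, with norm $\lesssim\|g\|_{\bmo^c}$. For a small atom $a$ on $Q$ with $\int a=0$, subtracting $g_Q$ and applying the noncommutative Cauchy--Schwarz inequality in $L_2(\N)$ gives $|\langle a,g\rangle|=|\langle a,g-g_Q\rangle|\le\|a\|_{L_2(\N)}\cdot\|(g-g_Q)\un_Q\|_{L_2(\N)}$, whose second factor is controlled by $|Q|^{1/2}\|g\|_{\bmo^c}$ via the first term in \eqref{eq: def bmo}; unit-cube atoms are handled by the second term of \eqref{eq: def bmo}. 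This embeds $\bmo^c$ continuously into $(\h_{1,at}^c)^*$. Combined with the continuous inclusion $\h_{1,at}^c\hookrightarrow\h_1^c$ from the first step and the identification $(\h_1^c)^*=\bmo^c$ from Theorem \ref{dual-hardy}, a Hahn--Banach separation argument gives equality: if the image of $\h_{1,at}^c$ in $\h_1^c$ were proper, some $0\neq g\in\bmo^c$ would vanish on every atom.

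The main obstacle is the last step, showing that annihilation on every atom forces $g=0$ in $\bmo^c$; this is where the noncommutative structure complicates the classical density argument. For each cube $Q\subset\R$ and each $x\in L_1(\M)\cap\M$, I would construct a normalized atom $a_{Q,x}$ based on $\un_Q\otimes x$ (with a mean-zero correction when $|Q|<1$) satisfying the $L_2(\N)$-size bound. Pairing $a_{Q,x}$ against $g$ and letting $x$ exhaust a dense family of $L_1(\M)$ recovers the operator-valued average of $(g-g_Q)$ on $Q$ when $|Q|<1$, and of $g$ itself on $Q$ when $|Q|=1$, forcing both terms of \eqref{eq: def bmo} to vanish. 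The care required is in choosing the normalization constants so that the constructed atoms truly belong to $L_1(\M;L_2^c)$ with the required size bound, and in making the formal trace pairing rigorous in the semifinite setting. Once these are verified, the density argument closes the proof.
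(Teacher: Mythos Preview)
The paper does not prove this theorem here; it is quoted from \cite{XX18}, so there is no in-paper proof to compare against. The introduction does indicate that in \cite{XX18} the atomic decomposition is deduced from the $\h_1^c$--$\bmo^c$ duality, so your overall strategy is aligned with the intended approach, and the paper's own proof of Lemma~\ref{lem: atomic decomp Tent} (the tent-space analogue) follows exactly this duality scheme.

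Your proposal, however, has a genuine gap in the reverse inclusion. The Hahn--Banach argument you outline only yields that $\h_{1,at}^c$ is \emph{dense} in $\h_1^c$: if the closure of the image were proper, a nonzero $g\in(\h_1^c)^*=\bmo^c$ would annihilate all atoms, which your last paragraph rules out. But density plus a continuous inclusion does not give equality of Banach spaces with equivalent norms; a priori the $\h_{1,at}^c$-norm could be strictly stronger on a dense subspace. What is missing is the reverse dual inclusion $(\h_{1,at}^c)^*\subset\bmo^c$: every bounded functional $\ell$ on $\h_{1,at}^c$ must be shown to arise from some $g\in\bmo^c$ with $\|g\|_{\bmo^c}\lesssim\|\ell\|$. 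Once $(\h_{1,at}^c)^*=\bmo^c$ with equivalent norms, you obtain for $f\in\h_{1,at}^c$
\[
\|f\|_{\h_{1,at}^c}=\sup_{\|\ell\|_{(\h_{1,at}^c)^*}\le1}|\ell(f)|\approx\sup_{\|g\|_{\bmo^c}\le1}|\langle f,g\rangle|\approx\|f\|_{\h_1^c},
\]
and then density plus completeness of $\h_{1,at}^c$ finish. This is precisely the route taken in the proof of Lemma~\ref{lem: atomic decomp Tent}: the essential step there is to restrict a given $\ell\in(T_{1,at}^c)^*$ to each $L_1\big(\M;L_2^c(T(Q))\big)$, extract a local representative in $L_\infty\big(\M;L_2^c(T(Q))\big)$, patch these into a single function, and read off the $T_\infty^c$ bound from $\|\ell\|$. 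The analogous construction for $\h_{1,at}^c$---restricting $\ell$ to the mean-zero part of $L_1\big(\M;L_2^c(Q)\big)$ for small $Q$ and to $L_1\big(\M;L_2^c(Q)\big)$ for unit $Q$, then patching---is the piece your plan omits.
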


\begin{rmk}\label{rem: replace support}
In the above definition of atoms, we can replace the support of atoms  $Q$ by any bounded multiple of $Q$.
\end{rmk}

\section{Multiplier theorems}\label{section: Multiplier theorems}

We are going to develop some Fourier multiplier theorems in this section. They can be viewed as a special case of Calder\'on-Zygmund theory, and will be used to investigate various square funtions that characterize the Triebel-Lizorkin spaces. Our presentation follows closely the argument in Section 4.1 of \cite{XXY17}.

Recall again that $\varphi$ is a fixed  function satisfying \eqref{condition-phi}, $\varphi_0$ is the inverse Fourier transform of $1-\sum_{k>0}\varphi (2^{-k}\cdot)$, and $\varphi_k$ is the inverse Fourier transform of $\varphi (2^{-k}\cdot)$ when $k>0$. Moreover,  we denote by $\varphi^{(k)} $ the Fourier transform of $\varphi_k$ for every $k\in \mathbb{N}_0$ ($ \mathbb{N}_0$ being the set of nonnegative integers).

\subsection{Global multipliers}

Firstly, let us state the following homogeneous version of \cite[Theorem~4.1]{XXY17}.
 \begin{thm}\label{multiplier-homogeneous}
Let $\sigma \in \mathbb{R}$ with $\sigma >\frac{d}{2}$.
Assume that $( {\phi_j})_{j\in \mathbb{Z}}$ and $( {\rho_j})_{j\in \mathbb{Z}}$ are two sequences of functions on $\R\backslash\lbrace 0\rbrace$ such that
\[
\supp  {\phi_j \rho_j} \subset  \{\xi : 2^{j-1}\leq |  \xi |  \leq 2^{j+1}\}, \;\; j\in \mathbb{Z}
\]
and
\begin{equation*}
\underset{\substack{j\in \mathbb{Z}\\ -2\leq k \leq 2}}{\sup} \|   {\phi} _j (2^{j+k}\cdot)\varphi\| _{H_2^\sigma(\R)}<\infty.
\end{equation*}
Let $1< p<\infty$. Then for any $f\in \mathcal{S}'(\R;L_1(\M)+\M)$, we have
\[
\big\| (\sum_{j\in \mathbb{Z}} 2^{2j\alpha} |  \check \phi_j*\check \rho_j*f |^2)^\frac{1}{2}\big\| _{p}\lesssim \underset{\substack{j\in \mathbb{Z}\\ -2\leq k \leq 2}}{\sup} \|   {\phi}_j (2^{j+k}\cdot)\varphi  \| _{H_2^\sigma}\big\| (\sum_{j\in \mathbb{Z}}  2^{2j\alpha} |  \check \rho_j*f |  ^2)^\frac{1}{2}\big\| _{p},
\]
where the constant depends on $p$, $\sigma$, $d$ and $\varphi$.

\end{thm}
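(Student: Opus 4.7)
My plan is to follow the strategy of \cite[Theorem 4.1]{XXY17}: I first reduce the inequality to the $L_p(\N;\ell_2^c)$-boundedness of a diagonal Fourier multiplier, then realize that multiplier as an operator-valued Calder\'on--Zygmund operator and check the hypotheses of the noncommutative CZ theorem. The weight $2^{j\alpha}$ is absorbed by passing from $\check\rho_j*f$ to $g_j:=2^{j\alpha}\check\rho_j*f$; the inequality becomes the statement that the diagonal operator $T\colon (g_j)_{j\in\mathbb{Z}}\mapsto(\check\phi_j*g_j)_{j\in\mathbb{Z}}$ is bounded on $L_p(\N;\ell_2^c)$ with operator norm controlled by $M:=\sup_{j,|k|\le 2}\|\phi_j(2^{j+k}\cdot)\varphi\|_{H_2^\sigma}$.

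Next I would carry out a preliminary localization in frequency. Fixing a smooth cutoff $\widetilde\varphi$ equal to $1$ on $\{\tfrac12\le|\xi|\le 2\}$ and supported in $\{\tfrac14\le|\xi|\le 4\}$, the support hypothesis gives $\phi_j\rho_j=(\phi_j\,\widetilde\varphi(2^{-j}\cdot))\rho_j$, so replacing $\phi_j$ by $\phi_j\,\widetilde\varphi(2^{-j}\cdot)$ does not change $\check\phi_j*\check\rho_j*f$. After this reduction $\phi_j$ is supported in $\{2^{j-2}\le|\xi|\le 2^{j+2}\}$; decomposing $\widetilde\varphi$ as a finite sum of dilates $\varphi(2^{-k}\cdot)$ for $|k|\le 2$, the hypothesis yields the scale-invariant bound $\sup_j\|\phi_j(2^j\cdot)\|_{H_2^\sigma}\lesssim M$.

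The third and main step is to verify the Calder\'on--Zygmund conditions for the diagonal kernel $K(s)=\mathrm{diag}(\check\phi_j(s))_{j\in\mathbb{Z}}$, viewed as a $B(\ell_2^c)$-valued kernel. The $L_\infty$ bound on $\widehat K$ reduces to $\sup_{j,\xi}|\phi_j(\xi)|\lesssim M$, which follows from the Sobolev embedding $H_2^\sigma\hookrightarrow L_\infty$ (since $\sigma>d/2$) applied to $\phi_j(2^j\cdot)$. The H\"ormander regularity reads
\[
\int_{|s|>2|t|}\sup_{j\in\mathbb{Z}}|\check\phi_j(s-t)-\check\phi_j(s)|\,ds\lesssim M,
\]
and after the change of variables $u=2^js,\,v=2^jt$ it suffices to bound $\int_{|u|>2|v|}|\Phi_j^\vee(u-v)-\Phi_j^\vee(u)|\,du$ uniformly in $j$, where $\Phi_j:=\phi_j(2^j\cdot)$ is supported in $\{\tfrac14\le|\xi|\le 4\}$ with $\|\Phi_j\|_{H_2^\sigma}\lesssim M$. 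I would split the range of $u$ into $\{|u|\le 2|v|^{1/2}\}$ (bound directly by the triangle inequality, Cauchy--Schwarz and Plancherel, mimicking the calculation preceding \eqref{Young-Banach}) and dyadic annuli at larger scales, on which the mean value theorem combined with weighted $L_2$ estimates for $\Phi_j^\vee$ furnishes a summable geometric series. Once both CZ conditions are in place, Lemma \ref{C-Z lem} in the $\ell_2^c$-valued setting yields that $T$ is bounded on $L_p(\N;\ell_2^c)$ for every $1<p<\infty$, which is exactly the desired inequality.

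The hard part, I expect, will be the uniform H\"ormander estimate: because the $B(\ell_2^c)$ operator norm at each point is $\sup_j|\check\phi_j(s-t)-\check\phi_j(s)|$, one cannot afford any $j$-loss before integrating. The rescaling $\phi_j\mapsto\Phi_j=\phi_j(2^j\cdot)$, together with the enlarged Sobolev hypothesis over $|k|\le 2$ in the statement, is precisely what turns $\{\Phi_j\}$ into a family with uniform $H_2^\sigma$ norm and uniformly localized support, so that after rescaling the problem collapses to a \emph{single}-scale CZ estimate that can be closed by Plancherel and Cauchy--Schwarz.
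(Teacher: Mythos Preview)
Your plan follows the strategy behind \cite[Theorem 4.1]{XXY17}: reduce to the $L_p(\N;\ell_2^c)$-boundedness of a diagonal multiplier and verify vector-valued Calder\'on--Zygmund conditions. This is the right framework, but there is a genuine gap in your H\"ormander step. You need
\[
\int_{|s|>2|t|}\sup_{j\in\mathbb{Z}}\big|\check\phi_j(s-t)-\check\phi_j(s)\big|\,ds\lesssim M,
\]
and you claim this reduces, after the change of variables $u=2^js$, $v=2^jt$, to a single-scale estimate uniform in $j$. But that substitution depends on $j$, whereas the supremum over $j$ sits \emph{inside} the integral; what your rescaling actually yields is $\sup_j\int_{|s|>2|t|}|\check\phi_j(s-t)-\check\phi_j(s)|\,ds\lesssim M$, and $\int\sup_j$ cannot be controlled by $\sup_j\int$. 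The family $\{\Phi_j\}$ is uniformly localized in frequency, but the physical-space kernels $\check\phi_j(s)=2^{jd}\Phi_j^\vee(2^js)$ live at genuinely different scales $2^{-j}$, so the problem does \emph{not} collapse to a single scale. To close this you must argue as in \cite[Lemma 4.3]{XXY17} (or Lemma \ref{lem:C-Z l_2} here): decompose the full $\ell_2$-valued symbol via a homogeneous Littlewood--Paley partition and sum the contributions over all $k\in\mathbb Z$; the resulting series $\sum_{k\in\mathbb Z}\min\big(2^k|t|,(2^k|t|)^{\frac d2-\sigma}\big)$ still converges, so the argument does extend to the two-sided index set---but it is a multi-scale argument, not the single-scale one you describe. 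Your frequency localization and the $L_\infty$ bound on the symbol are fine.

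By contrast, the paper avoids re-proving any kernel estimates. It exploits scale invariance: since both the hypothesis and the conclusion are unchanged under the index shift $j\mapsto j-K$ coupled with the dilation $\xi\mapsto 2^{-K}\xi$, it suffices to prove the inequality for the truncated sum $\sum_{j\geq K}$ with constants independent of $K$. Setting $\psi_{j-K}=\phi_j(2^K\cdot)$, $\eta_{j-K}=\rho_j(2^K\cdot)$, $\widehat g=\widehat f(2^K\cdot)$ carries the supports into $\{2^{j-1}\le|\xi|\le 2^{j+1}\}$ for $j\ge 0$, and \cite[Theorem 4.1]{XXY17} applies directly; letting $K\to-\infty$ gives the full sum by approximation. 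This is considerably shorter than your route and simply reuses the CZ work already done in \cite{XXY17}; your approach, once the H\"ormander gap is fixed as above, amounts to re-deriving that work in the homogeneous setting.
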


\begin{proof}
Without loss of generality, we may take $\alpha=0$. It suffices to show that for any integer $K$, 
\begin{equation}\label{ineq-Fourier-Multi}
\big\| (\sum_{j\geq K} |  \check \phi_j*\check \rho_j*f |^2)^\frac{1}{2}\big\| _{p}\lesssim \underset{\substack{j\in \mathbb{Z}\\ -2\leq k \leq 2}}{\sup}\|   {\phi}_j (2^{j+k}\cdot)\varphi  \| _{H_2^\sigma} \big\| (\sum_{j\geq K}|  \check \rho_j*f |^2)^\frac{1}{2} \big\| _{p},
\end{equation}
with the relevant constant independent of $K\in \mathbb{Z}$. To this end, we set
$$\psi_{j-K}  = \phi_j(2^K \cdot) ,\;\;\;\eta_{j-K} = \rho_j(2^K \cdot) ,\;\;\mbox{and}\;\; \widehat{g} =\widehat{f}(2^K\cdot).$$
By easy computation, we have
$$
\supp  {\psi_j \eta_j} \subset  \{\xi : 2^{j-1}\leq |  \xi |  \leq 2^{j+1}\},  \;\;\forall\, j\geq0,
$$
and
$$ \check \phi_j*\check \rho_j*f=  2^{dK} \check \psi_{j-K}*\check \rho_{j-K}*g(2^K \cdot). $$
This ensures
\begin{equation}\label{eq:phi-rho-f}
\big\| (\sum_{j\geq K} |  \check \phi_j*\check \rho_j*f |^2)^\frac{1}{2} \big\| _{p}= 2^{\frac{(p-1)dK}{p}} \big\| (\sum_{j\geq 0} |  \check \psi_j*\check \eta_j*g |^2)^\frac{1}{2}\big\| _{p}.
\end{equation}
Similarly,
\begin{equation}\label{eq:rho-f}
\big\| (\sum_{j\geq K} |  \check \rho_j*f |^2)^\frac{1}{2}\big\| _{p}= 2^{\frac{(p-1)dK}{p}} \big\| (\sum_{j\geq 0} |   \check \eta_j*g|^2)^\frac{1}{2} \big\| _{p}.
\end{equation}
Moreover, since ${\psi}_j (2^{j+k}\cdot) = {\phi}_{j+K} (2^{j+k+K}\cdot)$, we have
\begin{equation}\label{ineq-psi-phi}
\begin{split}
\underset{\substack{j\geq0 \\ -2\leq k \leq 2}}{\sup} \|   {\psi}_j (2^{j+k}\cdot)\varphi  \| _{H_2^\sigma} & = \underset{\substack{j\geq K\\ -2\leq k \leq 2}}{\sup} \|   {\phi}_j (2^{j+k}\cdot)\varphi  \| _{H_2^\sigma }\\
&\leq  \underset{\substack{j\in \mathbb{Z}\\ -2\leq k \leq 2}}{\sup} \|   {\phi}_j (2^{j+k}\cdot)\varphi  \| _{H_2^\sigma} .
\end{split}
\end{equation}
Now applying \cite[Theorem~4.1]{XXY17} to $\psi_j$, $\rho_j$ and $g$ defined above, we obtain
$$\big\| (\sum_{j\geq 0} |  \check \psi_j*\check \eta_j*g|^2)^\frac{1}{2}\| _{p} \lesssim \underset{\substack{j\geq0 \\ -2\leq k \leq 2}}{\sup}\|   {\psi}_j (2^{j+k}\cdot)\varphi  \| _{H_2^\sigma(} \big\| (\sum_{j\geq 0} |  \check \eta_j*g|^2)^\frac{1}{2}\| _{p}.$$
Putting \eqref{eq:phi-rho-f}, \eqref{eq:rho-f} and \eqref{ineq-psi-phi} into this inequality, we then get \eqref{ineq-Fourier-Multi}, which yields Theorem \ref{multiplier-homogeneous} by approximation.
\end{proof}

Theorem \ref{multiplier-homogeneous} is developed to deal with the multiplier problem of square functions, and also the multiplier problem of the Hardy spaces $\mathcal{H}_p^c(\mathbb{R}^d,\M)$ by virtue of their characterizations (see \cite{XXX17}). In order to deal with the corresponding problems on the inhomogeneous versions of square functions or Hardy spaces, we need the following global version of Theorem \ref{multiplier-homogeneous}. The main difference is that in the inhomogeneous case, we need a careful analysis of the convolution kernel near the origin.

\begin{thm}\label{multiplier-global}
Let $1<p<\infty, \alpha \in \mathbb{R}$ and $\sigma >\frac{d}{2}$. Assume that $(\phi_j)_{j\geq 0}$ and $(\rho_j)_{j\geq 0}$ are two sequences of functions on $\mathbb{R}^d$ such that
\begin{gather*}
\supp (  \phi_j    \rho_j) \subset  \{\xi \in\R: 2^{j-1}\leq |  \xi |  \leq 2^{j+1}\},\;\;  j\in \mathbb{N},\\
\supp (  \phi_0   \rho_0) \subset  \{\xi \in \R: |  \xi |  \leq 2\},
\end{gather*}
and
\begin{equation}\label{eq:j condition thm}
\underset{\substack{j\geq 1\\ -2\leq k \leq 2}}{\sup} \|    {\phi} _j (2^{j+k}\cdot)\varphi\| _{H_2^\sigma (\R)}<\infty\quad \;\mbox{and }\;\;  \|    \phi_0 ({\varphi}^{(0)}+{\varphi}^{(1)})\| _{H_2^\sigma(\R)}<\infty.
\end{equation}
Then for any $L_1(\M)+\M$-valued distribution $f$,
\begin{equation*}
\begin{split}
 \big\| (\sum_{j\geq 0}2^{2j\alpha} |  \check \phi_j* \check \rho_j*f |  ^2)^\frac{1}{2}\big\| _{p}  & \lesssim  \max \big\{ \underset{\substack{j\geq 1 \\ -2\leq k \leq 2}}{\sup} \|    {\phi} _j (2^{j+k}\cdot)\varphi\| _{H_2^\sigma}, \|    \phi_0 ({\varphi}^{(0)}+{\varphi}^{(1)})\| _{H_2^\sigma}  \big\}\\
 &\; \;\;\;\cdot \big\| (\sum_{j\geq 0}2^{2j\alpha} |  \check \rho_j*f |  ^2)^\frac{1}{2}\big\| _{p},
\end{split}
\end{equation*}
where the constant depends only on $p$, $\sigma$, $d$ and $\varphi$.

\end{thm}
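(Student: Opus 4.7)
The plan is to reduce to the already-proved homogeneous multiplier theorem (Theorem~\ref{multiplier-homogeneous}) for the tail $j\ge 1$, and to treat the low-frequency term $j=0$ separately by a Young-type argument, then combine by the triangle inequality in $L_p(\mathcal N;\ell_2^c)$.

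\smallskip

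First I would split
$$
\Big\| \Big(\sum_{j\ge 0}2^{2j\alpha}|\check\phi_j*\check\rho_j*f|^2\Big)^{1/2}\Big\|_p
\le \|\check\phi_0*\check\rho_0*f\|_p
+\Big\| \Big(\sum_{j\ge 1}2^{2j\alpha}|\check\phi_j*\check\rho_j*f|^2\Big)^{1/2}\Big\|_p,
$$
using the triangle inequality in the column space $L_p(\mathcal N;\ell_2^c)$ (the pair $(A,B)$ with $A$ the $j\ge 1$ square function and $B$ the $j=0$ term). It then remains to bound each summand by the RHS of the statement.

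\smallskip

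For the tail $j\ge 1$, I extend the sequences by zero: set $\phi_j=\rho_j=0$ for all $j\le 0$. The resulting two-sided families $(\phi_j)_{j\in\mathbb Z}$, $(\rho_j)_{j\in\mathbb Z}$ obviously satisfy the support condition of Theorem~\ref{multiplier-homogeneous}, and
$$
\underset{\substack{j\in\mathbb Z\\ -2\le k\le 2}}{\sup}\|\phi_j(2^{j+k}\cdot)\varphi\|_{H_2^\sigma}
\;=\;\underset{\substack{j\ge 1\\ -2\le k\le 2}}{\sup}\|\phi_j(2^{j+k}\cdot)\varphi\|_{H_2^\sigma},
$$
which is finite by \eqref{eq:j condition thm}. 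Applying Theorem~\ref{multiplier-homogeneous} and then dropping the zero terms yields
$$
\Big\| \Big(\sum_{j\ge 1}2^{2j\alpha}|\check\phi_j*\check\rho_j*f|^2\Big)^{1/2}\Big\|_p
\lesssim \underset{\substack{j\ge 1\\-2\le k\le 2}}{\sup}\|\phi_j(2^{j+k}\cdot)\varphi\|_{H_2^\sigma}\cdot
\Big\| \Big(\sum_{j\ge 0}2^{2j\alpha}|\check\rho_j*f|^2\Big)^{1/2}\Big\|_p.
$$

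\smallskip

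For the low-frequency term I would exploit the elementary identity $\varphi^{(0)}(\xi)+\varphi^{(1)}(\xi)=1-\sum_{k\ge 2}\varphi(2^{-k}\xi)=1$ on $\{|\xi|<2\}$, which follows directly from the support condition $\supp\varphi\subset\{1/2\le|\xi|\le 2\}$. Since $\supp(\phi_0\rho_0)\subset\{|\xi|\le 2\}$, this gives, almost everywhere,
$$
\phi_0\rho_0 \;=\; \bigl(\phi_0(\varphi^{(0)}+\varphi^{(1)})\bigr)\,\rho_0,
$$
hence $\check\phi_0*\check\rho_0*f=\check\Theta*(\check\rho_0*f)$ with $\Theta:=\phi_0(\varphi^{(0)}+\varphi^{(1)})$. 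The calculation preceding \eqref{Young-Banach} yields $\|\check\Theta\|_{L_1(\R)}\lesssim\|\Theta\|_{H_2^\sigma}$ whenever $\sigma>d/2$, so the vector-valued Young inequality \eqref{Young-Banach} (applied with $X=L_1(\mathcal M)+\mathcal M$) gives
$$
\|\check\phi_0*\check\rho_0*f\|_p
\;\lesssim\;\|\phi_0(\varphi^{(0)}+\varphi^{(1)})\|_{H_2^\sigma}\,\|\check\rho_0*f\|_p
\;\le\;\|\phi_0(\varphi^{(0)}+\varphi^{(1)})\|_{H_2^\sigma}\,\Big\|\Big(\sum_{j\ge 0}2^{2j\alpha}|\check\rho_j*f|^2\Big)^{1/2}\Big\|_p.
$$
Combining the two bounds and taking the max of the two multiplier constants gives the stated inequality.

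\smallskip

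The only nontrivial point is the low-frequency reduction: one must check that the support structure of the Littlewood--Paley partition allows the symbol $\phi_0$ to be replaced by $\phi_0(\varphi^{(0)}+\varphi^{(1)})$ without loss, and that the resulting symbol lies in $H_2^\sigma$ (which is precisely the second half of hypothesis \eqref{eq:j condition thm}). Once this manipulation is in place, everything else is a direct appeal to Theorem~\ref{multiplier-homogeneous}, Young's inequality \eqref{Young-Banach} and the column-space triangle inequality.
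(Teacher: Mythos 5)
Your proposal is correct and follows essentially the same route as the paper: split off the $j=0$ term, apply Theorem~\ref{multiplier-homogeneous} to the extended-by-zero sequences to handle $j\ge 1$, and treat the low-frequency piece by rewriting $\phi_0\rho_0=\phi_0(\varphi^{(0)}+\varphi^{(1)})\rho_0$ (valid since $\varphi^{(0)}+\varphi^{(1)}\equiv 1$ on $\{|\xi|\le 2\}$) and invoking the Young-type inequality \eqref{Young-Banach}. The only cosmetic difference is that the paper writes the decomposition as a two-sided equivalence $\approx$ while you only use the one-sided triangle inequality, which is all that is needed.
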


\begin{proof}
This theorem follows easily from its homogeneous version, i.e., Theorem \ref{multiplier-homogeneous}. Indeed, we can divide $\big\| (\sum_{j\geq 0}2^{2j\alpha} |  \check \phi_j* \check \rho_j*f |  ^2)^\frac{1}{2}\big\|_{p} $ into two parts
$$
\big\| (\sum_{j\geq 0}2^{2j\alpha} |  \check \phi_j* \check \rho_j*f |  ^2)^\frac{1}{2}\big\| _{p}  \approx \big\| (\sum_{j\geq 1}2^{2j\alpha} |  \check \phi_j* \check \rho_j*f |  ^2)^\frac{1}{2} \big\| _{p}  +\|   \check \phi_0* \check \rho_0*f  \| _{p} 
$$
and treat them separately. Applying Theorem  \ref{multiplier-homogeneous} to the sequences $(\phi_j)_{j\in \mathbb{Z}}$, $(\rho_j)_{j\in \mathbb{Z}}$ with $\phi_j=0$ and $\rho_j=0$ for $j\leq 0$, we get the estimate of the first term on the right hand side. The result is
$$\big\| (\sum_{j\geq 1}2^{2j\alpha} |  \check \phi_j* \check \rho_j*f |  ^2)^\frac{1}{2}\big\| _{p} \lesssim\underset{\substack{j\geq1\\ -2\leq k \leq 2}}{\sup}\|   {\phi}_j (2^{j+k}\cdot)\varphi  \| _{H_2^\sigma}\big\| (\sum_{j\geq 1}|  \check \rho_j*f |  ^2)^\frac{1}{2}\big\| _{p}.$$
The second term $\|   \check \phi_0* \check \rho_0*f  \| _{p}$ is also easy to handle. By the support assumption on $\phi_0 \rho_0$, we have 
$$    \check \phi_0* \check \rho_0*f =\mathcal{F}^{-1} \big(\phi_0 (\varphi^{(0)} +\varphi^{(1)})\big) * \check{\rho}_0 *f.$$
Hence,
\begin{equation*}
\|  \check \phi_0* \check \rho_0*f  \| _{p} \leq \| \mathcal{F}^{-1} \big(\phi_0 (\varphi^{(0)} +\varphi^{(1)})\big)\|_1 \|  \check \rho_0*f  \| _{p} \lesssim \|  \phi_0 (\varphi^{(0)} +\varphi^{(1)})\|_{H_2^\sigma}  \|  \check \rho_0*f  \| _{p}.
\end{equation*}
The assertion is proved.
\end{proof}

\subsection{Hilbert-valued multipliers}

In fact, both theorems above deal with Fourier multipliers acting on Hilbert-valued noncommutative $L_p$ spaces  (the Hilbert space being $\ell_2$). In this subsection titled ``Hilbert-valued multipliers", our target is to extend Theorem \ref{multiplier-global} to the general case where $\ell_2 $ is replaced with more complicated Hilbert spaces. Assume that we have a sequence of Hilbert spaces $H_j$ for every $j\in \mathbb{N}_0$, and denote $H = \oplus_{j=0}^\infty H_j$. Then an element $f\in L_p(\N; H^c)$ has the form $f =(f_j)_{j\geq 0} $ with $f_j \in L_p (\N; H_j^c)$ for every $j$. In this case, it still makes sense to consider the action of the Calder\'on-Zygmund operator $\fk  =(\check{\phi} _j)_{j\geq 0}$.

Since it will be frequently used in the following, we  introduce an  elementary inequality (see \cite[Lemma 4.2]{XXY17}):
\begin{equation}\label{ineq-potential-Sobolev}
\|  fg\| _{H_2^\sigma (\mathbb{R}^{d};\ell_2)}\leq \|  f\| _{H_2^\sigma (\mathbb{R}^{d};\ell_2)} \int _{\mathbb{R}^{d}}(1+|  s | ^2)^{\sigma}| \mathcal{F}^{-1}(g)(s)|  ds,
\end{equation}
where $\sigma>\frac d 2 $, and the functions $f:\mathbb{R}^{d}\rightarrow \ell_2$ and $g:\mathbb{R}^{d}\rightarrow\mathbb{C}$ satisfy
\[
f\in H_2^\sigma (\mathbb{R}^{d};\ell_2) \mbox{ and } \int _{\mathbb{R}^{d}}(1+|  s | ^2)^{\sigma}| \mathcal{F}^{-1}(g)(s)|  ds<\infty.
\]
Here $H_2^\sigma(\R; \ell_2) $ is the $\ell_2$-valued Potential Sobolev space of order $\sigma$.
Note also that $\ell_2$ could be an $\ell_2$-space on an arbitrary index set, depending on the problems in consideration.

The following lemma is an analogue of Lemma 4.3 in \cite{XXY17}. The main difference is that in order to get a Calder\'on-Zygmund operator which is bounded on local Hardy or $\bmo$ spaces, we need to consider the Littlewood-Paley decomposition covering the origin.

\begin{lem}\label{lem:C-Z l_2}
Let $\phi=(\phi_{j})_{j\geq0}$ be a sequence of continuous functions
on $\mathbb{R}^{d}$, viewed as a function from $\mathbb{R}^{d}$
to $\ell_2$. For $\sigma>\frac{d}{2}$, we assume that
\begin{equation}\label{eq:phi}
\|  \phi \| _{2,\sigma}\stackrel{\mathrm{def}}{=}\max \big\{ \sup_{k\geq 1}  \|  \phi(2^{k}\cdot)\varphi   \|  _{H_{2}^{\sigma}(\mathbb{R}^{d};\ell_2)},\;\|  \phi \varphi^{(0)} \|  _{H_{2}^{\sigma}(\mathbb{R}^{d};\ell_2)} \big\}<\infty.
\end{equation}
Let $\fk=(\fk_{j})_{j\geq0}$ with $\fk_{j}=\mathcal{F}^{-1}(\phi_{j})$.
Then $\fk$ is a Calder\'on-Zygmund kernel with values in $\ell_2$, more
precisely,
\begin{enumerate}[\rm(1)]

\item $\|  \widehat{\fk}\|  _{L_{\infty}(\mathbb{R}^{d};\ell_2)}\lesssim \|  \phi \| _{2,\sigma}$;

\item $\int_{  |s |\geq\frac{1}{2}}  \|  \fk(s) \|  _{\ell_2}ds\lesssim\|  \phi \| _{2,\sigma}$;

\item $\sup_{t\in\mathbb{R}^{d}}\int_{  |s |>2  |t |}  \|  \fk(s-t)-\fk(s) \|  _{\ell_2}ds\lesssim \|  \phi \| _{2,\sigma}$.

\end{enumerate}
The relevant constants depend only on $\varphi $, $\sigma$ and $d$.
\end{lem}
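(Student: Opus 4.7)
My plan is to reduce every estimate to a single model calculation on one dyadic frequency annulus and sum the resulting geometric series, the geometric decay coming from $\sigma>d/2$. Using the Littlewood--Paley partition of unity $1=\varphi^{(0)}(\xi)+\sum_{k\geq 1}\varphi(2^{-k}\xi)$, decompose $\fk=\fk^{(0)}+\sum_{k\geq 1}\fk^{(k)}$ where $\fk_j^{(k)}=\mathcal{F}^{-1}(\phi_j\varphi^{(k)})$. For $k\geq 1$, the dilation $\fk^{(k)}(s)=2^{dk}g^{(k)}(2^k s)$ with $g^{(k)}:=\mathcal{F}^{-1}(\phi(2^k\cdot)\varphi)$ (viewed as an $\ell_2$-valued function) turns the hypothesis \eqref{eq:phi} into the uniform bound $\|g^{(k)}\|_{H_2^\sigma(\R;\ell_2)}\lesssim\|\phi\|_{2,\sigma}$; the analogous bound for $\mathcal{F}^{-1}(\phi\varphi^{(0)})$ handles the low-frequency term.

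For (1), I would apply the Sobolev embedding $H_2^\sigma(\R;\ell_2)\hookrightarrow L_\infty(\R;\ell_2)$ (valid because $\sigma>d/2$) to $\phi(2^k\cdot)\varphi$ and to $\phi\varphi^{(0)}$ separately: every $\xi\in\R$ lies either where $\varphi^{(0)}=1$ (namely $|\xi|<1$) or admits a $k\geq 1$ for which $\varphi(2^{-k}\xi)$ is bounded below by a positive constant, so one of these two bounds produces $\|\phi(\xi)\|_{\ell_2}\lesssim\|\phi\|_{2,\sigma}$ uniformly. For (2), a change of variable followed by Cauchy--Schwarz with the weight $(1+|u|^2)^\sigma$ and Plancherel gives, for $k\geq 1$,
\begin{equation*}
\int_{|s|\geq 1/2}\|\fk^{(k)}(s)\|_{\ell_2}\,ds=\int_{|u|\geq 2^{k-1}}\|g^{(k)}(u)\|_{\ell_2}\,du\lesssim\Big(\int_{|u|\geq 2^{k-1}}(1+|u|^2)^{-\sigma}du\Big)^{1/2}\|g^{(k)}\|_{H_2^\sigma(\R;\ell_2)}\lesssim 2^{-k(\sigma-d/2)}\|\phi\|_{2,\sigma},
\end{equation*}
and the sum over $k\geq 1$ converges as a geometric series; the $k=0$ contribution is handled in the same way without the annular restriction.

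Part (3) is where the main technical bookkeeping lives. After the same rescaling,
\begin{equation*}
\int_{|s|>2|t|}\|\fk^{(k)}(s-t)-\fk^{(k)}(s)\|_{\ell_2}\,ds=\int_{|u|>2^{k+1}|t|}\|g^{(k)}(u-2^kt)-g^{(k)}(u)\|_{\ell_2}\,du.
\end{equation*}
Cauchy--Schwarz with weight $(1+|u|^2)^\sigma$, Plancherel, and the pointwise bound $|e^{-2\pi i\xi\cdot 2^kt}-1|\leq C\min(2^k|t|,1)$ on $\supp\varphi$ give each summand the estimate
\begin{equation*}
\int_{|s|>2|t|}\|\fk^{(k)}(s-t)-\fk^{(k)}(s)\|_{\ell_2}\,ds\lesssim\min(2^k|t|,1)\cdot(1+2^k|t|)^{(d-2\sigma)/2}\|\phi\|_{2,\sigma}.
\end{equation*}
Splitting the sum over $k$ at the threshold $2^k\approx 1/|t|$ produces two geometric series, both uniformly summable in $t$: for $k$ below the threshold the bound is $\lesssim 2^k|t|\cdot\|\phi\|_{2,\sigma}$, summing backwards from the threshold with ratio $1/2$; for $k$ above, the factor $(2^k|t|)^{(d-2\sigma)/2}$ provides a geometric ratio $2^{(d-2\sigma)/2}<1$. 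The $k=0$ term is absorbed into the low-frequency factor by the same argument. The delicate point is precisely this interpolation between the ``small $|t|$'' and ``large $|t|$'' regimes on each annulus; once that min-bound on $|e^{-2\pi i\xi\cdot 2^k t}-1|$ is in place, the rest is a routine vector-valued Plancherel computation combined with the rescaling that converted \eqref{eq:phi} into a single model bound.
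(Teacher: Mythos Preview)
Your overall strategy---dyadic decomposition of $\fk$, rescaling each piece to a fixed annulus, Cauchy--Schwarz against the Sobolev weight, and summing a geometric series split at $2^k|t|\approx 1$---is exactly the paper's. Parts (1) and (2) are correct and, if anything, organised more directly than the paper (which threads through an auxiliary thickening $\phi_{(k)}=\phi(\varphi^{(k-1)}+\varphi^{(k)}+\varphi^{(k+1)})$ that is not strictly needed there).

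Part (3) has a genuine gap in the step you call ``Plancherel and the pointwise bound.'' After Cauchy--Schwarz with weight $(1+|u|^2)^\sigma$, the second factor is $\|(e^{-2\pi i\,2^kt\cdot\xi}-1)\phi(2^k\cdot)\varphi\|_{H_2^\sigma(\R;\ell_2)}$, and a sup-norm bound on the multiplier $(e^{-2\pi i\,2^kt\cdot\xi}-1)$ does \emph{not} control this: the weight $(1+|u|^2)^\sigma$ lives on the physical side, and $H_2^\sigma$ is not an $L_\infty$-module. In the regime $2^k|t|\geq 1$ even the crude triangle-inequality bound $\|g^{(k)}(\cdot-2^kt)\|_{H_2^\sigma}+\|g^{(k)}\|_{H_2^\sigma}$ fails, because the weight is not translation-invariant and the first term picks up a factor $(1+2^k|t|)^\sigma$; in the regime $2^k|t|<1$ there is no Plancherel identity that converts the weighted norm into something on which the pointwise bound acts. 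The paper treats the two regimes by different mechanisms: for $2^k|t|\geq 1$ it applies the triangle inequality \emph{before} Cauchy--Schwarz and uses the region restriction $|s|>2|t|$ on each term (after translating, one still has $|s|>|t|$), giving $(2^k|t|)^{d/2-\sigma}$; for $2^k|t|<1$ it writes $\phi\varphi^{(k)}=\phi_{(k)}\cdot\varphi^{(k)}$ so that on the physical side the difference lands on the fixed Schwartz factor $\mathcal F^{-1}(\varphi)$, and then the multiplicative inequality \eqref{ineq-potential-Sobolev} together with the mean value theorem on $\mathcal F^{-1}(\varphi)(\cdot)-\mathcal F^{-1}(\varphi)(\cdot-2^kt)$ yields the factor $2^k|t|$ while preserving the $H_2^\sigma$ control on $\phi_{(k)}(2^k\cdot)$. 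Once you replace ``Plancherel plus pointwise bound'' by this argument, your proof and the paper's coincide.
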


\begin{proof}
For any $\xi\in\mathbb{R}^{d}$ and $k\geq1$, by the Cauchy-Schwarz inequality, we have
\begin{equation*}
\begin{split}
  \|  \phi(2^{k}\xi)\varphi  (\xi) \|_{\ell_2}   & = \big \|  \int\mathcal{F}^{-1}(\phi(2^{k}\cdot)\varphi  )(s)e^{-2\pi \rm i s\cdot\xi}ds \big \| _{\ell_2}  \\
&   \leq   \|  \phi(2^{k}\cdot)\varphi \| _{H_{2}^{\sigma}(\mathbb{R}^{d};\ell_{2})}  (\int  (1+  |s |^{2} )^{-\sigma}ds )^{\frac{1}{2}}\lesssim  \|  \phi \|  _{2,\sigma}.
\end{split}
\end{equation*}
In other words,  we have $  \|  \phi\varphi  (2^{-k}\cdot) \| _{L_\infty(\R;\ell_2)}\lesssim \|  \phi \|  _{2,\sigma}$. Likewise, $  \|  \phi\varphi ^{(0)}  \| _{L_\infty(\R;\ell_2)}\lesssim \|  \phi \|  _{2,\sigma}$ also holds. Thus, by \eqref{eq: supp varphi_j} and \eqref{eq:resolution of unity},  we easily deduce
that $\|  \widehat{\fk}\|  _{L_{\infty}(\mathbb{R}^{d};\ell_{2})}\lesssim  \|  \phi \|  _{2,\sigma}$.

To show the third property of $\fk$, we decompose $\phi$ into
\[
\phi=\sum_{k\geq0}\phi\varphi^{(k)}.
\]
 The convergence of the above series can be proved by a limit procedure of its partial sums, which is quite formal.
By \eqref{eq: supp varphi_j} and \eqref{eq:resolution of unity}, we write
\[
\phi\varphi^{(k)}=\phi(\varphi^{(k-1)}+\varphi^{(k)}+\varphi^{(k+1)})\varphi^{(k)}  \stackrel{{\rm def}}{=}\phi_{(k)}\varphi^{(k)},\quad k\geq0.
\]
Here we make the convention that $\varphi^{(k)}=0$ if $k<0$.
Then for $s\in\mathbb{R}^{d}$,
\begin{gather*}
\mathcal{F}^{-1}(\phi  \varphi^{(k)})(s)  =  \mathcal{F}^{-1}(\phi_{(k)})*\mathcal{F}^{-1}(\varphi^{(k)})(s)=2^{kd}\mathcal{F}^{-1}(\phi_{(k)}(2^{k}\cdot))*\mathcal{F}^{-1}(\varphi  )(2^{k}s),\quad k\geq 0.
\end{gather*}
By \eqref{ineq-potential-Sobolev}, we have
\begin{equation*}
  (\int_{\mathbb{R}^{d}}  (1+  |2^{k}s |^{2} )^{\sigma}  \|  \mathcal{F}^{-1}(\phi\varphi^{(k)} )(s) \|_{\ell_{2} }  ^{2}ds )^{\frac{1}{2}} \lesssim  2^{\frac{kd}{2}}   \|  \phi_{(k)}(2^k\cdot) \|  _{H_{2}^{\sigma}(\mathbb{R}^{d};\ell_{2})}.
\end{equation*}
Notice that if $k\geq 1$, we have $\varphi^{(k)}(2^k\cdot)=\varphi$. Thus, if $k\geq 2$,
\begin{equation*}
\begin{split}
\|  \phi_{(k)}(2^k\cdot) \|  _{H_{2}^{\sigma}(\mathbb{R}^{d};\ell_{2})}& \leq    \sum_{j=-1}^{1}  \|  \phi(2^k\cdot)\varphi^{(k-j)}(2^k\cdot) \|  _{H_{2}^{\sigma}(\mathbb{R}^{d};\ell_{2})}\\
& \lesssim   \sum_{j=-1}^{1} \|  \phi(2^{k-j}\cdot)\varphi^{(k-j)}(2^{k-j}\cdot) \|  _{H_{2}^{\sigma}(\mathbb{R}^{d};\ell_{2})}\\
& =  \sum_{j=-1}^{1} \|  \phi(2^{k-j}\cdot)\varphi\|  _{H_{2}^{\sigma}(\mathbb{R}^{d};\ell_{2})} \leq  3 \|\phi\|_{2,\sigma}.
\end{split}
\end{equation*}
For $k=0,1$, we treat $\phi_{(k)}(2^k\cdot) $ in the same way:
\begin{equation*}
\begin{split}
  \|  \phi_{(1)}(2\cdot) \|  _{H_{2}^{\sigma}(\mathbb{R}^{d};\ell_{2})} &\lesssim   \|\phi\varphi^{(0)}\|_{H_{2}^{\sigma}(\mathbb{R}^{d};\ell_{2})}+  \|  \phi(2\cdot)\varphi \|  _{H_{2}^{\sigma}(\mathbb{R}^{d};\ell_{2})}+  \|  \phi(4\cdot)\varphi \|  _{H_{2}^{\sigma}(\mathbb{R}^{d};\ell_{2})}\leq  3 \|\phi\|_{2,\sigma};\\
  \|  \phi_{(0)}\|  _{H_{2}^{\sigma}(\mathbb{R}^{d};\ell_{2})} & \lesssim  \|\phi\varphi^{(0)}\|_{H_{2}^{\sigma}(\mathbb{R}^{d};\ell_{2})}+ \|\phi(2\cdot)\varphi \| _{H_{2}^{\sigma}(\mathbb{R}^{d};\ell_{2})}\leq  3 \|\phi\|_{2,\sigma}.
\end{split}
\end{equation*}
In summary, we obtain
\begin{equation*}
  (\int_{\mathbb{R}^{d}}  (1+  |2^{k}s |^{2} )^{\sigma}  \|  \mathcal{F}^{-1}(\phi\varphi^{(k)} )(s) \|_{\ell_{2} } ^{2}ds )^{\frac{1}{2}}  \lesssim 2^{\frac{kd}{2}}  \|\phi\|_{2,\sigma}.
\end{equation*}
Thus, by the Cauchy-Schwarz inequality, for any $t\in\mathbb{R}^{d}\setminus\{0\}$ and $k\geq0$, we
have
\begin{equation}\label{eq:estimates 1}
\begin{split}
\int_{  |s |>  |t |}  \|  \mathcal{F}^{-1}(\phi\varphi^{(k)})(s) \|_{\ell_{2} }   ds &\lesssim  2^{\frac{kd}{2}}  \|  \phi \|  _{2,\sigma}  (\int_{  |s |>  |t |}(1+  |2^{k}s |^{2})^{-\sigma}ds )^{\frac{1}{2}}\\
 &\lesssim  (2^{k}  |t |)^{\frac{d}{2}-\sigma}  \|  \phi \|  _{2,\sigma}.
\end{split}
\end{equation}
Consequently,
\[
\int_{  |s |>2  |t |}  \|  \mathcal{F}^{-1}(\phi\varphi^{(k)})(s)-\mathcal{F}^{-1}(\phi\varphi^{(k)})(s-t) \|_{\ell_{2} }   ds\lesssim(2^{k}  |t |)^{\frac{d}{2}-\sigma}  \|  \phi \|  _{2,\sigma}.
\]
We notice that $\frac{d}{2}-\sigma<0$, so the estimate above is good
only when $2^{k}  |t |\geq1$. Otherwise, we need another
estimate 
\begin{equation*}
\begin{split}
& \mathcal{F}^{-1}(\phi\varphi^{(k)})(s)-\mathcal{F}^{-1}(\phi\varphi^{(k)})(s-t) \\
 &= \mathcal{F}^{-1}(\phi_{(k)}\varphi^{(k)}(1-e_{t}))(s)\\
  & = 2^{kd}\mathcal{F}^{-1}(\phi_{(k)}(2^{k}\cdot))*  [\mathcal{F}^{-1}(\varphi  )-\mathcal{F}^{-1}(\varphi  )(\cdot-2^{k}t) ](2^{k}s),
\end{split}
\end{equation*}
where $e_{t}(\xi)=e^{2\pi {\rm i}\xi\cdot t}$.
Thus,
\begin{equation*}
\begin{split}
 &     (\int_{\mathbb{R}^{d}}  (1+  |2^{k}s |^{2} )^{\sigma}  \|  \mathcal{F}^{-1}(\phi\varphi^{(k)})(s)-\mathcal{F}^{-1}(\phi\varphi^{(k)})(s-t) \|_{\ell_{2} } ^{2}ds )^{\frac{1}{2}}\\
 & \lesssim  2^{\frac{kd}{2}}  \|  \phi \|  _{2,\sigma}2^{k}  |t |\int  (1+  |s |^{2} )^{\sigma}  |\mathcal{F}^{-1}(\varphi  )(s-\theta 2^{k}t) |ds\\
 & \lesssim  2^{\frac{kd}{2}}  \|  \phi \|  _{2,\sigma}2^{k}  |t |  (\int  |J^{\sigma}  [\varphi  (s)e^{2\pi {\rm i} s\cdot \theta 2^{k}t} ] |^{2}ds )^{\frac{1}{2}}\\
 & \lesssim  2^{\frac{kd}{2}}  \|  \phi \|  _{2,\sigma}2^{k}  |t |,
\end{split}
\end{equation*}
where $\theta \in [0,1]$. Then as before, for $2^{k}  |t |<1$,
we have
\[
\int_{  |s |>2  |t |}  \|  \mathcal{F}^{-1}(\phi\varphi^{(k)})(s)-\mathcal{F}^{-1}(\phi\varphi^{(k)})(s-t) \|_{\ell_2}  ds\lesssim 2^{k}  |t |  \|  \phi \|  _{2,\sigma}.
\]
Combining the previous estimates, we obtain
\begin{equation*}
\begin{split}
& \sup_{t\in\mathbb{R}^{d}}\int_{  |s |>2  |t |}  \|  \fk(s-t)-\fk(s) \|_{\ell _2}   ds \\
& \leq \sup_{t\in\mathbb{R}^{d}}\sum_{k\geq0}\int_{  |s |>2  |t |}  \|  \mathcal{F}^{-1}(\phi\varphi^{(k)})(s)-\mathcal{F}^{-1}(\phi\varphi^{(k)})(s-t) \| _{\ell_2} ds\\
 & \lesssim   \|  \phi \|  _{2,\sigma} \sup_{t\in\mathbb{R}^{d}}\sum_{k\geq0}\min  (2^{k}  |t |,(2^{k}  |t |)^{\frac{d}{2}-\sigma} )\lesssim  \|  \phi \|  _{2,\sigma}.
\end{split}
\end{equation*}

Finally, the second estimate of $\fk$ can  be deduced from \eqref{eq:estimates 1} by letting $|  t | =\frac{1}{2}$:
\begin{equation*}
\begin{split}
\int_{  |s |\geq\frac{1}{2}}  \|  \fk(s) \|_{\ell_2}  ds  & \leq  \sum_{k\geq0}\int_{  |s |\geq\frac{1}{2}}  \|  \mathcal{F}^{-1}(\phi\varphi^{(k)})(s) \|_{\ell_2}  ds\\
&  \leq   \sum_{k\geq0}(2^{k-1})^{\frac{d}{2}-\sigma}  \|  \phi \|  _{2,\sigma}\lesssim  \|  \phi \|  _{2,\sigma}.
\end{split}
\end{equation*}
The proof is complete.
\end{proof}

We keep the notation $H = \oplus_{j=0}^\infty H_j$. By the above lemma, we can apply the (local) Calder\'on-Zygmund theory introduced in section \ref{section-oHardy}, to deduce the following lemma:

\begin{lem}\label{lem:p to p}
Let $1<p<\infty$ and  $\phi=(\phi_j)_{j\geq 0}$ be a sequence of continuous functions on $\mathbb{R}^d$ satisfying \eqref{eq:phi}.  For  any $f=(f_j)_{j\geq 0}\in L_p(\N; H^c)$, we have
\[
\| (\check{\phi}_j * f_j)_{j\geq 0}\| _{L_p(\N; H^c)}\lesssim \|  \phi \|  _{2,\sigma} \| ( f_j)_{j\geq 0}\| _{L_p(\N; H^c)},
\]
where the relevant constant depends only on $\varphi$, $\sigma$, $p$ and $d$.
\end{lem}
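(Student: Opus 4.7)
The aim is to view the diagonal convolution operator $T:(f_j)_{j\ge 0}\mapsto(\check\phi_j*f_j)_{j\ge 0}$ as an $H$-valued (column) Calder\'on--Zygmund operator and to apply the local noncommutative theory from Section \ref{section-oHardy}. The kernel in question is $\fk=(\fk_j)_{j\ge 0}$, regarded as an $H$-valued function on $\R$, with $H=\oplus_{j\ge 0}H_j$ (more precisely, one embeds each $f_j\in L_p(\N;H_j^c)$ into the first column of $L_p(B(H)\bar\otimes\N)$ and extends $T$ to the corresponding $B(H)$-valued diagonal convolution so that the relevant operator norm on the column structure is controlled by $\|\fk(\cdot)\|_H$).

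First I would establish the $L_2$ estimate. For $f\in L_2(\N;H^c)$, an application of the operator-valued Plancherel formula \eqref{eq: Planchel} (slotwise in $j$) together with part (1) of Lemma \ref{lem:C-Z l_2} gives
\[
\|Tf\|_{L_2(\N;H^c)}^2=\int_{\R}\sum_{j\ge0}|\widehat\phi_j(\xi)|^{2}|\widehat f_j(\xi)|^{2}\,d\xi\le \|\widehat\fk\|_{L_\infty(\R;H)}^{2}\,\|f\|_{L_2(\N;H^c)}^{2}\lesssim \|\phi\|_{2,\sigma}^{2}\,\|f\|_{L_2(\N;H^c)}^{2}.
\]

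Next I would obtain the endpoint bounds. By parts (2) and (3) of Lemma \ref{lem:C-Z l_2}, the $H$-valued kernel $\fk$ satisfies an $\h_1^c$-type size estimate at infinity and the H\"ormander regularity, both with constant $\|\phi\|_{2,\sigma}$. These are precisely the hypotheses of Lemma \ref{C-Z lem}, read in the column/Hilbert-valued setting: the absolute value $|K(s)|$ (resp. $|K(s-t)-K(s)|$) is replaced by the Hilbert-space norm $\|\fk(s)\|_H$ (resp. $\|\fk(s-t)-\fk(s)\|_H$), and the column operator-valued Hardy and bmo spaces $\h_1^c(\R,\M;H)$ and $\bmo^c(\R,\M;H)$ are used throughout. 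Since the proof of Lemma \ref{C-Z lem} in \cite{XX18} proceeds via atomic decomposition and the duality $(\h_1^c)^*=\bmo^c$ of Theorem \ref{dual-hardy}, it transfers verbatim to this $H$-valued situation. Hence $T$ is bounded on $\h_1^c(\R,\M;H)$ and from $\bmo_0^c(\R,\M;H)$ to $\bmo^c(\R,\M;H)$, with constants proportional to $\|\phi\|_{2,\sigma}$.

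Finally I would interpolate. Using the $H$-valued analogue of Theorem \ref{interpolation-hardy}, the bounds $\h_1^c\to\h_1^c$ and $\bmo_0^c\to\bmo^c$ interpolate to the desired $L_p(\N;H^c)\to L_p(\N;H^c)$ bound for $1<p<2$; the range $2<p<\infty$ follows either by duality or by the second interpolation identity in Theorem \ref{interpolation-hardy} applied between $L_\infty\to\bmo$ and $L_2\to L_2$. The constant of each interpolated bound inherits a factor of $\|\phi\|_{2,\sigma}$, giving the statement. The main obstacle is not computational but conceptual: one needs to ensure that the argument underlying Lemma \ref{C-Z lem} really does go through verbatim when the scalar (or $\M$-valued) kernel is replaced by an $H$-valued one and when the ambient column structure is $L_p(\N;H^c)$ rather than $L_p(\N)$. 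This amounts to checking that atoms, the $\bmo^c$-pairing and the standard truncation/decomposition steps all respect the column Hilbert-module structure, which they do since the Hilbert direction is completely commutative and only plays the role of a bounded coefficient space.
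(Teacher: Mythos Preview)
Your outline has two gaps, one minor and one substantive.

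\textbf{Minor.} You write that the estimates of Lemma~\ref{lem:C-Z l_2} ``are precisely the hypotheses of Lemma~\ref{C-Z lem}''. They are not: Lemma~\ref{C-Z lem} asks for a \emph{pointwise} decay $\|K(s)\|\le C|s|^{-d-\rho}$ for $|s|\ge 1$ and a pointwise Lipschitz bound, whereas Lemma~\ref{lem:C-Z l_2} only delivers the integrated size condition $\int_{|s|\ge 1/2}\|\fk(s)\|_{\ell_2}\,ds<\infty$ and the H\"ormander condition. So you cannot invoke Lemma~\ref{C-Z lem} as a black box; you would have to redo its proof with these weaker (integrated) hypotheses. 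This is fixable, but not what you wrote.

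\textbf{Substantive.} Your interpolation step does not land in the right space. From the column endpoints $\h_1^c\to\h_1^c$ and ${\bmo}_0^c\to{\bmo}^c$, Theorem~\ref{interpolation-hardy}\,(1) yields boundedness on $\h_p^c(\R,B(H)\overline\otimes\M)$, \emph{not} on $L_p(\N;H^c)$; for $p\neq 2$ these spaces are genuinely different. Likewise, your proposed use of Theorem~\ref{interpolation-hardy}\,(2) for $2<p<\infty$ requires the \emph{mixture} endpoint $L_\infty\to{\bmo}$, i.e.\ both the column and the row ${\bmo}$ bound, which you have not established.

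This is exactly the point where the paper's argument diverges from yours. The paper does not quote Lemma~\ref{C-Z lem}; it proves directly that $\fk$ maps $L_\infty(\N;H^c)$ into both ${\bmo}^c(\R,B(H)\overline\otimes\M)$ and ${\bmo}^r(\R,B(H)\overline\otimes\M)$. The column part is routine from the integral kernel bounds, but the row part requires a separate argument: the quantity $A'=\frac{1}{|Q|}\int_Q|K(g)^*|^2\,ds$ cannot be controlled by the same Cauchy--Schwarz trick, and the paper handles it by testing against vectors $b$ in the unit ball of $H\otimes L_2(\M)$ and exploiting that the kernel acts diagonally (so the operator norm of ${\rm diag}(f_j)_j$ equals $\|f\|_{L_\infty(\N;H^c)}$). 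Only with the full ${\bmo}$ bound in hand does Theorem~\ref{interpolation-hardy}\,(2) give $L_p(B(H)\overline\otimes\N)$ on the range side, after which duality covers $1<p<2$. Your column-only route misses this row estimate, and without it the interpolation does not give $L_p(\N;H^c)$.
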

\begin{proof}
Consider $\fk$ as a diagonal matrix with diagonal entries $(\fk_j)_{j\geq 0}$ determined by $\widehat{\fk}_j=\phi _j$ and $f=(f_j)_{j\geq 0}$ as a column matrix. The associated Calder\'on-Zygmund operator is defined on $L_p( B(H)\overline{\otimes}\N)$ by 
 \[
\fk(f)(s)=\int_{\R} \fk(s-t)f(t)dt.
\]
Now it suffices to show that $\fk$ is a bounded operator on $L_p(\N;H^c)$.

We claim that $\fk$ is bounded from $L_\infty(\N; H^c)$ into $\bmo(\R,B(H)\overline{\otimes}\M)$. Put $K(s)=\fk (s)\otimes 1_{\M}\in B(H)\overline{\otimes } \M$, for any $s\in \R$. Then we have $\|\fk(s)\|_{\ell_2}\geq \|\fk(s)\|_{\ell_\infty}=\| K(s)\|_{B(H)\overline{\otimes} \M}$ and $\| f\|_{L_\infty(\N; H^c)}=\| f\|_{B(H)\overline{\otimes}\N}$. Thus, the claim is equivalent to saying that $K$ is bounded from $L_\infty(\N; H^c)$ into $\bmo(\R,B(H)\overline{\otimes}\M)$, if we regard $L_\infty(\N;  H^c)$ as a subspace of  $ B(H)\overline \otimes\N$.

First, we show that $K$ is bounded  from $L_\infty(\N;H^c)$ into $\bmo^c(\R,B(H)\overline{\otimes}\M)$.
 Let $Q$ be a cube in $\R$ centered at $c$. We decompose $f$ as $f=g+h$ with $g=f\un_{\widetilde{Q}}$, where $\widetilde{Q}=2Q$ is  the cube which has the same center as $Q$ and twice the side length of $Q$. Set
\[
a=\int_{\R \backslash \widetilde{Q}} K(c-t)f(t)dt.
\]
Then
\[
K(f)(s)-a=K(g)(s)+\int [K(s-t)-K(c-t)]h(t)dt.
\]
Thus, for $Q$ such that $|  Q|  <1$, we have
\[
\frac{1}{|  Q| }\int_Q|  K(f)-a| ^2ds\leq 2(A+B),
\]
where
\begin{equation*}
\begin{split}
A &= \frac{1}{|  Q| }\int_Q|  K(g)| ^2ds,\\
B &= \frac{1}{|  Q| }\int_Q|  \int [K(s-t)-K(c-t)]h(t)dt | ^2ds.
\end{split}
\end{equation*}
The term $A$ is easy to estimate. By Lemma \ref{lem:C-Z l_2} and the Plancherel formula \eqref{eq: Planchel},
\begin{equation*}
\begin{split}
|  Q|  A &  \leq  \int |  \widehat{K}(\xi)\widehat{g}(\xi)| ^2d\xi =\int \widehat{g}(\xi)^*\widehat{K}(\xi)^* \widehat{K}(\xi) \widehat{g}(\xi)d\xi \leq \int \|  \widehat{K}(\xi) \|_{B(H)\overline\otimes \M}  ^2 |  \widehat{g}(\xi)| ^2d\xi\\
&   \lesssim   \int \|  \widehat{k}(\xi) \|_{\ell_2}^2 |  \widehat{g}(\xi)| ^2d\xi\lesssim \|  \phi \| _{2,\sigma}^2\int_{\widetilde{Q}}\,|  f(s)|  ^2ds\\
&      \leq     |  \widetilde{Q} |  \,\|  \phi \| _{2,\sigma}^2 \|  f\|^2
 _{B(H)\overline \otimes \N}=|  \widetilde{Q} |  \,\|  \phi \| _{2,\sigma}^2 \|  f\| _{L_\infty(\N; H^c)}^2,
\end{split}
\end{equation*}
whence
\[
\|  A\| _{B(H)\overline{\otimes} \M}\lesssim  \|  \phi \| _{2,\sigma}^2 \|  f\| _{L_\infty(\N;  H^c)}^2.
\]
To estimate $B$, writing $h=(h_j)_{j\geq 0}$, by Lemma \ref{lem:C-Z l_2}, we get
\begin{equation*}
\begin{split}
& \big |  \int [K(s-t)- K(c-t)]h(t)dt \big | ^2\\
&\lesssim  \int _{\R\backslash\widetilde{Q}}\|  K  (s-t)-K  (c-t)\| _{B(H)\overline\otimes \M}dt \int _{\R\backslash\widetilde{Q}}\|  K  (s-t)-K  (c-t)\| _{B(H)\overline\otimes \M}|  h(t)| ^2dt\\
& \lesssim  \int _{\R\backslash\widetilde{Q}}\|  \fk  (s-t)-\fk  (c-t)\| _{\ell_2}dt \int _{\R\backslash\widetilde{Q}}\|  \fk  (s-t)-\fk  (c-t)\| _{\ell_2} |h(t)| ^2dt\\
& \lesssim  \|  \phi \| _{2,\sigma}^2 \|  f\| _{B(H)\overline \otimes \N}^2 \lesssim \|  \phi \| _{2,\sigma}^2 \|  f\| _{L_\infty(\N; H^c)}^2.
\end{split}
\end{equation*}
Hence,
 \[
\|  B\| _{B(H)\overline{\otimes} \M}\leq \frac{1}{|  Q| }\int _Q \big\|  \int [K  (s-t)-K  (c-t)]h(t)dt \big\| _{B(H)\overline{\otimes} \M}^2ds \lesssim  \|  \phi \| _{2,\sigma}^2 \|  f\| _{L_\infty(\N;H^c)}^2.
\]
Combining the previous inequalities, we deduce that, for any $|  Q| <1$ \[
\Big\| (\frac{1}{|  Q| }\int_Q|  K  (f)-a| ^2ds)^\frac{1}{2} \Big\|_{B(H)\overline{\otimes} \M} \lesssim \|  \phi \| _{2,\sigma} \|  f\| _{L_\infty(\N;H^c)}.
\]
Now we consider the case when $|  Q| =1$. We have
\[
\frac{1}{|  Q| }\int_Q|  K  (f)| ^2ds\leq 2\frac{1}{|  Q| }\int_Q|  K  (g)| ^2ds+2\frac{1}{|  Q| }\int_Q|  K  (h)| ^2ds.
\]
The first term on the right hand side of the above inequality is equal to the term $A$, so it remains  to estimate the second term. When $t\in \R\backslash\widetilde{Q}$, $s\in Q$ and $|  Q| =1$, we have $|  s-t| \geq \frac{1}{2}$. Then by (2) in Lemma \ref{lem:C-Z l_2} and the Cauchy-Schwarz inequality \eqref{eq: 2-1}, we easily deduce that 
\begin{equation*}
\begin{split}
& |  K (h)(s)| ^2= \big|  \int | K  (s-t)h(t)dt \big| ^2\\
& \leq \int _{\R\backslash\widetilde{Q}}\|  K(s-t)\| _{B(H)\overline{\otimes} \M}dt \int _{\R\backslash\widetilde{Q}}\|  K  (s-t)\| _{B(H)\overline{\otimes} \M} | h(t)| ^2dt\\
&\lesssim   \|  f\| _{L_\infty(\N; H^c)}^2(\int _{\R\backslash\widetilde{Q}}\|  \fk  (s-t)\| _{\ell_2}dt)^2\\
&\lesssim   \|  \phi \| _{2,\sigma}^2 \|  f\| _{L_\infty(\N;H^c)}^2.
\end{split}
\end{equation*}
Thus, we have, for any  $|  Q| =1$,
\[
\Big\|  \big(\frac{1}{|  Q| }\int_Q|  K  (f)| ^2 ds\big)^\frac{1}{2}\Big\|  _{B(H)\overline{\otimes} \M}\lesssim \|  \phi \| _{2,\sigma} \|  f\| _{L_\infty(\N; H^c)}.
\]
Therefore,  $K $ is bounded from $L_\infty(\N; H^c)$ into $\bmo^c(\R,B(H)\overline{\otimes}\M)$.

Next we show that  $K $ is bounded from $L_\infty(\N;H^c)$ into $\bmo^r(\R,B(H)\overline{\otimes}\M)$.
We still use the same decomposition $f=g+h$, then we obtain
\[
\frac{1}{|  Q| }\int_Q | [K  (f)-a]^*| ^2ds\leq 2(A'+B'),
\]
where
\begin{equation*}
\begin{split}
A' &= \frac{1}{|  Q| }\int_Q|  K  (g)^*| ^2ds,\\
B' & =  \frac{1}{|  Q| }\int_Q   \big|  \int [(K  (s-t)-K  (c-t))h(t)]^*dt \big| ^2ds.
\end{split}
\end{equation*}
The estimate of $B'$ can be reduced to that of $B$. Indeed,
\begin{equation*}
\begin{split}
\|  B'\| _{B(H)\overline{\otimes} \M} &  \leq \frac{1}{|  Q| }\int _Q \big\|  \int [(K  (s-t)-K  (c-t))h(t)]^*dt \big\| _{B(H)\overline{\otimes} \M} ^2ds \\
& =  \frac{1}{|  Q| }\int _Q \big\|  \int [K  (s-t)-K  (c-t)]h(t)dt \big\| _{B(H)\overline{\otimes} \M} ^2ds \\
 & \lesssim   \|  \phi \| _{2,\sigma}^2 \|  f\| _{L_\infty(\N;H^c)}^2.
\end{split}
\end{equation*}
However, for $A'$, we need a different argument. $A'$ can be viewed as a bounded operator on $H\otimes L_2(\M)$. So
\begin{equation*}
\|  A'\| _{B(\ell_2)\overline{\otimes} \M}
= \sup_b \lbrace \frac{1}{|  Q| }\int_Q \|  \fk(g)(s) \, b\| _{H\otimes L_2(\M)}^2ds \rbrace,
\end{equation*}
where the supremum runs over all $b$ in the unit ball of $H\otimes L_2(\M)$. By the Plancherel formula  \eqref{eq: Planchel}, we have
\begin{equation*}
\begin{split}
 \int_Q \| \fk(g)(s) \, b\| _{H\otimes L_2(\M)}^2ds  & =  \int_Q \langle  \fk(g)(s) \, b, \fk(g)(s) \, b\rangle_{H\otimes L_2(\M)} ds  \\
&\leq  \int \langle \widehat{\fk}(\xi)\widehat{g}(\xi)\, b, \widehat{\fk}(\xi)\widehat{g}(\xi)\, b \rangle _{H\otimes L_2(\M)} d\xi.
\end{split}
\end{equation*}
Let $\text{diag}({f}_j)_j$ be the diagonal matrix in $B(H)\overline\otimes \N$ with entries in $B(H_j) \overline{\otimes}\N$. By the Cauchy-Schwarz inequality, the Plancherel formula  \eqref{eq: Planchel} and Lemma \ref{lem:C-Z l_2}, we continue the estimate above as
\begin{equation*}
\begin{split}
\int \langle \widehat{\fk}(\xi)\widehat{g}(\xi)\, b, \widehat{\fk}(\xi)\widehat{g}(\xi)\, b \rangle _{H\otimes L_2(\M)}  d\xi &  \leq  \sup_\xi \|  \widehat{\fk  }(\xi)\| _{\ell_2}^2 \int \langle  \widehat{g}(\xi)\, b,  \widehat{g}(\xi)\, b \rangle _{H\otimes L_2(\M)} d\xi\\
& \lesssim  \|  \phi \| _{2,\sigma}^2 \int_{\widetilde{Q}} \|  \text{ diag}({f}_j)_j(s) \, b\|^2_{H\otimes L_2(\M)}ds\\
& \lesssim   |Q | \|  \phi \| _{2,\sigma}^2  \| \text{ diag}({f}_j)_j \|^2_{B(H)\overline\otimes  \N}\| b\|^2_{H\otimes L_2(\M)}\\
& \leq   |  Q| \|  \phi \| _{2,\sigma}^2 \|  f\| _{L_\infty(\N;H^c)}^2 ,
\end{split}
\end{equation*}
whence,
 \[
 \|  A'\| _{B(\ell_2)\overline{\otimes} \M} \lesssim \|  \phi \| _{2,\sigma}^2 \|  f\| _{L_\infty(\N;H^c)}^2.
 \]
Following the estimate of $\frac{1}{|  Q| }\int_Q|  K  (f)(s)| ^2 ds$, we get,  when $|  Q| =1$,
  \begin{equation*}
\begin{split}
  \frac{1}{|  Q| }\int_Q|  K  (f)^*| ^2 ds  & \leq 2A'+ 2\frac{1}{|  Q| }\int_Q|  K  (h)^*| ^2 ds\\
&      \leq  2A'+ 2\frac{1}{|  Q| }\int_Q\|   K  (h)^*\| _{B(H)\overline{\otimes}\M}^2 ds\\
&      =  2A'+ 2\frac{1}{|  Q| }\int_Q\|   K  (h)\| _{B(H)\overline{\otimes}\M}^2 ds\\
&     \lesssim   \|  \phi \| _{2,\sigma}^2 \|  f\| _{L_\infty(\N;H^c)}^2.
  \end{split}
\end{equation*}
 Therefore, $K  $ is bounded from $L_\infty(\N;H^c)$ into $\bmo^r(\R,B(H)\overline{\otimes}\M)$. 
 
 In summary, we have proved that $\fk$ is bounded from $L_\infty(\N;H^c)$ into $\bmo(\R,B(H)\overline{\otimes}\M)$.
 It is also clear that $\fk  $ is bounded from $L_2(\N; H^c)$ into $L_2(B(H)\overline{\otimes}\N)$, then by the interpolation in Theorem \ref{interpolation-hardy}, $\fk  $ is bounded from $L_p(\N; H^c)$ into $L_p(B(H)\overline{\otimes}\N)$ for any $2\leq p<\infty$. The case $1<p<2$ is obtained by duality.
\end{proof}

Note that when all $H_j$ degenerate to one dimensional Hilbert space, then $H=\ell_2$, the above lemma gives a sufficient condition for $(\phi_j)_{j\geq 0}$ being a bounded Fourier multiplier on $L_p(\N ; \ell_2^c)$.  So we can also use Lemmas \ref{lem:C-Z l_2} and \ref{lem:p to p} to prove Theorem \ref{multiplier-global} by an argument similar to the proof of  \cite[Theorem~4.1]{XXY17}; details are left to the reader. But here our target is to extend Theorem \ref{multiplier-global} to a more general setting.

\begin{thm}\label{cor: conic multiplier}
Let $p, \alpha, \sigma$, $(\phi_j)_{j\geq 0}$ and $(\rho_j)_{j\geq 0}$ be the same as  in Theorem \ref{multiplier-global}.
Then, for any $f\in \mathcal{S}'(\R;L_1(\M)+\M)$,
\begin{equation*}
\begin{split}
& \Big\| (\sum_{j\geq 0}2^{j(2\alpha +d)}\int_{B(0,2^{-j})} |  \check \phi_j* \check \rho_j*f(\cdot+t) |  ^2dt)^\frac{1}{2} \Big\| _{p}\\
 & \lesssim \max \big\{\underset{\substack{j\geq 1\\ -2\leq k \leq 2}}{\sup} \|    {\phi} _j (2^{j+k}\cdot)\varphi\| _{H_2^\sigma},\| \phi_0 ({\varphi}^{(0)}+{\varphi}^{(1)})\| _{H_2^\sigma}  \big\}\\
&  \;\;\;\; \cdot \Big\| (\sum_{j\geq 0}2^{j(2\alpha+d)} \int_{B(0,2^{-j})} |\check \rho_j*f(\cdot+t) |  ^2dt)^\frac{1}{2} \Big\| _{p},
\end{split}
\end{equation*}
where the constant depends only on $p$, $\sigma$, $d$ and $\varphi$.
\end{thm}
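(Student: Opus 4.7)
The plan is to recast the Lusin-type square function as a Hilbert-valued Littlewood--Paley square function, after which the theorem falls under Lemma \ref{lem:p to p}. For each $j\geq 0$, let $H_j=L_2\bigl(B(0,2^{-j}),\,2^{jd}dt\bigr)$ (Lebesgue measure on the ball, rescaled by $2^{jd}$), and set $H=\bigoplus_{j\geq 0}H_j$. Given $f\in\mathcal{S}'(\R;L_1(\M)+\M)$, define $H$-valued functions $G=(G_j)_{j\geq 0}$ and $F=(F_j)_{j\geq 0}$ on $\R$ by
$$G_j(s)(t)=2^{j\alpha}\,\check\rho_j*f(s+t),\qquad F_j(s)(t)=2^{j\alpha}\,\check\phi_j*\check\rho_j*f(s+t),\qquad t\in B(0,2^{-j}).$$
Unwinding the definition of the norm on $L_p(\N;H^c)$, one sees that $\|G\|_{L_p(\N;H^c)}$ and $\|F\|_{L_p(\N;H^c)}$ are exactly the right-hand and left-hand sides of the desired inequality, the weight $2^{jd}$ on $H_j$ absorbing the $2^{jd}$ factor in the statement.

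The key observation is that convolution with $\check\phi_j$ in the $s$-variable commutes with the translation $s\mapsto s+t$ that parametrises the $H_j$-coordinate, so $F_j=\check\phi_j*G_j$ as $H_j$-valued functions. In other words, $F$ is the image of $G$ under the diagonal Fourier multiplier $\fk=(\check\phi_j)_{j\geq 0}$ acting on $L_p(\N;H^c)$. Applying Lemma \ref{lem:p to p} then yields
$$\|F\|_{L_p(\N;H^c)}\lesssim\|\phi\|_{2,\sigma}\,\|G\|_{L_p(\N;H^c)},$$
with $\|\phi\|_{2,\sigma}$ as in \eqref{eq:phi}.

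It remains to bound $\|\phi\|_{2,\sigma}$ by the constant appearing in the statement, namely $\max\bigl\{\sup_{j\geq 1,\,-2\leq k\leq 2}\|\phi_j(2^{j+k}\cdot)\varphi\|_{H_2^\sigma},\,\|\phi_0(\varphi^{(0)}+\varphi^{(1)})\|_{H_2^\sigma}\bigr\}$. By the support assumption on $\phi_j\rho_j$, we may replace each $\phi_j$ ($j\geq 1$) by $\phi_j(\varphi^{(j-1)}+\varphi^{(j)}+\varphi^{(j+1)})$ and $\phi_0$ by $\phi_0(\varphi^{(0)}+\varphi^{(1)})$ without affecting the convolutions $\check\phi_j*\check\rho_j$. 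After this modification, for each fixed $k\geq 1$ the term $\phi_j(2^k\cdot)\varphi$ vanishes unless $|j-k|\leq 2$, so the $\ell_2$-sum defining $\|\phi(2^k\cdot)\varphi\|_{H_2^\sigma(\R;\ell_2)}$ contains at most five nonzero terms; a parallel argument handles $\|\phi\varphi^{(0)}\|_{H_2^\sigma(\R;\ell_2)}$. The conceptual heart of the argument, and its only genuine step, is the choice of the tower $\{H_j\}$ so that the column $L_p$-norm on $\bigoplus_j H_j$ reproduces the outer $\|\cdot\|_p$ of the Lusin-type integrals; once this identification is made, everything else is either Lemma \ref{lem:p to p} or routine verification.
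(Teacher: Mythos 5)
Your proposal is correct and follows essentially the same route as the paper: identify the Lusin square function with the column norm on $L_p(\N;H^c)$ for $H=\oplus_j L_2(B(0,2^{-j}),2^{jd}dt)$, observe that the operator is the diagonal multiplier $(\check\phi_j)_j$, replace $\phi_j$ by its truncation $\phi_j(\varphi^{(j-1)}+\varphi^{(j)}+\varphi^{(j+1)})$ (the paper's $\zeta_j$) using the support assumption on $\phi_j\rho_j$, verify \eqref{eq:phi} via the five-term overlap and \eqref{ineq-potential-Sobolev}, and invoke Lemma \ref{lem:p to p}. The only cosmetic difference is that you state the application of Lemma \ref{lem:p to p} before performing the truncation that guarantees its hypothesis \eqref{eq:phi}; logically the replacement must come first, but since it leaves $F$ unchanged this is a matter of ordering, not substance.
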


\begin{proof}

Set $H_j=L_2\big(B(0,2^{-j}),2^{jd}dt\big)$ and $H= \oplus_{j=0}^\infty H_j$. So we have
$$ \big\| (\sum_{j\geq 0}2^{j(2\alpha+d)} \int_{B(0,2^{-j})} |\check \phi_j*\check \rho_j*f(\cdot+t) |  ^2dt)^\frac{1}{2}\big\| _{p} = \|(2^{j\alpha} \check \phi_j*\check\rho_j*f(\cdot+\cdot))_j \|_{L_p(\N; H^c)}.$$
Let
\begin{equation*}
\begin{split}
\zeta_j & =\phi_j(\varphi^{(j-1)}+\varphi^{(j)}+\varphi^{(j+1)}),\,\, j\geq 2,\\
\zeta_1&  = \phi_1(\varphi + \varphi^{(1)} +\varphi^{(2)}), \\
\zeta_0 & =\phi_0(\varphi^{(0)}+\varphi )\quad \text{and} \;\; \zeta_j=0 \text{ if }j<0.
\end{split}
\end{equation*}
By the support assumption on $\phi_j\rho_j$, we have that $\phi_j\rho_j=\zeta_j\rho_j$.  So for any $f\in \mathcal{S}'(\R;L_1(\M)+\M)$,
 \[
 \check{\phi}_j*\check{\rho}_j*f=\check{\zeta}_j*\check{\rho}_j*f, \; j\in \mathbb{N}_0.
\]
 Now we show that $\zeta=(\zeta_j)_{j\geq 0}$ satisfies \eqref{eq:phi} with $\zeta$ instead of $\phi$. Indeed, by the support assumption of $\varphi  $, the sequence $\zeta (2^k\cdot)\varphi  =\big(\zeta_j (2^k\cdot)\varphi \big)_{j\geq0}$ has at most five nonzero terms of indices $j$ with $k-2\leq j\leq k+2$. Thus for any $k\in \mathbb{N}_0$,
\[
\|  \zeta (2^k\cdot)\varphi  \| _{H_2^\sigma(\R;\ell_2)}\leq  \sum_{j=k-2}^{k+2}
\|  \zeta_j(2^k\cdot)\varphi  \| _{H_2^\sigma}.
\]
Moreover, by \eqref{ineq-potential-Sobolev}, we have 
\begin{equation*}
\|  \zeta_j(2^k\cdot)\varphi  \| _{H_2^\sigma}\lesssim \|  \phi_j(2^k\cdot) \varphi  \| _{H_2^\sigma},\quad  k-2\leq j\leq k+2.
\end{equation*}
Therefore, the condition \eqref{eq:j condition thm} yields \[
\sup_{k\geq 1} \|  \zeta(2^k\cdot)\varphi  \| _{H_2^\sigma(\R;\ell_2)}\lesssim \underset{\substack{j\geq 1\\ -2\leq k \leq 2}}{\sup} \|    {\phi} _j (2^{j+k}\cdot)\varphi\| _{H_2^\sigma }   +\|  \phi_0 ({\varphi}^{(0)}+{\varphi}^{(1)})\| _{H_2^\sigma}<\infty,
\]
where the relevant constant depends only on $\sigma$, $\varphi$ and $d$. In a similar way, we have
\[
\|  \zeta\varphi^{(0)}\| _{H_2^\sigma(\R;\ell_2)}\leq \sum_{0\leq j\leq 2}\|  \zeta_j\varphi^{(0)}\| _{H_2^\sigma}\lesssim \underset{\substack{j\geq 1\\ -2\leq k \leq 2}}{\sup} \|    {\phi} _j (2^{j+k}\cdot)\varphi\| _{H_2^\sigma  }+\|  \phi_0 ({\varphi}^{(0)}+{\varphi}^{(1)})\| _{H_2^\sigma}<\infty.
\]
Now applying Lemma \ref{lem:p to p} to $f_j=2^{j\alpha} \check\rho_j*f(\cdot+\cdot)$, and $\zeta_j$ instead of $\phi_j$, we conclude the theorem.
\end{proof}

The above theorem will be useful when we consider the conic square function characterizations of local Hardy spaces and inhomogeneous Triebel-Lizorkin spaces in section \ref{section-charact}.

\subsection{Multipliers on $\h_p^c$}

Note that both Theorem \ref{multiplier-global} and Theorem \ref{cor: conic multiplier} do not deal with the case $p=1$. So we include the corresponding Fourier multiplier results for $\h_p^c$ with $1\leq p \leq 2$ in the following. When the Hilbert space $H$ degenerates to $\ell_2$, we have

\begin{lem}\label{lem:p=1}
Let $1\leq p\leq 2$ and $\phi=(\phi_j)_{j\geq0}$ be a sequence of continuous functions on $\R$ satisfying \eqref{eq:phi}. For $f\in \h_p^c(\R,\M)$,
\[
\big\| (\sum_{j\geq 0}| \check{\phi}_j*f |  ^2)^\frac{1}{2}\big\| _{p}\lesssim \|  \phi \|  _{2,\sigma} \|  f\| _{\h_p^c}.
\]
The relevant constant depends only on $\varphi  $, $\sigma$ and $d$.
\end{lem}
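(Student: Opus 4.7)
The plan is to prove the lemma at the two endpoints $p=1$ and $p=2$ and interpolate in between.

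\emph{Case $p=2$.} Using $\h_2^c(\R,\M)=L_2(\N)$ (with equivalent norms), the operator-valued Plancherel formula \eqref{eq: Planchel} yields
\begin{equation*}
\big\|(\sum_{j\geq 0}|\check\phi_j*f|^2)^{1/2}\big\|_{L_2(\N)}^2=\int_{\R}\sum_{j\geq 0}|\phi_j(\xi)|^2\,\tau(|\widehat{f}(\xi)|^2)\,d\xi\leq \|\widehat{\fk}\|_{L_\infty(\R;\ell_2)}^2\|f\|_{L_2(\N)}^2,
\end{equation*}
and part (1) of Lemma \ref{lem:C-Z l_2} gives $\|\widehat{\fk}\|_{L_\infty(\R;\ell_2)}\lesssim\|\phi\|_{2,\sigma}$.

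\emph{Case $p=1$.} Invoke the atomic decomposition of Theorem \ref{thm:atomic h1}, reducing the problem to the uniform bound $\|Ta\|_{L_1(\N;\ell_2^c)}\lesssim\|\phi\|_{2,\sigma}$ over $\h_1^c$-atoms $a$ associated to a cube $Q$ with $|Q|\le 1$, where $T(f)=(\check\phi_j*f)_j$. I would split the $L_1(\N;\ell_2^c)$-norm into pieces over $2Q$ and $(2Q)^c$. For the inner contribution, the local inclusion $\|\,\cdot\,\un_{2Q}\|_{L_1(\N;\ell_2^c)}\le |2Q|^{1/2}\|\,\cdot\,\|_{L_2(\N;\ell_2^c)}$ combined with the $p=2$ estimate and the atom size bound yields a contribution of order $\|\phi\|_{2,\sigma}$. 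For the outer contribution, I would distinguish the two atom cases: when $|Q|=1$ (no cancellation), use the integrated size at infinity from part (2) of Lemma \ref{lem:C-Z l_2}; when $|Q|<1$, exploit $\int_Q a=0$ to rewrite $Ta(s)=\int_Q[\fk(s-t)-\fk(s-t_0)]a(t)\,dt$ with $t_0$ the centre of $Q$, and apply the integrated H\"ormander-type regularity from part (3) of Lemma \ref{lem:C-Z l_2}. In both cases the computation pairs the operator-valued Cauchy-Schwarz inequality \eqref{eq: 2-1} with a Fubini exchange so that the estimate routes through the integrated ($L_1$-type) kernel bounds.

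\emph{Case $1<p<2$.} Interpolate between the two endpoints. Since $\h_2^c(\R,\M)=L_2(\N)$, reiterating Theorem \ref{interpolation-hardy}(1) gives $(\h_1^c(\R,\M),L_2(\N))_\theta=\h_p^c(\R,\M)$ with $\theta=2(1-1/p)$; standard complex interpolation for column Hilbert-valued noncommutative $L_p$-spaces gives $(L_1(\N;\ell_2^c),L_2(\N;\ell_2^c))_\theta=L_p(\N;\ell_2^c)$ for the same $\theta$. Complex interpolation then transfers the endpoint bounds to the intermediate range.

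The principal technical obstacle is the outer part of the $p=1$ atomic estimate: Lemma \ref{lem:C-Z l_2} provides only integrated bounds on $\|\fk(\cdot)\|_{\ell_2}$, not pointwise ones, so an annular $L_2$-decomposition of $(2Q)^c$ followed by the $p=2$ estimate on each annulus is not directly available. The argument must therefore be organised directly in the $L_1(\N;\ell_2^c)$ direction, matching the integrated kernel bounds to the $L_2^c(Q)$-structure of the atom through an operator-valued Cauchy-Schwarz performed in the $t$-variable, and then integrated in $s$ against the correct part of Lemma \ref{lem:C-Z l_2}.
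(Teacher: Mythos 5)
Your route is genuinely different from the paper's: the paper never touches atoms for this lemma. It regards $\fk=(\check\phi_j)_{j\geq 0}$ as a column Calder\'on--Zygmund kernel, notes that the $L_2$ case is trivial, and handles $p=1$ by duality --- the adjoint $\fk^*$ is shown to map $L_\infty(\N;\ell_2^c)$ into $\bmo^c(\R,\M)$ by re-running the estimates of Lemma \ref{lem:p to p} with $\widetilde{K}(s)=\fk(-s)^*\otimes 1_\M$ --- and then interpolates. Your $p=2$ step and your interpolation step agree with the paper's; the $p=1$ step does not, and it is where the problems lie.

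There are two gaps in the atomic argument. First, the inner part: the atom condition controls $\|a\|_{L_1(\M;L_2^c(Q))}=\tau\big(\int_Q|a|^2\big)^{1/2}$, not $\|a\|_{L_2(\N)}=\big(\tau\int_Q|a|^2\big)^{1/2}$, and in a semifinite algebra neither quantity dominates the other (take $a=c\,\un_Q\otimes e$ with $e$ a projection of small trace). So the chain ``H\"older over $2Q$ into $L_2(\N;\ell_2^c)$, then the $p=2$ bound, then the atom size'' does not close; moreover the H\"older step itself is wrong as stated, since operator Jensen gives $\tau\int_{2Q}(\sum_j|g_j|^2)^{1/2}\le |2Q|^{1/2}\,\tau\big[(\int_{2Q}\sum_j|g_j|^2)^{1/2}\big]$, which lands in an $L_1(\M;L_2^c)$-norm, not in $L_2(\N;\ell_2^c)$. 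The repair is to stay in column $L_1(\M;L_2^c)$-norms and use Plancherel together with $\|\widehat{\fk}\|_{L_\infty(\R;\ell_2)}\lesssim\|\phi\|_{2,\sigma}$ there, exactly as the paper does in \eqref{eq: atom project atom}. Second, and more seriously, the outer part: after the operator-valued Cauchy--Schwarz in $t$ you obtain $|\fk(a)(s)|\le A(s)^{1/2}B(s)^{1/2}$ with $A(s)=\int_Q\|\fk(s-t)-\fk(s-t_0)\|_{\ell_2}\,dt$ scalar and $B(s)=\int_Q\|\fk(s-t)-\fk(s-t_0)\|_{\ell_2}\,|a(t)|^2\,dt$ operator-valued, so you must bound $\int_{(2Q)^c}A(s)^{1/2}\,\tau(B(s)^{1/2})\,ds$. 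Converting $\tau(B(s)^{1/2})$ into a multiple of $\tau\big((\int_Q|a|^2)^{1/2}\big)$ requires either $\int_{(2Q)^c}\sup_{t\in Q}\|\fk(s-t)-\fk(s-t_0)\|_{\ell_2}\,ds$ or a mixed $L_1(ds;L_2(dt))$ kernel bound; Lemma \ref{lem:C-Z l_2}(3) only gives $\sup_t\int_s$, and the supremum and the integral cannot simply be interchanged. This is precisely the obstruction that makes the duality route ($L_\infty\to\bmo$) the standard one in noncommutative Calder\'on--Zygmund theory, and it is why the paper proves the lemma that way. Unless you extract the stronger weighted-$L_2(ds)$ kernel estimates hidden inside the proof of Lemma \ref{lem:C-Z l_2} and turn them into the required mixed bound, the outer estimate as sketched does not go through.
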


\begin{proof}
Now we view $\fk  =(\fk  _j)_{j\geq 0}=(\check{\phi}_j)_{j\geq0}$ as a column matrix and the associated Calder\'on-Zygmund operator $\fk  $ is defined on $L_p(\N)$:
\[
\fk  (f)(s)=\int_{\R}\fk  (s-t)f(t)dt, \quad\forall s\in R.
\]
Thus $\fk  $ maps function with values in $L_p(\M)$ to sequence of functions. Then we have to show that $\fk  $ is bounded from $\h_p^c(\R,\M)$ to $L_p(\N;\ell_2^c)$ for $1\leq p\leq 2$. The case $p=2$ is trivial, so by interpolation, it suffices to consider the case $p=1$. To prove that $\fk  $ is bounded from $\h_1^c(\R,\M)$ to $L_1(\N;\ell_2^c)$,  passing to the dual spaces, it is equal to proving that the adjoint of $\fk $ is bounded from  $L_\infty(\N;\ell_2^c)$ to $\bmo^c(\R,\M)$.  We keep all the notation in the proof of Lemma \ref{lem:p to p}. For any finite sequence $f=(f_j)_{j\geq 0}$ (viewed as a column matrix), the adjoint of $\fk$ is defined by
\[
\fk ^*(f)(s)=\int_{\R}\sum_j{\widetilde{\fk  }}_j(s-t)f_j(t)dt,
\]
where $\widetilde{\fk  }(s)=\fk  (-s)^*$ (so it is a row matrix).  Put $\widetilde{K}(s)=\widetilde{\fk  }(s)\otimes 1_\M$. In this case, $\|\widetilde{K}(f)\|_{\bmo^c(\R,\M)}=\|\widetilde{K}(f)\|_{\bmo^c(\R,B(\ell_2) \overline{\otimes }\M)}$. Then we  apply the estimates used in Lemma \ref{lem:p to p} by replacing $K$ with $\widetilde{K}$. It follows that $\fk  ^*$ is bounded from $L_\infty(\N;\ell_2^c)$ into $\bmo^c(\R,\M)$, so the desired assertion is proved.
\end{proof}

The next theorem is a complement of Theorem \ref{multiplier-global} for the case $p=1$, which relies heavily on the characterization of $\h_1^c(\R,\M)$ given in Theorem \ref{thm: equivalence hpD}.

\begin{thm}\label{lem: Multiplier p=1}
We keep the assumption in Theorem \ref{multiplier-global}. Assume additionally that for any $j\geq 1$, $\rho_j={{\rho}}(2^{-j}\cdot)$ for some Schwartz function $\rho $ with $\supp {{\rho}}\subset \{\xi : 2^{-1}\leq \vert \xi\vert \leq 2\}$ and ${{\rho}}(\xi)>0$ for any $2^{-1}< \vert \xi\vert < 2$, and that $\supp {{\rho}_0}\subset \lbrace\xi :  \vert \xi\vert \leq 2\rbrace$ and ${{\rho}_0}(\xi)>0$ for any $\vert \xi\vert < 2$. Then for $f\in \mathcal{S}'(\R ; L_1(\M)+\M)$, we have
\begin{equation*}
\begin{split}
\big\| (\sum_{j\geq 0}2^{2j\alpha} |  \check \phi_j* \check \rho_j*f |  ^2)^\frac{1}{2} \big\| _{1}  &\lesssim  \max \big\{ \underset{\substack{j\geq 1\\ -2\leq k \leq 2}}{\sup} \|    {\phi} _j (2^{j+k}\cdot)\varphi\| _{H_2^\sigma }, \|    \phi_0 ( {\varphi}^{(0)} + {\varphi}^{(1)})\| _{H_2^\sigma} \big\}\\
& \;\;\;\; \cdot \big\| (\sum_{j\geq 0}2^{2j\alpha} |  \check \rho_j*f |  ^2)^\frac{1}{2} \big\| _{1}.
\end{split}
\end{equation*}

\end{thm}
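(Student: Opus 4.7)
The plan is to reduce the desired $p=1$ inequality to the $\h_1^c \to L_1(\N;\ell_2^c)$ multiplier bound supplied by Lemma \ref{lem:p=1}. The bridge is an auxiliary function $F\in\h_1^c$ constructed from $f$ via a discrete Calder\'on reproducing formula tailored to $\rho$: its Hardy norm will be shown to be dominated by the right-hand side, and the left-hand side will be recovered from $F$ by applying another multiplier sequence to which Lemma \ref{lem:p=1} applies directly.

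First, exploiting the positivity of $\rho$ on $\{1/2 < |\xi| < 2\}$ and of $\rho_0$ on $\{|\xi| < 2\}$, I would adapt the construction behind \eqref{eq: reproduceD 2} to produce Schwartz functions $\Psi$ (of vanishing mean) and $\tilde\psi$ with $\widehat\Psi,\widehat{\tilde\psi}\in H_2^\sigma(\R)$ satisfying
$$\sum_{j\geq 1}\widehat\Psi(2^{-j}\xi)\,\rho(2^{-j}\xi)+\widehat{\tilde\psi}(\xi)\,\rho_0(\xi) = 1,\quad\forall\,\xi\in\R,$$
and with $\widehat\Psi\cdot\rho$ bounded below on the annulus. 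Inserting the dyadic weights, set
$$F := \sum_{j\geq 1} 2^{j\alpha}\Psi_j*\rho_j*f + \tilde\psi*\rho_0*f.$$
The first key estimate is $\|F\|_{\h_1^c}\lesssim \|(\sum_{j\geq 0} 2^{2j\alpha}|\check\rho_j*f|^2)^{1/2}\|_1$. This is obtained from Theorem \ref{thm: equivalence hpD} (applied with $\widehat\Phi=\rho$ and companion low-frequency test $\widehat\phi=\rho_0$): expanding $\check\rho_\ell*F$ and using that the Fourier supports of $\rho_\ell$ and $\widehat\Psi(2^{-j}\cdot)\rho(2^{-j}\cdot)$ overlap only for $|\ell-j|\leq 2$ truncates the inner sum to finitely many nearby scales, and uniform convolution bounds $\|\mathcal F^{-1}(\rho_\ell\Psi_j)\|_1\lesssim 1$ (via \eqref{Young-Banach}) together with the operator-valued Cauchy--Schwarz \eqref{eq: 2-1} and a Fubini swap between $\ell$ and $j$ convert the $\h_1^c$-norm of $F$ into the weighted square function appearing on the right-hand side.

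To conclude, for each $j\geq 0$ I would define the multiplier $\mu_j(\xi) := 2^{j\alpha}\phi_j(\xi)\rho_j(\xi)/D_j(\xi)$, where $D_j(\xi):=\sum_{|k-j|\leq 2}2^{k\alpha}\widehat\Psi(2^{-k}\xi)\rho(2^{-k}\xi)$ (augmented by $\widehat{\tilde\psi}(\xi)\rho_0(\xi)$ when $j$ is small). Since $\widehat\Psi\cdot\rho$ is bounded below on $\{1/2<|\xi|<2\}$, one has $D_j\approx 2^{j\alpha}$ uniformly on the support of $\rho_j$, so $\mu_j$ extends smoothly to a multiplier supported in a slightly enlarged dyadic annulus and by construction satisfies $\check\mu_j*F = 2^{j\alpha}\check\phi_j*\check\rho_j*f$. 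A rescaling computation shows that $\mu_j(2^{j+k}\cdot)\varphi$ factorizes as $\phi_j(2^{j+k}\cdot)\varphi\cdot g_k$ for a $j$-independent smooth bounded factor $g_k$, so by \eqref{ineq-potential-Sobolev} the sequence $(\mu_j)_{j\geq 0}$ verifies the Sobolev condition \eqref{eq:phi} of Lemma \ref{lem:C-Z l_2} with $\|\mu\|_{2,\sigma}$ controlled by the maximum appearing in the theorem. Lemma \ref{lem:p=1} then yields
$$\Big\|\Big(\sum_{j\geq 0} 2^{2j\alpha}|\check\phi_j*\check\rho_j*f|^2\Big)^{1/2}\Big\|_1 = \Big\|\Big(\sum_j|\check\mu_j*F|^2\Big)^{1/2}\Big\|_1 \lesssim \|\mu\|_{2,\sigma}\|F\|_{\h_1^c},$$
which combined with the first estimate closes the argument. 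The main obstacle is the careful bookkeeping in these two multiplier estimates: verifying the uniform convolution bounds with the correct dependence on $(\Psi,\tilde\psi)$ in the first step, and showing that $D_j$ admits a smooth inverse bounded below uniformly in $j$ so that the rescaled $H_2^\sigma$-norms of $\mu_j$ are dominated by those of $\phi_j$ by constants depending only on $\alpha,\sigma,d,\rho,\Psi$.
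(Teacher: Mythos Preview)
Your second step (defining $\mu_j$ and applying Lemma \ref{lem:p=1}) is sound and parallels the paper's final move. The gap is in your first key estimate, $\|F\|_{\h_1^c}\lesssim \big\|(\sum_j 2^{2j\alpha}|\check\rho_j*f|^2)^{1/2}\big\|_1$. After expanding $\check\rho_\ell*F$ and truncating to $|j-\ell|\leq 2$, you are left with controlling
\[
\Big\|\Big(\sum_\ell\big|K_\ell*(2^{\ell\alpha}\check\rho_\ell*f)\big|^2\Big)^{1/2}\Big\|_1
\quad\text{by}\quad
\Big\|\Big(\sum_\ell 2^{2\ell\alpha}|\check\rho_\ell*f|^2\Big)^{1/2}\Big\|_1,
\]
where $K_\ell(s)=2^{\ell d}K(2^\ell s)$ with $K=\mathcal F^{-1}(\rho(2^{k}\cdot)\widehat\Psi)$. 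A uniform bound $\sup_\ell\|K_\ell\|_1<\infty$ together with the operator Cauchy--Schwarz \eqref{eq: 2-1} does \emph{not} deliver this at $p=1$: this would be exactly the $p=1$ case of Lemma \ref{lem:p to p}, which the paper does not prove and which fails for general sequences (dually, it would require $h\in\bmo^c\Rightarrow(\widetilde\Psi_j*h)_j\in L_\infty(\N;\ell_2^c)$, and already in the scalar case the $g$-function of a $\bmo$ function is not in $L_\infty$). If instead you use the special form $g_\ell=2^{\ell\alpha}\check\rho_\ell*f$, you find $K_\ell*g_\ell=2^{\ell\alpha}\mathcal F^{-1}(\widehat K\cdot\rho)_\ell*f$, so the required bound is the very theorem you are proving, with $\phi_\ell$ replaced by $\widehat K(2^{-\ell}\cdot)$; the argument is circular.

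The paper sidesteps this by not building $F$ as a weighted sum at all. It sets $g=J^\alpha f$ and applies Theorem \ref{thm: equivalence hpD} with the test functions $\Psi_j=(I_{-\alpha}\rho)(2^{-j}\cdot)$, $\Psi_0=J_{-\alpha}\rho_0$, so that the characterization gives $\|g\|_{\h_1^c}\approx\big\|(\sum_j|\check\Psi_j*g|^2)^{1/2}\big\|_1$ \emph{directly}. Since $\check\Psi_j*g=2^{j\alpha}\check\rho_j*I^{-\alpha}J^\alpha f$ for $j\ge1$, the only remaining issue is the mismatch between $I^\alpha$ and $J^\alpha$, which is handled by Stein's lemma expressing $|\xi|^\alpha(1+|\xi|^2)^{-\alpha/2}$ (and, away from the origin, its reciprocal) as Fourier transforms of finite measures; convolution by a finite measure is trivially bounded on $L_1(\N;\ell_2^c)$. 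This yields $\|g\|_{\h_1^c}\approx\big\|(\sum_j 2^{2j\alpha}|\check\rho_j*f|^2)^{1/2}\big\|_1$ without any $L_1$ square-function multiplier estimate. The left-hand side is then recovered by applying Lemma \ref{lem:p=1} to $g$ with $\zeta_j=2^{j\alpha}I_{-\alpha}\phi_j\rho_j$ (and $\zeta_0=J_{-\alpha}\phi_0\rho_0$), whose $\|\cdot\|_{2,\sigma}$ norm is controlled by the maximum in the statement.
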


\begin{proof}
By the assumptions of $\rho$ and $\rho_0$, we can select a Schwartz function $\widetilde{\rho}$ with the same properties as $\rho$ and a Schwartz function $\widetilde{\rho}_0$ satisfying the same conditions as $\rho_0$, such that 
$$
 \sum_{j=1}^\infty \rho (2^{-j}\xi) \overline{\widetilde{\rho}(2^{-j}\xi)}+\rho_0(\xi) \overline{\widetilde{\rho}_0(\xi)}=1,\quad \forall \xi \in \R.
$$
Let  $\Psi_j=(I_{-\alpha}{\rho})(2^{-j}\cdot)$,  $\widetilde\Psi_j=(I_{\alpha}{\rho})(2^{-j}\cdot)$ for $j\geq 1$ and  $\Psi_0=J_{-\alpha}{\rho}_0$,  $\widetilde{\Psi}_0=J_\alpha \rho_0$. We have
$$
 \sum_{j=1}^\infty \Psi _j(\xi) \overline{\widetilde{\Psi}_j(\xi)}+\Psi_0(\xi) \overline{\widetilde{\Psi}_0(\xi)}=1,\quad \forall \, \xi \in \R.
$$
  Applying Theorem \ref{thm: equivalence hpD} (the equivalence $ \|g_{\Phi}^{c, D}(f)\|_{p}+\|\phi *f\|_{p} \approx \|f\|_{\h^c_p}$) to  $g=J^\alpha f$ with the text functions in the above identity, we get
$$\|  g \| _{\h_1^c}\approx \big\|  (\sum_{j\geq 0}|  \check {\Psi}_j*g| ^2)^\frac{1}{2}\big\| _1.$$
Now let us show the following equivalence:
$$
\big\|  (\sum_{j\geq 0}|  \check {\Psi}_j*g| ^2)^\frac{1}{2}\big\| _1\approx \big\|  (\sum_{j\geq 0}2^{2j\alpha}|   \check{\rho}_j*f| ^2)^\frac{1}{2}\big\| _1.
$$
It is easy to see that $ \check {\Psi}_0*g=\check{\rho}_0*f$ and $2^{j\alpha} \check{\rho}_j*f= \check{\Psi}_j*I^{\alpha}f $, so it suffices to prove
\begin{equation}\label{eq: J and I equi}
 \big\|  (\sum_{j\geq 1}|  \check {\Psi}_j*J^{\alpha}f | ^2)^\frac{1}{2}\big\| _1\approx  \big\|  (\sum_{j\geq 1}| \check {\Psi}_j*I^{\alpha}f | ^2)^\frac{1}{2}\big\| _1.
\end{equation}
First, let us consider the case $\alpha \geq 0$. By  \cite[Lemma 3.2.2]{Stein1970}, there exists a finite measure $\mu_\alpha$ on $\R$  such that 
$$
|\xi |^{\alpha}=\widehat{\mu}_{\alpha}(\xi)(1+|\xi|^2)^\frac{\alpha}{2}.
$$
Thus, we have
$$
\check {\Psi}_j*I^{\alpha}f=\mu_{\alpha}*\check {\Psi}_j*J^{\alpha}f, \quad \forall\, j\geq 1.
$$
This implies that
$$
\big\|  (\sum_{j\geq 1}| \check {\Psi}_j*I^{\alpha}f | ^2)^\frac{1}{2}\big\| _1\lesssim  \big\|  (\sum_{j\geq 1}|  \check {\Psi}_j*J^{\alpha}f | ^2)^\frac{1}{2}\big\| _1.
$$
Then, we move to the case $\alpha<0$. Also by \cite[Lemma 3.2.2]{Stein1970}, there exist two finite measures $\nu_{\alpha}$ and $\lambda_{\alpha}$ on $\R$ such that
$$
(1+|\xi|^2)^{-\frac{\alpha}{2}}=\widehat{\nu}_{\alpha}(\xi)+|\xi |^{-\alpha}\widehat{\lambda}_{\alpha}(\xi).
$$
Let $(\dot\varphi_k)_{k \in \mathbb{Z}}$ be the homogeneous resolution of the unit defined in \eqref{eq:resolution of unity homo}. It follows that
$$
\frac{(1+|\xi|^2)^{-\frac{\alpha}{2}}}{|\xi|^{-\alpha}}\sum_{k\geq 0}\dot\varphi_k(\xi)=\frac{\widehat{\nu}_{\alpha}(\xi)}{|\xi|^{-\alpha}}\sum_{k\geq 0}\dot\varphi_k(\xi)+\widehat{\lambda}_{\alpha}(\xi)\sum_{k\geq 0}\dot\varphi_k(\xi).
$$
Thus, by the support assumption of $\widehat{\rho}$, we have
$$
\check {\Psi}_j*I^{\alpha}f=\omega_\alpha*\check {\Psi}_j*J^{\alpha}f,
$$
with
$$\omega_\alpha=\nu_\alpha *\sum_{k\geq 0}\F^{-1}(I_\alpha\dot{\varphi}_k)+\lambda_\alpha*\F^{-1}(\sum_{k\geq 0}\dot\varphi_k).$$
Both $\F^{-1}(\sum_{k\geq 0}\dot\varphi_k)$ and $\sum_{k\geq 0}\F^{-1}(I_\alpha\dot{\varphi}_k)$ are finite measures.
Since $\sum_{k\geq 0}\dot\varphi_k=1-\sum_{k< 0}\dot\varphi_k $, and $\sum_{k< 0}\dot\varphi_k $ is a Schwartz function, we know that $\F^{-1}(\sum_{k\geq 0}\dot\varphi_k) = \delta_0 - \F^{-1}(\sum_{k< 0}\dot\varphi_k)$ is a finite measure, where $\delta_0 $ denotes the Dirac measure at the origin.
Moreover, it is known in \cite[Lemma 3.4]{XXY17} that $\|\F^{-1}(I_\alpha\dot{\varphi}_k)\|_1\lesssim 2^{k \alpha}$. Then we have
$$
\|\F^{-1}(\sum_{k\geq 0} I_\alpha\dot{\varphi}_k)\|_1\lesssim \sum_{k\geq 0} 2^{k \alpha}<\infty.
$$
Therefore, $\omega_\alpha$ is a finite measure on $\R$. Thus,
$$
\big\|  (\sum_{j\geq 1}| \check {\Psi}_j*I^{\alpha}f | ^2)^\frac{1}{2}\big\| _1\lesssim  \big\|  (\sum_{j\geq 1}|  \check {\Psi}_j*J^{\alpha}f | ^2)^\frac{1}{2}\big\| _1.
$$
Similarly, for $\alpha\in \mathbb{R}$, we can prove that
$$
 \big\|  (\sum_{j\geq 1}|  \check {\Psi}_j*J^{\alpha}f | ^2)^\frac{1}{2}\big\| _1\lesssim \big\|  (\sum_{j\geq 1}| \check {\Psi}_j*I^{\alpha}f | ^2)^\frac{1}{2}\big\| _1.
$$
In summary, we have proved \eqref{eq: J and I equi}, which yields that 
$$\|g\|_{\h_1^c} = \|J^\alpha f \|_{\h_1^c}  \approx  \big\|\big(\sum_{j\geq 0}   2^{2j\alpha}  |\check{\rho}_j  *f   |^2\big)^{\frac 1 2} \big\|_1\,. $$

Now define a new sequence $\zeta=(\zeta_j)_{j\geq 0}$ by setting $\zeta_j=2^{j\alpha}I_{-\alpha}\phi_j\rho_j$ for $j\geq 1$ and $\zeta_0=J_{-\alpha}\phi_0\rho_0$. Then
$$
\check{\zeta}_j*g=2^{j\alpha}\check{\phi}_j*\check{\rho}_j*I^{-\alpha}g \quad \text{and}\quad \check{\zeta}_0*g=\check\phi_0*\check{\rho}_0*f.
$$
Repeating the argument for  \eqref{eq: J and I equi} with  $\zeta=(\zeta_j)_{j\geq 0}$ instead of $\Psi=(\Psi_j)_{j\geq 0}$, we get
\begin{equation*}
\big\|  (\sum_{j\geq 0} 2^{2j\alpha}|\check{\phi}_j*\check{\rho}_j*f | ^2)^\frac{1}{2}\big\| _1 \
=  \big\|  (\sum_{j\geq 0}| \check{\zeta}_j*I^{\alpha} f | ^2)^\frac{1}{2}\big\| _1\approx  \big\|  (\sum_{j\geq 0}| \check{\zeta}_j*g | ^2)^\frac{1}{2}\big\| _1.
\end{equation*}
Then, we apply Lemma \ref{lem:p=1} to $g$ with this new $\zeta$ instead of $\phi$ to get
\begin{equation*}
 \big\|  (\sum_{j\geq 0}| \check{\zeta}_j*g | ^2)^\frac{1}{2}\big\| _1
\lesssim \|\zeta\|_{2,\sigma}\|  g\| _{\h_1^c}\approx \|\zeta\|_{2,\sigma} \big\|  (\sum_{j\geq 0}2^{2j\alpha}| \check{\rho}_j*f | ^2)^\frac{1}{2}\big\| _1.
\end{equation*}
It suffices to estimate the term $\|\zeta\|_{2,\sigma}$. By the definition of $\zeta =(\zeta_j) _{j\geq } $, we have
\begin{equation*}
\begin{split} 
\underset{\substack{j\geq 1\\ -2\leq k \leq 2}}{\sup} \|    {\zeta} _j (2^{j+k}\cdot)\varphi\| _{H_2^\sigma } & \lesssim \underset{\substack{j\geq 1\\ -2\leq k \leq 2}}{\sup} \|    {\phi} _j (2^{j+k}\cdot)\varphi\| _{H_2^\sigma },\\
\|    \zeta_0 ( {\varphi}^{(0)} + {\varphi}^{(1)})\| _{H_2^\sigma} &  \lesssim   \|    \phi_0 ( {\varphi}^{(0)} + {\varphi}^{(1)})\| _{H_2^\sigma}. \end{split}
\end{equation*}
So we can use the same argument at the end of the proof of Theorem \ref{cor: conic multiplier}, to get
$$\|\zeta\|_{2,\sigma}\lesssim  \max \big\{ \underset{\substack{j\geq 1\\ -2\leq k \leq 2}}{\sup} \|    {\phi} _j (2^{j+k}\cdot)\varphi\| _{H_2^\sigma }, \|    \phi_0  ({\varphi}^{(0)} + {\varphi}^{(1)})\| _{H_2^\sigma}  \big\}.$$
Combining the above inequalities, we get the desired assertion.
\end{proof}

In the setting where $\ell_2$ is replaced by $H= \oplus_{j= 0}^\infty H_j$ with $H_j=L_2\big(B(0,2^{-j}),2^{jd}dt\big)$, the counterpart of Lemma \ref{lem:p=1} is the following:

\begin{lem}\label{lem:p=1-conic}
Let $\phi=(\phi_j)_{j\geq0}$ be a sequence of continuous functions on $\R$ satisfying \eqref{eq:phi}. Then for $1\leq p\leq 2$ and  $f\in \h_p^c(\R,\M)$,
\[
\big\| (\sum_{j\geq 0}2^{dj} \int_{B(0,2^{-j})} |\check \phi_j*f(\cdot+t) |  ^2dt)^\frac{1}{2} \big\| _{p}\lesssim \|  \phi \|  _{2,\sigma} \|  f\| _{\h_p^c}.
\]
The relevant constant depends only on $\varphi  $, $\sigma$ and $d$.
\end{lem}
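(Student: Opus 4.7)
My plan is to mirror the proof of Lemma~\ref{lem:p=1}, with the column Hilbert space $\ell_2^c$ replaced by $H^c$, where $H = \bigoplus_{j\geq 0} H_j$ and $H_j = L_2(B(0, 2^{-j}), 2^{jd}dt)$ is the space already used in Theorem~\ref{cor: conic multiplier}. The left-hand side is precisely $\|T(f)\|_{L_p(\N; H^c)}$ for the operator $T(f)(s) = (\check{\phi}_j * f(s + \cdot))_{j\geq 0}$, which is convolution with the $H$-valued kernel $\mathbf{k}(s)_{j, t} = \check{\phi}_j(s + t)$. A direct computation gives
\[
\|\widehat{\mathbf{k}}(\xi)\|_H^2 = \sum_{j\geq 0} 2^{jd}\int_{B(0, 2^{-j})} |\phi_j(\xi)|^2\, dt = c_d\,\|\phi(\xi)\|_{\ell_2}^2 \lesssim \|\phi\|_{2,\sigma}^2,
\]
the last inequality being the bound $\|\phi\|_{L_\infty(\R;\ell_2)} \lesssim \|\phi\|_{2,\sigma}$ already observed in the proof of Lemma~\ref{lem:C-Z l_2}. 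Combined with~\eqref{eq: Planchel}, Fubini, translation invariance of the trace, and $\|f\|_{\h_2^c}\simeq \|f\|_{2}$, this settles the case $p = 2$. Interpolation between the endpoints $p=1$ and $p=2$ (via Theorem~\ref{interpolation-hardy}(1) on the source side and standard interpolation of $L_p(\N; H^c)$ on the target side) then gives $1 < p < 2$, so it suffices to handle $p = 1$.

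For $p = 1$, I would invoke the duality $\h_1^c(\R,\M)^\ast = \bmo^c(\R,\M)$ from Theorem~\ref{dual-hardy} and reduce, as in Lemma~\ref{lem:p=1}, to showing that the adjoint $T^\ast$ maps $L_\infty(\N; H^c)$ into $\bmo^c(\R, \M)$. The adjoint is a row convolution operator; on a cube $Q$ with $|Q| \leq 1$ one performs the standard splitting $f = g + h$, $g = f\un_{\widetilde Q}$, and estimates each piece by Plancherel and Cauchy--Schwarz exactly as in the proof of Lemma~\ref{lem:p to p}. The only missing inputs are the $H$-valued analogues of the size and smoothness conditions of Lemma~\ref{lem:C-Z l_2}, namely $\int_{|s|\geq 1/2}\|\mathbf{k}(s)\|_H\,ds \lesssim \|\phi\|_{2,\sigma}$ and $\sup_{t_0}\int_{|s|>2|t_0|}\|\mathbf{k}(s-t_0)-\mathbf{k}(s)\|_H\,ds \lesssim \|\phi\|_{2,\sigma}$.

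To produce these two estimates I would decompose $\phi_j = \sum_{k\geq 0} \phi_j\varphi^{(k)}$ and set $\mathbf{k}^{(k)}(s)_{j,t} = \mathcal F^{-1}(\phi_j\varphi^{(k)})(s+t)$, so that $\mathbf{k} = \sum_k \mathbf{k}^{(k)}$. The central observation is the identity, obtained by Plancherel after the dilation $s\mapsto 2^{-k}s$,
\[
\int_{\R}(1+|2^k s|^2)^{\sigma}\bigl|\mathcal F^{-1}(\phi_j\varphi^{(k)})(s+t)\bigr|^2\,ds = 2^{kd}\,\|\phi_j(2^k\cdot)\varphi\|_{H_2^\sigma}^2 \qquad (k\geq 1),
\]
with the analogous identity involving $\|\phi_j(\varphi^{(0)}+\varphi^{(1)})\|_{H_2^\sigma}$ when $k=0$; crucially the right-hand side is independent of $t\in B(0,2^{-j})$. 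Summing over $j$ via~\eqref{eq:phi} and applying Cauchy--Schwarz with weight $(1+|2^k s|^2)^{-\sigma}$ on $\{|s|>A\}$ yields
\[
\int_{|s|>A}\|\mathbf{k}^{(k)}(s)\|_H\,ds \lesssim (2^k A)^{\frac{d}{2}-\sigma}\|\phi\|_{2,\sigma},
\]
which is summable in $k$ at $A=1/2$ since $\sigma>d/2$. The smoothness bound is split into regimes: for $2^k|t_0|\geq 1$ the triangle inequality together with the previous estimate suffice, while for $2^k|t_0|<1$ one replaces $\phi_j\varphi^{(k)}$ by $(1-e^{-2\pi i\xi\cdot t_0})\phi_j\varphi^{(k)}$ and, after scaling $\xi=2^k\eta$, exploits via~\eqref{ineq-potential-Sobolev} the $H_2^\sigma$-gain of order $2^k|t_0|$ coming from the compact support of $\varphi$. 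Summing in $k$ exactly as in Lemma~\ref{lem:C-Z l_2} gives the Lipschitz estimate uniformly in $t_0$. The main technical obstacle is precisely this derivation of the $H$-valued kernel estimates; once the $t$-dependence vanishes after Plancherel and scaling, the remainder of the argument is a faithful transcription of Lemma~\ref{lem:p to p}.
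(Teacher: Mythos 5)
Your overall architecture is exactly the paper's: realize the left-hand side as $\|\fk(f)\|_{L_p(\N;H^c)}$ for the $H$-valued convolution kernel $\mathbf{k}(s)_{j,t}=\check\phi_j(s+t)$, settle $p=2$ by Plancherel, interpolate, and for $p=1$ dualize and rerun the $\bmo$ estimate of Lemma \ref{lem:p to p}; the paper itself disposes of the lemma in two sentences by asserting that the kernel estimates ``are the same'' as in Lemma \ref{lem:p to p}. The $p=2$ computation, the interpolation, and the duality reduction are all fine. The problem is the step you yourself flag as central. The identity
\[
\int_{\R}(1+|2^k s|^2)^{\sigma}\bigl|\mathcal F^{-1}(\phi_j\varphi^{(k)})(s+t)\bigr|^2\,ds = 2^{kd}\,\|\phi_j(2^k\cdot)\varphi\|_{H_2^\sigma}^2
\]
is true only at $t=0$: the weight $(1+|2^ks|^2)^\sigma$ is not translation invariant, so the left-hand side does depend on $t$. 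Substituting $s'=s+t$ and using Peetre's inequality, the best one gets is an upper bound with an extra factor $(1+|2^kt|^2)^\sigma$. For $t\in B(0,2^{-j})$ this factor is $O(1)$ precisely when $j\geq k$; when $j<k$ it can be as large as $2^{2(k-j)\sigma}$, and after summing over $j$ (which condition \eqref{eq:phi} does not prevent from charging small $j$) and then over $k$, your size and H\"ormander estimates no longer close. So the derivation of the two $H$-valued kernel conditions has a genuine gap in the off-diagonal regime $j<k$ --- which is exactly the one point of the proof that is new relative to Lemmas \ref{lem:C-Z l_2} and \ref{lem:p=1}.

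Two remarks on how serious this is. First, in every application in the paper (Theorems \ref{cor: conic multiplier} and \ref{lem: Multiplier conic p=1}) the sequence fed into this lemma has $\phi_j$ supported in $\{|\xi|\approx 2^j\}$, so $\phi_j\varphi^{(k)}=0$ unless $|j-k|\leq 2$; in that diagonal situation $2^k|t|\lesssim 1$ for $t\in B(0,2^{-j})$, the Peetre factor is harmless, and your argument goes through verbatim. Second, for general $\phi$ satisfying only \eqref{eq:phi} you need an additional idea to handle the terms with $j<k$ (for instance, a weight adapted to the shift, or a separate treatment of the region $|s|\approx 1$ where $s+t$ can approach the origin); simply asserting $t$-independence is not enough. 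You should either restrict the statement to frequency-localized $\phi_j$, or supply the missing off-diagonal estimate explicitly.
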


\begin{proof}
The proof of this lemma is similar to Lemma \ref{lem:p=1}; let us point out the necessary change. Consider the $H$-valued Calder\'on-Zygmund operator $\fk   $  defined on $L_p(\N)$ given by
\[
\fk  (f)_j(\cdot +t)=\check{\phi}_j  * f(\cdot+t).
\]
The lemma is then reduced to showing that $\fk  $ is bounded from $\h_p^c(\R,\M)$ to $L_p(\N; H^c)$ for $1\leq p<2$. Since each $H_j $ is a normalized Hilbert space, such that the constant function $1$ has Hilbert norm one, the kernel estimates of our $\fk$ here are the same as the ones in Lemma \ref{lem:p to p}. So we can repeat the proof in Lemma \ref{lem:p to p} and Lemma \ref{lem:p=1}. The desired assertion follows.
\end{proof}

Combining the above lemma with Theorem \ref{thm: equivalence hpD} ($\|s_{\Phi}^{c, D}(f)\|_{L_{p}(\N)}+\|\phi *f\|_{p} \approx \|f\|_{\h^c_p}$), we can deduce the analogue of Theorem \ref{lem: Multiplier p=1} in the setting $H= \oplus_{j= 0}^\infty H_j$ with $H_j=L_2\big(B(0,2^{-j}),2^{jd}dt\big)$. Its proof is similar to that of Theorem \ref{lem: Multiplier p=1}, and is left to the reader.

\begin{thm}\label{lem: Multiplier conic p=1}
Keep the assumption in Theorem \ref{cor: conic multiplier} and assume additionally that for any $j\geq 1$, $\rho_j={{\rho}}(2^{-j}\cdot)$ for some Schwartz function with $\supp {{\rho}}\subset \lbrace\xi : 2^{-1}\leq \vert \xi\vert \leq 2\rbrace$ and ${{\rho}}(\xi)>0$ for any $2^{-1}< \vert \xi\vert < 2$, and that $\supp {{\rho}_0}\subset \lbrace\xi :  \vert \xi\vert \leq 2\rbrace$ and ${{\rho}_0}(\xi)>0$ for any $\vert \xi\vert < 2$. Then for any $f\in \mathcal{S}'(\R; L_1(\M)+\M)$,
\begin{equation*}
\begin{split}
& \Big\| (\sum_{j\geq 0}2^{j(2\alpha +d)}\int_{B(0,2^{-j})} |  \check \phi_j* \check \rho_j*f(\cdot+t) |  ^2dt)^\frac{1}{2}\Big\| _{1}\\
&\lesssim  \max \big\{ \underset{\substack{j\geq 1\\ -2\leq k \leq 2}}{\sup} \|    {\phi} _j (2^{j+k}\cdot)\varphi\| _{H_2^\sigma }, \|    \phi_0 ( {\varphi}^{(0)}+\varphi^{(1)})\| _{H_2^\sigma}  \big\} \\
  & \;\;\;\; \cdot  \Big\| (\sum_{j\geq 0}2^{j(2\alpha+d)} \int_{B(0,2^{-j})} |\check \rho_j*f(\cdot+t) |  ^2dt)^\frac{1}{2}\Big\| _{1}.
\end{split}
\end{equation*}

\end{thm}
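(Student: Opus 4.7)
The plan is to mimic the argument of Theorem \ref{lem: Multiplier p=1} step by step, replacing the radial discrete square function by its conic (Lusin area) analogue everywhere, using Theorem \ref{thm: equivalence hpD} in its $s_{\Phi}^{c,D}$-form instead of its $g_{\Phi}^{c,D}$-form, and invoking Lemma \ref{lem:p=1-conic} in place of Lemma \ref{lem:p=1}.

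First, I would select $\widetilde{\rho}$ and $\widetilde{\rho}_0$ with the same support and positivity assumptions as $\rho$ and $\rho_0$ so that $\sum_{j\geq 1}\rho(2^{-j}\xi)\overline{\widetilde\rho(2^{-j}\xi)} + \rho_0(\xi)\overline{\widetilde\rho_0(\xi)} = 1$ on $\R$. Setting $\Psi_j = (I_{-\alpha}\rho)(2^{-j}\cdot)$, $\widetilde\Psi_j = (I_\alpha\rho)(2^{-j}\cdot)$ for $j\geq 1$, and $\Psi_0 = J_{-\alpha}\rho_0$, $\widetilde\Psi_0 = J_\alpha\rho_0$, we still have a resolution of the identity. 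Applying Theorem \ref{thm: equivalence hpD} in its conic form to $g = J^\alpha f$ with the pair $(\Psi,\widetilde\Psi)$, we obtain
\[
\|g\|_{\h_1^c} \;\approx\; \Big\| \bigl(\sum_{j\geq 0} 2^{jd}\int_{B(0,2^{-j})}|\check\Psi_j * g(\cdot+t)|^2\,dt\bigr)^{\frac12}\Big\|_1.
\]

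Next, I would establish the conic counterpart of the equivalence \eqref{eq: J and I equi}, namely that one can replace $J^\alpha f$ by $I^\alpha f$ inside the above expression (for $j\geq 1$). This is the step with the most substance; however, the argument of Theorem \ref{lem: Multiplier p=1} carries over verbatim, because the key identities $|\xi|^\alpha = \widehat{\mu}_\alpha(\xi)(1+|\xi|^2)^{\alpha/2}$ and its counterpart for negative $\alpha$ from \cite[Lemma~3.2.2]{Stein1970}, together with the $L_1$-bound on $\sum_{k\geq 0}\F^{-1}(I_\alpha \dot\varphi_k)$ from \cite[Lemma~3.4]{XXY17}, produce a finite measure $\omega_\alpha$ on $\R$ with $\check\Psi_j * I^\alpha f = \omega_\alpha * \check\Psi_j * J^\alpha f$. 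Since convolution on the left with the scalar measure $\omega_\alpha$ commutes with the translation $t\mapsto \cdot + t$ on $B(0,2^{-j})$, it factors out of the square function, and Young's inequality finishes the equivalence in both directions. Consequently,
\[
\|g\|_{\h_1^c} \;\approx\; \Big\| \bigl(\sum_{j\geq 0} 2^{j(2\alpha+d)}\int_{B(0,2^{-j})}|\check\rho_j * f(\cdot+t)|^2\,dt\bigr)^{\frac12}\Big\|_1.
\]

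Then I would introduce the new sequence $\zeta = (\zeta_j)_{j\geq 0}$ with $\zeta_j = 2^{j\alpha} I_{-\alpha}\phi_j\rho_j$ for $j\geq 1$ and $\zeta_0 = J_{-\alpha}\phi_0\rho_0$, so that $\check\zeta_j * g = 2^{j\alpha}\check\phi_j * \check\rho_j * I^{-\alpha} g$ and $\check\zeta_0 * g = \check\phi_0 * \check\rho_0 * f$. Repeating in the conic setting the same $I^\alpha$-versus-$J^\alpha$ reduction (again valid thanks to the support conditions on $\phi_j\rho_j$), we get
\[
\Big\|\bigl(\sum_{j\geq 0}2^{j(2\alpha+d)}\int_{B(0,2^{-j})}|\check\phi_j*\check\rho_j*f(\cdot+t)|^2 dt\bigr)^{\frac12}\Big\|_1 \approx \Big\|\bigl(\sum_{j\geq 0}2^{jd}\int_{B(0,2^{-j})}|\check\zeta_j* g(\cdot+t)|^2 dt\bigr)^{\frac12}\Big\|_1.
\]
Now I would apply Lemma \ref{lem:p=1-conic} to $g$ with the sequence $\zeta$ in place of $\phi$, controlling the right-hand side by $\|\zeta\|_{2,\sigma}\,\|g\|_{\h_1^c}$. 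Combined with the equivalence from the previous paragraph, this bounds the target norm by $\|\zeta\|_{2,\sigma}$ times $\big\|(\sum_{j\geq 0} 2^{j(2\alpha+d)}\int_{B(0,2^{-j})}|\check\rho_j*f(\cdot+t)|^2 dt)^{1/2}\big\|_1$.

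Finally, I would estimate $\|\zeta\|_{2,\sigma}$ by the maximum appearing in the statement. This is precisely the end of the proof of Theorem \ref{lem: Multiplier p=1}: using \eqref{ineq-potential-Sobolev} together with the fact that $I_{-\alpha}$ and $J_{-\alpha}$ act boundedly on $H_2^\sigma$ when localized by the Littlewood-Paley bumps $\varphi$ and $\varphi^{(0)}+\varphi^{(1)}$, one obtains
\[
\|\zeta\|_{2,\sigma} \lesssim \max\Bigl\{\sup_{\substack{j\geq 1\\ -2\leq k\leq 2}}\|\phi_j(2^{j+k}\cdot)\varphi\|_{H_2^\sigma},\ \|\phi_0(\varphi^{(0)}+\varphi^{(1)})\|_{H_2^\sigma}\Bigr\}.
\]
The main obstacle is checking the conic Bessel-versus-Riesz equivalence; once one observes that the relevant operators are scalar convolutions and therefore commute with the inner averaging over $B(0,2^{-j})$, the remainder of the proof is a routine transcription of the argument for Theorem \ref{lem: Multiplier p=1}.
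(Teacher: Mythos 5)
Your proposal is correct and follows exactly the route the paper indicates for this theorem (whose proof it leaves to the reader): combine Lemma \ref{lem:p=1-conic} with the conic form $s_{\Phi}^{c,D}$ of Theorem \ref{thm: equivalence hpD} and transcribe the proof of Theorem \ref{lem: Multiplier p=1}, the only point needing care being the one you correctly single out, namely that the finite scalar measure $\omega_\alpha$ commutes with the inner averaging over $B(0,2^{-j})$ so that Young's inequality in $L_1(\N;H^c)$ yields the conic analogue of \eqref{eq: J and I equi}.
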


This theorem fills the gap of $p=1$ left by Theorem \ref{cor: conic multiplier}. Both of them will be useful when we consider the conic square functions of inhomogeneous Triebel-Lizorkin spaces in section \ref{section-charact}.

%%%%%%%%%%%%%%%%%%%%%%%%%%%%%%%%%%%%%%%%%%%%%%%%%%%%%%%%%%%%%%%%%%%
 \section{Operator-valued Triebel-Lizorkin spaces}\label{section-TL}

In this section, we give the definition of operator-valued Triebel-Lizorkin spaces, and then prove some basic properties of them. Among the others, we connect operator-valued Triebel-Lizorkin spaces with local Hardy spaces introduced in \cite{XX18} via Bessel potentials. By this connection, we are able to deduce  the duality and  the complex interpolation of Triebel-Lizorkin spaces. We also show that for $\alpha>0$ the $F_1^{\alpha,c} (\R,\M)$-norm is the sum of two homogeneous norms.

\subsection{Definitions and basic properties}\label{section: Definitions of T_L}
Recall that  $\varphi$ is a Schwartz function satisfying \eqref{condition-phi}.
For each $j \in \mathbb{N}$,  $\varphi_j$ is the function whose Fourier transform is equal to $\varphi(2^{-j}\cdot)$, and  $\varphi_0$ is the function whose Fourier transform is equal to $1-\sum_{j\geq 1}\varphi (2^{-j}\cdot)$. Moreover, the Fourier transform of $\varphi_j$ is denoted by $\varphi^{(j)}$ for $j\in \mathbb{N}_0$.

\begin{defn}\label{def:T-L R}
Let $1\leq p<\infty$ and $\alpha \in \mathbb{R}$.
\begin{enumerate}[\rm(1)]
\item The column Triebel-Lizorkin space $F_p^{\alpha,c}(\R,\M)$ is defined by
$$
F_p^{\alpha,c}(\R,\M)=\lbrace f\in \mathcal{S}'(\R; L_1(\M)+\M): \|  f\| _{F_p^{\alpha,c}}<\infty\rbrace,
$$
where
$$
\|  f\| _{F_p^{\alpha,c}}= \big\|  (\sum_{j\geq 0}2^{2j\alpha}|  {\varphi}_j*f | ^2)^\frac{1}{2}\big\| _p.
$$
\item The row space $F_p^{\alpha,r}(\R,\M)$ consists of all $f$ such that $f^*\in F_p^{\alpha,c}(\R,\M)$, equipped with the norm $\|  f\| _{F_p^{\alpha,r}}=\|  f^*\| _{F_p^{\alpha,c}}$.
\item The mixture space $F_p^{\alpha}(\R,\M)$ is defined to be
\begin{equation*}
F_p^{\alpha}(\R,\M)=
\begin{cases}
F_p^{\alpha,c}(\R,\M)+F_p^{\alpha,r}(\R,\M) \quad \mbox{if}\quad  1\leq p\leq 2\\
F_p^{\alpha,c}(\R,\M)\cap F_p^{\alpha,r}(\R,\M) \quad \mbox{if}\quad  2<p<\infty,
\end{cases}
\end{equation*}
equipped with
\begin{equation*}
\|  f\| _{F_p^{\alpha}}=
\begin{cases}
\inf \lbrace \|  g\| _{F_p^{\alpha,c}}+\|  h\| _{F_p^{\alpha,r}}: f=g+h \rbrace  \quad \mbox{if}\quad  1\leq p\leq 2\\
\max \lbrace \|  f\| _{F_p^{\alpha,c}},\|  f\| _{F_p^{\alpha,r}} \rbrace  \quad \mbox{if}\quad  2< p<\infty.\\
\end{cases}
\end{equation*}
\end{enumerate}
\end{defn}

In the sequel, we focus on the study of the column Triebel-Lizorkin spaces. All results obtained in the rest of this paper also admit the row versions.
The following proposition shows that $F_p^{\alpha,c}$-norm is independent of the choice of the function $\varphi$ satisfying \eqref{condition-phi}.

\begin{prop}\label{prop: charac of F}
Let $\psi $ be another Schwartz function satisfying the same condition \eqref{condition-phi} as $\varphi$. For each $j \in \mathbb{N}$,  let $\psi_j$ be the function whose Fourier transform is equal to $\psi(2^{-j}\cdot)$, and let $\psi_0$ be the function whose Fourier transform is equal to $1-\sum_{j\geq 1}\psi (2^{-j}\cdot)$. Then
$$
\|  f\| _{F_p^{\alpha,c}} \approx \big\|  (\sum_{j\geq 0}2^{2j\alpha}|  {\psi}_j*f | ^2)^\frac{1}{2}\big\| _p.
$$
\end{prop}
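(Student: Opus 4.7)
By symmetry we only need to establish $\big\|(\sum_j 2^{2j\alpha}|\psi_j * f|^2)^{1/2}\big\|_p \lesssim \|f\|_{F_p^{\alpha,c}}$, since the converse follows by swapping the roles of $\varphi$ and $\psi$. The plan is to reduce the estimate to the Fourier multiplier theorems from Section \ref{section: Multiplier theorems}.

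The support of $\widehat{\psi}_j$ is contained in the dyadic shell $\{2^{j-1}\le |\xi|\le 2^{j+1}\}$ for $j\ge 1$ (resp. $\{|\xi|\le 2\}$ for $j=0$), and on this shell the truncated sum $\widehat{\varphi}_{j-1}+\widehat{\varphi}_j+\widehat{\varphi}_{j+1}$ (with $\widehat{\varphi}_{-1}:=0$; resp. $\widehat{\varphi}_0+\widehat{\varphi}_1$) is identically $1$, by \eqref{condition-phi} and the resolution $\sum_k \widehat{\varphi}_k\equiv 1$. Consequently $\psi_j*f=\sum_{m\in\{-1,0,1\}}\psi_j*\varphi_{j+m}*f$ (with the convention that terms with $j+m<0$ are zero). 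A three-term triangle inequality in $L_p(\N;\ell_2^c)$ then reduces matters to showing, for each fixed $m\in\{-1,0,1\}$, that $\big\|(\sum_j 2^{2j\alpha}|\psi_j*\varphi_{j+m}*f|^2)^{1/2}\big\|_p \lesssim \|f\|_{F_p^{\alpha,c}}$.

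For $1<p<\infty$, apply Theorem \ref{multiplier-global} with $\phi_j=\widehat{\psi}_j$ and $\rho_j=\widehat{\varphi}_{j+m}$. The main hypothesis $\sup_{j,k}\|\phi_j(2^{j+k}\cdot)\varphi\|_{H_2^\sigma}<\infty$ holds trivially since $\widehat{\psi}_j(2^{j+k}\xi)\varphi(\xi)=\psi(2^k\xi)\varphi(\xi)$ is a compactly supported Schwartz function \emph{independent of $j$}; the $j=0$ boundary condition is finite by the same reasoning. A harmless reindexing $j\mapsto j-m$ in the resulting bound then yields $\|f\|_{F_p^{\alpha,c}}$ on the right-hand side.

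The main technical point is the endpoint $p=1$, where Theorem \ref{lem: Multiplier p=1} requires the more restrictive structural assumption $\rho_j=\rho(2^{-j}\cdot)$ with $\supp \rho \subset \{\tfrac12\le|\xi|\le 2\}$. This is immediate for $m=0$, with $\rho=\varphi$ and $\widehat{\varphi}_0$ supplying the required positivity on $\{|\xi|<2\}$. For $m=\pm 1$ one first reindexes by $j'=j+m$ so that the new $\rho$-function is $\widehat{\varphi}_{j'}=\varphi(2^{-j'}\cdot)$ of the required form; the index shift is absorbed into the rescaled multiplier $\phi'_{j'}=2^{\mp\alpha}\widehat{\psi}_{j'\mp 1}$, whose rescaled expression $\phi'_{j'}(2^{j'+k}\xi)\varphi(\xi)=2^{\mp\alpha}\psi(2^{k\mp 1}\xi)\varphi(\xi)$ remains $j'$-independent and finite in $H_2^\sigma$, and the boundary term $j'=0$ is treated by setting $\phi'_0:=0$. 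Once this reindexing is in place, Theorem \ref{lem: Multiplier p=1} applies and closes the proof.
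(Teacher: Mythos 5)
Your argument is correct and coincides with the paper's own proof: the same three-term decomposition $\psi_j*f=\sum_{k=-1}^{1}\psi_j*\varphi_{j+k}*f$ followed by Theorems \ref{multiplier-global} and \ref{lem: Multiplier p=1}, and you are in fact more explicit than the paper about the reindexing needed to fit the structural hypothesis of Theorem \ref{lem: Multiplier p=1} at $p=1$. One tiny slip: for $m=-1$ the boundary term is $\phi'_0=2^{\alpha}\widehat{\psi}_{1}$, not $0$, but it satisfies the required support and $H_2^\sigma$ conditions, so nothing breaks.
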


\begin{proof}
For any $f\in \mathcal{S}'(\R; L_1(\M)+\M)$, by the support assumption of $\psi$ and $\varphi$, we have, for any $j\geq 0$,
$$
{\psi}_j*f=\sum_{k=-1}^1{\psi}_j*{\varphi}_{k+j}*f,
$$
with the convention ${\varphi}_{-1}=0$. Thus by Theorems \ref{multiplier-global} and \ref{lem: Multiplier p=1},
\begin{equation*}
\begin{split}
&\big\|  (\sum_{j\geq 0}2^{2j\alpha}|  {\psi}_j*f | ^2)^\frac{1}{2}\big\| _p \\
 & \leq \sum_{k=-1}^{1}\big\|  (\sum_{j\geq 0}2^{2j\alpha}|  {\psi}_j*{\varphi}_{k+j}*f | ^2)^\frac{1}{2}\big\| _p\\
 & \lesssim  \max \big\{ \sup_{-2\leq k \leq 2} \|    {\psi} (2^{k}\cdot)\varphi\| _{H_2^\sigma }, \|    \psi_0 ({\varphi}^{(0)} + {\varphi}^{(1)})\| _{H_2^\sigma}  \big\}
 \big\|  (\sum_{j\geq 0}2^{2j\alpha}|  {\varphi}_j*f | ^2)^\frac{1}{2}\big\| _p \\
 &  \lesssim  \big\|  (\sum_{j\geq 0}2^{2j\alpha}|  {\varphi}_j*f | ^2)^\frac{1}{2}\big\| _p.
\end{split}
\end{equation*}
Changing the role of $\varphi$ and $\psi$, we get the reverse inequality.
\end{proof}

\begin{prop}\label{prop: F}
Let $1\leq p <\infty$ and $\alpha \in \mathbb{R}$. Then
\begin{enumerate}[\rm(1)]
\item $F_p^{\alpha,c}(\R,\M)$ is a Banach space.
\item   $F_p^{\alpha,c}(\R,\M)\subset F_p^{\beta,c}(\R,\M)$ if $\alpha > \beta$.
\item $F_p^{0,c}(\R,\M)=\h _p^c(\R,\M)$ with equivalent norms.
\end{enumerate}
\end{prop}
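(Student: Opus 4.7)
The plan is to handle the three assertions in increasing order of difficulty. For part (2), monotonicity $2^{2j\beta}\leq 2^{2j\alpha}$ for $j\geq 0$ yields the operator inequality $\sum_{j\geq 0}2^{2j\beta}|\varphi_j*f|^{2}\leq \sum_{j\geq 0}2^{2j\alpha}|\varphi_j*f|^{2}$ as positive operators; operator monotonicity of the square root on the positive cone together with monotonicity of $\|\cdot\|_p$ then gives $\|f\|_{F_p^{\beta,c}}\leq \|f\|_{F_p^{\alpha,c}}$. For part (1), the triangle inequality and homogeneity are inherited from the column norm of $L_p(\N;\ell_2^c)$ applied to the sequence $(2^{j\alpha}\varphi_j*f)_j$, while definiteness follows from the resolution of unity \eqref{eq:resolution of unity}: if every $\varphi_j*f$ vanishes, then $\widehat{f}=0$ and hence $f=0$ in $\mathcal{S}'(\R; L_1(\M)+\M)$. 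For completeness, I would view $T\colon f\mapsto (2^{j\alpha}\varphi_j*f)_j$ as an isometric embedding into $L_p(\N;\ell_2^c)$; given a Cauchy sequence $(f_n)$ with $Tf_n\to (g_j)$ in the target space, I would set $f=\sum_{j\geq 0}2^{-j\alpha}g_j$, whose convergence in $\mathcal{S}'(\R; L_1(\M)+\M)$ is ensured by the spectral support of each $g_j$ (contained in the annulus/ball described in \eqref{eq: supp varphi_j}), and the identity $\varphi_j*f=2^{-j\alpha}g_j$ would then be verified using the near-orthogonality of Littlewood--Paley blocks together with \eqref{eq:resolution of unity}.

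For part (3), my plan is to apply the discrete characterization of $\h_p^c$ in Theorem \ref{thm: equivalence hpD} with test functions engineered so that its square function coincides with the one defining $F_p^{0,c}$. Explicitly, I would take $\widehat{\Phi}=\varphi$, pick $\widehat{\Psi}\in C_c^\infty(\R)$ equal to $1$ on $\{1/2\leq|\xi|\leq 2\}$, let $\phi=\varphi_0$, and pick $\widehat{\psi}\in C_c^\infty(\R)$ equal to $1$ on $\{|\xi|\leq 2\}$. Direct checks confirm that $\Phi,\Psi,\phi,\psi$ satisfy the hypotheses of Theorem \ref{thm: equivalence hpD}: both $\Phi$ and $\Psi$ have vanishing mean, $\widehat{\phi}(0)=\widehat{\varphi_0}(0)=1>0$ and $\widehat{\psi}(0)=1>0$, the reproduction identity \eqref{eq: reproduceD 2} holds, and the nondegeneracy of $\Phi$ is built into \eqref{condition-phi}. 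With these selections $\Phi_j=\varphi_j$ for every $j\geq 1$, so Theorem \ref{thm: equivalence hpD} gives
$$\|f\|_{\h_p^c}\approx \big\|\big(\sum_{j\geq 1}|\varphi_j*f|^{2}\big)^{1/2}\big\|_p+\|\varphi_0*f\|_p.$$
The elementary inequality $(a^2+b^2)^{1/2}\leq a+b\leq\sqrt{2}\,(a^2+b^2)^{1/2}$, valid also for positive operators, then reabsorbs the $j=0$ term back into the square function, showing the right-hand side is equivalent to $\|f\|_{F_p^{0,c}}$.

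The most delicate point I expect is the reconstruction step in the completeness proof of (1): convergence of $\sum_j 2^{-j\alpha}g_j$ in $\mathcal{S}'(\R;L_1(\M)+\M)$ has to be established by pairing against Schwartz test functions and exploiting the band-limited structure of each $g_j$. A closely related issue in (3) is to confirm that the two ambient distribution spaces agree on norm-finite elements, i.e., that every $f\in F_p^{0,c}(\R,\M)$ actually lies in $L_1(\M;\mathrm{R}_d^c)+L_\infty(\M;\mathrm{R}_d^c)$ so that the $\h_p^c$-norm is well-defined; this will also come down to the same kind of distributional pairing estimates.
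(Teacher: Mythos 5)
Your proposal is correct and follows essentially the same route as the paper: completeness via the isometric embedding $f\mapsto(2^{j\alpha}\varphi_j*f)_j$ into $L_p(\N;\ell_2^c)$ with reconstruction of the limit from its band-limited components (you are in fact slightly more careful than the paper in writing the limit as $\sum_j 2^{-j\alpha}g_j$), the obvious monotonicity argument for (2), and for (3) an application of Theorem \ref{thm: equivalence hpD} with $\widehat{\Phi}=\varphi$, $\phi=\varphi_0$ and cutoffs $\widehat\Psi,\widehat\psi$ equal to $1$ on the relevant supports, which is exactly the paper's observation that $\varphi$ satisfying \eqref{condition-phi} also satisfies \eqref{eq: reproduceD}--\eqref{eq: reproduceD 2}. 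The remaining points you flag (distributional convergence of the band-limited series, identification of the two ambient spaces) are handled at the same level of detail in the paper itself.
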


\begin{proof}
(1) Let $\{f_i\}$ be a Cauchy sequence in $F_p^{\alpha,c}(\R,\M)$. Then, the sequence $\{a_i\}$ with $a_i=({\varphi}_0*f_i, \ldots ,2^{j\alpha}{\varphi}_j*f_i, \ldots)$ is also a Cauchy sequence in $L_p(\N;\ell_2^c(\mathbb{N}_0))$. Thus, $a_i$ converges to a function $f=(f^0,\ldots, f^j,\ldots)$ in  $L_p(\N;\ell_2^c(\mathbb{N}_0))$. Formally we take
\begin{equation}\label{eq:f=sum fk}
f=\sum_{j\geq 0}f^j.
\end{equation}
 Since for each $j\in \mathbb{N}$, $\supp \widehat{f^j}\subset\{\xi:  2^{j-1}\leq |  \xi | \leq 2^{j+1}\}$ and $\text{supp } \widehat{f^0} \subset \{\xi:  |  \xi | \leq 2\}$, the series \eqref{eq:f=sum fk} converges in $\mathcal{S}'(\R;L_p(\M))$. Let ${\varphi}_j=0$ if $j<0$. By the support assumption of $\varphi$, when $i\rightarrow \infty$, we get
 $$
{\varphi}_j*f_i=\sum_{k=j-1}^{j+1}{\varphi}_k*{\varphi}_j*f_i\rightarrow \sum_{k=j-1}^{j+1}{\varphi}_j*f^k={\varphi}_j*f,
 $$
which implies that $f^j=2^{j\alpha}{\varphi}_j*f$, for any $j\geq 0$. Thus, $f\in F_p^{\alpha,c}(\R,\M)$ and $\{f_i\}$ converges to $f$ in $F_p^{\alpha,c}(\R,\M)$.

(2) is obvious.

(3) It is easy to see that any $\varphi$ satisfying \eqref{condition-phi} also satisfies \eqref{eq: reproduceD}. Then by the discrete characterization of $\h _p^c(\R,\M)$ given in Theorem \ref{thm: equivalence hpD}, we get the desired assertion.
\end{proof}

Given $a\in\mathbb{R}_+$, we define $D_{i, a}(\xi)=(2\pi {\rm i}\xi_i)^a$ for $\xi\in\mathbb{R}^d$, and $D_i^a$ to be the  Fourier multiplier with symbol $D_{i, a}(\xi)$ on Triebel-Lizorkin spaces $F_p^{\alpha ,c}(\R,\M)$. We set  $D_a=D_{1,a_1}\cdots D_{d, a_d}$ and $D^a=D_1^{a_1}\cdots D_d^{a_d}$ for any $a=(a_1,\cdots, a_d)\in\R_+$. Note that if $a$ is a positive integer, $D_i^a=\partial_i^a$, so there does not exist any conflict of notation. The operator $D^a$ can be viewed as a  fractional extension of partial derivatives. The following is the so-called lifting (or reduction) property of Triebel-Lizorkin spaces.

\begin{prop}\label{prop: lifting}
Let $1\leq p<\infty$ and $\alpha\in \mathbb{R}$.
\begin{enumerate}[\rm(1)]
\item For any $\beta\in \mathbb{R}$, $J^\beta$ is an isomorphism between $F_p^{\alpha,c}(\R, \M)$ and $F_p^{\alpha-\beta,c}(\R, \M)$. In particular, $J^\alpha$ is an isomorphism between $F_p^{\alpha,c}(\R, \M)$ and $\h_p^{c}(\R, \M)$.
\item Let $\beta >0$. Then $f \in F_p^{\alpha,c}(\R, \M)$  if and only if $\varphi_0*f\in L_p(\N)$ and $D_i^\beta f\in F_p^{\alpha-\beta,c}(\R, \M)$ for all $i=1,\ldots, d$. Moreover, in this case,
$$
\|f\|_{ F_p^{\alpha,c}}\approx \|\varphi_0*f \|_p+\sum_{i=1}^d \|D_i^\beta f \|_{ F_p^{\alpha-\beta,c}}.
$$
\end{enumerate}

\end{prop}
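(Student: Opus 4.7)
The overall strategy is to apply the Fourier multiplier theorems of Section~\ref{section: Multiplier theorems} (Theorem~\ref{multiplier-global} for $1<p<\infty$ and Theorem~\ref{lem: Multiplier p=1} for $p=1$) with carefully chosen pairs $(\phi_j,\rho_j)$. In both parts I set $\rho_j=\varphi^{(j)}$, so that $\check\rho_j*g=\varphi_j*g$, and use the enlarged cutoff $\tilde\varphi^{(j)}=\varphi^{(j-1)}+\varphi^{(j)}+\varphi^{(j+1)}$ (with the convention $\varphi^{(-1)}=0$), which equals $1$ on $\supp\varphi^{(j)}$.

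For Part~(1), I take $\phi_j(\xi)=2^{-j\beta}J_\beta(\xi)\tilde\varphi^{(j)}(\xi)$ for $j\geq 1$ and $\phi_0(\xi)=J_\beta(\xi)$. A direct computation gives $2^{j\alpha}\check\phi_j*\varphi_j*f=2^{j(\alpha-\beta)}\varphi_j*J^\beta f$, so the left-hand side of the multiplier estimate equals $\|J^\beta f\|_{F_p^{\alpha-\beta,c}}$ while the right-hand side is $\|f\|_{F_p^{\alpha,c}}$. The hypothesis \eqref{eq:j condition thm} is verified from the pointwise estimate $J_\beta(2^{j+k}\xi)\approx 2^{(j+k)\beta}$ on $\supp\varphi$, together with the analogous bounds for all partial derivatives: this makes $2^{-j\beta}J_\beta(2^{j+k}\cdot)\tilde\varphi^{(j)}(2^{j+k}\cdot)\varphi$ a smooth function whose $H_2^\sigma$ norm is uniformly bounded in $j,k$. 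Hence $\|J^\beta f\|_{F_p^{\alpha-\beta,c}}\lesssim\|f\|_{F_p^{\alpha,c}}$; the converse follows by the same argument with $\beta$ replaced by $-\beta$ applied to $J^\beta f$. The special case $\beta=\alpha$ combined with Proposition~\ref{prop: F}(3) identifies $F_p^{\alpha,c}$ with $\h_p^c$.

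For Part~(2), the inequality $\|\varphi_0*f\|_p\leq \|f\|_{F_p^{\alpha,c}}$ is immediate from the definition. The bound $\|D_i^\beta f\|_{F_p^{\alpha-\beta,c}}\lesssim \|f\|_{F_p^{\alpha,c}}$ is proved exactly as in Part~(1) with $J_\beta$ replaced by $D_{i,\beta}(\xi)=(2\pi {\rm i}\xi_i)^\beta$. For the converse inequality, I use the algebraic identity
\begin{equation*}
\sum_{i=1}^d \frac{\overline{(2\pi {\rm i}\xi_i)^\beta}}{\sum_{k=1}^d|2\pi\xi_k|^{2\beta}}\,(2\pi {\rm i}\xi_i)^\beta\,=\,1,\qquad \xi\neq 0,
\end{equation*}
which, after multiplying by $2^{j\beta}\varphi^{(j)}(\xi)$ and restricting by the enlarged cutoff, gives the decomposition
\begin{equation*}
\varphi_j*f\,=\,2^{-j\beta}\sum_{i=1}^d \check\phi_{i,j}*\varphi_j*D_i^\beta f, \qquad j\geq 1,
\end{equation*}
with $\phi_{i,j}(\xi)=2^{j\beta}\tilde\varphi^{(j)}(\xi)\,\overline{(2\pi {\rm i}\xi_i)^\beta}\,/\,\sum_{k}|2\pi\xi_k|^{2\beta}$. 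Since $\sum_k|\xi_k|^{2\beta}\approx|\xi|^{2\beta}$ by equivalence of norms on $\mathbb{R}^d$, the denominator is of order $2^{2j\beta}$ on $\supp\tilde\varphi^{(j)}$, and after the rescaling $\xi\mapsto 2^{j+k}\xi$ the symbol $\phi_{i,j}(2^{j+k}\cdot)\varphi$ reduces to a fixed $j$-independent function (up to harmless bounded factors $2^{k\beta}$). Applying the multiplier theorem to each $i$ separately and summing then controls the $j\geq 1$ part of $\|f\|_{F_p^{\alpha,c}}$ by $\sum_i\|D_i^\beta f\|_{F_p^{\alpha-\beta,c}}$, while the $j=0$ term is exactly $\|\varphi_0*f\|_p$.

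The main technical obstacle is the verification of the $H_2^\sigma$ norm bound \eqref{eq:j condition thm} for the symbols built from $(2\pi {\rm i}\xi_i)^\beta$, which is non-smooth at $\xi_i=0$ whenever $\beta$ is not an integer. A direct Fourier-side computation confirms that a multiplier of the form $(2\pi {\rm i}\xi_i)^\beta\eta(\xi)$ with $\eta$ smooth and compactly supported belongs to $H_2^\sigma(\R)$ provided $\sigma<\beta+\frac{1}{2}$, so one has to work with $\sigma$ in the range $(\tfrac{d}{2},\,\beta+\tfrac{1}{2})$; this is the delicate regularity condition that has to be accommodated when invoking Theorems~\ref{multiplier-global} and \ref{lem: Multiplier p=1}.
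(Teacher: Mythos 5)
Part (1) is correct and follows the same route as the paper: apply Theorems~\ref{multiplier-global} and \ref{lem: Multiplier p=1} with $\rho_j=\varphi^{(j)}$ and $\phi_j(\xi)=2^{-j\beta}J_\beta(\xi)$ (the extra factor $\tilde\varphi^{(j)}$ you insert is harmless but also unnecessary, as only $\supp(\phi_j\rho_j)$ is constrained).

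For the converse inequality in Part (2) there is a genuine gap, and you have in fact already computed the obstruction without noticing it. Your decomposition uses the multipliers
$\phi_{i,j}(\xi)=2^{j\beta}\,\overline{(2\pi{\rm i}\xi_i)^\beta}\big/\sum_k|2\pi\xi_k|^{2\beta}$
restricted to a dyadic annulus. After rescaling, these have a Hölder--type singularity of order $\beta$ along the hyperplane $\{\xi_i=0\}$ (and the denominator contributes a further singularity along every $\{\xi_k=0\}$), which is \emph{not} removed by the enlarged cutoff $\tilde\varphi^{(j)}$, since the dyadic annulus meets every coordinate hyperplane when $d\geq 2$. As you correctly observe, such a symbol belongs to $H_2^\sigma(\R)$ only for $\sigma<\beta+\tfrac12$. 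But Theorems~\ref{multiplier-global} and \ref{lem: Multiplier p=1} require $\sigma>\tfrac d2$, so the admissible window $(\tfrac d2,\,\beta+\tfrac12)$ is \emph{empty} whenever $\beta\leq\tfrac{d-1}{2}$ and $d\geq2$ --- a range explicitly covered by the statement $\beta>0$. This is not merely a ``delicate condition to accommodate''; with your choice of symbols the argument simply cannot be run for small $\beta$. The paper sidesteps this by a different construction: it builds a smooth partition $\chi_i(\xi)=\frac{\chi(\xi_i)|\xi_i|^\beta}{\chi(\xi_1)|\xi_1|^\beta+\cdots+\chi(\xi_d)|\xi_d|^\beta}\cdot\frac{1}{(2\pi{\rm i}\xi_i)^\beta}$ where $\chi$ is an infinitely differentiable cutoff with $\chi(s)=0$ for $|s|<\frac{1}{2\sqrt d}$. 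Because $\chi(\xi_i)$ \emph{vanishes identically} in a neighbourhood of $\xi_i=0$, the factor $\chi(\xi_i)|\xi_i|^\beta/(2\pi{\rm i}\xi_i)^\beta$ is $C^\infty$ on all of $\R$, and $\chi_i\varphi^{(j)}$ is a smooth compactly supported symbol lying in $H_2^\sigma$ for \emph{every} $\sigma$. That is the mechanism that makes the converse inequality work uniformly in $\beta>0$; your purely algebraic partition $\sum_i|2\pi\xi_i|^{2\beta}/\sum_k|2\pi\xi_k|^{2\beta}=1$ has no such cutoff and inherits the fractional-power singularities. To repair your proof you would need to insert cutoffs of the paper's type in front of each $|\xi_i|^{2\beta}$.

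A smaller remark on the easy direction of Part (2): you give the correct $H_2^\sigma$ threshold $\sigma<\beta+\tfrac12$ for $D_{i,\beta}\varphi$, so the same tension between $\sigma>\tfrac d2$ and $\sigma<\beta+\tfrac12$ already appears there when $\beta\leq\tfrac{d-1}{2}$; this tension is implicit but unresolved in your write-up, and is worth being explicit about if you intend to cover all $\beta>0$.
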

\begin{proof}
(1) Let $f\in F_p^{\alpha,c}(\R, \M)$. Applying Theorems \ref{multiplier-global} and \ref{lem: Multiplier p=1} with $\rho = \varphi $, we obtain
\begin{equation}\label{eq: lifting J}
\begin{split}
\|  J^\beta f \| _{F_p^{\alpha-\beta,c}}&=  \big\|  (\sum_{j\geq 0}2^{2j(\alpha-\beta)}|  {\varphi}_j*J^\beta f | ^2)^\frac{1}{2}\big\| _p \\
&\lesssim    \max\{  \underset{\substack{j\geq 1\\ -2\leq k \leq 2}}{\sup}2^{-j\beta}\|  J_\beta(2^{j+k}\cdot){\varphi}\| _{H_2^\sigma}, \|J_\beta (\varphi^{(0)}+\varphi^{(1)})\|_{H_2^\sigma}\}\\
&\;\;\;\;\cdot\big\|  (\sum_{j\geq 0}2^{2j\alpha}|  {\varphi}_j*f | ^2)^\frac{1}{2}\big\| _p.
\end{split}
\end{equation}
It is easy to check that all partial derivatives of $2^{-j\beta}J_\beta(2^{j+k}\cdot){\varphi}$ of order less than or equal to $[\sigma]+1$ are bounded uniformly in $j\geq 1$ and $-2\leq k\leq 2$, and that $J_\beta(\varphi^{(0)}+\varphi^{(1)})\in H_2^\sigma(\R)$. Thus $\|  J^\beta f \| _{F_p^{\alpha-\beta,c}}\lesssim \|   f \| _{F_p^{\alpha,c}}$. So $J^\beta$ is continuous from $F_p^{\alpha,c}(\R, \M)$ to $F_p^{\alpha-\beta,c}(\R, \M)$, and its inverse $J^{-\beta}$ is also continuous from  $F_p^{\alpha-\beta,c}(\R, \M)$ to $F_p^{\alpha,c}(\R, \M)$.

(2) If we take $\sigma\in(\frac{d}{2},\beta+\frac{d}{2})$, then we have $\|D_{i,\beta} \0\|_{H_2^\sigma}<\infty$ and $\|D_{i,\beta}{\varphi}\|_{H_2^\sigma}<\infty$. Replacing $J^\beta$ by $D_i^\beta$ in \eqref{eq: lifting J}, we obtain  that, for any $i=1,\ldots, d$,
$$
\|D_i^\beta f \|_{ F_p^{\alpha-\beta,c}}\lesssim \| f \|_{ F_p^{\alpha,c}},
$$
which implies immediately that
$$
 \|\varphi_0*f \|_p+\sum_{i=1}^d \|D_i^\beta f \|_{ F_p^{\alpha-\beta,c}}\lesssim \|f\|_{ F_p^{\alpha,c}}.
$$
To show the reverse inequality, we choose a nonnegative infinitely differentiable function  $\chi$ on $\mathbb{R}$ such that $\chi(s)=0$ if $|s|<\frac{1}{2\sqrt{d}}$ and $\chi(s)=1$ if $|s|\geq \frac{1}{\sqrt{d}}$. For $i=1,\ldots, d$, we define $\chi_i$ on $\R$ as follows:
$$
\chi_i(\xi)=\frac{1}{\chi(\xi_1)|\xi_1|^\beta+\ldots+\chi(\xi_d)|\xi_d|^\beta}\frac{\chi(\xi_i)|\xi_i|^\beta}{(2\pi {\rm i}\xi_i)^\beta},
$$
whenever the first denominator is positive, say, when $|\xi |\geq 1$. Then for any $j\geq 1$, $\chi_i\varphi_j$ is a well-defined infinitely differentiable function on $\R\backslash \{\xi: \xi_i=0 \}$ and
$$
\varphi^{(j)}=\sum_{i=1}^d \chi_i D_{i,\beta}\varphi^{(j)}.
$$
Then by Theorem \ref{multiplier-homogeneous},  we have
\begin{equation*}
\begin{split}
\|f\|_{F_p^{\alpha,c}} &\leq   \|\varphi_0*f \|_p+ \sum_{i=1}^d \big\|\big(\sum_{j\geq 1}2^{2j\alpha} |\check {\chi}_i* \varphi_j* D_i^\beta f |^2\big)^\frac{1}{2}\big\|_p\\
&  \lesssim   \|\varphi_0*f \|_{p}+ \sum_{i=1}^d \underset{\substack{j\geq 1\\ -2\leq k \leq 2}}{\sup} 2^{j\beta}\|\chi_i(2^{j+k}\cdot)\varphi \|_{H_2^\sigma}  \big\|\big(\sum_{j\geq 1}2^{2j(\alpha-\beta)} |{\varphi}_j* D_i^\beta f |^2\big)^\frac{1}{2}\big\|_p\,.
\end{split}
\end{equation*}
However,
$$
2^{j\beta}\|\chi_i(2^{j+k}\cdot)\varphi \|_{H_2^\sigma(\R)}=\|\phi_i (2^k\cdot)\varphi\|_{H_2^\sigma(\R)},
$$
where
$$
\phi_i(\xi)=\frac{1}{\chi(2^j\xi_1)|\xi_1|^\beta+\ldots+\chi(2^j\xi_d)|\xi_d|^\beta}\frac{\chi(2^j\xi_i)|\xi_i|^\beta}{(2\pi {\rm i}\xi_i)^\beta}.
$$
Since all partial derivatives of $\phi_i\varphi(2^k \cdot )$, of order less than a fixed integer, are bounded uniformly in $j, k$ and $i$, and the norm of $\phi_i\varphi(2^k \cdot)$ in $H_2^\sigma(\R)$ are bounded from above by a constant independent of $j, k$ and $i$. Then we deduce
\begin{equation*}
\begin{split}
\|f\|_{F_p^{\alpha,c}} &\lesssim  \|\varphi_0*f \|_p+\sum_{i=1}^d\big\|\big(\sum_{j\geq 1}2^{2j(\alpha-\beta)} | {\varphi}_j* D_i^\beta f |^2\big)^\frac{1}{2}\big\|_p\\
&\leq  \|\varphi_0*f \|_{p }+\sum_{i=1}^d \|D_i^\beta f \|_{ F_p^{\alpha-\beta,c}}.
\end{split}
\end{equation*}
The assertion is proved.
\end{proof}

\begin{defn}
For $\alpha\in \mathbb{R}$, we define $F^{\alpha,c}_\infty(\R,\M)$ as the space of all $f\in \mathcal{S}'(\R; \M)$  such that 
$$
\| \varphi_0*f\|_\N+\sup_{|Q|<1} \Big\|  \frac{1}{|  Q| }\int_Q\sum_{j\geq -\log_2(l(Q))}2^{2j\alpha}|  \varphi_j*f(s) | ^2ds \Big\| _\M^\frac{1}{2}<\infty.
$$
We endow the space $F^{\alpha,c}_\infty(\R,\M)$ with the norm:
$$
\|  f\|  _{F^{\alpha,c}_\infty}=\| \varphi_0*f\|_\N+\sup_{|Q|<1} \Big\|  \frac{1}{|  Q| }\int_Q\sum_{j\geq -\log_2(l(Q))}2^{2j\alpha}|  \varphi_j*f (s)| ^2ds \Big\| _\M^\frac{1}{2}.
$$
\end{defn}

\begin{prop}
Let $1\leq p <\infty$, $\alpha\in \mathbb{R}$ and $q$ be the conjugate index of $p$. Then the dual space of $F_p^{\alpha,c}(\R,\M)$ coincides isomorphically with $F_q^{-\alpha,c}(\R,\M)$.
\end{prop}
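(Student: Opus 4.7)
The plan is to reduce this duality to the corresponding duality for local Hardy spaces by means of the Bessel-potential lifting established in the preceding proposition. Recall that $J^\alpha$ is an isomorphism $F_p^{\alpha,c}(\R,\M) \to \h_p^c(\R,\M)$ with inverse $J^{-\alpha}$, and since $J_\alpha(\xi)=(1+|\xi|^2)^{\alpha/2}$ is real-valued, $J^\alpha$ is formally self-adjoint with respect to the trace pairing extending the $L_2(\N)$ inner product. So the pairing between $F_p^{\alpha,c}$ and $F_q^{-\alpha,c}$ that I aim to implement is simply the natural one, realized through the identity $\langle f,g\rangle = \langle J^\alpha f,\,J^{-\alpha}g\rangle$ whenever the right-hand side makes sense.

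For $1<p<\infty$ with conjugate index $q\in(1,\infty)$, I would invoke Theorem \ref{dual-hardy}, which gives $\h_p^c(\R,\M)^* = \h_q^c(\R,\M)$. Pre-composing with the isomorphism $J^\alpha:F_p^{\alpha,c}\to\h_p^c$ and post-composing with the isomorphism $J^{-\alpha}:F_q^{-\alpha,c}\to\h_q^c$ (the latter obtained from the lifting proposition applied at exponent $q$ and smoothness $-\alpha$) produces the map $g\mapsto \ell_g$ where $\ell_g(f):=\langle J^\alpha f,\,J^{-\alpha}g\rangle_{\h_p^c,\h_q^c}$. This is a bijective bicontinuous identification of $F_q^{-\alpha,c}(\R,\M)$ with the dual of $F_p^{\alpha,c}(\R,\M)$; that $\ell_g$ coincides with $\langle f,g\rangle$ on a dense class of Schwartz functions is immediate from $J^\alpha J^{-\alpha}=\mathrm{Id}$.

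For the endpoint $p=1$, $q=\infty$, the same two-step diagram works provided one first establishes that $J^{-\alpha}$ is an isomorphism from $F_\infty^{-\alpha,c}(\R,\M)$ onto $\bmo^c(\R,\M)$, so that Theorem \ref{dual-hardy}'s identification $\h_1^c(\R,\M)^*=\bmo^c(\R,\M)$ can be combined with the $p=1$ lifting. To prove this endpoint lifting I would mimic the proof of the lifting proposition: decompose $\varphi_j\ast J^{-\alpha}g = \check\zeta_j\ast\varphi_j\ast g$, where $\zeta_j$ is the Bessel symbol $J_{-\alpha}$ multiplied by a fattened Littlewood-Paley window around $\{2^{j-1}\le|\xi|\le 2^{j+1}\}$; the rescaled symbols $2^{j\alpha}\zeta_j(2^{j+k}\cdot)\varphi$ satisfy the $H_2^\sigma$ estimate \eqref{eq:j condition thm} uniformly in $j\ge 1$ and $-2\le k\le 2$. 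Then the Carleson-type definition of $F_\infty^{\alpha,c}$ and the known Carleson characterization of $\bmo^c$ must be shown to transfer under this multiplier, using the $\bmo^c$-bounded Calderón--Zygmund estimate of Lemma \ref{C-Z lem} in place of the $L_p$-multiplier theorem invoked in Proposition \ref{prop: lifting}.

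The main obstacle will be exactly this endpoint lifting $F_\infty^{-\alpha,c}\cong \bmo^c$: the norm of $F_\infty^{\alpha,c}$ involves a Carleson sum beginning at the dyadic scale $j\geq -\log_2 \ell(Q)$, and one must verify that the Bessel symbol interacts compatibly with this cube-dependent cutoff in both directions. Once this lifting is secured, the duality $(F_1^{\alpha,c})^*\cong F_\infty^{-\alpha,c}$ is obtained by the identical composition of isomorphisms used for $1<p<\infty$, and the proposition follows in full generality.
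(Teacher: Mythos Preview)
Your reduction for $1<p<\infty$ is exactly the paper's argument: transport the $\h_p^c$--$\h_q^c$ duality of Theorem \ref{dual-hardy} through the lifting isomorphisms $J^{\pm\alpha}$ of Proposition \ref{prop: lifting}.

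For the endpoint $p=1$ you correctly identify the missing ingredient---the isomorphism $J^\alpha:F_\infty^{\alpha,c}\to\bmo^c$---but you plan to prove it by a fresh multiplier/Calder\'on--Zygmund argument that must cope with the cube-dependent cutoff $j\ge -\log_2 l(Q)$ in the Carleson norm. The paper avoids this entirely: it invokes the general discrete Carleson characterization of $\bmo^c$ from \cite[Corollary~5.13]{XX18}, namely \eqref{eq: discrete Carleson}, which is valid for any admissible pair $(\phi,\Phi)$. Choosing $\Phi=J^{-\alpha}\varphi$ and $\phi=\varphi_0$ and applying the characterization to $J^\alpha f$ gives, after unwinding the convolution, precisely the defining expression for $\|f\|_{F_\infty^{\alpha,c}}$. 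Thus the endpoint lifting drops out for free from the flexibility already built into the $\bmo^c$ characterization, with no new operator-boundedness to establish. Your route would work in principle, but you would essentially be re-proving part of that characterization; the paper's choice of test function is the cleaner shortcut you should adopt.
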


\begin{proof}
First, we show that $J^\alpha$ is an isomorphism between $F_\infty^{\alpha,c}(\R,\M)$ and $\bmo ^c(\R,\M)$. To this end, we use the discrete Carleson characterization of $\bmo ^c(\R,\M)$ in \cite[Corollary~5.13]{XX18}:
\begin{equation}\label{eq: discrete Carleson}
\|  f\|  _{\bmo ^c}\approx\| \phi*f\|_\N+\sup_{|Q|<1}  \Big\|  \frac{1}{|  Q| }\int_Q\sum_{j\geq -\log_2(l(Q))} |\Phi_j*f (s)| ^2ds\Big\| _\M^\frac{1}{2},
\end{equation}
where $\Phi\in \mathcal{S}(\R)$ and $\phi \in H_2^\sigma(\R)$ satisfying \eqref{eq: reproduceD 2}. By taking $\phi=\varphi_0$ and $\Phi=J^{-\alpha} \varphi$, we apply  \eqref{eq: discrete Carleson} to $J^\alpha f$:
\begin{equation*}
\begin{split}
\| J^\alpha  f\|  _{\bmo ^c}   &\approx \| \varphi_0*f\|_\N+\sup_{|Q|<1}   \Big\|  \frac{1}{|  Q| }\int_Q\sum_{j\geq -\log_2(l(Q))} |(J^{-\alpha} \varphi)_j*(J^\alpha f) (s)| ^2ds \Big\| _\M^\frac{1}{2}\\
&= \| \varphi_0*f\|_\N+\sup_{|Q|<1}  \Big\| \frac{1}{|  Q| }\int_Q\sum_{j\geq -\log_2(l(Q))}2^{2j\alpha}|  \varphi_j*f (s)| ^2ds \Big\|  _\M^\frac{1}{2}\\
&= \|  f\|  _{F^{\alpha,c}_\infty}.
\end{split}
\end{equation*}
Since $J^\alpha$ is also an isomorphism between $F_p^{\alpha,c}(\R, \M)$ and $\h_p^{c}(\R, \M)$ for any $1<p<\infty$, by the $\h_1^c$-$\bmo^c$ duality and the $\h_p^c$-$\h_q^c$ duality in Theorem  \ref{dual-hardy}, we see that  $F_p^{\alpha,c}(\R,\M)^*=F_q^{-\alpha,c}(\R,\M)$ with equivalent norms.
\end{proof}

\subsection{Interpolation}

Now we indicate a complex interpolation result of Triebel-Lizorkin spaces. It is deduced from the interpolation of local Hardy and bmo spaces in Theorem \ref{interpolation-hardy}, and the boundedness of complex order Bessel potentials on them.

\begin{prop}\label{complex-inter-1&infty}
Let $\alpha_0,\alpha_1\in \mathbb{R}$ and $1<p<\infty$. Then
$$
\big( F_\infty^{\alpha_0,c}(\R,\M), F_1^{\alpha_1,c}(\R,\M)\big)_{\frac{1}{p}}=F_p^{\alpha,c}(\R,\M), \quad \alpha=(1-\frac{1}{p})\alpha_0+\frac{\alpha_1}{p}.
$$
\end{prop}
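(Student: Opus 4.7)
The plan is to pull the identity back to the already-established interpolation theorem for local Hardy and bmo spaces (Theorem~\ref{interpolation-hardy}) through the Bessel-potential isomorphisms of Proposition~\ref{prop: lifting}. Setting $\theta=1/p$ so that $\alpha=(1-\theta)\alpha_0+\theta\alpha_1$, the obstruction is that a single Bessel potential cannot simultaneously convert the two endpoint spaces $F_\infty^{\alpha_0,c}$ and $F_1^{\alpha_1,c}$ into $\bmo^c$ and $\h_1^c$, so I would use an analytic family instead.

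Concretely, I would introduce the analytic family
\[
T_z=J^{(1-z)\alpha_0+z\alpha_1},\qquad z\in\overline{S}=\{0\le\mathrm{Re}\,z\le1\},
\]
and verify the boundary bounds
\[
T_{it}=J^{it(\alpha_1-\alpha_0)}\!\circ J^{\alpha_0}:F_\infty^{\alpha_0,c}(\R,\M)\to\bmo^c(\R,\M),
\]
\[
T_{1+it}=J^{it(\alpha_1-\alpha_0)}\!\circ J^{\alpha_1}:F_1^{\alpha_1,c}(\R,\M)\to\h_1^c(\R,\M),
\]
with operator norms growing at most polynomially in $|t|$. The two outer factors are the isomorphisms of Proposition~\ref{prop: lifting} (and the isomorphism $J^{\alpha_0}:F_\infty^{\alpha_0,c}\to\bmo^c$ implicit in the proof of the duality proposition). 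The middle factor $J^{i\sigma}$ is a Fourier multiplier whose symbol $(1+|\xi|^2)^{i\sigma/2}$ is uniformly of modulus one and whose inverse Fourier transform satisfies size and Lipschitz estimates (conditions (2) and (3) of Lemma~\ref{C-Z lem}) with constants polynomial in $|\sigma|$; Lemma~\ref{C-Z lem} then gives its boundedness on $\h_1^c$ and, by duality and the corresponding local statement there, on $\bmo^c$, with admissible growth.

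Applying Stein's interpolation theorem for analytic families of operators, valid in the present noncommutative complex-interpolation setting, yields that
\[
T_\theta=J^{\alpha}:\bigl(F_\infty^{\alpha_0,c}(\R,\M),\,F_1^{\alpha_1,c}(\R,\M)\bigr)_\theta\longrightarrow\bigl(\bmo^c(\R,\M),\,\h_1^c(\R,\M)\bigr)_\theta=\h_p^c(\R,\M)
\]
is bounded, the last equality being Theorem~\ref{interpolation-hardy}. Composing with the inverse isomorphism $J^{-\alpha}:\h_p^c\to F_p^{\alpha,c}$ of Proposition~\ref{prop: lifting}(1) gives the inclusion $(F_\infty^{\alpha_0,c},F_1^{\alpha_1,c})_\theta\hookrightarrow F_p^{\alpha,c}$. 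Running the same argument with the inverse family $S_z=T_z^{-1}=J^{-(1-z)\alpha_0-z\alpha_1}$ (whose boundary values now map $\bmo^c$ into $F_\infty^{\alpha_0,c}$ and $\h_1^c$ into $F_1^{\alpha_1,c}$) and postcomposing with $J^\alpha$ produces the reverse inclusion.

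The main obstacle I expect is the controlled growth of $\|J^{i\sigma}\|$ on $\h_1^c$ and $\bmo^c$: the Calder\'on--Zygmund criterion of Lemma~\ref{C-Z lem} requires both a decay estimate for $|s|\ge 1$ and a H\"older estimate, and one must track that the constants $C_1,C_2,\rho,\gamma$ depend polynomially (and not worse) on $\sigma$ so that Stein's theorem can be invoked. A minor additional check, which is essentially formal, is the admissibility of the analytic family in the sense required by the noncommutative version of Stein's theorem, namely that $z\mapsto T_z f$ is analytic with values in the sum space for every $f$ in a dense subspace of the interpolation space—this is immediate from the explicit symbol of $J^{(1-z)\alpha_0+z\alpha_1}$ acting on Schwartz functions with values in $L_1(\M)\cap\M$.
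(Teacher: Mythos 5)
Your proposal is correct and follows essentially the same route as the paper: both reduce the statement to Theorem \ref{interpolation-hardy} via Bessel potentials of complex order $J^{(1-z)\alpha_0+z\alpha_1}$, the key technical input being that $J^{{\rm i}t}$ acts boundedly on $\h_1^c(\R,\M)$ and, by duality, on $\bmo^c(\R,\M)$ with at most polynomial growth in $|t|$. The only differences are cosmetic: the paper unwinds Stein's interpolation theorem by hand (inserting the damping factor $e^{(z-\frac1p)^2}$ into an explicit analytic function), proves the $J^{{\rm i}t}$ bound via the multiplier criterion of Theorem \ref{lem: Multiplier p=1} rather than by verifying the kernel conditions of Lemma \ref{C-Z lem}, and obtains the inclusion $\big(F_\infty^{\alpha_0,c},F_1^{\alpha_1,c}\big)_{\frac1p}\subset F_p^{\alpha,c}$ by duality together with the inclusion between Calder\'on's two complex methods rather than by a second application of the analytic family.
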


\begin{proof}
Let $f\in F_p^{\alpha,c}(\R,\M)$. By Proposition \ref{prop: lifting}, we have $J^\alpha f\in \h _p^c(\R,\M)$. Therefore, according to Theorem \ref{interpolation-hardy} (1), there exists a continuous function on the strip $\{z\in \mathbb{C}: 0\leq \text{Re}{z}\leq 1\}$, analytic in the interior, such that $J^\alpha f=F(\frac{1}{p})\in \h _p^c(\R,\M)$ and 
$$
\sup_{t\in \mathbb{R}}\| F({\rm i}t) \|_{{\bmo}^c}<\infty \quad \text{and}\quad \sup_{t\in \mathbb{R}}\| F(1+{\rm i}t) \|_{{\h}_1^c}<\infty.
$$
We consider Bessel potentials of complex order. For $z\in\mathbb{C}$, define $J_z(\xi)=(1+|\xi |^2)^\frac{z}{2}$, and $J^z$ to be the associated Fourier multiplier. We set
$$
\widetilde{F}(z)=e^{(z-\frac{1}{p})^2}J^{-(1-z)\alpha_0-z\alpha_1}F(z).
$$
For any $t\in \mathbb{R}$,
$$
\| \widetilde{F}({\rm i}t) \|_{F_\infty^{\alpha_0,c}}\approx e^{-t^2+\frac{1}{p^2}}\| J^{{\rm i}t(\alpha_0-\alpha_1)}F({\rm i}t)\|_{{\bmo}^c}
$$
and
$$
\| \widetilde{F}(1+{\rm i}t) \|_{F_1^{\alpha_1,c}}\approx e^{-t^2+(1-\frac{1}{p})^2} \| J^{{\rm i}t(\alpha_0-\alpha_1)}F(1+{\rm i}t)\|_{{\h}_1^c}.
$$
We claim that $J^{{\rm i}t}$ is a bounded Fourier multiplier on ${\h}_1^c(\R,\M)$, so by duality, it is bounded  on ${\bmo}^c(\R,\M)$ too.  
Therefore, we will have
$$
\sup_{t\in \mathbb{R}}\| \widetilde{F}({\rm i}t) \|_{F_\infty^{\alpha_0,c}}<\infty \quad \text{and}\quad \sup_{t\in \mathbb{R}}\| \widetilde{F}(1+{\rm i}t) \|_{F_1^{\alpha_1,c}}<\infty.
$$
This will imply that $f=\widetilde{F}(\frac{1}{p})\in \big( F_\infty^{\alpha_0,c}(\R,\M), F_1^{\alpha_1,c}(\R,\M)\big)_{\frac{1}{p}}$.
Hence,
$$
F_p^{\alpha,c}(\R,\M)\subset  \big( F_\infty^{\alpha_0,c}(\R,\M), F_1^{\alpha_1,c}(\R,\M)\big)_{\frac{1}{p}}.
$$
By duality, we will get the reverse inclusion for the Calder\'on's second interpolation $(\cdot, \cdot)^{\frac 1 p }$. Then by the inclusion $(\cdot, \cdot)_{\frac 1 p }\subset (\cdot, \cdot)^{\frac 1 p }$ between two kinds of complex interpolations (see \cite[Theorem~4.3.1]{BL1976}), we will obtain the desired assertion.

Now, we prove the claim. First, we easily check that $J_{{\rm i}t}$ is $d$-times differentiable on $\R\setminus \{0\}$, and for any $m\in \mathbb{N}_0^d$ and $| m|_1\leq d$, we have
$$
\sup \big\{ |\xi |^ {|m|_1} |D^m J_{{\rm i}t}(\xi)|: \xi\neq 0 \big\}\lesssim (1+|t|)^d.
$$
Next, we check that (with $J_{{\rm i}t}(2^{k}\xi)=(1+|2^{k}\xi|^2)^{\frac{{\rm i}t}{2}}$),
$$
\underset{-2\leq k\leq 2}{\max} \|J_{{\rm i}t}(2^{k}\cdot)\varphi \|_{H_2^d}\lesssim (1+|t|)^d \quad \text{and}\quad
\| J_{{\rm i}t} ({\varphi}^{(0)}+\varphi^{(1)})\| _{H_2^d} \lesssim (1+|t|)^d.
$$
By (3) in Proposition \ref{prop: F}, if we take $(\varphi_j)_{j\geq 0}$ to be the Littlewood-Paley decomposition on $\R$ satisfying \eqref{eq: supp varphi_j} and \eqref{eq:resolution of unity}, we have
$$
\|J^{{\rm i}t} f \|_{\h_1^c} \approx \big\| (\sum_{j\geq 0} |\check{J_{{\rm i}t}}* {\varphi}_j*f | ^2)^\frac{1}{2}\big\| _1
$$
and 
$$
\| f \|_{\h_1^c} \approx \big\|  (\sum_{j\geq 0} | {\varphi}_j*f | ^2)^\frac{1}{2}\big\| _1.
$$
Then, we apply Theorem  \ref{lem: Multiplier p=1} with $\rho_j=\varphi_j$, $\phi_j(2^j\cdot)=\check{J_{{\rm i}t}}$, and $\alpha=0$, $\sigma=d$, 
\[
\| J^{{\rm {i}}t}f \|_{\h_1^c} \lesssim \max \big\{ \max_{-2\leq k\leq 2} \| J_{{\rm i}t} (2^k\cdot)\varphi\| _{H_2^d}, \| J_{{\rm i}t}( {\varphi}^{(0)}+ {\varphi}^{(1)})\| _{H_2^d}  \big\}  \| f \|_{\h_1^c}\lesssim (1+|t|)^d  \| f \|_{\h_1^c}.
\]
The claim is proved.
\end{proof}

\begin{rmk}
The real interpolation of the couple $\big( F_\infty^{\alpha,c}(\R,\M), F_1^{\alpha,c}(\R,\M)\big)$ follows easily from that of Hardy spaces (see Theorem \ref{interpolation-hardy}) and Proposition \ref{prop: lifting}. But if $\alpha_1 \neq \alpha_2$, the real interpolation of $\big( F_\infty^{\alpha_1,c}(\R,\M), F_1^{\alpha_2,c}(\R,\M)\big)$ will give Besov type spaces. We will not consider this problem in this paper, and refer the reader to \cite{XXY17} for similar results on homogeneous Triebel-Lizorkin (and Besov) spaces.
\end{rmk}

\subsection{Triebel-Lizorkin spaces with $\alpha>0$}

The following result shows that when $\alpha>0$, the $F_{1}^{\alpha,c}(\R,\M)$-norm can be rewritten as the sum of two homogeneous norms. Recall that for a fixed Schwartz function $\varphi$ in \eqref{condition-phi}, the functions $\dot{\varphi}_j$'s determined by $\widehat{\dot{\varphi}}_j(\xi)=\varphi (2^{-j}\xi)$, $j\in\mathbb{Z}$ give a homogeneous Littlewood-Paley decomposition on $\R$ satisfying \eqref{eq:resolution of unity homo}.

\begin{prop}\label{thm:homo}
Let $\alpha>0$. If $1\leq p<\infty$, then
\begin{equation*}
\| f\|_{F_{p}^{\alpha,c}}\approx \|  {\varphi_0}*f\| _{p}+\big\|  (\sum_{j=-\infty}^{+\infty}2^{2j\alpha}|  \dot{\varphi}_j*f| ^2)^\frac{1}{2}\big\| _{p}\, , \quad \forall\, f\in F_{p}^{\alpha,c}(\R,\M).
\end{equation*}
If $1\leq p\leq 2$,
\begin{equation*}
\| f\|_{F_{p}^{\alpha,c}}\approx\|  f\| _{p}+ \big\| (\sum_{j=-\infty}^{+\infty}2^{2j\alpha}|  \dot{\varphi}_j*f| ^2)^\frac{1}{2}\big\| _{p}\, , \quad \forall\, f\in F_{p}^{\alpha,c}(\R,\M).
\end{equation*}
\end{prop}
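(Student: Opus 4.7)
The plan is to handle both equivalences by reducing to the inhomogeneous decomposition, exploiting $\varphi_j=\dot{\varphi}_j$ for $j\geq 1$ and controlling the low-frequency homogeneous pieces $\dot{\varphi}_j*f$ with $j\leq 0$ by elementary means. For the first equivalence, the upper bound $\|f\|_{F_p^{\alpha,c}}\leq \|\varphi_0*f\|_p+\bigl\|\bigl(\sum_{j\in\mathbb{Z}}2^{2j\alpha}|\dot{\varphi}_j*f|^2\bigr)^{1/2}\bigr\|_p$ follows from the triangle inequality in $L_p(\N;\ell_2^c)$ together with $\varphi_j=\dot{\varphi}_j$ for $j\geq 1$. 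For the reverse bound, $\|\varphi_0*f\|_p\leq \|f\|_{F_p^{\alpha,c}}$ is trivial, and the $j\geq 1$ part of the homogeneous square function coincides with its inhomogeneous counterpart.

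The key step is controlling the low-frequency part. For $j\leq 0$, the Fourier transform of $\dot{\varphi}_j$ is supported in $\{|\xi|\leq 2\}$, on which a direct computation gives $\widehat{\varphi_0}+\widehat{\varphi_1}=1$; hence $\dot{\varphi}_j*f=\dot{\varphi}_j*(\varphi_0+\varphi_1)*f$. Since $\|\dot{\varphi}_j\|_1=\|\dot{\varphi}_0\|_1$ is independent of $j$, Young's inequality combined with the triangle inequality in $L_p(\N;\ell_2^c)$ yields
\begin{equation*}
\bigl\|\bigl(\sum_{j\leq 0}2^{2j\alpha}|\dot{\varphi}_j*f|^2\bigr)^{1/2}\bigr\|_p \leq \sum_{j\leq 0}2^{j\alpha}\|\dot{\varphi}_j*f\|_p \lesssim \bigl(\sum_{j\leq 0}2^{j\alpha}\bigr)\bigl(\|\varphi_0*f\|_p+\|\varphi_1*f\|_p\bigr),
\end{equation*}
where the geometric sum converges since $\alpha>0$, and $\|\varphi_1*f\|_p\leq 2^{-\alpha}\|f\|_{F_p^{\alpha,c}}$ via the pointwise operator bound $|\varphi_1*f|\leq \bigl(\sum_{j\geq 0}2^{2j\alpha}|\varphi_j*f|^2\bigr)^{1/2}$. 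This completes the first equivalence.

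For the second equivalence, Young's inequality gives $\|\varphi_0*f\|_p\leq \|\varphi_0\|_1\|f\|_p\lesssim \|f\|_p$, so the first equivalence yields $\|f\|_{F_p^{\alpha,c}}\lesssim \|f\|_p+\bigl\|\bigl(\sum_{j\in\mathbb{Z}} 2^{2j\alpha}|\dot{\varphi}_j*f|^2\bigr)^{1/2}\bigr\|_p$. Conversely, using $\varphi_j=\dot{\varphi}_j$ for $j\geq 1$ and the pointwise operator Cauchy--Schwarz inequality
\begin{equation*}
\Bigl|\sum_{j\geq 1}\varphi_j*f\Bigr|^2\leq\bigl(\sum_{j\geq 1}2^{-2j\alpha}\bigr)\sum_{j\geq 1}2^{2j\alpha}|\varphi_j*f|^2
\end{equation*}
(applied also to tails of the series to deduce convergence in $L_p$), one obtains $\bigl\|\sum_{j\geq 1}\varphi_j*f\bigr\|_p\lesssim \|f\|_{F_p^{\alpha,c}}$. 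Combining with $\|\varphi_0*f\|_p\leq \|f\|_{F_p^{\alpha,c}}$ and the decomposition $f=\varphi_0*f+\sum_{j\geq 1}\varphi_j*f$ in $L_p$, we conclude $\|f\|_p\lesssim \|f\|_{F_p^{\alpha,c}}$. The main point requiring care is the a priori justification that $f\in L_p(\N)$, so that the right-hand side of the second equivalence is even finite; for $1\leq p\leq 2$ this follows from the embedding $F_p^{\alpha,c}\hookrightarrow L_p(\N)$ obtained via the isomorphism $J^{\alpha}:F_p^{\alpha,c}\to \h_p^c$ of Proposition \ref{prop: lifting} together with $\h_p^c\hookrightarrow L_p(\N)$, which accounts for the stated restriction on $p$.
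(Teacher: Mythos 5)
Your proof is correct and, for the first equivalence, follows essentially the same route as the paper: the low-frequency terms $\dot{\varphi}_j*f$ with $j\le 0$ are rewritten using the identity $\widehat{\dot{\varphi}}_j=\widehat{\dot{\varphi}}_j(\widehat{\varphi}_0+\widehat{\varphi}_1)$ and summed in $\ell_1$ via Young's inequality and the convergent geometric series $\sum_{j\le 0}2^{j\alpha}$ (the paper argues identically, merely treating the term $j=0$ separately with $\varphi_0+\varphi_1+\varphi_2$). The only genuine difference is in the second equivalence: where the paper obtains $\|f\|_p\lesssim\|f\|_{F_p^{\alpha,c}}$ from the chain of embeddings $F_p^{\alpha,c}(\R,\M)\subset \h_p^c(\R,\M)\subset L_p(\N)$, valid for $1\le p\le 2$, you derive it directly from the operator Cauchy--Schwarz bound $|\sum_{j\ge1}\varphi_j*f|^2\le(\sum_{j\ge1}2^{-2j\alpha})\sum_{j\ge1}2^{2j\alpha}|\varphi_j*f|^2$ together with the convergence of $\sum_{j\ge0}\varphi_j*f$ to $f$ in $\mathcal{S}'$; this is more elementary, renders the a priori appeal to Proposition \ref{prop: lifting} superfluous, and would in fact establish the second equivalence for all $1\le p<\infty$ rather than only $1\le p\le 2$.
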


\begin{proof}
Firstly, we prove the first equivalence. By the definition of the $F_{p}^{\alpha,c}$-norm, it is obvious that
$$
\|f\|_{F_{p}^{\alpha,c}}\lesssim \|  {\varphi_0}*f\| _{p}+\big\|  (\sum_{j=-\infty}^{+\infty}2^{2j\alpha}|  \dot{\varphi}_j*f| ^2)^\frac{1}{2}\big\| _{p}.
$$
 To prove the reverse inequality,  it suffices to show:
$$
\big\|  (\sum_{j=-\infty}^{0}2^{2j\alpha}|  \dot{\varphi}_j*f| ^2)^\frac{1}{2}\big\| _p\lesssim \|  {\varphi_0}*f\| _p+\big\|  (\sum_{j=1}^{+\infty}2^{2j\alpha}|  \dot{\varphi}_j*f| ^2)^\frac{1}{2}\big\| _{p}.
$$
By the support assumption of $\varphi $, we have $\varphi^{(0)}=1$ for any $| \xi |\leq 1$. Thus, when $j< 0$,
$$
{\varphi}(2^{j}\cdot)={\varphi}(2^{j}\cdot)\varphi^{(0)}.
$$
Then
\begin{equation}\label{eq: <0}
{\dot\varphi}_j*f={\dot\varphi}_j*{\varphi_0}*f.
\end{equation}
By the triangle inequality, \eqref{eq: <0} and  \cite[Lemma 1.7]{XXY2015}, we obtain
\begin{equation*}
\begin{split}
\big\|  (\sum_{j=-\infty}^{0}2^{j\alpha}|  {\dot\varphi}_j*f | ^2)^\frac{1}{2}\big\| _p &\lesssim \sum_{j=-\infty}^{-1}2^{j\alpha}\|  {\dot\varphi}_j*\varphi_0*f \| _p +\|{\dot\varphi}_0*f\|_p \\
&\lesssim \sum_{j=-\infty}^{-1}2^{j\alpha}\|  {\dot\varphi}_j\| _1\|  {\varphi_0}*f\| _p+ \big\| \varphi (\varphi_0+\varphi_1+\varphi_2)*f \big\| _{p}\\
& \lesssim \sum_{j=-\infty}^{0}2^{j\alpha}\|  {\varphi_0}*f\| _p +\big\|  (\sum_{j=1}^{+\infty}2^{2j\alpha}|  {\varphi}_j*f| ^2)^\frac{1}{2}\big\| _{p}\\
& \lesssim  \|  {\varphi_0}*f\| _p +\big\|  (\sum_{j=1}^{+\infty}2^{2j\alpha}| {\varphi}_j*f| ^2)^\frac{1}{2}\big\| _{p}.
\end{split}
\end{equation*}
Therefore, we have proved that $ \|  {\varphi_0}*f\| _p +\big\|  (\sum_{j=1}^{+\infty}2^{2j\alpha}| {\varphi}_j*f| ^2)^\frac{1}{2}\big\| _{p}$ gives rise to an equivalent norm on $F_{p}^{\alpha,c}(\R,\M)$ when $\alpha>0$.

Now let us deal with the second equivalence. For any $1\leq p\leq 2$ and $\alpha>0$, we have $F_{p}^{\alpha,c}(\R,\M)\subset \h_p^c(\R,\M)\subset L_p(\N)$. Therefore $\|  f\| _p \lesssim \|  f\| _{F_{p}^{\alpha,c}}$. Combined with the equivalence obtained above, we see that 
$$\|  f\| _{p}+ \big\| (\sum_{j=-\infty}^{+\infty}2^{2j\alpha}|  \dot{\varphi}_j*f| ^2)^\frac{1}{2}\big\| _{p}\lesssim \|  f\| _{F_{p}^{\alpha,c}}.$$
The reverse inequality can be easily deduced by the fact that $\| {\varphi_0}* f\| _p\leq \|  {\varphi_0}\| _1\|  f\| _p$.
\end{proof}

We also have a continuous counterpart of Proposition \ref{thm:homo}. For any $\e\geq 0$, we define  $\dot{\varphi}_\e=\F^{-1}(\varphi(\e\cdot))$.
\begin{cor}\label{cor: homo}
Let $1\leq p\leq 2$ and $\alpha>0$. Then, for any $f\in F_{p}^{\alpha,c}(\R,\M)$, 
\begin{equation*}
 \| f\|_{F_{p}^{\alpha,c}} \approx \|  f\| _{p}+\Big\|  (\int _0^\infty\e^{-2\alpha}|  \dot{\varphi}_\e*f| ^2\frac{d\e}{\e})^\frac{1}{2}\Big\| _{p}.
\end{equation*}
\end{cor}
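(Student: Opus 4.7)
The plan is to reduce this corollary to its discrete analogue, Proposition~\ref{thm:homo}, by establishing the Fubini-style equivalence
\[
\Big\|\Big(\int_0^\infty \e^{-2\alpha}|\dot\varphi_\e * f|^2 \tfrac{d\e}{\e}\Big)^{\frac 12}\Big\|_p \approx \Big\|\Big(\sum_{j\in\mathbb Z} 2^{2j\alpha}|\dot\varphi_j * f|^2\Big)^{\frac 12}\Big\|_p
\]
for $f \in F_p^{\alpha,c}(\R,\M)$, which combined with Proposition~\ref{thm:homo} yields the claim. I would begin by discretizing the integral: write $\int_0^\infty \cdots \tfrac{d\e}{\e} = \sum_{j\in\mathbb Z}\int_{2^{-j}}^{2^{-j+1}}\cdots\tfrac{d\e}{\e}$ and substitute $\e=2^{-j}\tau$, so the continuous square function becomes (up to absolute constants) $\big(\sum_j 2^{2j\alpha}\int_1^2 \tau^{-2\alpha}|\dot\varphi_{2^{-j}\tau}*f|^2\tfrac{d\tau}{\tau}\big)^{1/2}$. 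For $\tau \in [1,2]$ the Fourier support of $\dot\varphi_{2^{-j}\tau}*f$ lies in the annulus $\{2^{j-2}\le|\xi|\le 2^{j+1}\}$, so $\dot\varphi_{2^{-j}\tau}*f = \dot\varphi_{2^{-j}\tau}*\widetilde\varphi_j*f$ with $\widetilde\varphi_j := \dot\varphi_{j-1}+\dot\varphi_j+\dot\varphi_{j+1}$, which is the handle that couples the two scales.

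For the direction $\lesssim$ (continuous controlled by discrete), I would apply a homogeneous Hilbert-valued multiplier theorem (the direct analogue of Theorem~\ref{multiplier-homogeneous} for $1<p<\infty$, of Theorem~\ref{lem: Multiplier p=1} for $p=1$) with input Hilbert space $H = L^2([1,2], \tau^{-2\alpha}\tfrac{d\tau}{\tau})$, symbols $\phi_j(\xi)(\tau) = \varphi(2^{-j}\tau\xi)$ and reference symbols $\rho_j = \widehat{\widetilde\varphi_j}$. After the usual dilation, the Sobolev hypothesis reduces to $\sup_{\tau \in [1,2]}\|\varphi(\tau\cdot)\varphi\|_{H_2^\sigma(\R)} <\infty$, which is immediate from $\varphi \in \mathcal S$ and the compactness of $[1,2]$; the theorem then bounds the continuous square function by the discrete one.

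For the reverse direction I would invoke a continuous Calder\'on reproducing formula: since $\sum_k\varphi(2^{-k}\xi)=1$ for $\xi\ne 0$, a standard construction produces a Schwartz $\psi$ with $\supp\widehat\psi\subset\{\tfrac12\le|\xi|\le 2\}$ and $\int_0^\infty \psi(\e\xi)\overline{\varphi(\e\xi)}\tfrac{d\e}{\e}=1$ for $\xi\ne 0$. Frequency support confines the identity
\[
\dot\varphi_j*f = \int_{I_j} \dot\varphi_j*\dot\psi_\e*\dot\varphi_\e*f\,\tfrac{d\e}{\e}
\]
to a bounded interval $I_j \subset [2^{-j-1}, 2^{-j+2}]$; Cauchy--Schwarz gives $|\dot\varphi_j*f|^2 \lesssim \int_{I_j}|\dot\varphi_j*\dot\psi_\e*\dot\varphi_\e*f|^2\tfrac{d\e}{\e}$, and after the change of variable $\e = 2^{-j}\tau$ a second application of the Hilbert-valued multiplier theorem (with input and output Hilbert spaces swapped) bounds the resulting expression by the continuous square function. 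The main obstacle is the homogeneous Hilbert-valued multiplier theorem itself: it is not stated explicitly in the paper, but is a routine parallel of the derivations of Theorem~\ref{cor: conic multiplier} and Theorem~\ref{lem: Multiplier conic p=1} from Theorem~\ref{multiplier-homogeneous} and Lemma~\ref{lem:p to p}, with the discrete parameter set replaced throughout by the measure space $([1,2], \tau^{-2\alpha}\tfrac{d\tau}{\tau})$. Once it is in place, every remaining verification is a direct consequence of the Schwartz regularity of $\varphi$ and the compactness of $[1,2]$.
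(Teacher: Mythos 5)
The paper states this corollary without proof, so the benchmark is whether your route is viable. Your overall strategy --- discretize $\int_0^\infty\frac{d\e}{\e}$ into dyadic blocks, identify the continuous square function as an $L_p(\N;H^c)$-norm with $H$ built from $L_2([1,2],\tau^{-2\alpha}\frac{d\tau}{\tau})$, and pass between the continuous and discrete square functions via Hilbert-valued Fourier multipliers and a Calder\'on reproducing formula, then invoke Proposition \ref{thm:homo} --- is sound and is exactly the circle of ideas the paper uses for its own continuous characterization (Theorem \ref{thm: charac T-L cont }, where the authors instead approximate each block integral by Riemann sums and apply the $\ell_2$-valued theorems; both devices are legitimate). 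Two small bookkeeping points: with $\e=2^{-j}\tau$, $\tau\in[1,2]$, the Fourier support of $\dot\varphi_{2^{-j}\tau}*f$ fills $\{2^{j-2}\le|\xi|\le2^{j+1}\}$, so $\widetilde\varphi_j$ must be $\sum_{k=j-3}^{j+2}\dot\varphi_k$ rather than three terms; and in the reproducing formula the correct window is $I_j=[2^{-j-2},2^{-j+2}]$. Neither affects the argument.

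The one place where "routine parallel" oversells things is $p=1$. Your plan needs a \emph{homogeneous} ($j\in\mathbb Z$) Hilbert-valued multiplier theorem, but the paper's $p=1$ multiplier results (Theorems \ref{lem: Multiplier p=1} and \ref{lem: Multiplier conic p=1}) are intrinsically local: they run through the characterization of $\h_1^c(\R,\M)$ and the local Calder\'on--Zygmund theory, neither of which sees negative $j$. A homogeneous $p=1$ version would force you to import the non-local Hardy space $\mathcal H_1^c$ and its multiplier theory from \cite{Mei2007,XXX17,XXY17}, which is more than a formal rewrite. The fix is to not prove the full homogeneous equivalence of the two square functions: split both at scale $1$. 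For $\e\ge1$ (resp.\ $j\le0$) use only Young's inequality, $\|\dot\varphi_\e*f\|_p\le\|\check\varphi\|_1\|f\|_p$, the column triangle inequality and the convergence of $\int_1^\infty\e^{-\alpha}\frac{d\e}{\e}$ (this is where $\alpha>0$ enters, exactly as in the proof of Proposition \ref{thm:homo}); this bounds the low-frequency parts by $\|f\|_p$, which is already present on both sides of the asserted equivalence. Then the multiplier theorems are needed only for $j\ge1$, $\e\le1$, where the inhomogeneous statements the paper actually proves (including $p=1$) apply after replacing $H_j=L_2(B(0,2^{-j}),2^{jd}dt)$ by $L_2([1,2],\tau^{-2\alpha}\frac{d\tau}{\tau})$ --- a substitution the paper itself treats as immediate in Lemma \ref{lem:p=1-conic}. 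With that modification your proof closes for all $1\le p\le2$.
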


%We notice that \eqref{eq: cor homo} is the sum of two homogeneous norms. For $\lambda>0$, $\|  f(\lambda\cdot)\| _{p}=\lambda^{-\frac{d}{p}}\|  f\| _{p}$ and for the second term in \eqref{eq: cor homo}, we have a corresponding assertion with $\alpha-\frac{d}{p}$ instead of $\frac{d}{p}$, since  $[\dot{\varphi}_\e*f(\lambda\cdot)](s)=\dot{\varphi}_{\lambda\e}*f(\lambda s)$. (This paragraph goes to the paper PDO ???)

%%%%%%%%%%%%%%%%%%%%%%%%%%%%%%%%%%%%%%%%%%%%%%%%%%%%%%%%%%%%%%%%%%%%%%%%
\section{Characterizations}\label{section-charact}
%%%%%%%%%%%%%%%%%%%%%%%%%%%%%%%%%%%%%%%%%%%%%%%%%%%%%%%%%%%%%%%%%%%%%%

In this section we give two kinds of characterizations of the Triebel-Lizorkin spaces defined previously: one is done by directly replacing the function $\varphi$ in Definition \ref{def:T-L R} by more general convolution kernels; the other is described by Lusin square functions. Since the local Hardy spaces are included in the family of inhomogeneous Triebel-Lizorkin spaces, these two characterizations can be seen as extensions as well as improvements of those in \cite{XX18} for local Hardy spaces, listed in Theorems \ref{thm main1} and \ref{thm: equivalence hpD}. The multiplier theorems in section \ref{section: Multiplier theorems} will play a crucial role in this section.

\subsection{General characterizations}
We have seen in section \ref{section: Definitions of T_L} that the definition of Triebel-Lizorkin spaces is independent of the choice of $\varphi$ satisfying \eqref{condition-phi}. In this 
section, we will show that this kernel is not even necessarily a Schwartz function.

 Let $\sigma > \frac d 2$ and  $\Phi^{(0)}$, $\Phi$ be two complex-valued  infinitely differentiable  functions defined respectively on $\R$ and $\R\backslash\lbrace 0\rbrace$, which satisfy
\begin{equation}\label{eq: condition Phi 0}
 \left \{ \begin{split}
  \displaystyle &
| \Phi^{(0)}(\xi)| >0 \quad  \mbox{if }\, |  \xi| \leq 2,\\
  \displaystyle & \sup_{k\in \mathbb{N}_0}2^{-k\alpha_0} \|  \Phi^{(0)}(2^k\cdot)\varphi  \| _{H_2^\sigma}<\infty,
 \end{split} \right.
\end{equation}
and
\begin{equation}\label{eq: condition Phi}
 \left \{ \begin{split}
  \displaystyle &
 |  \Phi(\xi)| >0 \quad  \mbox{if } \,\frac{1}{2}\leq|  \xi| \leq 2,\\
  \displaystyle & \sup_{k\in \mathbb{N}_0}2^{-k\alpha_0}\|  \Phi(2^k\cdot)\varphi  \| _{H_2^\sigma}<\infty,\\
  \displaystyle & \int_{\R}(1+|  s| ^2)^\sigma |  \F ^{-1}(\Phi \varphi^{(0)}I_{-\alpha_1})(s)|  ds<\infty.
 \end{split} \right.
\end{equation}
Recall that here $I_{-\alpha_1}(\xi)$ is the symbol of the Riesz potential $I^{-\alpha_1}=(-(2\pi )^{-2}\Delta)^{\frac{-\alpha_1}{2}}$.

Let  ${\Phi}^{(j)}=\Phi(2^{-j}\cdot)$ for $j\geq 1$, and $\Phi_j$ be the function whose Fourier transform is equal to ${\Phi}^{(j)}$ for any $j\in \mathbb{N}_0$.

\begin{thm}\label{thm:general}
Let $1\leq p < \infty$ and $\alpha \in \mathbb{R}$. Assume that $\alpha_0<\alpha<\alpha_1$, $\alpha_1\geq  0$ and $\Phi^{(0)}$, $\Phi$ satisfy conditions \eqref{eq: condition Phi 0} and \eqref{eq: condition Phi} respectively. Then for any $f\in \mathcal{S}'(\R; L_1(\M)+\M)$, we have
\begin{equation}\label{eq: general chara}
\|  f\| _{F_p^{\alpha,c}} \approx \big\|  (\sum_{j\geq 0}2^{2j\alpha}|  {\Phi}_j*f| ^2)^\frac{1}{2}\big\| _{p},
\end{equation}
where the relevant constants are independent of $f$.
\end{thm}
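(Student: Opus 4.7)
The plan is to establish the equivalence \eqref{eq: general chara} by expressing each square function in terms of the other via the Littlewood-Paley decomposition $\sum_k\varphi^{(k)}=1$, and then invoking the Fourier multiplier machinery of Section \ref{section: Multiplier theorems}: Theorem \ref{multiplier-global} when $1<p<\infty$, and Theorem \ref{lem: Multiplier p=1} when $p=1$. The conditions $\alpha_0<\alpha$ and $\alpha<\alpha_1$ are used precisely to ensure the summability of the geometric series produced by these multiplier estimates.

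For the direction $\|f\|_{F_p^{\alpha,c}}\lesssim\|(\sum_{j\ge 0}2^{2j\alpha}|\Phi_j*f|^2)^{1/2}\|_p$, I would use the non-vanishing of $\Phi$ on $\{1/2\le|\xi|\le 2\}$ and of $\Phi^{(0)}$ on $\{|\xi|\le 2\}$ to build smooth compactly supported $\Psi$ and $\Psi^{(0)}$ with $\Psi\Phi=\varphi$ and $\Psi^{(0)}\Phi^{(0)}=\varphi^{(0)}$. Writing $\Psi^{(j)}=\Psi(2^{-j}\cdot)$ for $j\ge 1$ gives the pointwise identity $\varphi_j*f=\F^{-1}(\Psi^{(j)})*\Phi_j*f$ valid for every $j\ge 0$. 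Applying Theorem \ref{multiplier-global} with $\phi_j=\Psi^{(j)}$ and $\rho_j=\Phi^{(j)}$ closes the $1<p<\infty$ case: the support condition on $\phi_j\rho_j=\varphi^{(j)}$ is exactly the one required, and the $H_2^\sigma$-norms in \eqref{eq:j condition thm} are uniformly bounded in $j$ and $k$ since $\Psi$ and $\Psi^{(0)}$ are smooth and compactly supported.

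For the opposite direction, I would decompose $\Phi_j*f=\sum_m\F^{-1}(\Phi^{(j)}\varphi^{(j+m)})*f$ for $j\ge 1$ (with the convention $\varphi^{(k)}=0$ for $k<0$), and $\Phi_0*f=\sum_k\F^{-1}(\Phi^{(0)}\varphi^{(k)})*f$, then apply the triangle inequality on $L_p(\N;\ell_2^c)$. After the change of index $j'=j+m$, the $m$-th slice equals $2^{-m\alpha}\|(\sum_{j'\ge 1}2^{2j'\alpha}|\Phi_{j'-m}*\varphi_{j'}*f|^2)^{1/2}\|_p$, to which the appropriate multiplier theorem applies with $\phi_{j'}=\Phi^{(j'-m)}$ and $\rho_{j'}=\varphi^{(j')}$; the support condition is automatic from $\supp\varphi^{(j')}$, and one gets a bound of the form $C_m\,2^{-m\alpha}\|f\|_{F_p^{\alpha,c}}$ with $C_m=\sup_{-2\le k\le 2}\|\Phi(2^{m+k}\cdot)\varphi\|_{H_2^\sigma}$. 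The $j=0$ piece is treated in parallel: the scaling computation $\|\F^{-1}(\Phi^{(0)}\varphi^{(k)})\|_1=\|\F^{-1}(\Phi^{(0)}(2^k\cdot)\varphi)\|_1\lesssim 2^{k\alpha_0}$ (via the Young-type estimate \eqref{Young-Banach} and condition \eqref{eq: condition Phi 0}) gives $\|\Phi_0*f\|_p\lesssim\sum_k 2^{k(\alpha_0-\alpha)}\|f\|_{F_p^{\alpha,c}}$, which converges thanks to $\alpha_0<\alpha$.

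The main obstacle is proving the summability of $\sum_m C_m\,2^{-m\alpha}$. For $m\ge 0$, the second line of \eqref{eq: condition Phi} gives $C_m\lesssim 2^{m\alpha_0}$ immediately, hence $C_m\,2^{-m\alpha}\lesssim 2^{m(\alpha_0-\alpha)}$ is summable since $\alpha_0<\alpha$. The delicate range is $m<0$: for $m+k\le -2$ the cutoff $\varphi^{(0)}(2^{m+k}\xi)$ is identically $1$ on $\supp\varphi$, so one can write
\begin{equation*}
 \Phi(2^{m+k}\xi)\,\varphi(\xi)=2^{(m+k)\alpha_1}\bigl(\Phi\varphi^{(0)}I_{-\alpha_1}\bigr)(2^{m+k}\xi)\cdot I_{\alpha_1}(\xi)\,\varphi(\xi);
\end{equation*}
the inequality \eqref{ineq-potential-Sobolev} then transfers the $H_2^\sigma$-norm onto $\int(1+|s|^2)^\sigma|\F^{-1}(\Phi\varphi^{(0)}I_{-\alpha_1})(s)|\,ds$, which is finite by the third line of \eqref{eq: condition Phi}. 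This yields $C_m\lesssim 2^{m\alpha_1}$ and hence $C_m\,2^{-m\alpha}\lesssim 2^{m(\alpha_1-\alpha)}$, summable because $\alpha<\alpha_1$. The remaining work is the adaptation of both arguments to $p=1$ using Theorem \ref{lem: Multiplier p=1} in place of Theorem \ref{multiplier-global} — straightforward for the upper bound since $\rho_{j'}=\varphi^{(j')}$ already fits the required form, but requiring extra care for the lower bound where the role of $\rho_j=\Phi^{(j)}$ must be routed through a Schwartz-class intermediate — together with the bookkeeping for the boundary-in-$m$ contributions.
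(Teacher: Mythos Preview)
Your treatment of the upper bound $\big\|\big(\sum_{j\ge0}2^{2j\alpha}|\Phi_j*f|^2\big)^{1/2}\big\|_p\lesssim\|f\|_{F_p^{\alpha,c}}$ follows the paper's route: decompose $\Phi^{(j)}=\sum_k\Phi^{(j)}\varphi^{(j+k)}$ and sum over $k$, extracting the factor $2^{k\alpha_0}$ for large $k$ from the second line of \eqref{eq: condition Phi} and the factor $2^{k\alpha_1}$ for negative $k$ from the third line via \eqref{ineq-potential-Sobolev}. The paper organizes the negative-$k$ part somewhat differently --- it factors $\Phi(\xi)\varphi^{(k)}(\xi)=2^{k\alpha_1}\eta(\xi)\rho^{(k)}(\xi)$ with $\rho=I_{\alpha_1}\varphi$ and applies the multiplier theorems twice, first to peel off $\eta$ and then $\rho$ --- but the underlying mechanism is the same as yours.

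The lower bound is where your approach diverges from the paper's, and where there is a real gap at $p=1$. Your identity $\varphi_j*f=\F^{-1}(\Psi^{(j)})*\Phi_j*f$ with $\Psi=\varphi/\Phi$ smooth and compactly supported is correct, and for $1<p<\infty$ Theorem~\ref{multiplier-global} applies directly with $\phi_j=\Psi^{(j)}$ and $\rho_j=\Phi^{(j)}$; this is in fact simpler than what the paper does. But for $p=1$ you need Theorem~\ref{lem: Multiplier p=1}, whose additional hypotheses demand that $\rho=\Phi$ be a Schwartz function supported in $\{\frac12\le|\xi|\le2\}$ and strictly positive there. Condition~\eqref{eq: condition Phi} guarantees none of this: $\Phi$ is only $C^\infty$ on $\R\setminus\{0\}$, with polynomial growth governed by $\alpha_0$, and may well have unbounded support (think $\Phi=I_\beta$). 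Routing through a truncation $\tilde\Phi=\Phi\chi$ does not close the loop either: you get from $\varphi_j$ to $\tilde\Phi_j$, but the passage from $\tilde\Phi_j$ back to $\Phi_j$ again places $\Phi$ in the $\rho$-slot of Theorem~\ref{lem: Multiplier p=1}.

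The paper circumvents this by an absorption argument with a free parameter $M$. It inserts the redundant factor $\varphi^{(0)}(2^{-j-M}\cdot)$ (equal to $1$ on $\supp\varphi^{(j)}$), writes $\varphi^{(0)}(2^{-j-M}\cdot)=1-h^{(j+M)}$, and arrives at an inequality of the shape $\|f\|_{F_p^{\alpha,c}}\le C\big(\|f\|_{F_{p,\Phi}^{\alpha,c}}+\text{error}\big)$, where the error term $\big\|\big(\sum_j2^{2j\alpha}|h_{j+M}*\Phi_j*f|^2\big)^{1/2}\big\|_p$ is estimated exactly as in your large-$m$ analysis (the paper's term III). The point is that in that estimate every application of Theorem~\ref{lem: Multiplier p=1} has $\rho_j=\varphi^{(j)}$ on the right-hand side, which \emph{does} satisfy the required hypotheses. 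The error is then $\lesssim 2^{M(\alpha_0-\alpha)}\|f\|_{F_p^{\alpha,c}}$, and choosing $M$ large enough makes the constant less than $\tfrac12$, allowing absorption into the left-hand side. This detour through the auxiliary parameter $M$ --- ensuring that $\Phi$ never occupies the $\rho$-slot of the $p=1$ multiplier theorem --- is what your proposal is missing.
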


\begin{proof}
We follow the pattern of the proof of \cite[Theorem 4.17]{XXY17}. Denote the norm on the right hand side of \eqref{eq: general chara} by $\|  f \| _{F_{p,\Phi}^{\alpha,c}}$.

\emph{Step 1.} Let $\varphi_k=0$ (and so is $\varphi^{(k)}$) if $k<0$. Given a positive integer $K$, for  any $j\in \mathbb{N}_0$, we write
$$
\Phi^{(j)}=\sum_{k=-\infty}^{K-1}\Phi^{(j)}\varphi^{(j+k)}+\sum_{k=K}^\infty\Phi^{(j)}\varphi^{(j+k)}.
$$
 Then
 \begin{equation}\label{eq:convergence}
{\Phi}_j*f=\sum_{k\leq K-1}{\Phi}_j*{\varphi}_{j+k}*f+\sum_{k\geq K}{\Phi}_j*{\varphi}_{j+k}*f.
 \end{equation}
Temporarily we take for granted that the second series is convergent not only in $\mathcal {S}'(\R; L_1(\M)+\M)$ but also in $F_p^{\alpha, c}(\R,\M)$, which is to be settled up in the last step. Then we obtain
$$
\|  f \| _{F_{p,\Phi}^{\alpha,c}}\leq \rm{I}+\rm{II}+\rm{III},
$$
where
\begin{equation*}
\begin{split}
\rm{I} &= \sum_{k\leq K-1}\big\|  (\sum_{j\geq 1}2^{2j\alpha} |  {\Phi}_j*{\varphi}_{j+k}*f| ^2)^\frac{1}{2}\big\| _p,\\
\rm{II} &= \sum_{k\leq K-1}\|  {\Phi}_0*{\varphi}_{k}*f\| _p,\\
\rm{III} &=  \sum_{k\geq K}\big\|  (\sum_{j\geq 0}2^{2j\alpha} |  {\Phi}_j*{\varphi}_{j+k}*f | ^2)^\frac{1}{2}\big\| _p.
\end{split}
\end{equation*}
The term $\rm{II}$ is easy to deal with. By \eqref{Young-Banach} and \eqref{eq: condition Phi 0}, we obtain
\begin{equation*}
\begin{split}
\sum_{k=0}^{ K-1}\| {\Phi}_0*{\varphi}_{k}*f \| _p & = \sum_{k=0}^{K-1} \|{\Phi}_0*({\varphi}_{k-1}+{\varphi}_{k}+{\varphi}_{k+1})*{\varphi}_{k}*f \| _p\\
 & \lesssim  \sum_{k=0}^{K-1}\| {\varphi}_k*f \| _p  \|{\Phi}_0*({\varphi}_{k-1}+{\varphi}_{k}+{\varphi}_{k+1}) \|_1  \\
& \lesssim  \sup_{k\in \mathbb{N}_0} 2^{-k\alpha_0}   \| \Phi^{(0)}(2^k\cdot)\varphi  \| _{H_2^\sigma} \sum_{k=0}^{K-1} 2^{k(\alpha_0-\alpha)}  \| 2^{k\alpha} {\varphi}_k*f \| _p\\
& \lesssim C_K\| f\|_{F_p^{\alpha,c}}.
 \end{split}
\end{equation*}

Let us treat the terms $\rm I$ and $\rm{ III}$ separately. By the support assumption of $\varphi^{(k)}$ and the property that it is equal to $1$ when $| \xi |\leq 1$, for $k\leq K-1$, we have
\begin{equation}\label{split 1}
\begin{split}
\Phi(\xi)\varphi^{(k)}(\xi)& = \frac{\Phi(\xi)\varphi^{(0)}(2^{-K}\xi)}{|  \xi | ^{\alpha_1}}|  \xi | ^{\alpha_1}\varphi^{(k)}(\xi)\\
 &= 2^{k\alpha_1}\eta(\xi)\rho^{(k)}(\xi),
 \end{split}
\end{equation}
where $\eta$, $\rho$ are defined by
$$
\eta(\xi)=\frac{\Phi(\xi)\varphi^{(0)}(2^{-K}\xi)}{|  \xi | ^{\alpha_1}}\quad \mbox{and}\quad \rho(\xi)=|  \xi | ^{\alpha_1}\varphi  (\xi).
$$
 Let $\eta^{(j)}=\eta(2^{-j}\cdot)$, $j\in \mathbb{Z}$. For $j\geq 1$, define ${\eta}_j=\F^{-1}(\eta^{(j)})$. Then for any $j\geq 1$, we have $$
{\Phi}_j*{\varphi}_{j+k}*f=2^{k\alpha_1}{\eta}_j*{\rho}_{j+k}*f.
$$
Now we are ready to estimate  $\rm I$. Applying Theorems \ref{multiplier-global} and \ref{lem: Multiplier p=1} twice, we get
\begin{equation}\label{sum-in-I}
\begin{split}
\rm I  &=  \sum_{k\leq K-1}2^{k(\alpha_1-\alpha)}\big\|  (\sum_{j\geq 1}2^{2(j+k)\alpha}|  {\eta}_j*{\rho}_{j+k}*f | ^2)^\frac{1}{2}\big\| _p\\
 &=  \sum_{k\leq K-1}2^{k(\alpha_1-\alpha)}\big\|  (\sum_{j\geq k+1}2^{2j\alpha}| {\eta}_{j-k}*{\rho}_{j}*f | ^2)^\frac{1}{2}\big\| _p \\
  &\lesssim \sum_{k\leq K-1}2^{k(\alpha_1-\alpha)}\max\big\{  \|  \eta^{(-k)}(\varphi^{(0)}+\varphi^{(1)})\| _{H_2^\sigma},\max_{-2\leq \ell \leq 2}\|  \eta^{(-k-\ell)}\varphi  \| _{H_2^\sigma} \big\}\\
 & \;\;\;\; \cdot  \big\| (\sum_{j\geq 0}2^{2j\alpha}|  {\rho}_{j}*f | ^2)^\frac{1}{2}\big\| _p \\
 &  \lesssim  \sum_{k\leq K-1}2^{k(\alpha_1-\alpha)}\max\big\{   \|  \eta^{(-k)}(\varphi^{(0)}+\varphi^{(1)})\| _{H_2^\sigma}, \max_{-2\leq \ell \leq 2}\|  \eta^{(-k-\ell)}\varphi  \| _{H_2^\sigma} \big\}\\
 & \;\;\;\; \cdot \max \big\{\|  I_{\alpha_1}(\varphi^{(0)}+\varphi^{(1)})\| _{H_2^\sigma},\|  I_{\alpha_1}\varphi\| _{H_2^\sigma} \big\}\big\|  (\sum_{j\geq 0}2^{2j\alpha}|  {\varphi}_{j}*f | ^2)^\frac{1}{2}\big\| _p \\
&=  \sum_{k\leq K-1}2^{k(\alpha_1-\alpha)}\max\big\{ \|  \eta^{(-k)}(\varphi^{(0)}+\varphi^{(1)})\| _{H_2^\sigma}, \max_{-2\leq \ell \leq 2}\|  \eta^{(-k-\ell)}\varphi  \| _{H_2^\sigma} \big\} \\
  &\;\;\;\; \cdot \max \big\{\|  I_{\alpha_1}(\varphi^{(0)}+\varphi^{(1)})\| _{H_2^\sigma},\|  I_{\alpha_1}\varphi\| _{H_2^\sigma} \big\}\|  f \| _{F_p^{\alpha,c}}.
  \end{split}
\end{equation}

Let us deal with all the factors in the last term of the above inequality. Firstly, when $\alpha_1=0$, it is obvious that  $I_{\alpha_1}\varphi \in H_2^\sigma(\R)$ and $ I_{\alpha_1}(\varphi^{(0)}+\varphi^{(1)}) \in H_2^\sigma(\R)$. Secondly, we treat the case $\alpha_1>0$. First,  it is easy to see that $\|  I_{\alpha_1}\varphi\| _{H_2^\sigma}<\infty$. Next, we deal with the term $I_{\alpha_1}(\varphi^{(0)}+\varphi^{(1)})$, which can be reduced to $I_{\alpha_1}\varphi^{(0)}$ by dilation. Let $N $ be a positive integer such that $\alpha_1> \frac 1 N$. If the dimension $d$ is odd, we consider the function $F(z)  =e^{(z-\frac{N+2}{2N+2})^2} |\xi| ^{\alpha_1 -\frac 1 2 -\frac 1 N + (1+\frac 1 N)z} \varphi^{(0)} $, which is continuous on the strip $\{z\in \mathbb{C}\,:\, 0\leq {\rm Re} (z)\leq 1\}$, and analytic in the interior. A direct computation shows that 
$\sup_{t\in \mathbb{R}} \| F({\rm i} t) \|_{H_2^{\frac d 2 -\frac 1 2}} <\infty$ and $\sup_{t\in \mathbb{R}} \| F({1+\rm i} t) \|_{H_2^{\frac d 2 +\frac 1 2}} <\infty$. Then for $\theta = \frac{\frac 1 N +\frac 1 2}{\frac 1 N +1}>\frac 1 2$, we have 
$$F(\theta) =I_{\alpha_1} \varphi^{(0)} \in H_2^{\sigma}(\R) =\big(H_2^{\frac d 2 -\frac 1 2}(\R), H_2^{\frac d 2 +\frac 1 2}(\R)\big)_\theta,$$
 for some $\sigma >\frac d 2$. On the other hand, if $d$ is even, set $F(z)  =e^{(z-\frac{1}{2N})^2} |\xi| ^{N \alpha_1 z +\frac{\alpha_ 1}{ 2 }} \varphi^{(0)} $. We can also check that
$\sup_{t\in \mathbb{R}} \| F({\rm i} t) \|_{H_2^{\frac d 2}} <\infty$, and that $\sup_{t\in \mathbb{R}} \| F({1+\rm i} t) \|_{H_2^{\frac d 2 +1}} <\infty$. Then for $\theta = \frac{1}{2N}$, we have 
$$
F(\theta) =I_{\alpha_1} \varphi^{(0)} \in H_2^{\frac d 2 + \frac{1}{2N}}(\R) =\big(H_2^{\frac d 2 }(\R), H_2^{\frac d 2 +1}(\R)\big)_\theta.
$$
Thus, for any $\alpha_1>0 $, we can always choose a positive $\sigma>\frac d 2 $ such that $I_{\alpha_1} \varphi^{(0)} \in H_2^{\sigma}(\R) $.
Finally, we have to estimate $\|  \eta^{(-k)}\varphi  \| _{H_2^\sigma}$ and $\|  \eta^{(-k)}\varphi^{(0)}\| _{H_2^\sigma}$ uniformly in $k$, which will yield the convergence of the last sum in \eqref{sum-in-I} by dilation again. To this end, note that by \eqref{eq: condition Phi}, $\check{\eta}$ is integrable on $\R$, then we use the  Cauchy-Schwarz inequality in the following way:
\begin{equation*}
\begin{split}
|  \F^{-1}(\eta^{(-k)}\varphi  )(s)| ^2  &=  |  \int_{\R} \check{\eta}(t)\F^{-1}(\varphi  )(s-2^{k}t)dt| ^2 \\
& \leq   \|  \check{\eta}\| _1\int_{\R} |  \check{\eta}(t)|  \cdot |  \F^{-1}(\varphi  )(s-2^{k}t)| ^2 dt.
\end{split}
\end{equation*}
For $k\leq K-1$, we have
\begin{equation}\label{eq:term I'}
\begin{split}
& \|  \eta^{(-k)}\varphi  \| _{H_2^\sigma}^2
= \int _{\R}(1+|  s| ^2)^\sigma |  \F^{-1}(\eta^{(-k)}\varphi  )(s)| ^2ds \\
& \leq  \|  \check{\eta}\| _1 \int_{\R}(1+|  s| ^2)^\sigma \int_{\R} |  \check{\eta}(t)|  \cdot |  \F^{-1}(\varphi  )(s-2^k t)| ^2 dtds \\
& \lesssim  \|  \check{\eta}\| _1 \int_{\R}(1+|  2^k t| ^2)^\sigma |  \check{\eta}(t) |
\int_{\R} (1+|  s-2^k t| ^2)^\sigma |  \F^{-1}(\varphi  )(s-2^kt)| ^2 dsdt\\
&\leq  2^{K\sigma}\|  \check{\eta}\| _1\int_{\R}(1+|  t| ^2)^\sigma |  \check{\eta}(t) |  dt \int_{\R}(1+|  s| ^2)^\sigma |  \F^{-1}(\varphi  )(s) |  ^2ds \\
& \leq  C_{\0,\sigma,K}\big(\int_{\R}(1+|  t| ^2)^\sigma |  \check{\eta}(t) |  dt \big)^2.
\end{split}
\end{equation}
The other term $\|  \eta^{(-k)}\varphi^{(0)}\| _{H_2^\sigma}$ is dealt with in the same way.

Going back to the estimate of $\rm{I}$, by the previous inequalities, we obtain
\begin{equation}\label{eq:I}
{\rm{I}} \lesssim C_{\Phi,\varphi^{(0)},\alpha_1,\alpha,K}\int_{\R}(1+|  t| ^2)^\sigma |  \check{\eta}(t) |  dt\,\|  f\| _{F_p^{\alpha,c}}.
\end{equation}
In order to return from $\eta$ back to $\0$, we write
$$
\eta = I_{-\alpha_1}\Phi [ \varphi^{(0)}(2^{-K}\cdot)-\varphi^{(0)}]+I_{-\alpha_1}\Phi\varphi^{(0)}.
$$
Since $I_{-\alpha_1}\Phi ( \varphi^{(0)} (2^{-K}\cdot )-\varphi^{(0)})$ is an infinitely differentiable function with compact support, we have $$
\int_{\R}(1+|  t| ^2)^\sigma |  \F^{-1}(I_{-\alpha_1}\Phi ( \varphi^{(0)}(2^{-K}\cdot)-\varphi^{(0)}))(t) |  dt =C'_{\Phi,\varphi^{(0)},\alpha_1,\alpha,K}<\infty.
$$
Then \eqref{eq: condition Phi} implies that 
$$
\int_{\R}(1+|  t| ^2)^\sigma |  \check{\eta}(t) |  dt \lesssim C'_{\Phi,\varphi^{(0)},\alpha_1,\alpha,K}+\int_{\R}(1+|  s| ^2)^\sigma |  \F ^{-1}(I_{-\alpha_1}\Phi \varphi^{(0)})(s)|  ds<\infty.
$$
Therefore,
$$
{ \rm{I}} \lesssim  \|  f\| _{F_p^{\alpha,c}}.
$$

\emph{Step 2.} Now it remains to estimate the third term $\rm{III}$. Let $H$ be a Schwartz function such that
\begin{equation}
\supp H\subset \lbrace \xi\in \R: \frac{1}{4}\leq |  \xi |  \leq 4\rbrace \quad \text{ and }\quad
H(\xi)=1\,\,\,\mbox{if}\,\, \,\frac{1}{2}\leq |  \xi| \leq 2.
\end{equation}
Let $H^{(k)}=H(2^{-k}\cdot)$. For $k\geq K$, we have
\begin{equation}
\Phi(\xi)\varphi^{(k)}(\xi)=\frac{\Phi(\xi)}{|  \xi| ^{\alpha_0}}H^{(k)}(\xi)\varphi  ^{(k)}(\xi)|  \xi| ^{\alpha_0},\label{eq: Step 3}\\
\end{equation}
and
\begin{equation}
\Phi^{(0)}(\xi)\varphi^{(k)}(\xi)=\frac{\Phi^{(0)}(\xi)}{|  \xi| ^{\alpha_0}}H^{(k)}(\xi)\varphi  ^{(k)}(\xi)|  \xi| ^{\alpha_0}.\label{eq: Step 3'}
\end{equation}
For any $j\in \mathbb{N}_0$, we keep using the notation ${\Phi}_j=\F^{-1}(\Phi^{(j)})$ and ${H}_j=\F^{-1}(H^{(j)})$. Thus, we have
$$
{\Phi}_j*{\varphi}_{j+k}*f=2^{k\alpha_0}{(I_{-\alpha_0}\Phi)}_j*{H}_{j+k}*{(I_{\alpha_0}\varphi  )}_{j+k}*f.
$$
Therefore,
\begin{equation*}
\begin{split}
\rm{III} &=  \sum_{k\geq K}2^{k(\alpha_0-\alpha)}\big\|  \big(\sum_{j\geq 0}2^{2(j+k)\alpha} |  {(I_{-\alpha_0}\Phi})_j*{H}_{j+k}*{(I_{\alpha_0}\varphi  )}_{j+k}*f| ^2  \big)^\frac{1}{2}\big\| _p\\
 &=  \sum_{k\geq K} 2^{k(\alpha_0-\alpha)}\big\|  \big(\sum_{j\geq k}2^{2j\alpha} |  {(I_{-\alpha_0}\Phi)}_{j-k}*{H}_{j}*{(I_{\alpha_0}\varphi  )}_{j}*f| ^2\big)^\frac{1}{2}\big\| _p.
\end{split}
\end{equation*}
Since both $H$ and $\varphi  $ vanish near the origin, by Theorems \ref{multiplier-global} and  \ref{lem: Multiplier p=1}, we obtain
\begin{equation*}
\begin{split}
&  \sum_{k\geq K} 2^{k(\alpha_0-\alpha)}\big\|  \big(\sum_{j\geq k}2^{2j\alpha} |  {(I_{-\alpha_0}\Phi)}_{j-k}*{H}_{j}*{(I_{\alpha_0}\varphi  )}_{j}*f| ^2\big)^\frac{1}{2}\big\| _p\\
& \lesssim  \sup_{k\in\mathbb{N}_0}\max\big\{  2^{-k\alpha_0} \max _{-2\leq \ell\leq 2}\|  I_{-\alpha_0} \Phi (2^{k+\ell} \cdot)H(2^\ell \cdot)\varphi  \| _{H_2^\sigma}, 2^{-k\alpha_0}\|  I_{-\alpha_0} \Phi^{(0)} (2^k \cdot)H(\varphi^{(0)}+ \varphi^{(1)})\| _{H_2^\sigma}\big\} \\
& \;\;\;\;  \cdot  \sum_{k\geq K} 2^{k(\alpha_0-\alpha)} \|  f\| _{F_p^{\alpha,c}}.
\end{split}
\end{equation*}
Then by \eqref{ineq-potential-Sobolev},  \eqref{eq: condition Phi 0} and \eqref{eq: condition Phi}, we have, for any $-2\leq \ell\leq 2$,
\begin{equation}\label{eq:III 1}
\begin{split}
&  2^{-k\alpha_0}  \| I_{-\alpha_0} \Phi (2^{k+\ell} \cdot)H(2^\ell \cdot)\varphi  \| _{H_2^\sigma}\\
& \leq  2^{-k\alpha_0}  \|  \Phi(2^{k+\ell} \cdot)\varphi  \| _{H_2^\sigma}\int _{\R} (1+|  t| ^2)^{\sigma}|  \F^{-1}(I_{-\alpha_0}H(2^\ell \cdot))(t)|  dt \\
& \lesssim   2^{-k\alpha_0}  \|  \Phi(2^{k+\ell} \cdot)\varphi  \| _{H_2^\sigma}\leq  \sup_{k\in\mathbb{N}_0}  2^{-k\alpha_0} \|  \Phi(2^{k} \cdot)\varphi  \| _{H_2^\sigma}<\infty ,
\end{split}
\end{equation}
and
\begin{equation}\label{eq:III 2}
\begin{split}
& 2^{-k\alpha_0} \|  I_{-\alpha_0} \Phi^{(0)} (2^k \cdot)H (\varphi^{(0)}+ \varphi^{(1)}) \| _{H_2^\sigma} \\
& =  2^{-k\alpha_0} \|  I_{-\alpha_0} \Phi^{(0)} (2^k \cdot)H \sum_{\ell' =-2}^1\varphi(2^{-\ell'}\cdot) \| _{H_2^\sigma} \\
& \lesssim  2^{-k\alpha_0} \sum_{\ell '=-2}^1\|  I_{-\alpha_0} \Phi^{(0)} (2^{k+\ell'} \cdot)H(2^{\ell '} \cdot) \varphi  \| _{H_2^\sigma} \\
& \leq  2^{-k\alpha_0} \sum_{\ell '=-2}^1 \|  \Phi^{(0)}(2^{k+\ell'} \cdot)\varphi  \| _{H_2^\sigma}\int_{\R}  (1+|  t| ^2)^{\sigma}|  \F^{-1}(I_{-\alpha_0}H(2^\ell \cdot))(t)|  dt\\
& \lesssim  \sup_{k\in\mathbb{N}_0}  2^{-k\alpha_0} \|  \Phi^{(0)}(2^{k} \cdot)\varphi  \| _{H_2^\sigma}<\infty .
\end{split}
\end{equation}
Then  we get
$$
{\rm{III}} \leq C_{\Phi,\alpha_0,\alpha,K} \|  f\| _{F_p^{\alpha,c}}.
$$
Combining this estimate with those of $\rm{I}$ and $\rm{II}$, we finally get
$$
\|  f\| _{F_{p,\Phi}^{\alpha,c}}\lesssim \|  f\| _{F_p^{\alpha,c}}.
$$

\emph{Step 3.} We turn to the reverse inequality. Note that $\varphi^{(0)}(\xi)=1$ when $|\xi |\leq 1$, then by  \eqref{eq: condition Phi 0} and \eqref{eq: condition Phi}, for any $j\in \mathbb{N}_0$, we write
\begin{equation}\label{eq:step 3}
\varphi^{(j)}(\xi)=\varphi^{(j)}(\xi)\,\varphi^{(0)}(2^{-j-M}\xi)=\frac{\varphi  ^{(j)}(\xi)}{\Phi^{(j)}(\xi)}\varphi^{(0)}(2^{-j-M}\xi)\Phi^{(j)}(\xi),
\end{equation}
where $M$ is a positive integer to be chosen later. By Theorems \ref{multiplier-global} and \ref{lem: Multiplier p=1},
\begin{equation*}
\begin{split}
\|  f\| _{F_p^{\alpha,c}}&= \big\|  \big(\sum_{j\geq 0}2^{2j\alpha}|  {\varphi}_j*f | ^2\big)^\frac{1}{2}\big\| _p\\
& \lesssim  \max\big\{ \max _{-2\leq \ell\leq 2} \|  \Phi^{-1}(2^{\ell}\cdot) \varphi (2^{\ell}\cdot) \varphi \| _{H_2^\sigma}, \| (\Phi^{(0)})^{-1} \varphi^{(0)}(\varphi^{(0)}+\varphi^{(1)})\| _{H_2^\sigma}\big\}\\
& \;\;\;\;\cdot \big\|  \big(\sum_{j\geq 0}2^{2j\alpha}|  {(\varphi_0)}_{j+M}*{\Phi}_j*f | ^2\big)^\frac{1}{2}\big\| _p\\
& \lesssim \big\|  \big(\sum_{j\geq 0}2^{2j\alpha}|  {(\varphi_0)}_{j+M}*{\Phi}_j*f | ^2\big)^\frac{1}{2}\big\| _p,
\end{split}
\end{equation*}
where $(\varphi_0)_{j+M}$ is the Fourier inverse transform of $\varphi^{(0)}(2^{-j-M}\cdot)$.
Let $h=1-\varphi^{(0)}$. Write $\varphi^{(0)}(2^{-j-M}\xi)\Phi^{(j)}(\xi)=\Phi^{(j)}(\xi)-h^{(j+M)}(\xi)\Phi^{(j)}(\xi)$. Then, we have
$$
\|  f\| _{F_p^{\alpha,c}} \lesssim  \|  f\| _{F_{p,\Phi}^{\alpha,c}}+\big\|  \big(\sum_{j\geq 0}2^{2j\alpha}|  {h}_{j+M}*{\Phi}_j*f | ^2  \big)^\frac{1}{2}\big\| _p,
$$
where the relevant constant depends only on $p$, $\sigma$, $d$ and $\varphi^{(0)}$. Applying  the arguments in the estimate of $\rm III$, \eqref{eq: Step 3} with $h^{(M)}\Phi$ in place of $\Phi$ and $\eqref{eq: Step 3'}$ with $h^{(M)}\Phi^{(0)}$ in place of $\Phi^{(0)}$, we deduce
\begin{equation*}
\begin{split}
 & \big\|  \big(\sum_{j\geq 0}2^{2j\alpha}|  {h}_{j+M}*{\Phi}_j*f | ^2 \big)^\frac{1}{2}\big\| _p \\
 &\leq  C_1 \sup_{k\geq M}2^{-k\alpha_0}\max \big\{\max _{-2\leq \ell\leq 2}  \|  h(2^{k-M+\ell}\cdot)\Phi(2^{k+\ell}\cdot)\varphi  \| _{H_2^\sigma}, \|  h(2^{k-M}\cdot)\Phi^{(0)}(2^k\cdot)\varphi  \| _{H_2^\sigma} \big\} \\
 & \;\;\;\; \cdot \sum_{k\geq M} 2^{k(\alpha_0-\alpha)}\|  f\| _{F_p^{\alpha,c}}\\
&= \sup_{k\geq M}2^{-k\alpha_0}\max \big\{\max _{-2\leq \ell\leq 2}  \|  h(2^{k-M+\ell}\cdot)\Phi(2^{k+\ell}\cdot)\varphi  \| _{H_2^\sigma}, \|  h(2^{k-M}\cdot)\Phi^{(0)}(2^k\cdot)\varphi  \| _{H_2^\sigma} \big\} \\
& \;\;\;\; \cdot C_1\frac{2^{M(\alpha_0-\alpha)}}{1-2^{\alpha_0-\alpha}}\|  f\| _{F_p^{\alpha,c}},
\end{split}
\end{equation*}
where $C_1$ is a constant which depends only on $p$, $\sigma$, $d$, $H$ and $\alpha_0$. Now we replace $h$ in the above Sobolev norm by $1-\varphi^{(0)}$:
$$
\|  h(2^{k-M+\ell}\cdot)\Phi(2^{k+\ell}\cdot)\varphi  \| _{H_2^\sigma}\leq \|  \Phi(2^{k+\ell}\cdot)\varphi  \| _{H_2^\sigma}+\|  \varphi^{(0)}(2^{k-M+\ell}\cdot)\Phi(2^{k+\ell}\cdot)\varphi  \| _{H_2^\sigma}.
$$
The support assumptions of $\varphi^{(0)}$ and $\varphi  $ imply that when $k\geq M$, $\varphi^{(0)}(2^{k-M+\ell}\cdot)\varphi  \neq 0$ if and only if $k+\ell=M$ or $k+\ell=M+1$. Then by \eqref{ineq-potential-Sobolev}, we have $$
\|  \varphi^{(0)}(2^{k-M+\ell}\cdot)\Phi(2^{k+\ell}\cdot)\varphi  \| _{H_2^\sigma}\leq C_2 \|  \Phi(2^{k+\ell}\cdot)\varphi  \| _{H_2^\sigma},
$$
where $C_2$ depends only on $\varphi^{(0)}$, $\sigma$ and $d$. Thus,
$$
\|  h(2^{k-M+\ell}\cdot)\Phi(2^{k+\ell}\cdot)\varphi  \| _{H_2^\sigma}\leq(1+C_2)\|  \Phi(2^{k+\ell}\cdot)\varphi  \| _{H_2^\sigma}.
$$
Similarly, we have 
$$
\|  h(2^{k-M}\cdot)\Phi^{(0)}(2^k\cdot)\varphi  \| _{H_2^\sigma}\leq(1+C_2)\| \Phi^{(0)}(2^k\cdot)\varphi  \| _{H_2^\sigma}.
$$
Putting all the estimates that we have obtained so far together, we get
\begin{equation*}
\begin{split}
\|  f\| _{F_p^{\alpha,c}}  & \leq C_3\Big(C_1(1+C_2)\frac{2^{M(\alpha_0-\alpha)}}{1-2^{\alpha_0-\alpha}}\sup_{k\geq M}2^{-k\alpha_0}\max \{\|  \Phi(2^{k}\cdot)\varphi  \| _{H_2^\sigma},\|  \Phi^{(0)}(2^k\cdot)\varphi  \| _{H_2^\sigma}\}\|  f\| _{F_p^{\alpha,c}} \\
& \;\;\;\;\;\;\;\; +\|  f\| _{F_{p,\Phi}^{\alpha,c}}\Big),
\end{split}
\end{equation*}
where the three constants $C_1,C_2,C_3$ are independent of $M$, so we could take $M$ large enough to make sure the multiple of $\|  f\| _{F_p^{\alpha,c}}$ above is less than $\frac 1 2$, so that we have
 $$
\|  f\| _{F_p^{\alpha,c}}\lesssim \|  f\| _{F_{p,\Phi}^{\alpha,c}}.
$$

\emph{Step 4.} We now settle the convergence issue of the second series in \eqref{eq:convergence}. For every $j\geq 0$, ${\Phi}_j*{\varphi}_{j+k}*f$ is an $L_1(\M)+\M$-valued tempered distribution on $\R$. We now show that the series converges in $\mathcal{S}'(\R;L_1(\M)+\M)$. By \eqref{eq:III 1} and \eqref{eq:III 2}, for any $L>K$, we have
\begin{equation*}
\begin{split}
& 2^{j\alpha}\sum_{k=K}^L\|  {\Phi}_j*{\varphi}_{j+k}*f \| _p\\
&\lesssim  \|  I_{\alpha_0} \varphi   \| _{H_2^\sigma}  \sum_{k\geq K} 2^{k(\alpha_0-\alpha)} \sup_{k\in\mathbb{N}_0} \max\big\{  2^{-k\alpha_0} \|  \Phi(2^k\cdot)\varphi  \| _{H_2^\sigma}, 2^{-k\alpha_0} \|  \Phi^{(0)}(2^k\cdot)\varphi  \| _{H_2^\sigma}\big\} \| f\| _{F_p^{\alpha,c}}\\
&\lesssim \|  f\| _{F_p^{\alpha,c}} .
\end{split}
\end{equation*}
Therefore, for any $j\geq 0$, $\sum_{k\geq K+1} {\Phi}_j*{\varphi}_{j+k}*f$ converges in $L_p(\N)$, so in $\mathcal{S}'(\R;L_1(\M)+\M)$ too. In the same way, we can show that the series also converges in $F_p^{\alpha,c}(\R,\M)$, which completes the proof.
\end{proof}

The following is the continuous analogue of Theorem \ref{thm:general}. We use similar notation for continuous parameters: given $\e>0$,  ${\Phi}_\e$ denotes the function whose Fourier transform is $\Phi^{(\e)}=\Phi(\e\cdot)$.
\begin{thm}\label{thm: charac T-L cont }
Keep the assumption of the previous theorem. Then for $f\in \mathcal{S}'(\R;L_1(\M)+\M)$, we have 
\begin{equation}\label{eq:cont}
\|  f\| _{F_p^{\alpha,c}}\approx \|  {\Phi}_0*f \| _p+ \Big\|\big(\int _0^1\e^{-2\alpha}|  {\Phi}_\e*f| ^2\frac{d\e}{\e}\big)^\frac{1}{2}\Big\| _p.
\end{equation}
\end{thm}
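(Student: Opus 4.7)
The plan is to deduce \eqref{eq:cont} from the discrete characterization in Theorem \ref{thm:general}. By that theorem,
$$
\|f\|_{F_p^{\alpha,c}}\approx \|\Phi_0 * f\|_p + \Bigl\|\Bigl(\sum_{j\geq 1} 2^{2j\alpha}|\Phi_j*f|^2\Bigr)^{1/2}\Bigr\|_p.
$$
On the other hand, partitioning $(0, 1]= \bigsqcup_{j \geq 0} (2^{-j-1}, 2^{-j}]$ and noting that $\e^{-2\alpha} \approx 2^{2j\alpha}$ uniformly on each dyadic piece,
$$
\int_0^1 \e^{-2\alpha}|\Phi_\e*f|^2 \frac{d\e}{\e} \approx \sum_{j \geq 0} 2^{2j\alpha} \int_{2^{-j-1}}^{2^{-j}} |\Phi_\e*f|^2 \frac{d\e}{\e},
$$
with constants depending only on $\alpha$. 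Thus the assertion reduces to showing that the $L_p$-norms of the two square functions on the right-hand sides above are equivalent, up to the boundary term $\|\Phi_0 * f\|_p$, which appears on both sides of \eqref{eq:cont}.

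To establish this equivalence, I will introduce the Hilbert spaces $H_j = L_2((2^{-j-1}, 2^{-j}], d\e/\e)$---each of total mass $\log 2$, independent of $j$---and $H = \bigoplus_{j \geq 0} H_j$, then view the continuous square function as the $L_p(\N; H^c)$-norm of the $H$-valued function $(\Phi_\e*f)_{j,\,\e\in(2^{-j-1}, 2^{-j}]}$. This setup completely mirrors that of Theorem \ref{cor: conic multiplier} and Theorem \ref{lem: Multiplier conic p=1}, where the relevant Hilbert spaces $L_2(B(0, 2^{-j}), 2^{jd}dt)$ are also normalized so that the constant $1$ has uniformly bounded $H_j$-norm. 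I can therefore repeat the arguments of those theorems essentially verbatim, verifying the uniform Sobolev estimate \eqref{eq:phi} for the $H$-valued kernel $\zeta_j(\xi) = (\Phi(\e\xi))_{\e\in(2^{-j-1}, 2^{-j}]}$ via \eqref{ineq-potential-Sobolev} together with the smoothness and support conditions \eqref{eq: condition Phi 0}--\eqref{eq: condition Phi} on $\Phi^{(0)}$ and $\Phi$. The $\Phi_0*f$ term and the integral over $\e$ close to $1$ are handled separately using the non-vanishing of $\Phi^{(0)}$ near the origin, following the pattern of the proof of Theorem \ref{thm:general}.

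The main obstacle is the case $p = 1$, which demands an $H$-valued analogue of Theorem \ref{lem: Multiplier p=1} adapted to the Hilbert spaces $H_j = L_2((2^{-j-1}, 2^{-j}], d\e/\e)$. This is obtained by following the proof scheme of Theorem \ref{lem: Multiplier conic p=1}: one reduces via the Bessel potential $J^\alpha$ to the analogue of Lemma \ref{lem:p=1-conic} with the new Hilbert spaces, where the Calder\'on-Zygmund estimates of Lemma \ref{lem:C-Z l_2} and the duality argument of Lemma \ref{lem:p=1} go through once the measure $2^{jd}dt$ on $B(0, 2^{-j})$ is replaced by $d\e/\e$ on $(2^{-j-1}, 2^{-j}]$, since both measures share the crucial property of having bounded total mass uniformly in $j$.
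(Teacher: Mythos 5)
Your proposal is correct and follows essentially the same route as the paper: discretize $(0,1]$ dyadically and then rerun the three-term decomposition of the proof of Theorem \ref{thm:general} blockwise, with all multiplier constants verified to be uniform in the continuous parameter $\delta=2^{j}\e\in(\tfrac12,1]$. The only difference is cosmetic: the paper handles the inner integral over each $(2^{-j-1},2^{-j}]$ by approximating it uniformly by discrete sums and invoking the $\ell_2$-valued multiplier theorems, whereas you package it as the Hilbert space $H_j=L_2((2^{-j-1},2^{-j}],\frac{d\e}{\e})$ and invoke the $H$-valued machinery of Lemmas \ref{lem:C-Z l_2} and \ref{lem:p to p} directly, which is legitimate since those lemmas use only the Hilbert-space structure and the uniform normalization of the measures.
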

\begin{proof}
This proof is very similar to the previous one. We keep the notation there and  only point out the necessary modifications. First, we need to discretize the integral on the right hand side of \eqref{eq:cont}. There exist two constants $C_1, C_2$ such that 
$$
C_1\sum_{j=0}^\infty2^{2j\alpha}\int_{2^{-j-1}}^{2^{-j}}|  {\Phi}_\e*f| ^2\frac{d\e}{\e} \leq   \int _0^1\e^{-2\alpha}|  {\Phi}_\e*f| ^2\frac{d\e}{\e}\leq C_2 \sum_{j=0}^\infty2^{2j\alpha}\int_{2^{-j-1}}^{2^{-j}}|  {\Phi}_\e*f| ^2\frac{d\e}{\e}.
$$
By approximation, we can  assume that $f$ is good enough so that  each integral over the interval $(2^{-j-1},2^{-j})$ can be approximated uniformly by discrete sums.
Instead of $\Phi^{(j)}(\xi)=\Phi(2^{-j}\xi)$, we have now $\Phi^{(\e)}(\xi)=\Phi(\e\xi)$ with $2^{-j-1}<\e\leq 2^{-j}$. We transfer the split \eqref{split 1}  into:
\begin{equation*}
\Phi^{(\e)}(\xi)\varphi_{j+k}(\xi) = \frac{\Phi(2^{-j}\cdot 2^j\e\xi)\varphi^{(0)}(2^{-K}\xi)}{|  2^{-j}\xi | ^{\alpha_1}}|  2^{-j}\xi | ^{\alpha_1
}\varphi_{j+k}(\xi).
\end{equation*}
Thus,$$
{\Phi}_{\e}*{\varphi}_{j+k}*f= 2^{k\alpha_1}{\eta}_j*{\rho}_{j+k}*f
$$
with
$$
\eta(\xi)=\frac{\Phi(2^j\e\xi)\varphi^{(0)}(2^{-K}\xi)}{|  \xi | ^{\alpha_1}}\quad \mbox{and} \quad \rho(\xi)=|  \xi | ^{\alpha_1}\varphi  (\xi).
$$
We proceed as  in step 1 of the previous theorem, where we transfer \eqref{eq:term I'} to the present setting:
\begin{equation*}
\begin{split}
\|  \eta^{(-k)}\varphi  \| _{H_2^\sigma} & \lesssim C_{\varphi^{(0)},\sigma,k}\int_{\R}(1+|  t| ^2)^\sigma |  \check{\eta}(t) |  dt\\
&=  C_{\varphi^{(0)},\sigma,k}\int_{\R}(1+|  t| ^2)^\sigma \big| \F^{-1}\big(I_{-\alpha_1}\Phi(\delta_j\cdot)\varphi^{(0)}(2^{-K}\cdot)\big)(t) \big| dt\\
& \leq   C_{\varphi^{(0)},\sigma,k} \delta_j^{\alpha_1}\int_{\R}(1+|  t| ^2)^\sigma \big| \F^{-1}\big(I_{-\alpha_1}\Phi \varphi^{(0)}(\delta_j^{-1}2^{-K}\cdot)\big)(t)\big| dt,
\end{split}
\end{equation*}
where $\delta_{j}=2^{j}\e$ and $\frac{1}{2}<\delta_{j}\leq 1$. The last integral is estimated as follows:
\begin{equation*}
\begin{split}
& \int_{\R}(1+|  t| ^2)^\sigma \big| \F^{-1}\big(I_{-\alpha_1}\Phi \varphi^{(0)}(\delta_{j}^{-1}2^{-K}\cdot)\big)(t) \big| dt\\ 
&\leq  \int_{\R}(1+|  t| ^2)^\sigma |  \F^{-1}(I_{-\alpha_1}\Phi \varphi^{(0)})(t) | dt\\
&\;\;\;\; +\int_{\R}(1+|  t| ^2)^\sigma \big|  \F^{-1}\big(I_{-\alpha_1}\Phi[\varphi^{(0)}- \varphi^{(0)}(\delta_{j}^{-1}2^{-K}\cdot)]\big)(t)\big|  dt\\
& \leq \int_{\R}(1+|  t| ^2)^\sigma |  \F^{-1}(I_{-\alpha_1}\Phi \varphi^{(0)})(t)|  dt\\
& \;\;\;\;  + \sup _{\frac{1}{2}<\delta\leq 1}\int_{\R}(1+|  t| ^2)^\sigma \big|  \F^{-1}\big(I_{-\alpha_1}\Phi[\varphi^{(0)}- \varphi^{(0)}(\delta^{-1}2^{-K}\cdot)]\big)(t)\big| dt.
\end{split}
\end{equation*}
Note that the above supremum is finite since $I_{-\alpha_1}\Phi[\varphi^{(0)}- \varphi^{(0)}(\delta^{-1}2^{-K}\cdot)]$ is a compactly supported and infinitely differentiable function whose inverse Fourier transform depends continuously on $\delta$. Then it follows that for $2^{-j-1}\leq \e\leq 2^{-j}$,
$$
\sum_{k\leq K-1}\Big\|  \big(\int _0^1\e^{-2\alpha}|  {\Phi}_\e*f| ^2\frac{d\e}{\e}\big)^\frac{1}{2}\Big\| _p\lesssim \sum_{k\leq K-1}2^{k(\alpha_1-\alpha)}\|  f \|  _{F_p^{\alpha,c}}\lesssim \|  f\|  _{F_p^{\alpha,c}}.
$$
We make similar modifications in step 2 of the previous theorem and then establish the third part. Moreover, by the previous theorem, $\|  {\Phi}_0*f \| _p\lesssim  \|  f\|  _{F_p^{\alpha,c}}$. Thus, we have proved
$$
\|  {\Phi}_0*f \| _p+\Big\| \big(\int _0^1\e^{-2\alpha}|  {\Phi}_\e*f| ^2\frac{d\e}{\e}\big)^\frac{1}{2}\Big\| _p\lesssim  \|  f\|  _{F_p^{\alpha,c}}.
$$

For the  reverse inequality, we follow the argument in step 3 in the previous proof. By \eqref{eq: condition Phi}, there exists $2<a\leq 2\sqrt{2}$ such that $\Phi (\xi)>0$ on $\{\xi: a^{-1}\leq |  \xi | \leq a\}$. Then for $j\geq 1$, $R_j=\{\e: a^{-1}2^{-j+1}<\e\leq a2^{-j-1}\}$ are disjoint sub intervals on $(0,1]$, and $\frac{\varphi^{(j)}}{\Phi^{(\e)}}$ is well-defined for any $\e\in R_j$. We slightly modify \eqref{eq:step 3} as follows: for any $\e\in R_j$, we have
\begin{equation*}
\varphi^{(j)}(\xi)=\varphi^{(j)}\varphi^{(0)}(2^{-j-K}\xi)=\frac{\varphi  ^{(j)}(\xi)}{\Phi^{(\e)}(\xi)}\varphi^{(0)}(2^{-j-K}\xi)\Phi^{(\e)}(\xi), \quad j\in \mathbb{N}_0.
\end{equation*}
Since for any $-2\leq \ell \leq 2$, 
$$
\|  \Phi^{-1}(2^{-j}\e^{-1}2^\ell \cdot) \varphi(2^\ell \cdot )\varphi \| _{H_2^\sigma} \leq \sup_{2a^{-1}\leq \delta \leq \frac{a}{2}}\|  \Phi^{-1}(\delta 2^\ell \cdot) \varphi (2^\ell \cdot) \varphi \| _{H_2^\sigma}<\infty
$$
and
$$
\| (\Phi^{(0)})^{-1}(2^{-j}\e^{-1}\cdot) \varphi^{(0)} (\varphi^{(0)}+\varphi^{(1)}) \| _{H_2^\sigma} \leq \sup_{2a^{-1}\leq \delta \leq \frac{a}{2}}\| (\Phi^{(0)})^{-1}(\delta \cdot) \varphi^{(0)} (\varphi^{(0)}+\varphi^{(1)})  \| _{H_2^\sigma}<\infty,
$$
 we follow the argument in step 3 in the previous theorem to get
\begin{equation*}
\begin{split}
\|  f\|  _{F_p^{\alpha,c}} &\lesssim   \big\|  \big(\sum_{j\geq 0}2^{2j\alpha}\int_{R_j}|  (\varphi_0)_{j+k}*{\Phi}_\e*f | ^2\big)^\frac{1}{2}\big\| _p\\
&\lesssim   \|  \Phi_0 *f \| _p+\Big\|  \big(\int _0^1\e^{-2\alpha}|  \Phi_\e*f| ^2\frac{d\e}{\e}\big)^\frac{1}{2}\Big\| _p +\big\|  \Big(\sum_{j\geq 0}2^{2j\alpha}\int_{R_j}|  {h}_{j+k}*\Phi_{\e} * f | ^2\frac{d\e}{\e}\big)^\frac{1}{2}\Big\| _p.
\end{split}
\end{equation*}
The remaining of the proof follows step 3 with necessary modifications.
\end{proof}

We now concretize the general characterization in the previous theorem to the case of Poisson kernel. Recall that $\mathrm{P}$ denotes the Poisson kernel of $\R$ and
 $$\mathrm{P}_\e(f)(s)=\int_{\mathbb{R}^d}\mathrm{P}_\e(s-t)f(t)dt, \quad (s,\e)\in \mathbb{R}^{d+1}_+\,.$$

The following theorem improves \cite[Section 2.6.4]{Tri1992} even in the classical case: \cite[Section 2.6.4]{Tri1992} requires $k >d+\max\{\alpha, 0\}$ for the Poisson characterization while we only need $k > \max\{\alpha, 0\}$. The proof of this theorem is similar to but easier than that of \cite[Theorem 4.20]{XXY17}, since we assume $k>0$ here; we omit the details. The key ingredient is the improvement of the characterization of Hardy spaces in terms of Poisson kernel given in \cite[Theorem 1.5]{XXX17}

\begin{thm}
Let $1\leq p <\infty$, $\alpha\in \mathbb{R}$, and $k\in \mathbb{N}$ such that $k>\max\{\alpha, 0\}$. Assume that $\Phi^{(0)}$ satisfies \eqref{eq: condition Phi 0}. Then for $f\in \mathcal{S}'(\R;L_1(\M)+\M)$, we have 
\begin{equation*}
\|  f\| _{F_p^{\alpha,c}}\approx \|  {\Phi}_0*f \| _p+ \Big\|\big(\int _0^1\e^{2(k-\alpha)}  \big|  \frac{\partial^k}{\partial \e^k}\mathrm{P}_\e(f) \big| ^2\frac{d\e}{\e}\big)^\frac{1}{2}\Big\| _p.
\end{equation*}

\end{thm}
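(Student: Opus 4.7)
My plan is to deduce the Poisson-kernel characterization as a direct application of Theorem~\ref{thm: charac T-L cont }, with the choice of symbol
$$\Phi(\xi) := (-2\pi|\xi|)^{k} e^{-2\pi|\xi|}.$$
Since $\widehat{\mathrm{P}_{\e}}(\xi)=e^{-2\pi\e|\xi|}$, differentiating $k$ times in $\e$ yields $\widehat{\partial_{\e}^{k}\mathrm{P}_{\e}}(\xi)=(-2\pi|\xi|)^{k}e^{-2\pi\e|\xi|}$, so $\Phi(\e\xi)=\e^{k}\widehat{\partial_{\e}^{k}\mathrm{P}_{\e}}(\xi)$ and thus $\Phi_{\e}*f=\e^{k}\partial_{\e}^{k}\mathrm{P}_{\e}(f)$. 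Consequently
$$\int_{0}^{1}\e^{-2\alpha}|\Phi_{\e}*f|^{2}\frac{d\e}{\e}=\int_{0}^{1}\e^{2(k-\alpha)}\big|\partial_{\e}^{k}\mathrm{P}_{\e}(f)\big|^{2}\frac{d\e}{\e},$$
so the target identity follows at once from Theorem~\ref{thm: charac T-L cont }, provided that $\Phi$, together with the given $\Phi^{(0)}$, satisfy hypotheses \eqref{eq: condition Phi 0}--\eqref{eq: condition Phi}.

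Next I would select parameters $\alpha_{0}<\alpha$ arbitrary and $\alpha_{1}\in(\max\{\alpha,0\},k)$, which is possible precisely because $k>\max\{\alpha,0\}$ and automatically gives $\alpha_{1}\geq 0$. Condition \eqref{eq: condition Phi 0} is imposed on $\Phi^{(0)}$ by hypothesis. For $\Phi$: the size/sign condition $|\Phi(\xi)|=(2\pi|\xi|)^{k}e^{-2\pi|\xi|}>0$ on $\frac12\leq|\xi|\leq 2$ is clear, and the supremum in \eqref{eq: condition Phi} is trivial since $\Phi(2^{j}\cdot)$ and all of its derivatives decay like $2^{jk}e^{-\pi 2^{j}}$ uniformly on $\supp\varphi$, so $\sup_{j\geq 0}2^{-j\alpha_{0}}\|\Phi(2^{j}\cdot)\varphi\|_{H_{2}^{\sigma}}<\infty$ for every choice of $\sigma$ and $\alpha_{0}$.

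The main obstacle is the third clause of \eqref{eq: condition Phi}, namely
$$\int_{\R}(1+|s|^{2})^{\sigma}\big|\F^{-1}\big((-2\pi)^{k}|\xi|^{\beta}e^{-2\pi|\xi|}\varphi^{(0)}(\xi)\big)(s)\big|\,ds<\infty,\qquad \beta:=k-\alpha_{1}\in(0,k),$$
for some $\sigma>d/2$. The symbol is compactly supported and $C^{\infty}$ on $\R\setminus\{0\}$, but only Hölder of order $\beta$ at the origin (the obstruction coming from both $|\xi|^{\beta}$ and $e^{-2\pi|\xi|}$). The expected decay of its inverse Fourier transform is $|s|^{-d-\beta}$, which yields the required integrability for $\sigma$ just above $d/2$. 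I would establish this either through the subordination identity, expressing $e^{-2\pi|\xi|}$ as a superposition of Gaussians and thereby reducing to compactly supported smooth symbols, or, more directly, by invoking the refined Poisson-kernel characterization of local Hardy spaces in \cite[Theorem~1.5]{XXX17} which the authors flag as the key ingredient and which already encodes precisely this pointwise decay.

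Once \eqref{eq: condition Phi 0}--\eqref{eq: condition Phi} are verified, Theorem~\ref{thm: charac T-L cont } gives
$$\|f\|_{F_{p}^{\alpha,c}}\approx\|\Phi_{0}*f\|_{p}+\Big\|\Big(\int_{0}^{1}\e^{-2\alpha}|\Phi_{\e}*f|^{2}\frac{d\e}{\e}\Big)^{\frac12}\Big\|_{p},$$
which, by the identification in the first paragraph, is exactly the claimed Poisson-kernel identity. The improvement over \cite[Section~2.6.4]{Tri1992} (which needs $k>d+\max\{\alpha,0\}$) arises because the smoothness of $\Phi$ is now measured in the Sobolev norm $H_{2}^{\sigma}$ with $\sigma$ only slightly above $d/2$, rather than by classical differentiability of order $\approx d$.
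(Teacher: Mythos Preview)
Your reduction to Theorem~\ref{thm: charac T-L cont } with $\Phi(\xi)=(-2\pi|\xi|)^{k}e^{-2\pi|\xi|}$ and the identification $\Phi_\e*f=\e^{k}\partial_\e^{k}\mathrm{P}_\e(f)$ are correct. The gap is in the third clause of \eqref{eq: condition Phi}. With $\beta=k-\alpha_{1}$ the symbol $(-2\pi)^{k}|\xi|^{\beta}e^{-2\pi|\xi|}\varphi^{(0)}(\xi)$ is compactly supported with leading singularity $|\xi|^{\beta}$ at the origin, so its inverse Fourier transform does decay like $|s|^{-d-\beta}$. But the weighted condition
\[
\int_{\R}(1+|s|^{2})^{\sigma}|s|^{-d-\beta}\,ds<\infty
\]
forces $2\sigma-d-\beta<-d$, i.e.\ $\beta>2\sigma$. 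Together with $\sigma>\tfrac d2$ this gives $\beta>d$, hence $k>\alpha_{1}+d>\max\{\alpha,0\}+d$---exactly Triebel's classical constraint, not the sharp one. Your assertion that decay $|s|^{-d-\beta}$ suffices for ``$\sigma$ just above $d/2$'' is therefore a miscalculation, and the explanation in your last paragraph misattributes the gain: the Sobolev norm $H_{2}^{\sigma}$ enters the \emph{second} clause of \eqref{eq: condition Phi}, whereas the obstruction to small $k$ sits in the third, which is an $L^{1}$ condition with polynomial weight.

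The paper does not deduce the theorem from Theorem~\ref{thm: charac T-L cont }; it instead follows the proof of \cite[Theorem~4.20]{XXY17}. The key input, \cite[Theorem~1.5]{XXX17}, is a Poisson \emph{characterization} of operator-valued Hardy spaces, not a pointwise decay bound on a single symbol; one reduces via the lifting isomorphism $J^{\alpha}:F_{p}^{\alpha,c}\to\h_{p}^{c}$ (Proposition~\ref{prop: lifting}) and applies that Hardy-space result directly, which is what bypasses the $\beta>d$ barrier. Your subordination idea does not rescue the direct route either, since superposing Gaussians does not raise the H\"older regularity of $|\xi|^{\beta}$ at the origin, and it is precisely that regularity which controls the decay of $\F^{-1}$ and hence the admissible range of $\sigma$.
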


\medskip

\subsection{Characterizations via Lusin functions}
We are going to give some characterizations for Triebel-Lizorkin spaces via Lusin square functions.  As what we did in the previous part of this section, we still use Fourier multiplier theorems as our main tool. But now we have to rely on the Hilbertian (instead of $\ell_2$) versions of the Fourier multiplier theorems.

The following characterization, via Lusin square functions associated to $\varphi $ given by the condition \eqref{condition-phi}, is a special case of the characterization in Theorem \ref{thm: equivalence hpD}. We keep using the notation $\varphi_j$ being the function whose Fourier transform is equal to $\varphi(2^{-j}\cdot)$ for $j\in \mathbb{N}$, and $\varphi_0$ being the function whose Fourier transform is equal to $1-\sum_{j\geq 1}\varphi (2^{-j}\cdot)$.

 \begin{prop}\label{prop: charac of F via Lusin}
For $1\leq p <\infty$ and $f\in F_p^{\alpha,c}(\R,\M)$, we have
\begin{equation}\label{eq: charac of F via Lusin}
\|  f\| _{F_p^{\alpha,c}} \approx  \| \varphi_0* f\|_p+ \Big\|  (\sum_{j\geq 1}2^{j(2\alpha+d)} \int_{B(0,2^{-j})}|{\varphi }_j*f(\cdot+t) | ^2dt)^\frac{1}{2}\Big\| _p.
\end{equation}

\end{prop}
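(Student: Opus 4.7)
The strategy is to reduce the claim to the analogous Lusin characterization of the local Hardy space $\h_p^c$ already available in Theorem \ref{thm: equivalence hpD}, via the lifting property of Proposition \ref{prop: lifting}, and to bridge the two levels with the Hilbertian Fourier multiplier theorems of Section \ref{section: Multiplier theorems}. Setting $g=J^\alpha f$, Proposition \ref{prop: lifting} gives $\|f\|_{F_p^{\alpha,c}}\approx\|g\|_{\h_p^c}$. I then apply Theorem \ref{thm: equivalence hpD} with test functions designed to coincide with the paper's Littlewood--Paley system: $\Phi$ chosen so that $\widehat{\Phi}=\varphi$ (hence $\Phi_j=\varphi_j$ for every $j\geq 1$), the companion $\Psi$ with $\widehat{\Psi}$ a smooth cut-off equal to $1$ on $\supp\varphi$, and the low-frequency pair $(\phi,\psi)$ with $\widehat{\phi}=\varphi^{(0)}$ (so $\phi$ is precisely the paper's $\varphi_0$) and $\widehat{\psi}$ a smooth bump equal to $1$ on $\supp\varphi^{(0)}$. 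A direct check shows that these satisfy \eqref{eq: reproduceD}--\eqref{eq: reproduceD 2}, and the theorem produces
$$\|g\|_{\h_p^c}\approx\|\varphi_0 *g\|_p+\Big\|\Big(\sum_{j\geq 1}2^{dj}\!\int_{B(0,2^{-j})}|\varphi_j *g(\cdot+t)|^2\,dt\Big)^{1/2}\Big\|_p.$$

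It remains to trade $g=J^\alpha f$ for $f$ in both summands. The low-frequency equivalence $\|\varphi_0 *J^\alpha f\|_p\approx\|\varphi_0 *f\|_p$ is immediate: with $\widetilde{\varphi}^{(0)}$ a smooth bump equal to $1$ on $\supp\varphi^{(0)}$, one writes $\varphi_0 *J^\alpha f=\mathcal{F}^{-1}(J_\alpha\widetilde{\varphi}^{(0)})*(\varphi_0 *f)$ and, symmetrically, $\varphi_0 *f=\mathcal{F}^{-1}(J_{-\alpha}\widetilde{\varphi}^{(0)})*(\varphi_0 *J^\alpha f)$, both convolutions with Schwartz kernels and hence bounded on $L_p$. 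For the Lusin equivalence, I apply Theorem \ref{cor: conic multiplier} (for $1<p<\infty$) or Theorem \ref{lem: Multiplier conic p=1} (for $p=1$) with $\rho_0=\phi_0=0$ so only the $j\geq 1$ sums appear. The $\lesssim$ direction uses $\rho_j(\xi)=\varphi(2^{-j}\xi)$ and $\phi_j(\xi)=2^{-j\alpha}J_\alpha(\xi)\widetilde{\varphi}(2^{-j}\xi)$ with $\widetilde{\varphi}$ a smooth cut-off equal to $1$ on $\supp\varphi$, so that $\check{\rho}_j *f=\varphi_j *f$ and $\check{\phi}_j *\check{\rho}_j *f=2^{-j\alpha}\varphi_j *J^\alpha f$; applying the multiplier theorem with parameter $\alpha$ and absorbing the $2^{-j\alpha}$ factor into the $2^{j(2\alpha+d)}$ weight delivers the desired inequality. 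The reverse direction swaps the roles: $\rho_j(\xi)=\varphi(2^{-j}\xi)J_\alpha(\xi)$ and $\phi_j(\xi)=2^{j\alpha}J_\alpha(\xi)^{-1}\widetilde{\varphi}(2^{-j}\xi)$, with the multiplier theorem now applied with parameter $0$. The Sobolev conditions \eqref{eq:j condition thm} on $\phi_j$ reduce, after dyadic rescaling, to uniform $H_2^\sigma$-control of $2^{\mp j\alpha}J_{\pm\alpha}(2^{j+k}\cdot)\widetilde{\varphi}(2^k\cdot)\varphi$, which is elementary since $J_{\pm\alpha}$ behaves like $|\xi|^{\pm\alpha}$ on dyadic annuli.

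The only genuinely delicate point is the $p=1$ endpoint of the reverse inequality, because Theorem \ref{lem: Multiplier conic p=1} demands that $\rho_j$ be an exact dilation $\rho(2^{-j}\cdot)$ of a single function $\rho$, and the natural choice $\varphi(2^{-j}\xi)J_\alpha(\xi)$ fails this (the factor $J_\alpha(\xi)$ depends on $\xi$ rather than on $2^{-j}\xi$). The remedy is the trick already used in the proof of Theorem \ref{lem: Multiplier p=1}: replace the Bessel potential by the Riesz potential on the high-frequency part via Stein's identity $|\xi|^\alpha=\widehat{\mu_\alpha}(\xi)(1+|\xi|^2)^{\alpha/2}$ for a finite measure $\mu_\alpha$ (and its dual identity for $\alpha<0$). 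After this substitution $\varphi(2^{-j}\xi)|\xi|^\alpha=2^{j\alpha}\rho_\alpha(2^{-j}\xi)$ with $\rho_\alpha(\xi)=|\xi|^\alpha\varphi(\xi)$ a bona fide dyadic dilation, the harmless scalar $2^{j\alpha}$ being absorbed into $\phi_j$. I expect this endpoint bookkeeping to be the only non-routine step; the remainder is a verification exercise in the Sobolev condition \eqref{eq:j condition thm}.
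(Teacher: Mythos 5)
Your argument is correct and rests on the same skeleton as the paper's proof --- lift by $J^\alpha$ via Proposition \ref{prop: lifting} and invoke the discrete characterization of $\h_p^c$ in Theorem \ref{thm: equivalence hpD} --- but it distributes the work differently. The paper applies Theorem \ref{thm: equivalence hpD} to $J^\alpha f$ with the \emph{twisted} test functions $\Phi=I^{-\alpha}\varphi$ and $\phi=J^{-\alpha}\varphi_0$, so that the resulting square function becomes, after exchanging $J^\alpha$ for $I^\alpha$ by convolution with Stein's finite measures exactly as in the proof of \eqref{eq: J and I equi}, the desired weighted Lusin function of $f$; no further multiplier theorem is needed, and Theorems \ref{cor: conic multiplier} and \ref{lem: Multiplier conic p=1} are not invoked here at all. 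You instead run Theorem \ref{thm: equivalence hpD} with the plain system $\widehat{\Phi}=\varphi$, $\widehat{\phi}=\varphi^{(0)}$ and then move the Bessel potential across the conic square function by the Hilbertian multiplier theorems. Both routes work, and you correctly identify the $p=1$ endpoint as the delicate spot; the one caveat is that you cannot literally take $\rho_0=\phi_0=0$ in Theorem \ref{lem: Multiplier conic p=1}, whose hypotheses require $\rho_0>0$ on $\{|\xi|<2\}$ because its proof identifies the right-hand side with $\|J^\alpha f\|_{\h_1^c}$ through a resolution of unity that must contain a nondegenerate low-frequency piece. This is repairable --- keep $\rho_0=\varphi^{(0)}$ with a harmless $\phi_0$, or better, perform the $I$--$J$ interchange \emph{before} applying the multiplier theorem so that $\rho_j=(I_\alpha\varphi)(2^{-j}\cdot)$ is an exact dilation, which is precisely the finite-measure trick you already invoke and is, in effect, what the paper's choice of test functions accomplishes from the start. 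The net comparison: your version is more modular but leans on the heavier conic multiplier machinery, whereas the paper's choice of $I^{-\alpha}\varphi$ short-circuits that step entirely.
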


\begin{proof}
For any $f\in F_p^{\alpha,c}(\R,\M)$, by the lifting property in Proposition \ref{prop: lifting}, we have $J^\alpha f\in \h_p^c(\R,\M)$. Then, we apply the discrete characterization in Theorem \ref{thm: equivalence hpD} with $\phi=J^{-\alpha}\varphi_0$ and $\Phi=I^{-\alpha}\varphi $ to $J^\alpha f$, 
\[
\|  f\| _{F_p^{\alpha,c}} \approx\| J^\alpha f \|_{\h_p^c}\approx\| {\varphi}_0* f\|_{p}+\|s_{I^{-\alpha}\varphi }^{c,D}(J^\alpha f)\|_{p}.
\]
Following the argument in the proof of \eqref{eq: J and I equi}, we can prove
\[
\big\|s_{I^{-\alpha}\varphi }^{c,D}(J^\alpha f)\big\|_{p}\approx \big\|s_{I^{-\alpha}\varphi }^{c,D}(I^\alpha f)\big\|_{p}.
\]
Moreover, we can easily check that
\[
\big\|s_{I^{-\alpha}\varphi }^{c,D}(I^\alpha f)\big\|_{p}=\Big\|  (\sum_{j\geq 1}2^{j(2\alpha+d)} \int_{B(0,2^{-j})}|{\varphi }_j*f(\cdot+t) | ^2dt)^\frac{1}{2}\Big\| _p.
\]
Therefore, we conclude
\[
\|  f\| _{F_p^{\alpha,c}} \approx \| \varphi_0* f\|_p+ \Big\| (\sum_{j\geq 1}2^{j(2\alpha+d)} \int_{B(0,2^{-j})}|{\varphi }_j*f(\cdot+t) | ^2dt)^\frac{1}{2}\Big\| _p.
\]
The assertion is proved.

\end{proof}

From the above Lusin square function by $\varphi$, we can deduce Lusin type characterizations with general convolution kernels by the aide of Theorems \ref{cor: conic multiplier} and \ref{lem: Multiplier conic p=1}.

\begin{thm}\label{thm:general Lusin}
Let $1\leq p < \infty$ and $\alpha \in \mathbb{R}$. Assume that $\alpha_0<\alpha<\alpha_1$, $\alpha_1\geq 0$ and $\Phi^{(0)}$, $\Phi$ satisfy conditions \eqref{eq: condition Phi 0}, \eqref{eq: condition Phi}. Then for any $f\in \mathcal{S}'(\R;L_1(\M)+\M)$, we have
\begin{equation*}
\|  f\| _{F_p^{\alpha,c}(\R,\M)} \approx \|\Phi_0*f\|_{p}+  \Big\|(\sum_{j\geq 1}2^{j(2\alpha+d)}\int_{B(0,2^{-j})} |{\Phi}_j*f(\cdot+t)| ^2 dt)^\frac{1}{2}\Big\| _{p},
\end{equation*}
where the equivalent constant is independent of $f$.
\end{thm}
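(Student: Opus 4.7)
The plan is to follow the blueprint of the proof of Theorem \ref{thm:general}, replacing the scalar Fourier multiplier theorems (Theorems \ref{multiplier-global} and \ref{lem: Multiplier p=1}) with their Hilbertian/conic counterparts (Theorems \ref{cor: conic multiplier} and \ref{lem: Multiplier conic p=1}), the underlying Hilbert space being $H=\oplus_{j\geq 0}H_j$ with $H_j=L_2(B(0,2^{-j}),2^{jd}dt)$. The base case, that is, the Lusin-type characterization for the Littlewood--Paley kernel $\varphi$, is already supplied by Proposition \ref{prop: charac of F via Lusin}. So the whole task reduces to comparing the Lusin square function built from $\Phi$ with the one built from $\varphi$.

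For the forward direction, I would fix a large integer $K$ and split, for each $j\in\mathbb{N}_0$,
\[
\Phi^{(j)}=\sum_{k\leq K-1}\Phi^{(j)}\varphi^{(j+k)}+\sum_{k\geq K}\Phi^{(j)}\varphi^{(j+k)}
\]
(with the convention $\varphi_k=0$ for $k<0$). On the low-frequency tail $k\leq K-1$ I would use the factorization from Step~1 of Theorem~\ref{thm:general}, namely $\Phi^{(j)}\varphi^{(j+k)}=2^{k\alpha_1}\eta^{(j)}\rho^{(j+k)}$ with $\eta(\xi)=\Phi(\xi)\varphi^{(0)}(2^{-K}\xi)/|\xi|^{\alpha_1}$ and $\rho(\xi)=|\xi|^{\alpha_1}\varphi(\xi)$, and then apply Theorem~\ref{cor: conic multiplier} (for $p>1$) or Theorem~\ref{lem: Multiplier conic p=1} (for $p=1$) twice: first from $\varphi$ to $\rho$, then from $\rho$ to $\eta$. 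The Sobolev norm estimates $\sup_k\|\eta^{(-k)}\varphi\|_{H_2^\sigma}$ and $\|\eta^{(-k)}\varphi^{(0)}\|_{H_2^\sigma}$ are controlled by condition \eqref{eq: condition Phi} exactly as in the proof of Theorem~\ref{thm:general}, and the convergence of $\sum_k 2^{k(\alpha_1-\alpha)}$ follows from $\alpha<\alpha_1$. On the high-frequency tail $k\geq K$, I would use the $I_{-\alpha_0}$ factorization together with an auxiliary smooth cut-off $H$ (as in Step~2 of Theorem~\ref{thm:general}), combined with $\sum_k 2^{k(\alpha_0-\alpha)}<\infty$ which follows from $\alpha>\alpha_0$.

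For the reverse direction I would mimic Step~3 of Theorem~\ref{thm:general}: for $M$ large enough, write
\[
\varphi^{(j)}(\xi)=\frac{\varphi^{(j)}(\xi)}{\Phi^{(j)}(\xi)}\,\varphi^{(0)}(2^{-j-M}\xi)\,\Phi^{(j)}(\xi),
\]
where the denominator is nonvanishing on the support of the numerator by \eqref{eq: condition Phi}; the conic multiplier theorem then controls the $\varphi$-Lusin norm by the $\Phi$-Lusin norm plus a small remainder of the form $\|(\sum_j 2^{j(2\alpha+d)}\int_{B(0,2^{-j})}|h_{j+M}*\Phi_j*f(\cdot+t)|^2 dt)^{1/2}\|_p$ with $h=1-\varphi^{(0)}$. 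The tail argument from Step~3 of Theorem~\ref{thm:general} absorbs this remainder once $M$ is chosen large enough.

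The main obstacle is simply bookkeeping: one must check that every application of a scalar multiplier theorem in the proof of Theorem~\ref{thm:general} admits a corresponding Hilbertian counterpart, and that the multiplier norm $\|\cdot\|_{2,\sigma}$ produced is the same in both settings. Since Theorems~\ref{cor: conic multiplier} and~\ref{lem: Multiplier conic p=1} are formulated with precisely the same hypotheses as Theorems~\ref{multiplier-global} and~\ref{lem: Multiplier p=1}, the Sobolev estimates and convergence bounds transfer verbatim, and the argument carries through. The convergence issue for the series of tempered distributions in Step~4 of Theorem~\ref{thm:general} is handled identically, with $L_p(\N)$ replaced by $L_p(\N;H^c)$.
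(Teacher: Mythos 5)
Your proposal is correct and follows essentially the same route as the paper: the paper's own proof is just a short remark that one repeats the argument of Theorem \ref{thm:general}, starting from the base characterization in Proposition \ref{prop: charac of F via Lusin} and replacing Theorems \ref{multiplier-global} and \ref{lem: Multiplier p=1} by their conic counterparts, Theorems \ref{cor: conic multiplier} and \ref{lem: Multiplier conic p=1}. Your write-up in fact supplies more of the bookkeeping than the paper does.
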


\begin{proof}
This proof is very similar to that of Theorem \ref{thm:general}. 
The main target is to replace the standard test functions $\varphi_0$ and $\varphi$ in Proposition \ref{prop: charac of F via Lusin} with $\Phi_0$ and $\Phi$ satisfying \eqref{eq: condition Phi 0} and \eqref{eq: condition Phi}. This time we need to use the Lusin type multiplier theorem i.e. Theorem \ref{cor: conic multiplier}, instead of Theorem \ref{multiplier-global}. For the special case $p=1$, we apply Theorem \ref{lem: Multiplier conic p=1} instead of Theorem \ref{lem: Multiplier p=1}.
\end{proof}

Using a similar argument as in Theorem \ref{thm: charac T-L cont }, we also have the following continuous analogue of the above theorem. This is the general characterization of Triebel-Lizorkin spaces by Lusin square functions. Recall that $\widetilde \Gamma =  \{ (t,\varepsilon )\in\mathbb{R}_{+}^{d+1}:  |t |<\varepsilon<1 \} $.
\begin{thm}\label{thm:general Lusin cont}
Keep the assumption in the previous theorem. Then for any $L_1(\M)+\M$-valued tempered distribution $f$ on $\R$, we have
\begin{equation*}
\|  f\| _{F_p^{\alpha,c}(\R,\M)} \approx \|\Phi_0*f\|_{p}+\Big\|  \big(\int_{\widetilde{\Gamma}} \e^{-2\alpha} |{\Phi}_\e*f(\cdot+ t)| ^2 \frac{dtd\e}{\e^{d+1}}\big)^\frac{1}{2}\Big\| _{p}.
\end{equation*}
\end{thm}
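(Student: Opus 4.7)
The plan is to combine the continuous-parameter strategy of Theorem \ref{thm: charac T-L cont} with the Lusin-specific arguments of Theorem \ref{thm:general Lusin}. The starting point is to dyadically discretize the integral over $\widetilde{\Gamma}$ by splitting $\e$ into the intervals $(2^{-j-1}, 2^{-j}]$ for $j \geq 0$. On each such piece, $\e^{-2\alpha} \approx 2^{2j\alpha}$, the measure $\e^{-d-1} d\e$ has total mass of order $2^{jd}$, and $B(0,\e) \subset B(0,2^{-j})$. After rescaling with $\delta = 2^j \e \in (1/2, 1]$, this essentially reduces the continuous Lusin norm to an expression of the form
$$\Big\|\Big(\sum_{j\geq 0} 2^{j(2\alpha+d)} \int_{1/2}^1 \int_{B(0, 2^{-j})} |\Phi_{\delta 2^{-j}} * f(\cdot + t)|^2 \, dt \, d\delta \Big)^{1/2}\Big\|_p,$$
up to uniform constants, while the contribution near $\e = 1$ is absorbed into $\|\Phi_0 * f\|_p$ via a standard Young-type argument.

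The central technical step is to invoke the Hilbert-valued multiplier theorems (Theorems \ref{cor: conic multiplier} and \ref{lem: Multiplier conic p=1}) with the enlarged Hilbert spaces $H_j = L_2\bigl(B(0, 2^{-j}) \times (1/2, 1], \, 2^{jd} dt \otimes d\delta\bigr)$, which absorb both the spatial and the scale averaging simultaneously. Since the multiplier symbols $\Phi(\delta \cdot)$ parameterized by $\delta \in (1/2, 1]$ inherit all the $H_2^\sigma$-Sobolev estimates of $\Phi$ (the rescaling factor $\delta$ being bounded and bounded away from zero), the hypotheses required by these theorems hold uniformly in $\delta$. With this enlargement in hand, one replays the proof of Theorem \ref{thm:general Lusin} essentially verbatim: write $\Phi^{(\e)} = \sum_k \Phi^{(\e)} \varphi^{(j+k)}$ and split into a low-frequency range ($k \leq K-1$, handled via the auxiliary function $\eta(\xi) = \Phi(2^j \e \xi) \varphi^{(0)}(2^{-K}\xi)/|\xi|^{\alpha_1}$ and condition \eqref{eq: condition Phi}, gaining the geometric factor $2^{k(\alpha_1 - \alpha)}$) and a high-frequency range ($k \geq K$, handled via condition \eqref{eq: condition Phi 0} and gaining $2^{k(\alpha_0 - \alpha)}$). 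Each piece is dominated by the corresponding multiplier norm times the standard Lusin characterization of Proposition \ref{prop: charac of F via Lusin}.

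For the reverse inequality, I would mirror step 3 of the proof of Theorem \ref{thm: charac T-L cont}: pick $2 < a \leq 2\sqrt{2}$ such that $\Phi(\xi) > 0$ on $\{a^{-1} \leq |\xi| \leq a\}$, form the disjoint subintervals $R_j \subset (0, 1]$ on which $\Phi(\e \cdot)$ is nonvanishing at the $j$-th Littlewood-Paley scale, and use the reproducing identity $\varphi^{(j)}(\xi) = (\Phi^{(\e)})^{-1}(\xi) \, \varphi^{(j)}(\xi) \, \varphi^{(0)}(2^{-j-M} \xi) \, \Phi^{(\e)}(\xi)$ valid for $\e \in R_j$. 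The uniformity in $\e \in R_j$ of the $H_2^\sigma$-norms of $(\Phi^{(\e)})^{-1} \varphi$ is the crucial input for the Hilbert-valued multiplier step, after which a small-error absorption (choosing $M$ large) closes the argument. The main obstacle in the whole proof is not conceptual but technical bookkeeping: verifying that every $H_2^\sigma$-estimate arising during the multiplier arguments stays uniformly bounded in the extra continuous parameter $\delta$, and carefully tracking the scaling factors that ensured geometric convergence in the discrete setting continue to do so after $\e$-integration. The analysis of $\|\eta^{(-k)}\varphi\|_{H_2^\sigma}$ exactly as in estimate \eqref{eq:term I'}, but now with $\eta$ also depending on $\delta_j = 2^j \e$, is the representative calculation that must be done carefully to ensure all bounds survive taking the supremum over $\delta_j \in (1/2, 1]$.
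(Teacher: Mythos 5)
Your proposal is correct and follows essentially the route the paper intends: the paper gives no written proof of this theorem beyond the remark that it follows by the argument of Theorem \ref{thm: charac T-L cont} (dyadic discretization of the $\e$-integral, rescaling by $\delta=2^j\e\in(\frac12,1]$, uniformity of all $H_2^\sigma$-bounds in $\delta$, and the reproducing identity over the intervals $R_j$ for the reverse direction), combined with the conic multiplier theorems in place of the scalar ones — exactly what you describe. The only cosmetic difference is that the paper handles the extra continuous parameter by approximating each $\e$-integral over $(2^{-j-1},2^{-j}]$ by discrete sums, whereas you absorb it directly into enlarged Hilbert spaces $H_j=L_2\big(B(0,2^{-j})\times(\frac12,1],2^{jd}dt\otimes d\delta\big)$; both are legitimate within the paper's Hilbert-valued framework.
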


%%%%%%%%%%%%%%%%%%%%%%%%%%%%%%%%%%%%%%%%%%%%%%%%%%%%%%%%%%%%%%%%%%%
 \section{Smooth atomic decomposition}\label{section-atom}
%%%%%%%%%%%%%%%%%%%%%%%%%%%%%%%%%%%%%%%%%%%%%%%%%%%%%%%%%%%%%%%%%%%

This section is devoted to the study of atomic decomposition of $F_1^{\alpha,c }(\R, \M)$. We aim to decompose $F_1^{\alpha,c }(\R, \M)$ into atoms which have good enough size, smooth and moment conditions. To proceed in an orderly way step by step, we begin with the special case $\alpha=0$, i.e., the space $\h_1^c(\R,\M)$. Even though the result for $\h_1^c(\R,\M)$ below does not lead to the one for general $F_1^{\alpha,c }(\R, \M)$ directly, the main ingredients to obtain smooth decomposition for $F_1^{\alpha,c }(\R, \M)$ are already contained in those for $\h_1^c(\R,\M)$. The main results in this section will be very useful in our forthcoming paper \cite{XX18PDO} on mapping properties of pseudo-differential operators.

\subsection{Smooth atomic decomposition of $\h_{1}^{c}(\R,\M)$}

In the classical theory, the smooth atoms have been widely studied and have played a crucial role when studying the mapping  properties of pseudo-differential operators acting on local Hardy spaces, or more generally, on Triebel-Lizorkin spaces. Details can be found in \cite{CMS1985}, \cite{FJ1988} and \cite{Tri1992}. In this subsection, we will show that in our operator-valued case, the atoms in Theorem \ref{thm:atomic h1} can also be refined to be infinitely differentiable.

As in the classical case, the theory of tent spaces will be of great service in our proof of smooth atomic decomposition theorem. Tent spaces in the operator-valued setting have been introduced in \cite{Mei2007} and \cite{Mei-tent} first; see also \cite{XXX17} for further complement. For our use, we study the local version of tent spaces defined in \cite{XX18}. For any function defined on the strip $S= \mathbb{R}^{d}\times (0,1)$
with values in $L_{1}  (\mathcal{M} )+\M$, whenever it exists, we
define
$$
A^{c}  (f )(s)=  \Big(\int_{\widetilde{\Gamma}}  |f(t+s,\varepsilon) |^{2}\frac{dtd\varepsilon}{\varepsilon^{d+1}} \Big)^{\frac{1}{2}},s\in\mathbb{R}^{d}.
$$
For $1\leq p<\infty$, we define
$$
T_{p}^{c}(\R,\M)=  \{f:A^{c}  (f )\in L_{p}  (\mathcal{N} ) \}
$$
equipped with the norm $  \|  f \|  _{T_{p}^{c}(\R,\M)}=  \|  A^{c}  (f ) \|  _p$.

 First, we introduce a lemma concerning the atomic
decomposition of the tent space $T_{1}^{c}(\R,\M)$. A function $a \in L_1\big(\M; L_2(S,\frac{dsd\e}{\e})\big)$
is called a $T_{1}^{c}$-atom if
\begin{itemize}
\item supp $a\subset T(Q)$ for some cube $Q$ in $\mathbb{R}^{d}$ with $|Q|\leq 1$;
\item $\tau  \Big(\int_{T(Q)}  |a(s,\varepsilon) |^{2}\frac{dsd\varepsilon}{\varepsilon} \Big)^{\frac{1}{2}}\leq  |Q |^{-\frac{1}{2}}$.
\end{itemize}

Let $T_{1,at}^{c}  (\mathbb{R}^{d},\mathcal{M} )$ be the
space of all $f: S\rightarrow L_1(\M)$ admitting a representation of the form
\begin{equation}   \label{eq: atom tent}
f =\sum_{j=1}^{\infty}\lambda_{j}a_{j},
\end{equation}
where the $a_{j}$'s are $T_{1}^{c}$-atoms and $\lambda_{j}\in\mathbb{C}$
such that $\sum_{j=1}^{\infty}  |\lambda_{j} |<\infty$.
We equip $T_{1,at}^{c}  (\mathbb{R}^{d}, \mathcal{M} )$ with
the following norm
\[
  \|  f \|  _{T_{1,at}^{c}}=\inf  \{ \sum_{j=1}^{\infty}  |\lambda_{j} |:f=\sum_{j=1}^{\infty}\lambda_{j}a_{j}; \,a_{j}\text{'s are \ensuremath{T_{1}^{c}}-atoms}, \lambda_{j}\in\mathbb{C} \} .
\]

\begin{lem}
\label{lem: atomic decomp Tent}
We have $T_{1,at}^{c}  (\mathbb{R}^{d},\mathcal{M} )=T_{1}^{c}  (\mathbb{R}^{d},\mathcal{M} )$
with equivalent norms.
\end{lem}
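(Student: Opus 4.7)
The plan is to prove the two inclusions separately; only the reverse one requires real work.

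For $T_{1,at}^c\subset T_1^c$, it suffices to show $\|a\|_{T_1^c}\lesssim 1$ for every $T_1^c$-atom $a$. If $\supp a\subset T(Q)$ with $|Q|\le 1$, then since the cone $\widetilde\Gamma$ has $|t|<\e<1$ and the tent forces $\e\le|Q|^{1/d}$, the support of $A^c(a)$ lies in a fixed dilate of $Q$ of measure $\lesssim|Q|$. Noncommutative Cauchy--Schwarz against this scalar indicator gives
$$\|A^c(a)\|_{L_1(\N)}\lesssim|Q|^{\frac12}\|A^c(a)\|_{L_2(\N)},$$
and a Fubini calculation identifies $\|A^c(a)\|_{L_2(\N)}^2$ (up to a dimensional constant) with $\tau\big(\int_{T(Q)}|a|^2\frac{dsd\e}{\e}\big)$, which the atom normalisation bounds by $|Q|^{-1}$. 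The atomic series then converges in $T_1^c$ with the expected norm estimate.

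For the reverse inclusion I would adapt the Coifman--Meyer--Stein construction, in the operator-valued form developed by Mei, to the present truncated strip $S=\R\times(0,1)$. Given $f\in T_1^c$, I would introduce a noncommutative analogue of the level sets of $A^c(f)$ and their Hardy--Littlewood enlargements $O_k^*\subset\R$, satisfying $|O_k^*|\lesssim 2^{-k}\|f\|_{T_1^c}$. Performing a Whitney decomposition $O_k^*=\bigcup_j Q_{k,j}$ (with cubes of side $>1$ further subdivided to enforce $|Q_{k,j}|\le 1$) partitions the effective support of $f$ into pieces $E_{k,j}\subset T(Q_{k,j})$ lying over $O_k^*\setminus O_{k+1}^*$. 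Setting
$$a_{k,j}=\lambda_{k,j}^{-1}f\un_{E_{k,j}},\qquad \lambda_{k,j}=C\,2^k|Q_{k,j}|,$$
the atomic size condition on $a_{k,j}$ reduces via Fubini to the bound $A^c(f)\le 2^{k+1}$ valid on $(O_{k+1}^*)^c$, and $\sum_{k,j}|\lambda_{k,j}|\lesssim\sum_k 2^k|O_k^*|\lesssim\|f\|_{T_1^c}$ by the layer-cake identity.

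The main obstacle is the level-set selection: since $A^c(f)(s)$ is $\M$-valued rather than scalar, one cannot literally form $\{A^c(f)>2^k\}$. I expect to address this with a Cuculescu-type choice of decreasing spectral projections in $\M$, analogous to the device used for the atomic decomposition of $\h_1^c(\R,\M)$ recalled in Theorem~\ref{thm:atomic h1}, whose traces play the role of scalar measures and make the Whitney machinery available. Once this is in place, the truncation $\e<1$ is absorbed by the subdivision of large Whitney cubes, which introduces no mean-zero condition since atoms with $|Q|=1$ are not required to integrate to zero, matching the local tent atom definition above.
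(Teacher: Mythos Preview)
Your treatment of the inclusion $T_{1,at}^c\subset T_1^c$ is correct and essentially identical to the paper's.

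For the reverse inclusion, however, the paper takes a genuinely different and much shorter route: it argues by \emph{duality} rather than by a constructive stopping-time decomposition. Using the known identification $(T_1^c)^*=T_\infty^c$, the easy inclusion yields $T_\infty^c\subset (T_{1,at}^c)^*$. Conversely, any continuous functional $\ell$ on $T_{1,at}^c$ restricts, for each cube $Q$ with $|Q|\le 1$, to a functional on $L_1\big(\M;L_2^c(T(Q),\frac{dsd\e}{\e})\big)$ of norm at most $|Q|^{1/2}\|\ell\|$, hence is represented by some $g_Q\in L_\infty\big(\M;L_2^c(T(Q))\big)$; patching these over a tiling of $\R$ by unit cubes produces a single $g\in T_\infty^c$. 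Thus $(T_{1,at}^c)^*=T_\infty^c$, and density of $T_{1,at}^c$ in $T_1^c$ finishes the proof.

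The point is that this duality argument completely bypasses the obstacle you correctly flagged: there is no need to make sense of operator-valued level sets $\{A^c(f)>2^k\}$, no Cuculescu projections, no Whitney decomposition. Your proposed route is in principle viable (Mei has carried out related constructions for the nonlocal tent spaces), but it is substantially more technical, and what you have written is only a sketch with the hardest step left open. The paper's approach trades constructivity for brevity: one does not obtain an explicit atomic decomposition of a given $f$, but the equivalence of norms drops out in a few lines.
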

\begin{proof}
In order to prove $T_{1,at}^{c}  (\mathbb{R}^{d},\mathcal{M} )\subset T_{1}^{c}  (\mathbb{R}^{d},\mathcal{M} )$, it is enough to show that any $T_1^c$-atom $a$ satisfies  $\| a\|_{T_1^c}\lesssim 1$. By the support assumption of $a$, we have
\begin{equation*}
\begin{split}
\| a\|_{T_1^c}  = \big\|A^c(a)\big\|_1
&  =  \tau \int_{\R}\Big(\int_0^1\int_{B(t,\e)}|a(s,\e)|^2\frac{dsd\e}{\e^{d+1}}\Big)^\frac{1}{2}dt\\
 & \lesssim  |Q|^{\frac{1}{2}}\tau \Big(\int_{\R}\int_0^1\int_{B(t,\e)} |a(s,\e)|^2 \frac{dsd\e}{\e^{d+1}}dt\Big)^\frac{1}{2}\\
&  =   c_d ^\frac{1}{2}|Q|^{\frac{1}{2}}\tau \Big(\int_{T(Q)} |a(t,\e)|^2 \frac{dtd\e}{\e}\Big)^\frac{1}{2}\lesssim 1.
\end{split}
\end{equation*}
Then by the duality $T_1^c(\R,\M)^*=T_\infty^c(\R,\M)$ (see \cite{XX18}), we have $T_{\infty}^{c}  (\mathbb{R}^{d},\mathcal{M} )\subset  T_{1,at}^{c}  (\mathbb{R}^{d},\mathcal{M} )^{*} $.

Now let $Q$  be a cube in $\R$ with $|Q|\leq 1$. If $f\in L_{1}\big(\mathcal{M};L_{2}^{c}  (T  (Q ),\frac{dsd\varepsilon}{\varepsilon} )\big)$, then
\[
a=|Q|^{-\frac{1}{2}}\| f\|^{-1}_{L_{1}\big(\mathcal{M};L_{2}^{c}  (T  (Q ),\frac{dsd\varepsilon}{\varepsilon} )\big)}f
\]
is a $T_1^c$-atom supported in $T(Q)$. Hence, 
\[
\| f\|_{T_{1,at}^c}\leq | Q|^{\frac{1}{2}} \| f\|_{L_{1}\big(\mathcal{M};L_{2}^{c}  (T  (Q ),\frac{dsd\varepsilon}{\varepsilon} )\big)}.
\]
Thus, $L_{1}\big(\mathcal{M};L_{2}^{c}  (T  (Q ),\frac{dsd\varepsilon}{\varepsilon} )\big)\subset T_{1,at}^{c}  (\mathbb{R}^{d},\mathcal{M} )$
for every cube $Q$. Therefore, every continuous functional $\ell$ on $T_{1,at}^{c}$
induces a continuous functional on $L_{1}\big(\mathcal{M};L_{2}^{c}  (T  (Q ),\frac{dsd\varepsilon}{\varepsilon} )\big)$
with norm smaller than or equal to $  | Q  |^{\frac{1}{2}}  \|  \ell \|  _{  (T_{1,at}^{c} )^{*}}$. Let $Q_0$ be the cube centered at the origin with side length 1 and  $Q_m= Q_0+ m$ for each $m\in \mathbb{Z}^d$. Then $\R=\cup_{m\in \mathbb{Z}^d}Q_m$. 
Consequently, we can choose a sequence of functions $(g_m)_{m\in \mathbb{Z}^d}$ 
such that
\[
\ell  (a )=\tau\int_{T(Q_m)} a(s,\e) g_m^{*}(s,\e)\frac{dsd\varepsilon}{\varepsilon},\quad \forall \,T_{1}^{c}\text{-atom } a \text{ with }\supp a\subset T(Q_m),
\]
and
\[
  \|  g_m\|  _{L_{\infty}\big(\mathcal{M};L_{2}^{c}  (T  (Q_m ),\frac{dsd\varepsilon}{\varepsilon} )\big)}\leq \|  \ell \|  _{  (T_{1,at}^{c} )^{*}}.
\]
Let $g(s,\e)=g_m(s,\e)$ for  $(s,\e)\in T(Q_m)$. Then, we have 
\[
\ell  (a )=\tau\int_S a(s,\e) g^{*}(s,\e)\frac{dsd\varepsilon}{\varepsilon},\quad \forall \,T_{1}^{c}\text{-atom }a.
\]
It follows that, for any cube $Q$ with $|Q|\leq 1$,
$
 g |_{T(Q)}\in L_{\infty}\big(\mathcal{M};L_{2}^{c}  (T(Q),\frac{dsd\varepsilon}{\varepsilon} )\big)
$ and 
\[
  \| g |_{T(Q)} \|  _{L_{\infty}\big(\mathcal{M};L_{2}^{c}  (T  (Q_m ),\frac{dsd\varepsilon}{\varepsilon} )\big)}\leq |Q|^\frac{1}{2} \|  \ell \|  _{  (T_{1,at}^{c} )^{*}}, 
\]
which implies $g\in T_\infty^c(\R,\M)$. Hence, $  T_{1,at}^{c}  (\mathbb{R}^{d},\mathcal{M} )^{*}\subset T_{\infty}^{c}  (\mathbb{R}^{d},\mathcal{M} )$.  Therefore, $T_{\infty}^{c}  (\mathbb{R}^{d},\mathcal{M} )= T_{1,at}^{c}  (\mathbb{R}^{d},\mathcal{M} )^{*}$ with equivalent norms. Finally, by the density of $ T_{1,at}^{c}  (\mathbb{R}^{d},\mathcal{M} )$ in $ T_{1}^{c}  (\mathbb{R}^{d},\mathcal{M} )$, we get the desired equivalence.
\end{proof}

The following Lemma shows the connection between $T_p^c(\R,\M)$ and $\h_p^c(\R,\M)$. The proof is modelled on the classical argument of \cite[Theorem 6]{CMS1985}.

\begin{lem}
\label{lem: bdd projection}
Fix a Schwartz function $\Phi$ on $\R$ satisfying:
\begin{equation}\label{eq: Phi condition}
\begin{cases}
\Phi\text{ is supported in the cube with side length 1 and centered at the origin};\\
 \int_{\R} \Phi(s) ds=0;\\
\Phi \text{ is nondegenerate in the sense of }\eqref{nondegenerate-Phi} .
\end{cases}
\end{equation}
Let $\pi_\Phi$ be the map given by
\[
\pi_{\Phi}(f)(s)=\int_{0}^1\int_{\R}\Phi_{\varepsilon}(s-t)f(t,\varepsilon)\frac{dt d\varepsilon}{\varepsilon},\quad s\in \mathbb{R}^d.
\]
Then $\pi_\Phi$ is bounded from $T_p^c(\R,\M)$ to $\h_p^c(\R,\M)$ for any $1\leq p<\infty$.
\end{lem}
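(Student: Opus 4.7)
The strategy, adapted from the classical approach of \cite{CMS1985}, will combine a tent-space atomic decomposition at the endpoint $p=1$ with a duality argument based on the Lusin area characterization of Hardy spaces (Theorem~\ref{thm main1}) for the range $1<p<\infty$.

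For $p=1$, I would appeal to Lemma~\ref{lem: atomic decomp Tent} and verify that $\pi_\Phi$ maps each $T_1^c$-atom $a$ supported in $T(Q)$ (with $|Q|\leq 1$) to a uniformly bounded multiple of an $\h_1^c$-atom, allowing the enlargement of the supporting cube permitted by Remark~\ref{rem: replace support}. The support of $\pi_\Phi(a)$ is controlled by the compact support of $\Phi$ together with the constraint $\e\leq \ell(Q)$ on $T(Q)$, and the moment condition (when $|Q|<1$) follows from $\int\Phi=0$ and Fubini. For the size bound I would pass to the Fourier side via the operator-valued Plancherel identity \eqref{eq: Planchel} and apply the Cauchy-Schwarz inequality \eqref{eq: 2-1} in the $\e$ variable to obtain the pointwise operator bound
\[
\bigl|\widehat{\pi_\Phi(a)}(\xi)\bigr|^2 \;\leq\; C_\Phi\int_0^1|\widehat a(\xi,\e)|^2\,\frac{d\e}{\e},
\]
with $C_\Phi = \sup_{\xi\neq 0}\int_0^1|\widehat\Phi(\e\xi)|^2\,d\e/\e<\infty$ (finite because $\widehat\Phi$ is Schwartz and vanishes at the origin). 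Integrating in $\xi$, returning to physical space by Plancherel, and taking $\tau$ of the positive operator square root then extracts the required $|Q|^{-1/2}$ factor from the $T_1^c$-atom normalization.

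For $1<p<\infty$, I would argue by duality. A direct computation identifies the adjoint as $\pi_\Phi^*(g)(t,\e)=\widetilde\Phi_\e*g(t)$ with $\widetilde\Phi(x)=\overline{\Phi(-x)}$, so that $A^c(\pi_\Phi^*(g))$ is precisely the truncated Lusin area function $s_{\widetilde\Phi}^c(g)$ associated to $\widetilde\Phi$. Since $\widetilde\Phi$ remains a nondegenerate Schwartz function with vanishing mean, Theorem~\ref{thm main1} yields
\[
\|\pi_\Phi^*(g)\|_{T_q^c}\;=\;\|s_{\widetilde\Phi}^c(g)\|_q \;\lesssim\; \|g\|_{\h_q^c},
\]
and the boundedness of $\pi_\Phi\colon T_p^c\to\h_p^c$ then follows via the dualities $(T_p^c)^*=T_q^c$ (a consequence of Lemma~\ref{lem: atomic decomp Tent} and the standard $T_1^c$--$T_\infty^c$ duality) and $(\h_p^c)^*=\h_q^c$ from Theorem~\ref{dual-hardy}; the case $p=2$ already follows directly from the Plancherel computation above.

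I expect the main obstacle to be the size estimate in the atomic step: in the absence of pointwise maximal functions one must extract the $|Q|^{-1/2}$ bound from an operator inequality between positive elements of $L_1(\M)$, using the monotonicity of $\tau$ under the operator square root, together with careful bookkeeping of column/row conventions. This is where the operator-valued Plancherel identity and Cauchy-Schwarz tools earn their keep, and where the noncommutative setting departs most visibly from the classical argument of \cite{CMS1985}.
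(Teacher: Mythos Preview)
Your proposal is correct and follows essentially the same route as the paper: the atomic argument at $p=1$ (support from compact support of $\Phi$, vanishing mean from $\widehat\Phi(0)=0$, size via Plancherel and the operator Cauchy--Schwarz inequality in the $\varepsilon$-variable) matches the paper's proof verbatim, and for $1<p<\infty$ the paper also uses duality and Theorem~\ref{thm main1}. The only cosmetic difference is that the paper computes the pairing $\tau\!\int \pi_\Phi(f)\,g^*$ directly and applies H\"older on the cone (which is exactly your observation that $A^c(\pi_\Phi^* g)=s_{\widetilde\Phi}^c(g)$), rather than invoking the tent-space duality $(T_p^c)^*=T_q^c$ as a black box; this sidesteps the need to cite that duality separately, since only the H\"older-type inequality is actually used.
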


\begin{proof}
For any $1<p<\infty$, let $q$ be its conjugate index. By Theorem \ref{dual-hardy}, it suffices to estimate $\tau \int \pi_{\Phi}(f)(s)g^*(s)ds$, for any $g\in \h_q^c(\R,\M)$.  Note that
\begin{equation*}
\begin{split}
\tau \int_{\R} \pi_{\Phi}(f)(s)g^*(s)ds &= \tau \int_{\R}  \int_0^1\Phi_{\varepsilon}(s-t)f(t,\varepsilon)\frac{dt d\varepsilon}{\varepsilon}g^*(s)ds\\
&=  \tau \int_{\R}  \int_0^1 f(t,\varepsilon) (\widetilde\Phi_{\varepsilon}*g)^*(t)\frac{d\varepsilon dt}{\varepsilon},
\end{split}
\end{equation*}
where $\widetilde \Phi(s)=\overline{\Phi(-s)}$. Then by the H\"older inequality,
 \begin{equation*}
\begin{split}
\Big| \tau \int_{\R}  \pi_{\Phi}(f)(s)g^*(s)ds\Big|  &=  \frac{1}{c_d}\Big| \tau \int_{\R}  \int_0^1\int_{B(s,\e)} f(t,\varepsilon) (\widetilde\Phi_{\varepsilon}*g)^*(t)\frac{d\varepsilon dt}{\varepsilon^{d+1}}ds\Big|\\
&   =   \frac{1}{c_d} \Big|\tau \int_{\R} \int_{\widetilde\Gamma} f(s+t,\varepsilon)(\widetilde\Phi_{\varepsilon}*g)^*(s+t)\frac{d\varepsilon dt}{\varepsilon^{d+1}}ds\Big|\\
 & \lesssim  \|A^c(f)\|_p\|  s_{\widetilde \Phi}^c(g)\| _q\\
& \lesssim   \|  f\| _{T_p^c}\|  g\| _{ \h_q^c}.
\end{split}
\end{equation*}

Now we deal with the case $p=1$. The argument below is based on the atomic decompositions of $\h_1^c(\R,\M)$ and $T_1^c(\R,\M)$.
By Lemma \ref{lem: atomic decomp Tent}, it is enough to show that $\pi _\Phi$ maps a $T_1^c$-atom to a bounded multiple of an $\h_1^c$-atom.
Let $a$ be an atom in $T_1^c$ based on some cube $Q$ with $|Q|\leq 1$. Since $\Phi$ is supported in the unit cube, it follows from the definition of $\pi_\Phi$ that $\pi_\Phi(a)$ is supported in $2Q$. Moreover, it satisfies the moment cancellation that $\int \pi_\Phi(a)(s)ds=0$ since $\widehat{\Phi}(0)=0$. So it remains to check that $\pi_\Phi(a)$ satisfies the size estimate.  To this end, we use the Cauchy-Schwarz inequality and the Plancherel formula \eqref{eq: Planchel},  
 \begin{equation} \label{eq: atom project atom}
 \begin{split}
& \| \pi_\Phi(a) \|_{L_1(\M;L_2^c(\R))}=\tau \big( \int_{\R}  |\widehat{\pi_\Phi(a)}(\xi) |^2d\xi \big)^\frac{1}{2} \\
&  =\tau  \Big( \int_{\R} | \int_0^1 \widehat{\Phi}(\e \xi) \widehat{a}(\xi,\varepsilon)\frac{d\varepsilon}{\varepsilon}|^2 d\xi \Big)^\frac{1}{2} \\
&  \leq \tau \Big(\int_{\R} \int_0^1 |\widehat{{\Phi}}(\e \xi) |^2\frac{d\varepsilon}{\varepsilon} \int_0^1 |\widehat{a}(\xi,\varepsilon)|^2\frac{d\varepsilon}{\varepsilon} d\xi \Big)^{\frac{1}{2}}\\
& \leq  \tau  \Big(\int_{T(Q)}  |a(s,\varepsilon) |^{2}\frac{dsd\varepsilon}{\varepsilon} \Big)^{\frac{1}{2}}\leq  |Q |^{-\frac{1}{2}}.
 \end{split}
\end{equation}
Therefore we obtain the boundedness of $\pi_\Phi$ from $T_{1,at}^c(\R,\M)$ to $\h_{1,{\rm at}}^c(\R,\M)$.
\end{proof}

Now we are able to refine the smoothness of the atoms given in Theorem \ref{thm:atomic h1}.

\begin{thm}\label{thm:smooth decomp}
 For any $f\in L_{1}  (\mathcal{M};\mathrm R_{d}^{c} )+L_{\infty}  (\mathcal{M};\mathrm R_{d}^{c} )$,  $ f$ belongs to $ \h_1^c(\mathbb{R}^d,\mathcal{M})$ if and only if it can be represented as
\begin{equation}\label{eq: 16}
f=\sum_{j=1}^{\infty}(\mu_j b_j+\lambda_jg_j),
\end{equation}
where
\begin{itemize}
\item the $b_j$'s are infinitely differentiable atoms supported in $2Q_{0,j}$ with $|Q_{0,j}|=1$. For any multiple index $\gamma \in \mathbb{N}^d_0$, there exists a constant $C_\gamma$ which depends on $\gamma$ satisfying  
\begin{equation}\label{eq: derivative of b}
\tau \big(\int_{2Q_{0,j}} |D^{\gamma}b_j (s)|^2 ds\big)^\frac{1}{2}\leq C_\gamma;
\end{equation}
\item  the $g_j$'s are infinitely differentiable atoms  supported in $2 Q_j$ with $| Q_j | <1$, and such that 
\begin{equation}\label{eq: improve atom}
\tau \big(\int_{2 Q_j} |g_j (s)|^2 ds\big)^\frac{1}{2}  \lesssim | Q_j|^{-\frac{1}{2}} \quad \text{and}\quad \int _{2 Q_j}g_j (s)ds=0;
\end{equation}
 \item  the coefficients $\mu_j$ and $\lambda_j$ are complex numbers such that
\begin{equation}\label{eq: 17}
\sum_{j=1}^{\infty}(  |\mu_j |+  |\lambda_j |)<\infty.
\end{equation}
\end{itemize}
Moreover, the infimum of \eqref{eq: 17} with respect to all admissible representations gives rise to an equivalent norm on $\h_1^c(\mathbb{R}^d,\mathcal{M})$.
\end{thm}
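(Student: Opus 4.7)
The \emph{if} direction is immediate: by Remark~\ref{rem: replace support}, each $b_j$ and each $g_j$ is, up to a multiplicative constant independent of $j$, an $\h_1^c$-atom in the sense of Theorem~\ref{thm:atomic h1} (for the $b_j$, the size estimate follows from \eqref{eq: derivative of b} at $\gamma=0$ and no cancellation is required at unit scale; each $g_j$ fulfils \eqref{eq: improve atom} verbatim), hence $\|f\|_{\h_1^c}\lesssim\sum_j(|\mu_j|+|\lambda_j|)$.

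For the \emph{only if} direction, the plan is to combine the Calder\'on reproducing formula \eqref{eq: reproduce 2} with the tent-space atomic decomposition of Lemma~\ref{lem: atomic decomp Tent} and the projection of Lemma~\ref{lem: bdd projection}. Choose $\Phi\in C_c^\infty(\R)$ satisfying \eqref{eq: Phi condition}, a Schwartz $\Psi$ of vanishing mean, and Schwartz $\phi,\psi$ so that
\[
f\;=\;\phi*\psi*f\;+\;\int_0^1\Phi_\e *(\Psi_\e*f)\,\tfrac{d\e}{\e}.
\]
By Theorem~\ref{thm main1}, $F(t,\e):=(\Psi_\e*f)(t)$ belongs to $T_1^c(\R,\M)$ with norm $\lesssim\|f\|_{\h_1^c}$. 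Lemma~\ref{lem: atomic decomp Tent} then produces $F=\sum_k\nu_k a_k$ with tent atoms $a_k$ supported in $T(Q_k)$, $|Q_k|\le 1$, and $\sum_k|\nu_k|\lesssim\|f\|_{\h_1^c}$. Applying $\pi_\Phi$ termwise, $\int_0^1\Phi_\e*F\,\tfrac{d\e}{\e}=\sum_k\nu_k\,\pi_\Phi(a_k)$, and each $\pi_\Phi(a_k)$ is $C^\infty$ (since $\Phi\in C_c^\infty$), supported in $2Q_k$, has vanishing integral (from $\widehat\Phi(0)=0$), and satisfies $\tau\bigl(\int|\pi_\Phi(a_k)|^2\bigr)^{1/2}\le|Q_k|^{-1/2}$ by the Plancherel--Cauchy--Schwarz computation \eqref{eq: atom project atom}. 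Those $\pi_\Phi(a_k)$ with $|Q_k|<1$ are the $g_j$'s. For $|Q_k|=1$ I would split $a_k=a_k\mathds{1}_{\e>1/2}+a_k\mathds{1}_{\e\le 1/2}$: on $\{\e>1/2\}$ the derivatives $(D^\gamma\Phi)_\e$ are uniformly bounded by $2^{d+|\gamma|_1}\|D^\gamma\Phi\|_\infty$, so direct Cauchy--Schwarz yields \eqref{eq: derivative of b} for $\pi_\Phi(a_k\mathds{1}_{\e>1/2})$, supplying a $b_j$; on $\{\e\le 1/2\}$ I would subdivide $Q_k$ into $2^d$ dyadic subcubes $Q_{k,\ell}$ of side $1/2$ and note, via operator monotonicity of $x\mapsto x^{1/2}$, that each restriction $a_k\mathds{1}_{Q_{k,\ell}\times(0,1/2]}$ is (up to a fixed factor $2^{-d/2}$) a tent atom on $Q_{k,\ell}$ with $|Q_{k,\ell}|=2^{-d}<1$, whose $\pi_\Phi$-image is a $g_j$.

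For the low-frequency term $\phi*\psi*f$ I would use a smooth partition of unity $\{\chi_n\}_{n\in\Z^d}$ with $\chi_n\in C_c^\infty$ supported in $2Q_{0,n}$, where $Q_{0,n}$ is the unit cube centered at $n$. Writing $\phi*\psi*f=\sum_n\chi_n\cdot(\phi*\psi*f)$, the Leibniz expansion
\[
D^\gamma\bigl(\chi_n\cdot(\phi*\psi*f)\bigr) \;=\; \sum_{\beta\le\gamma}\tbinom{\gamma}{\beta}\,D^\beta\chi_n\cdot\bigl((\phi*D^{\gamma-\beta}\psi)*f\bigr)
\]
reduces the $\gamma$-derivative $L^2$-size on $2Q_{0,n}$ to a sum over $|\beta|\le|\gamma|$ of local $L^2$-norms of Schwartz-multiplier images of $f$, the Schwartz structure being preserved since $\phi$ and $\psi$ can be chosen Schwartz. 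Normalizing constants $\mu_n$ to absorb the $\gamma$-dependent factors then yields a family of $b_j$'s of the required type.

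The main obstacle is controlling the summability $\sum_n|\mu_n|\lesssim\|f\|_{\h_1^c}$: the naive bound via $\|\phi*\psi*f\|_{L_1(\N)}\lesssim\|f\|_{\h_1^c}$ (which itself follows from the characterization Theorem~\ref{thm: equivalence hpD}) does not suffice, since local $L^2$-norms cannot be summed back to $L^1$. To overcome this I plan to re-apply Theorem~\ref{thm:atomic h1} to $\phi*\psi*f\in\h_1^c(\R,\M)$ (the Schwartz kernel $\phi*\psi$ satisfies the hypotheses of Lemma~\ref{C-Z lem}, so $\|\phi*\psi*f\|_{\h_1^c}\lesssim\|f\|_{\h_1^c}$), producing a non-smooth atomic decomposition $\phi*\psi*f=\sum_n\mu_n\widetilde b_n$ with $\sum_n|\mu_n|\lesssim\|f\|_{\h_1^c}$ and $\widetilde b_n$ supported in cubes of volume $\le 1$. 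A further Calder\'on reproducing identity applied at the unit scale then smooths each $\widetilde b_n$ into a $b_j$ while only enlarging its support by a bounded factor, the arising correction being absorbable into the tent-space decomposition of the high-frequency piece.
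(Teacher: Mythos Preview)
Your treatment of the high-frequency piece via tent-space atoms and the projection $\pi_\Phi$ is essentially the paper's argument; the extra case analysis you perform for $|Q_k|=1$ is harmless but unnecessary (the paper simply lets all $\pi_\Phi(a_k)$ with $|Q_k|\le 1$ be $g_j$'s, the boundary case $|Q_k|=1$ being a cosmetic discrepancy with the statement rather than a mathematical issue).

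The genuine gap is in the low-frequency piece. You invoke the general reproducing identity \eqref{eq: reproduce 2} with four functions $\Phi,\Psi,\phi,\psi$, where $\phi,\psi$ are merely Schwartz. As you correctly observe, partitioning $\phi*\psi*f$ directly by $\{\chi_n\}$ does not give summable coefficients, and your proposed fix---re-applying Theorem~\ref{thm:atomic h1} to $\phi*\psi*f$, then smoothing each resulting atom $\widetilde b_n$ by ``a further Calder\'on reproducing identity at the unit scale'' with the correction ``absorbable into the tent-space decomposition''---is not a complete argument. You do not specify what this further identity is, and in any version of it you will face the same problem: smoothing $\widetilde b_n$ while keeping its support bounded requires convolving against a \emph{compactly supported} smooth kernel, and you have not produced one.

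The paper resolves this by a specific choice of the reproducing formula rather than by a workaround. It takes $\Psi=\Phi$ with $\Phi\in C_c^\infty$ of vanishing mean (concretely $\widehat\Phi=|\cdot|^2\widehat\kappa$ with $\kappa\in C_c^\infty$), so that the low-frequency symbol is $\widehat\phi(\xi)=1-\int_0^1\widehat\Phi(\e\xi)^2\,\frac{d\e}{\e}$. An application of the Paley--Wiener--Schwartz theorem shows that $\phi$ itself is a compactly supported Schwartz function. With this in hand the paper does \emph{not} decompose $\phi*f$ afresh; instead it applies the non-smooth atomic decomposition (Theorem~\ref{thm:atomic h1}) to $f$ directly, writing $f=\sum_j\widetilde\mu_j a_j$ with $\sum_j|\widetilde\mu_j|\lesssim\|f\|_{\h_1^c}$, and then observes that each $\phi*a_j$ is smooth and supported in a fixed bounded set (as a convolution of two compactly supported functions). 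A smooth partition of unity at the unit scale splits each $\phi*a_j$ into a \emph{finite} number $N=N(d,\phi)$ of pieces $b_k$ supported in doubled unit cubes, and the derivative bounds \eqref{eq: derivative of b} follow from the operator Cauchy--Schwarz inequality \eqref{eq: 2-1} together with the atom's size estimate. This eliminates your ``main obstacle'' at the source: there is no correction term to absorb anywhere.
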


\begin{proof}
Since the $b_j$'s and $g_j$'s are atoms in $\h_{1}^{c}(\R,\M)$, 
it suffices to show that any $f\in \h_{1}^{c}(\R,\M)$ can be represented as in \eqref{eq: 16} and \[
\sum_{j=1}^{\infty}(  |\mu_j |+  |\lambda_j |)\lesssim \|  f\| _{\h_1^c}.
\]
To begin with, we construct a smooth resolution of the unit on $\R$.
Let $\kappa$ be a radial, real and infinitely differentiable function on $\R$ which is supported in the unit cube centered at the origin. Moreover, we assume that $\widehat{\kappa}(0)>0$. We take $\widehat{\Phi} =|\cdot|^2\widehat{\kappa}$, which can be normalized as:
\[
\int_0^\infty   \widehat{\Phi} (\e\xi)^2
\frac{d\e}{\e}=1, \quad \xi\in \R\backslash \{0\}.
\]
And we  define
\begin{equation}\label{eq: basic 3}
\widehat{\phi }(\xi)=1-\int_0^1  \widehat{\Phi}(\e\xi) ^2
\frac{d\e}{\e}, \quad \xi \in \R.
\end{equation}
 By the Paley-Wiener theorem, $\widehat{\Phi}$ can be extended to an analytic function $\widehat{\Phi}(z)$ of $d$ complex variables $z=(z_1,\dots, z_d)$, and for any $\lambda >0$, there exists a constant $C_\lambda$ such that 
\[
|\widehat{\Phi}(z)|\leq C_\lambda e^{(\frac{\lambda}{4}+\frac{\sqrt{d}}{2})|\xi_2|}(|\xi_1|^2+|\xi_2|^2)
\]
holds for any $z=\xi_1+\rm{i} \xi_2$. Therefore, 
\begin{equation*}
\begin{split}
\int_0^1| \widehat{\Phi} (\e z)|^2
\frac{d\e}{\e}  & \leq   C_\lambda^2\int_0^1 e^{\e(\frac{\lambda}{2}+\sqrt{d})|\xi_2|}\e^3 d\e \cdot(|\xi_1|^2+|\xi_2|^2)^2\\
 & \leq    C_\lambda^2\int_0^1 \e^3 d\e \cdot  e^{(\frac{\lambda}{2}+\sqrt{d}) |\xi_2|} (|\xi_1|^2+|\xi_2|^2)^2\\
  & \leq   C_\lambda^2 e^{(\frac{\lambda}{2}+\sqrt{d} )|\xi_2|}(1+|\xi_1|^2)^2(1+|\xi_2|^2)^2\\
& \leq    C_\lambda^2 e^{(\lambda+2\sqrt{d})|\xi_2|}(1+|\xi_1|)^4.
\end{split}
\end{equation*}
Now applying the Paley-Wiener-Schwartz theorem to distributions, we obtain that $\phi$ is a distribution with support in $\{s \in \R: |s |\leq 2\sqrt{d}\}$. On the other hand, if we define its value at the origin as $0$, the function $\int_{0}^{1}\widehat{\Phi}  (\varepsilon\cdot )^2\frac{d\varepsilon}{\varepsilon}$  is an infinitely differentiable function on $\R$, which ensures that $\phi $ is a Schwartz function. Thus, $\supp \phi\subset \{s \in \R: | s |\leq 2\sqrt{d}\}$.  By \eqref{eq: basic 3}, we arrive at the following decomposition of $f$:
 \begin{equation}\label{eq:decomp f}
  f=\phi  *f+\int_0^1\Phi_\varepsilon*{\Phi}_\e*f\frac{d\varepsilon}{\varepsilon}.
 \end{equation}

We first deal with  $\phi *f$. By Theorem \ref{thm:atomic h1}, we obtain an atomic decomposition of $f$:
\begin{equation}\label{eq:atom}
f=\sum_j \widetilde\mu_j a_j,
\end{equation}
where the $a_j$'s are $\h_1^c$-atoms and $\sum_j {  |\widetilde\mu_j|} \lesssim \|  f \| _{\h_1^c}$. Thus,
$$\phi *f=\sum_j \widetilde\mu_j\, \phi *a_j.$$
 We now show that every $\phi *a_j$ can be decomposed into smooth atoms supported in cubes with side length two. Let $\mathcal{X}_0$ be a nonnegative infinitely differentiable function on $\R$ such that $\supp \mathcal{X}_0\subset 2Q_0$ (with $Q_0$ the unit cube centered at the origin), and $\sum_{k\in\mathbb{Z}^d} \mathcal{X}_0(s-k)=1$ for every $s\in \R$. See \cite[Section~VII.2.4]{Stein1993} for the existence of such $\mathcal{X}_0$. Set $\mathcal{X}_k = \mathcal{X}_0(\cdot-k)$. Then $\mathcal{X}_k$ is supported in the cube $2Q_k=k+2Q_0$, and all $\mathcal{X}_k$'s form a smooth resolution of the unit:
\begin{equation}\label{eq: unit resolution}
1=\sum_{k\in\mathbb{Z}^d}\mathcal{X}_k(s), \quad \forall \, s\in \mathbb{R}^d. 
\end{equation}
 Take $a$ to be one of the atoms in \eqref{eq:atom} supported in $Q$.  Since $\phi $ has compact support, i.e. there exists a constant $C$ such that $\supp \phi \subset CQ_0$, then $\phi *a$ is supported in $(C+1)Q_0$. Thus, we get the decomposition
 \[
 \phi *a=\sum_{k=1}^N b_k
\quad \mbox{with }\, b_k=\mathcal{X}_k\cdot (\phi *a),
 \]
where $N$ is a positive integer depending only on the dimension $d$ and $C$. For any $\beta,\gamma\in \mathbb{N}_0^d$, denote $\beta\leq \gamma$ if $\beta_j\leq \gamma_j$ for every $1\leq j \leq d$. Then, by the Cauchy-Schwarz inequality, for any $k$,
\begin{equation*}
\begin{split}
\tau   \big(\int_{\R}  |D^\gamma b_k(s) |^2ds \big)^\frac{1}{2}  &\lesssim \sum_{ \beta \leq \gamma  }   \tau   \big(\int_{2Q_k}  |D^\beta \phi *a(s)\cdot D^{\gamma-\beta}\mathcal{X}_k(s) |^2ds \big)^\frac{1}{2}\\
&  \lesssim  \sum_{ \beta \leq \gamma  }  \tau  \big( \int_{2Q_k}   | \int_{\R} D^\beta \phi (s-t)a(t)dt |^2ds \big)^\frac{1}{2}\\
& \leq   \sum_{ \beta \leq \gamma  }     \big( \int_{Q} \int_{2Q_k}   | D^\beta \phi (s-t) |^2 ds dt \big)^\frac{1}{2}\cdot \tau   \big(\int _Q  | a(t) |^2dt \big)^\frac{1}{2}\\
& \lesssim    | Q |^\frac{1}{2}\tau   \big(\int_Q  | a(t) |^2dt \big)^\frac{1}{2}\leq 1,
\end{split}
\end{equation*}
where the relevant constants depend only on $\gamma$, $\phi$ and $\mathcal{X}_0$.
Thus, we have proved that $\phi *f$ can be decomposed as follows:
\[
\phi *f=\sum_{j}\mu_jb_j,
\]
with $b_j$ as desired. Furthermore, $\sum_{j}|\mu_j| \lesssim \|  f \|  _{\h_1^c}$.

Now it remains to deal with the second term on the right hand side of \eqref{eq:decomp f}. It follows from the definition of the tent space and Theorem \ref{thm main1} that ${\Phi}_\varepsilon *f\in T_{1}^{c}(\mathbb{R}^{d},\mathcal{M})$ and
\[
\|  \phi *f \| _{1}+\| {\Phi}_{\varepsilon}*f\| _{T_{1}^{c}}\lesssim \|  f \|  _{\h_1^c}.
\]
By Lemma \ref{lem: atomic decomp Tent}, we decompose ${\Phi}_\e*f$ as follows:
\begin{equation}
{\Phi}_\e*f(s)=\sum_{j=1}^{\infty}\lambda_{j}\widetilde{a}_{j}  (s,\varepsilon )\quad \text{with}\quad  \sum_{j=1}^{\infty}  |\lambda_{j} |\lesssim  \|  {\Phi}_\varepsilon * f \|  _{T_{1}^{c}},
\label{eq: tent decomp 1}
\end{equation}
where the $\widetilde{a}_j$'s are $T_1^c$-atoms based on cubes with side length less than or equal to 1.
 For each $\widetilde{a}_j(s,\varepsilon)$ based on $Q_j$ in \eqref{eq: tent decomp 1}, we set
\begin{equation}
g_{j}(s)=\int_{0}^{1}\Phi_{\varepsilon}*\widetilde{a}_{j}(s,\varepsilon)\frac{d\varepsilon}{\varepsilon}=\pi_{\Phi}\widetilde{a}_j(s), \quad \forall s\in \R.
\label{eq: proj of atom}
\end{equation}
We observe from the proof of Lemma \ref{lem: bdd projection} that $g_j$ is a bounded multiple of an $\h_1^c$-atom supported in $2Q_j$ with vanishing mean. Moreover,  $g_j$ is infinitely differentiable. Thus, $g_j$ satisfies \eqref{eq: improve atom} with relevant constant depending only on $\Phi$. Combining \eqref{eq: tent decomp 1} and \eqref{eq: proj of atom}, we obtain  the decomposition
\begin{equation*}
\int_{0}^{1}\Phi_{\varepsilon}*{\Phi}_\e*f \frac{d\varepsilon}{\varepsilon}
=\sum_{j=1}^{\infty}\lambda_{j}g_{j},
\end{equation*}
with $\sum_{j=1}^{\infty}  |\lambda_{j} |\lesssim  \|  f \|  _{\h_{1}^{c}}$.
 The proof is complete.
\end{proof}

 \subsection{Atomic decomposition for $F_1^{\alpha ,c }(\R,\M)$}
 
 Now we turn to the general space $F_1^{\alpha,c}(\R,\M)$.
For every $l=  (l_{1},\cdots,l_{d} )\in\mathbb{Z}^{d}$, $\mu \in \mathbb{N}_0$, we define  $Q_{\mu, l}$ in $\R$ to be the cubes centered at $2^{-\mu}l$, and with side length $2^{-\mu}$. For instance, $Q_{0,0} = [-\frac 1 2 , \frac 1 2 )^d$ is the unit cube centered at the origin. Let $\mathbb{D}_d$ be the collection of all the cubes $Q_{\mu, l}$ defined above. We write $(\mu , l)\leq (\mu ', l')$ if
 $$
 \mu \geq \mu ' \quad \mbox{and}\quad Q_{\mu, l}\subset 2Q_{\mu' ,l'}.
 $$
 For $a \in \mathbb{R}$, let $a_+=\max\{a,0\}$ and $[a]$ the largest integer less than or equal to $a$. Recall that $|\gamma|_1 =\gamma_1+\cdots+\gamma_d$ for $\gamma\in \mathbb{N}_0^d$, $s^{\beta}=s_1^{\beta_1}\cdots s_d^{\beta_d}$ for $s\in \R$, $\beta \in \mathbb{N}_0^d$ and $J^\alpha$ is the Bessel potential of order $\alpha$.

 \begin{defn}\label{def:smooth T atom}
Let $\alpha\in\mathbb{R}$, and let $K$ and $L$ be two integers such that
$$
K\geq ([\alpha]+1)_+ \quad \mbox{and} \quad L\geq \max{\{[-\alpha],-1\}}.
$$
\begin{enumerate}[\rm(1)]
\item A function $b\in L_1\big(\mathcal{M};L_2^c(\R)\big)$  is called an $(\alpha, 1)$-atom if
\begin{itemize}
\item $\supp b\subset 2 Q_{0,k}$;
\item  $\tau  \big(\int_{\R}  |D^{\gamma}b(s) |^{2}ds \big)^{\frac{1}{2}}\leq1$, $\,\,\forall\gamma \in  \mathbb{N}_0^d\,, \,\, |\gamma |_1\leq K$.
\end{itemize}

\item Let $Q=Q_{\mu, l}\in \mathbb{D}_d$, a function $a\in L_1\big(\mathcal{M};L_2^c(\R)\big)$  is called an $(\alpha, Q)$-subatom if
\begin{itemize}
\item $\supp a\subset 2Q$;
\item $\tau   \big(\int_{\R}  |D^{\gamma}a(s) |^{2}ds  \big)^{\frac{1}{2}}\leq  |Q |^{\frac{\alpha}{d}-\frac{  |\gamma |_1}{d}}$,
$\,\,\forall\gamma \in  \mathbb{N}_0^d\,, \,\, |\gamma |_1\leq K$;
\item $\int_{\mathbb{R}^{d}}s^{\beta}a  (s )ds=0$, $\,\,\forall\beta \in  \mathbb{N}_0^d\,, \,\, |\beta |_1\leq L$.
\end{itemize}

\item A function $g\in L_1\big(\mathcal{M};L_2^c(\R)\big)$ is called an $(\alpha, Q_{k,m})$-atom if
\begin{equation}\label{eq: Q atom-sc}
\tau  \big(\int_{\R}  | J^\alpha g(s) |^{2}ds  \big)^{\frac{1}{2}}\lesssim  |Q_{k,m} |^{-\frac 1 2 }\quad \text{and} \quad g=\sum_{  (\mu,l )\leq (k,m )}d_{\mu, l}a_{\mu, l},
\end{equation}
for some $k\in\mathbb{N}_0$ and $m\in\mathbb{Z}^{d}$, where the $a_{\mu ,l}$'s
are $(\alpha,Q_{\mu ,l})$-subatoms and the $d_{\mu ,l}$'s are complex numbers
such that $$   \big(\sum_{  (\mu,l )\leq  (k,m )}  |d_{\mu ,l} |^{2}  \big)^{\frac{1}{2}}\leq  |Q_{k,m} |^{-\frac{1}{2}}.$$
\end{enumerate}
\end{defn}

\begin{rmk}
If $L<0$, the third assumption of an $(\alpha, Q)$-subatom means that no moment cancellation is required. In the second assumption of an $(\alpha, 1)$-atom $b$ and that of an $(\alpha, Q)$-subatom $a$, it is tacitly assumed that $b$ and $a$ have  derivatives up to order $K$. For such $a$, we can define a norm by
$$\| a \|_{*}=\sup_{|\gamma|_1 \leq K}\|D^\gamma a\|_{L_1\big(\M;L_2^c(\R)\big)}.$$
Then the convergence in \eqref{eq: Q atom-sc} is understood in this norm, and we will see that the atom $g$ in \eqref{eq: Q atom-sc} belongs to $F_1^{\alpha,c}(\R,\M)$.
\end{rmk}

\begin{rmk}
In the classical case, the first size estimate in \eqref{eq: Q atom-sc} is not necessary. In other words, if $g=\sum_{  (\mu,l )\leq (k,m )}d_{\mu, l}a_{\mu, l}$ with the subatoms $a_{\mu, l}$'s and the complex numbers $d_{\mu, l}$'s such that $  \big(\sum_{  (\mu,l )\leq  (k,m )}  |d_{\mu ,l} |^{2}  \big)^{\frac{1}{2}}\leq  |Q_{k,m} |^{-\frac{1}{2}}$, then $g$ satisfies that estimate in \eqref{eq: Q atom-sc} automatically. We refer the readers to \cite{Tri1992} for more details. Unfortunately, in the current setting, we are not able to prove this estimate, so we just add it in \eqref{eq: Q atom-sc} for safety.
\end{rmk}

The following is our main result on the atomic decomposition of $F_{1}^{\alpha, c}  (\mathbb{R}^{d},\mathcal{M} )$. The idea comes from \cite[Theorem 3.2.3]{Tri1992}, but many techniques used are different from those of \cite[Theorem 3.2.3]{Tri1992} due to noncommutativity. 

\begin{thm}\label{thm: atomic decop T_L}
Let $\alpha\in\mathbb{R}$ and $K$, $L$ be two integers fixed as in Definition \ref{def:smooth T atom}. Then any
$f\in F_{1}^{\alpha, c}  (\mathbb{R}^{d},\mathcal{M} )$ can be represented as
\begin{equation}\label{eq:decomp T-L}
f=\sum_{j=1}^{\infty}\big(\mu_j b_j+\lambda_{j}g_{j}\big),
\end{equation}
where the $b_j$'s are $(\alpha,1)$-atoms, the $g_{j}$'s are $(\alpha,Q)$-atoms, and $\mu_j$, $\lambda_{j}$ are complex numbers
with
\begin{equation}\label{eq:mu+lambda}
\sum_{j=1}^{\infty}  (  |\mu_j |+  |\lambda_{j} | )<\infty.
\end{equation}
 Moreover, the infimum of \eqref{eq:mu+lambda} with respect to all admissible representations
is an equivalent norm in $F_{1}^{\alpha,c}   (\mathbb{R}^{d},\mathcal{M} )$.
\end{thm}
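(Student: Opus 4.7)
The proof combines a Calder\'on-type reproducing formula, adapted to the inhomogeneous setting, with the tent-space atomic decomposition (Lemma \ref{lem: atomic decomp Tent}) and the Lusin characterization of $F_1^{\alpha,c}(\R,\M)$ (Theorem \ref{thm:general Lusin cont}). Fix a Schwartz function $\Phi$ with compact support, nondegenerate in the sense of \eqref{nondegenerate-Phi}, with vanishing moments $\int s^\beta\Phi(s)\,ds=0$ for all $|\beta|_1\leq L$, and normalized so that $\int_0^\infty|\widehat\Phi(\e\xi)|^2\frac{d\e}{\e}=1$ on $\R\setminus\{0\}$. Define $\phi$ by $\widehat\phi(\xi)=1-\int_0^1|\widehat\Phi(\e\xi)|^2\frac{d\e}{\e}$ as in \eqref{eq: basic 3}; the Paley-Wiener argument carried out in the proof of Theorem \ref{thm:smooth decomp} shows $\phi$ is again a Schwartz function of compact support, and we obtain
$$f=\phi*f+\int_0^1\Phi_\e*\Phi_\e*f\,\frac{d\e}{\e}.$$
Applying Theorem \ref{thm:general Lusin cont} with $\Phi_0=\phi$ (which satisfies \eqref{eq: condition Phi 0} since $\widehat\phi$ is Schwartz with $\widehat\phi(0)=1$), we get $\|\phi*f\|_1+\|\e^{-\alpha}\Phi_\e*f\|_{T_1^c(\R,\M)}\lesssim\|f\|_{F_1^{\alpha,c}}$.

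The first summand is handled as in Theorem \ref{thm:smooth decomp}: a smooth partition of unity $\{\mathcal{X}_k\}_{k\in\Z}$ at unit scale yields $\phi*f=\sum_k\mu_k b_k$ with $\supp b_k\subset 2Q_{0,k}$. The key new ingredient is the bound $\|D^\gamma(\phi*f)\|_1\lesssim\|f\|_{F_1^{\alpha,c}}$ for $|\gamma|_1\leq K$, which follows by writing $D^\gamma(\phi*f)=\phi*D^\gamma f$, iterating Proposition \ref{prop: lifting}(2) to obtain $\|D^\gamma f\|_{F_1^{\alpha-|\gamma|_1,c}}\lesssim\|f\|_{F_1^{\alpha,c}}$, and then invoking the $\|\Phi_0*(\cdot)\|_1$ part of the Lusin characterization at the shifted smoothness level. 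For the integral term, Lemma \ref{lem: atomic decomp Tent} provides $\e^{-\alpha}\Phi_\e*f=\sum_j\lambda_j\widetilde{a}_j$, where each $\widetilde{a}_j$ is a $T_1^c$-atom supported in $T(Q_j)$ for some $Q_j=Q_{k_j,m_j}\in\mathbb{D}_d$, and $\sum_j|\lambda_j|\lesssim\|f\|_{F_1^{\alpha,c}}$. Set $g_j:=\int_0^{l(Q_j)}\e^\alpha\Phi_\e*\widetilde{a}_j(\cdot,\e)\frac{d\e}{\e}$; by the compact support of $\Phi$, $\supp g_j\subset 2Q_j$.

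It remains to exhibit each $g_j$ as a bounded multiple of an $(\alpha,Q_j)$-atom in the sense of Definition \ref{def:smooth T atom}(3). Decompose the $\e$-integral dyadically as $g_j=\sum_{\mu\geq k_j}g_j^{(\mu)}$ with $g_j^{(\mu)}=\int_{2^{-\mu-1}}^{2^{-\mu}}\e^\alpha\Phi_\e*\widetilde{a}_j(\cdot,\e)\frac{d\e}{\e}$, and apply a scale-$2^{-\mu}$ smooth partition of unity $\{\chi_{\mu,l}\}$ on $2Q_j$ to obtain $g_j^{(\mu)}=\sum_l d_{\mu,l}a_{\mu,l}$ with $\supp a_{\mu,l}\subset 2Q_{\mu,l}$. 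Each $a_{\mu,l}$ inherits the cancellation $\int s^\beta a_{\mu,l}(s)\,ds=0$ for $|\beta|_1\leq L$ from the vanishing moments of $\Phi$, while the subatom derivative scaling $\|D^\gamma a_{\mu,l}\|\leq|Q_{\mu,l}|^{(\alpha-|\gamma|_1)/d}$ comes from $(D^\gamma\Phi)_\e\sim\e^{-|\gamma|_1}$ balanced against the tent $L^2$-normalization of $\widetilde{a}_j$ and the weight $\e^\alpha$.

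The main obstacle, absent in the $\h_1^c$ case of Theorem \ref{thm:smooth decomp}, lies in verifying the two quantitative bounds in \eqref{eq: Q atom-sc}. The $\ell^2$-bound $(\sum_{(\mu,l)\leq(k_j,m_j)}|d_{\mu,l}|^2)^{1/2}\leq|Q_j|^{-1/2}$ will follow by Cauchy-Schwarz from the tent-atom normalization $\tau(\int_{T(Q_j)}|\widetilde{a}_j(s,\e)|^2\frac{dsd\e}{\e})^{1/2}\leq|Q_j|^{-1/2}$, using the bounded overlap of $\{\chi_{\mu,l}\}_l$ at each fixed scale $\mu$. The size estimate $\|J^\alpha g_j\|_{L_1(\M;L_2^c(\R))}\lesssim|Q_j|^{-1/2}$ is obtained by adapting the Plancherel computation \eqref{eq: atom project atom} in the proof of Lemma \ref{lem: bdd projection}: the symbol of $J^\alpha\Phi_\e$ is $(1+|\xi|^2)^{\alpha/2}\widehat\Phi(\e\xi)$, and the vanishing moments of order $L+1\geq-\alpha$ together with the Schwartz decay of $\widehat\Phi$ at infinity control $(1+|\xi|^2)^\alpha\int_0^1\e^{2\alpha}|\widehat\Phi(\e\xi)|^2\frac{d\e}{\e}$ uniformly in $\xi$. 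Finally, the converse direction (that any series \eqref{eq:decomp T-L} satisfying \eqref{eq:mu+lambda} defines an element of $F_1^{\alpha,c}(\R,\M)$ with controlled norm) is a routine verification, reducing via Proposition \ref{prop: lifting}(1) to $\h_1^c$-norm bounds on $J^\alpha$ applied to each $(\alpha,1)$- and $(\alpha,Q)$-atom, which in turn follow from the atomic characterization in Theorem \ref{thm:atomic h1}.
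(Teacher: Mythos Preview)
Your outline follows the same global architecture as the paper (Calder\'on reproducing formula, tent-space atomic decomposition, then verification), but there are two genuine gaps that would make the argument fail as written.

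\textbf{Subatom moment cancellation.} You produce subatoms by applying a spatial partition of unity $\{\chi_{\mu,l}\}$ to the already-synthesized function $g_j^{(\mu)}$, and then claim each $a_{\mu,l}=\chi_{\mu,l}\cdot g_j^{(\mu)}$ inherits $\int s^\beta a_{\mu,l}(s)\,ds=0$ from the vanishing moments of $\Phi$. This is false: multiplication by a cutoff destroys moment cancellation. The paper avoids this by doing the localization \emph{before} the convolution with $\Phi_\e$: it decomposes the tent $T(Q_j)$ into dyadic boxes $\widehat Q$ in $(t,\e)$-space and sets $a(s)=\int_{\widehat Q}\Phi_\e(s-t)a_j(t,\e)\,\frac{dt\,d\e}{\e}$. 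Since each piece is still given by integration against $\Phi_\e$, the moment conditions $\int s^\beta a(s)\,ds=0$ follow directly from $D^\beta\widehat\Phi(0)=0$. Your spatial cutoff occurs on the wrong side of the convolution.

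\textbf{Converse direction.} You call it ``routine'' via Proposition~\ref{prop: lifting}(1) and Theorem~\ref{thm:atomic h1}, i.e.\ show $\|J^\alpha g\|_{\h_1^c}\lesssim 1$ for an $(\alpha,Q)$-atom $g$ by recognizing $J^\alpha g$ as an $\h_1^c$-atom. But $J^\alpha$ is nonlocal, so $J^\alpha g$ is not compactly supported and carries no mean-zero condition; the hypothesis $\tau(\int|J^\alpha g|^2)^{1/2}\lesssim|Q|^{-1/2}$ in \eqref{eq: Q atom-sc} is only an $L_2$ bound, not an atom. The paper's Step~3 is substantially more work: it splits $\sum_{j\ge0}2^{2j\alpha}|\Phi_j*g|^2$ at the scale $j=k$ of the atom's base cube, uses the $J^\alpha$ size estimate together with Plancherel for the high-frequency part $j\ge k$, and for the low-frequency part $j<k$ expands $\Phi_j$ in a Taylor polynomial about the center of each $Q_{\mu,l}$, exploiting the subatom moment cancellation of order $L\ge[-\alpha]$ to gain a factor $2^{-\mu(M+\alpha)}$ that makes the sum over subatoms and over $j<k$ converge. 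This is precisely where the subatom structure (and hence the correct moment cancellation from the first issue) is indispensable.

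A smaller point: for the term $\phi*f$ you invoke a partition of unity directly on $\phi*f$ and the bound $\|D^\gamma(\phi*f)\|_{L_1(\N)}\lesssim\|f\|_{F_1^{\alpha,c}}$. That $L_1$ bound does not by itself control $\sum_k \tau(\int_{2Q_{0,k}}|D^\gamma b_k|^2)^{1/2}$, which is the quantity you need summable. The paper instead first invokes the $\h_1^c$-atomic decomposition of $f$ (Theorem~\ref{thm:atomic h1}), so that each $\phi*a_j$ has support in a fixed dilate of a unit cube and only a \emph{finite} partition is required per atom; for $\alpha<0$ it passes through $J^{[\alpha]}f\in\h_1^c$ first.
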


\begin{proof}
\emph{Step 1.} First, we show that any $f\in F_{1}^{\alpha, c}  (\mathbb{R}^{d},\mathcal{M} )$ admits the representation \eqref{eq:decomp T-L} and
$$
\sum_{j=1}^{\infty}  (  |\mu_j |+  |\lambda_{j} | )\lesssim \|f\|_{F_{1}^{\alpha, c}}.
$$
The proof of this part is similar to the proof of Theorem \ref{thm:smooth decomp}. Let $\kappa$ be the Schwartz function defined in the proof of Theorem \ref{thm:smooth decomp}. We take $\widehat{\Phi} =|\cdot|^N\widehat{\kappa}$, where $N$ is  a positive even integer such that $N\geq \max\{L,\alpha\}$, then $\Phi$ can be normalized as follows:
$$
\int_0^\infty \widehat{\Phi} (\e\xi)^2 \frac{d\e}{\e}=1, \quad \forall \, \xi \in \R\setminus\{0\}.
$$
Since $-\alpha+N\geq 0$, both $\sum_{j=-\infty}^{\infty} {(J_{-\alpha}\widehat{\Phi} )}(2^{-j}\xi)^2 $ and $\sum_{j=-\infty}^{\infty} {(J_{-\alpha}\widehat{\Phi} )}(2^{-j}\xi)^2 $ are rapidly decreasing and infinitely differentiable functions on $\R\setminus\{0\}$. So we have
\begin{equation}\label{eq: normalize J Phi}
\sum_{j=-\infty}^{\infty} {(J_{-\alpha}\widehat{\Phi} )}(2^{-j}\xi)^2 < \infty
\end{equation}
and 
\begin{equation}\label{eq: normalize I Phi}
\sum_{j=-\infty}^{\infty} {(I_{-\alpha}\widehat{\Phi} )}(2^{-j}\xi)^2  <\infty .
\end{equation}
Applying the Paley-Wiener-Schwartz theorem, we get a compactly supported function $\Phi_0 \in \mathcal{S}$ such that
$$
\widehat{\Phi}_0(\xi)=1-\int_0^1 \widehat{\Phi} (\e\xi)^2
\frac{d\e}{\e}.
$$
 Denote by $\Phi_\varepsilon$ the  Fourier inverse transform of ${\Phi}(\e\cdot)$. 
For any $f\in F_{1}^{\alpha,c}(\mathbb{R}^{d},\mathcal{M})$, we have
\begin{equation}\label{eq: T-L f decomp}
f=\Phi_0*f+\int_0^1\Phi_\varepsilon*{\Phi}_\e*f\frac{d\varepsilon}{\varepsilon}.
\end{equation}
Let us deal with the two terms on the right hand side of \eqref{eq: T-L f decomp} separately.

The term $\Phi_0*f$ is easy to treat. If $\alpha\geq 0$, Proposition \ref{prop: F} ensures that $F_1^{\alpha,c}(\R,\M)\subset \h_1^c(\R,\M)$. Then we can repeat the first part of the proof of Theorem \ref{thm:smooth decomp}: for any $f\in {F_1^{\alpha,c}(\R,\M)}$, $\Phi_0*f$ admits the decomposition
$$
\Phi_0*f=\sum_j\mu_j b_j ,
$$
with 
$$
\sum_j |  \mu_j|  \lesssim \|  f \| _{\h_1^c}\lesssim \|  f\|  _{F_1^{\alpha,c}},
$$
where the $b_j$'s, together with their derivatives $D^\gamma b_j$'s, satisfy \eqref{eq: derivative of b} with some constants $C_\gamma$ depending on $\gamma$. When $K$ is fixed, we can normalize the $b_j$'s by ${\max_{|\gamma|_1\leq K} |C_\gamma|}$, then the new $b_j$'s are $(\alpha,1)$-atoms.
If $\alpha <0$, by Propositions \ref{prop: F} and \ref{prop: lifting}, we have $J^{[\alpha]} f\in F_1^{\alpha-[\alpha],c}\subset \h_1^c$. Then $J^{[\alpha]}\Phi_0* f$ admits the decomposition
 $$
J^{[\alpha]} \Phi_0*f =\sum_{j}\mu_jb_j,
$$
with $\sum_{j}|\mu_j| \lesssim  \|J^{[\alpha]}f \|  _{\h_1^{c}}\lesssim \|  f\| _{F_1^{\alpha,c}}$.
Then
$$
\Phi_0*f=\sum_{j}\mu_j  J^{-[\alpha]}b_j.
$$
If $-[\alpha]$ is even, it is obvious that $\supp J^{-[\alpha]}b_j \subset \supp b_j$. Moreover, for any $\gamma \in \mathbb{N}_0^d$ such that $|\gamma |_1\leq K$, we have
$$
 \tau  (\int_{\R}  |D^{\gamma}J^{-[\alpha]} b_j(s) |^{2}ds )^{\frac{1}{2}}\lesssim \sum_{|\gamma'|_1\leq K-2[\alpha]} \tau  (\int_{\R}  |D^{\gamma'} b_j(s) |^{2}ds )^{\frac{1}{2}}\leq C_K.
$$
 We normalize $J^{-[\alpha]}b_j$ by this constant $C_K$ depending on $K$, then we can make it an $(\alpha,1)$-atom.
When $-[\alpha]$ is odd, it suffices to replace $[\alpha]$ in the above argument by $[\alpha]-1$, and then we get the desired decomposition. 

\emph{Step 2.} Now we turn to the second term on the right hand side of \eqref{eq: T-L f decomp}. It follows from Theorem \ref{thm:general Lusin cont} and the definition of the tent space that  $\varepsilon^{-\alpha}{\Phi}_\e*f\in T_{1}^{c}(\mathbb{R}^{d},\mathcal{M})$ and
$$
\|  \varepsilon^{-\alpha}{\Phi}_\e*f\| _{T_{1}^{c}}\lesssim \|  f \|  _{F_1^{\alpha,c}}.
$$
By Lemma \ref{lem: atomic decomp Tent}, we have
\begin{equation}
\varepsilon^{-\alpha}{\Phi}_\e*f(s)=\sum_{j=1}^{\infty}\lambda_{j}b_{j}  (s,\varepsilon ),
\label{eq: tent decomp}
\end{equation}
where the $b_j$'s are $T_1^c$-atoms based on the cubes $Q_j$'s with $|Q_j|\leq 1$. Then, if we set $a_j(s,\e)=\e^{\alpha}b_j(s,\e)$, we obtain
$$
{\Phi}_\e*f(s)=\sum_{j=1}^{\infty}\lambda_{j}a_{j}  (s,\varepsilon )
$$
and
\begin{equation}\label{eq: tent decomp coefficient}
\sum _{j=1}^{\infty}  |\lambda_{j} |\lesssim  \|  \e^{-\alpha}\Phi_\varepsilon * f \|  _{T_{1}^{c}}\lesssim \| f\|_{F_1^{\alpha,c}}.
\end{equation}
In particular, 
\begin{equation}\label{eq: a_j e}
\supp a_j\subset T(Q_j)\quad \text{ and}\quad \tau\Big(\int_{T(Q_j)}\e^{-2\alpha} |a_j(s,\e)|^2\frac{dsd\e}{\e}\Big)^\frac{1}{2}\leq |Q_j|^{-\frac{1}{2}}.
\end{equation}
 For every $a_j$, we set
\begin{equation}\label{eq: def of g atom}
g_{j}(s)=\pi _{\Phi}(a_j)(s)=\int_{0}^{1}\Phi_{\varepsilon}*a_{j}(s,\varepsilon)\frac{d\varepsilon}{\varepsilon}.
\end{equation}\label{eq: a_j}
Then $\supp g_j\subset 2Q_j$. We arrive at the decomposition
\begin{equation*}
 \int_{0}^{1}\Phi_\e*{\Phi}_{\varepsilon}*f\frac{d\varepsilon}{\varepsilon}
  =  \sum_{j=1}^{\infty}\lambda_{j}g_{j}.
\end{equation*}

Now we show that every $g_j$ is an $(\alpha, Q_{k_j,m_j})$-atom. Firstly, for any $Q_j$, there exist  $k_j\in \mathbb{N}_0$ and $s\in \R$ such that
$$
2^{-k_j-1}\leq l(Q_j) \leq 2^{-k_j} \quad \text{and}\quad c_{Q_j}=l(Q_j)s.
$$
Take $m_j=[s]\in \mathbb{Z}^d$, where $[s]=([s_1],\cdots,[s_d])$. Then,  we easily check that 
 \begin{equation}\label{eq: cube inclusion}
Q_j\subset 2Q_{k_j,m_j}, \quad Q_{k_j,m_j}\in \mathbb{D}_d.
\end{equation}
Next, by the argument similar to that in \eqref{eq: atom project atom} and by \eqref{eq: a_j e}, we have
 \begin{equation*}
\tau  \big(\int_{\R}  | I^\alpha \pi_\Phi(a_j)(s) |^{2}ds \big)^{\frac{1}{2}}  \lesssim  \tau  \Big(\int_{T(Q_j)} \e^{-2\alpha} | a_j(t,\varepsilon) |^{2}\frac{dtd\varepsilon}{\varepsilon} \Big)^{\frac{1}{2}}\leq |Q_j |^{-\frac{1}{2}}\lesssim |Q_{k_j,m_j} |^{-\frac{1}{2}}.
\end{equation*}
If $\alpha\leq 0$, it is clear that 
$$
\tau  \big(\int_{\R}  | J^\alpha \pi_\Phi(a_j)(s) |^{2}ds \big)^{\frac{1}{2}} \leq \tau  \big(\int_{\R}  |  I^\alpha  \pi_\Phi(a_j)(s) |^{2}ds \big)^{\frac{1}{2}}\lesssim  |Q_j |^{-\frac{1}{2}}\lesssim |Q_{k_j,m_j} |^{-\frac{1}{2}}.
$$
If $\alpha>0$, we have
 \begin{equation*}
 \begin{split}
\tau \big(\int_{\R}  | J^\alpha \pi_\Phi(a_j)(s) |^{2}ds \big)^{\frac{1}{2}}  & \lesssim \tau  \big(\int_{\R}  |  \pi_\Phi(a_j)(s) |^{2}ds \big)^{\frac{1}{2}}+\tau  \big(\int_{\R}  | I^\alpha \pi_\Phi(a_j)(s) |^{2}ds \big)^{\frac{1}{2}} \\
& \lesssim  \tau  \Big(\int_{T(Q_j)}  | a_j(t,\varepsilon) |^{2}\frac{dtd\varepsilon}{\varepsilon} \Big)^{\frac{1}{2}} + |Q_j |^{-\frac{1}{2}}\\ 
&\lesssim  \tau  \Big(\int_{T(Q_j)} \e^{-2\alpha} | a_j(t,\varepsilon) |^{2}\frac{dtd\varepsilon}{\varepsilon} \Big)^{\frac{1}{2}} + |Q_j |^{-\frac{1}{2}}\\
& \leq 2 |Q_j |^{-\frac{1}{2}}\lesssim |Q_{k_j,m_j} |^{-\frac{1}{2}}.
\end{split}
\end{equation*}
Then we get, for any $\alpha\in \mathbb{R}$,
 \begin{equation}\label{eq: size esitimate of g}
\tau  \big(\int_{\R}  | J^\alpha g_j(s) |^{2}ds \big)^{\frac{1}{2}} =\tau  \big(\int_{\R}  | J^\alpha \pi_\Phi(a_j)(s) |^{2}ds \big)^{\frac{1}{2}}  \lesssim |Q_{k_j,m_j} |^{-\frac{1}{2}}.
 \end{equation}
Finally,  we decompose the slice $T(Q_j)\cap  \{ 2^{-\mu-1}\leq \e\leq2^{-\mu} \} $
into  $(d+1)$-dimensional dyadic cubes whose projections on $\R$ belong to $\mathbb{D}_d$, and with side length $2^{-\mu}$, $\mu \in\mathbb{N}_0$. Let $\widehat{Q}$
be one of those dyadic cubes with side length $2^{-\mu }$ and $Q$ be its projection on $\R$.
Let
$$
a(s)=\int_{\widehat{Q}}\Phi _\e (s-t)a_{j}(t,\varepsilon)\frac{dtd\varepsilon}{\varepsilon}.
$$
By the support assumption of $\Phi$, it follows that 
$$
\supp a\subset 2Q,\qquad\supp a\subset 2Q_{j}\subset 4Q_{k_j,m_j}.
$$
Then
$$
\widehat{a}(\xi)=\int_{2^{-\mu}}^{2^{-\mu+1}}\widehat{\Phi}(\varepsilon \xi)  \,\mathcal{F} \big(a_{j}(\cdot,\varepsilon)\un_{Q} \big)(\xi)\frac{d\varepsilon}{\varepsilon}.
$$
Since $ D^\beta \widehat{\Phi}(0)=0$ for any $|  \beta|_1 \leq N$, we obtain
$$
\int_{\mathbb{R}^{d}}(-2\pi {\rm i}s)^{\beta}a(s)ds=D^\beta\widehat{a}(0)=0,\quad\forall \, |\beta |_1\leq L.
$$
 Furthermore, by the Cauchy-Schwarz inequality,
we have
\begin{equation*}
\begin{split}
    \tau  \big(\int  | a(s) |^{2}ds \big)^{\frac{1}{2}}
 & = \tau \Big(\int_{5Q}  \big|\int_{2^{-\mu}}^{2^{-\mu+1}}\int_{Q}\Phi_\e  (s-t)a_{j}(t,\varepsilon)\frac{dtd\varepsilon}{\varepsilon} \Big|^{2}ds \Big)^{\frac{1}{2}}\\
 & \lesssim  |Q |^{\frac{1}{2}}  \Big(\int_{2^{-\mu}}^{2^{-\mu+1}}\int_{Q}\varepsilon^{-2d} \frac{dtd\varepsilon}{\varepsilon} \Big)^{\frac{1}{2}}\cdot\tau  \Big(\int_{2^{-\mu}}^{2^{-\mu+1}}\int_{Q}  |a_{j}(t,\varepsilon) |^{2}\frac{dtd\varepsilon}{\varepsilon} \Big)^{\frac{1}{2}}\\
 & \lesssim \tau  \Big(\int_{2^{-\mu}}^{2^{-\mu+1}}\int_{Q}  |a_{j}(s,\varepsilon) |^{2}\frac{dsd\varepsilon}{\varepsilon} \Big)^{\frac{1}{2}}\\
  & \lesssim   |Q |^{\frac{\alpha}{d}}\tau  \Big(\int_{2^{-\mu}}^{2^{-\mu+1}}\int_{Q}\e^{-2\alpha}  |a_{j}(s,\varepsilon) |^{2}\frac{dsd\varepsilon}{\varepsilon} \Big)^{\frac{1}{2}}.
\end{split}
\end{equation*}
 Similarly, we have
$$
\tau  \big(\int  |D^{^{\gamma}}a(s) |^{2}ds \big)^{\frac{1}{2}}\leq C'_\gamma |Q |^{\frac{\alpha}{d}-\frac{  |\gamma |_1}{d}}\tau  \Big(\int_{2^{-\mu}}^{2^{-\mu+1}}\int_{Q}\e^{-2\alpha}  |a_{j}(s,\varepsilon) |^{2}\frac{dsd\varepsilon}{\varepsilon} \Big)^{\frac{1}{2}}.
$$
The above discussion gives
\begin{equation}\label{eq: sum form of g}
g_{j}=\sum_{(\mu,l)\leq (k_{j},m_{j})}d_{\mu ,l}^{j}a_{\mu, l}^{j},
\end{equation}
 where each $a^j_{\mu, l}$ is an $(\alpha,Q_{\mu, l})$-subatom. The normalizing factor
is given by
$$
d_{\mu ,l}^{j}= \max_{|\gamma|_1\leq K}\{C'_\gamma\}\tau  \Big(\int_{2^{-\mu}}^{2^{-\mu+1}}\int_{Q_{\mu, l}}\e^{-2\alpha}  |a_{j}(s,\varepsilon) |^{2}\frac{dsd\varepsilon}{\varepsilon} \Big)^{\frac{1}{2}}.
$$
 By the elementary fact that $\ell_{2}  (L_{1}(\mathcal{M}) )\supset L_{1}  (\mathcal{M};\ell_{2}^{c} )$,
we get
\begin{equation}\label{eq: coefficiant of g}
 \big(\sum_{(\mu,l)\leq (k_{j},m_{j})}  |d_{\mu, l}^{j} |^{2} \big)^{\frac{1}{2}}\leq C\tau  \Big(\int_{T(Q_{j})}\e^{-2\alpha}  | a_{j}(s,\varepsilon) |^{2}\frac{dsd\varepsilon}{\varepsilon} \Big)^{\frac{1}{2}}\leq C  |Q_{k_{j}, m_{j}} |^{-\frac{1}{2}},
\end{equation}
where $C$ is independent of $f$. We may assume $C=1$, otherwise, we can put $C$ in \eqref{eq: tent decomp} in the numbers $\lambda_j$, which does not change \eqref{eq: tent decomp coefficient}. In summary, \eqref{eq: cube inclusion}, \eqref{eq: size esitimate of g},  \eqref{eq: sum form of g} and  \eqref{eq: coefficiant of g} ensure that $g_j$ is an $(\alpha, Q_{k_j,m_j})$-atom.

\emph{Step 3.} Now we show the reverse assertion: if $f$ is given by \eqref{eq:decomp T-L}, then $f\in F_{1}^{\alpha, c}(\R,\M)$ and
 $$
 \|f\|_{F_{1}^{\alpha, c}}\lesssim \sum_{j=1}^{\infty}  (  |\mu_j |+  |\lambda_{j} | ).
 $$
  To this end, we have to show that every $(\alpha,1)$-atom $b$ and every $(\alpha,Q)$-atom $g$ belong to $F_{1}^{\alpha,c}  (\mathbb{R}^{d},\mathcal{M} )$ and 
$$  \|  b \|  _{F_{1}^{\alpha,c}}\lesssim 1 \quad  \text{ and}\quad   \|  g \|  _{F_{1}^{\alpha,c}}\lesssim 1.
$$
Let $b$  be an $(\alpha,1)$-atom in $F_1^{\alpha,c}(\R,\M)$. We observe that $b$ is also an atom in $\h_1^c(\R,\M)$. For $\alpha\leq 0$, by Proposition \ref{prop: F}, $\h_1^c\subset F_1^{\alpha,c}$. Then, we have $\|  b \| _{F_1^{\alpha,c}}\lesssim\|  b\| _{\h_1^{c}}\lesssim 1$. If $\alpha >0$,  by Proposition \ref{prop: lifting}, we have
$$\|  b \| _{F_1^{\alpha,c}}\approx   \|\varphi_0* b \|_1+ \sum_{i=1}^d\| D_i^K  b \| _{F_1^{\alpha-K,c}}.
$$
Note that for any $1\leq i\leq d$, $D_i^K b$ is an atom in $\h_1^c(\R,\M)$. Since $\alpha-K<0$, by Proposition \ref{prop: F}, we have
$$
\|  b \| _{F_1^{\alpha,c}}\lesssim \|\varphi_0* b \|_1+ \sum_{i=1}^d\| D_i^K  b\| _{\h_1^{c}}\lesssim 1.
$$

On the other hand, let $g$ be an $(\alpha,Q_{k,m})$-atom in the sense of Definition \ref{def:smooth T atom}. 
We may use the discrete general characterization of $F_1^{\alpha,c}(\R,\M)$ given in Theorem \ref{thm:general}, i.e.
$$
\|g\|_{F_1^{\alpha,c}}\approx \big\|  (\sum_{j= 0}^\infty2^{2j\alpha}|  \Phi_j*g | ^2)^\frac{1}{2}\big\| _{1}.
$$
We split $\sum_{j= 0}^\infty$ into two parts $\sum_{j= 0}^{k-1}$ and $\sum_{j=k}^\infty$. When $j\geq k$, by the support assumption of $\Phi$, we have $\supp \Phi_j*g\subset 5Q_{k,m}$. If $\alpha\geq 0$,
by \eqref{eq: normalize I Phi}, \eqref{eq: Q atom-sc} and the Plancherel formula \eqref{eq: Planchel}, we obtain
\begin{equation*}
\begin{split}
 \tau\big( \int_{5Q_{k,m}}\sum_{j= k}^\infty 2^{2j\alpha} | \Phi_j*g(s)|  ^2ds\big)^\frac{1}{2}
&= \tau\big( \int_{5Q_{k,m}}\sum_{j= k}^\infty | (I^{-\alpha}\Phi)_j*I^\alpha g(s)|  ^2ds\big)^\frac{1}{2}\\
& \leq  \tau\big( \int_{\R}\sum_{j= k}^\infty | (I_{-\alpha} \widehat \Phi )(2^{-j}\xi)| ^2 |I_{\alpha} \widehat g(\xi)| ^2d\xi\big)^\frac{1}{2} \\
&\lesssim  \tau\big( \int_{\R}| I_{\alpha} \widehat g(\xi)| ^2d\xi\big)^\frac{1}{2}= \tau\big( \int_{\R}| I^{\alpha} g(s)| ^2ds \big)^\frac{1}{2}\\
& \leq \tau\big( \int_{\R}| J^{\alpha} g(s)| ^2ds \big)^\frac{1}{2}\leq |Q_{m,k}|^{-\frac{1}{2}}.
\end{split}
\end{equation*}
If $\alpha<0$, by \eqref{eq: normalize J Phi},  \eqref{eq: Q atom-sc} and the Plancherel formula \eqref{eq: Planchel} again, we have
\begin{equation*}
\begin{split}
 \tau\big( \int_{5Q_{k,m}}\sum_{j= k}^\infty 2^{2j\alpha} | \Phi_j*g(s)|  ^2ds\big)^\frac{1}{2}
& \leq                                                                                                     \tau\big( \int_{5Q_{k,m}}\sum_{j= k}^\infty 2^{2j\alpha} | J^{-\alpha} \Phi_j*J^\alpha g(s)|  ^2ds\big)^\frac{1}{2}\\
& \leq  \tau\big( \int_{\R}\sum_{j= k}^\infty | (J_{-\alpha} \widehat\Phi )(2^{-j}\xi)| ^2 | J_{\alpha} \widehat g(\xi)| ^2d\xi\big)^\frac{1}{2} \\
& \lesssim  \tau\big( \int_{\R}| J_{\alpha} \widehat g(\xi)| ^2d\xi\big)^\frac{1}{2}=\tau\big( \int_{\R}| J^{\alpha} g(s)| ^2ds \big)^\frac{1}{2}\\
& \leq |Q_{m,k}|^{-\frac{1}{2}}.
\end{split}
\end{equation*}
It follows that
 $$\big\|  (\sum_{j= k}^\infty2^{2j\alpha}|  \Phi_j*g| ^2)^\frac{1}{2}\big\| _{1}\lesssim 1.$$
  In order to estimate the sum $\sum_{j= 0}^{k-1}$, we begin with a technical modification of $g$. Let
$$
\widetilde{g}=2^{k(\alpha-d)}g(2^{-k}\cdot).
$$
Then it is easy to see that $\widetilde{g}$ is an $(\alpha, Q_{0,m})$-atom.  Moreover, we have
$$
\Phi_j*g=2^{k(d-\alpha)}\Phi_{j-k}*\widetilde g(2^k\cdot),
$$
which implies that
\begin{equation}\label{eq: modification a}
\big\|  (\sum_{j= 0}^{k-1} 2^{2j\alpha}| \Phi_j*g | ^2)^\frac{1}{2}\big\| _{1}\leq \big\|  (\sum_{j= -\infty}^{-1} 2^{2j\alpha}| \Phi_j*\widetilde g | ^2)^\frac{1}{2}\big\| _{1}+2^{-k\alpha} \| (\Phi_0)_{-k}*\widetilde g \|_1,
\end{equation}
where $(\Phi_0)_{-k}$ denotes the inverse Fourier transform of the function $\Phi^{(0)}(2^k \cdot)$.
 In other words, we can assume, by translation, that the atom $g$ is based on a cube $Q$ with side length 1 and centered at the origin. Then, let us estimate the right hand side of \eqref{eq: modification a} with $g$ instead of  $\widetilde{g}$.

By the triangle inequality, we have
\begin{equation*}
\begin{split}
\big\|  (\sum_{j= -\infty}^{-1} 2^{2j\alpha}|  \Phi_j* g | ^2)^\frac{1}{2}\big\| _{1} &\leq   \sum_{j=-\infty}^{-1} 2^{j\alpha}\tau \int_{\R} |\Phi_j*g (s)|ds\\
& \leq   \sum_{j= -\infty}^{-1} \sum_{(\mu ,l)\leq(0,0)} |d_{\mu, l}|\,2^{j\alpha}\tau \int_{\R} |\Phi_j*a_{\mu,l}(s)|ds.
\end{split}
\end{equation*}
Now we estimate $2^{j\alpha}\tau \int_{\R} |\Phi_j*a_{\mu,l}(s)|ds$ for every $(\mu ,l)\leq (0,0)$. Let $M=[-\alpha]+1$. Then $M+\alpha>0$ and $L\geq M-1$. By the moment cancellation of $a_{\mu,l}$, we have
 \begin{equation*}
 \begin{split}
& \Phi_j*a_{\mu,l}(s) \\
 & =2^{jd}\int_{2Q_{\mu,l}}\big[\Phi(2^j s-2^j t)-\Phi(2^j s-2^{j} 2^{-\mu}l)\big]a_{\mu,l}(t)dt\\
 &= 2^{j(d+M)}\sum_{|\beta|_1=M}\frac{M+1}{\beta !}\int_{2Q_{\mu,l}}(2^{-\mu}l-t)^\beta \int_0^1(1-\theta)^{M} D^\beta\Phi\big(2^j s-2^j (\theta t+(1-\theta)2^{-\mu}l)\big)a_{\mu,l}(t) d\theta \,dt.
 \end{split}
 \end{equation*}
 It follows that
 \begin{equation*}
 \begin{split}
  |\Phi_j*a_{\mu,l}(s)|^2 &  \lesssim  \sum_{|\beta|_1=M}2^{2j(d+M)}\int_{2Q_{\mu,l}} \int_0^1(1-\theta)^{2M} |D^\beta\Phi \big(2^j s-2^j (\theta t+(1-\theta)2^{-\mu}l)\big)|^2d\theta dt\\
  &\;\;\;\; \cdot\int_{2Q_{\mu,l}} |t-2^{-\mu}l |^{2M} |a_{\mu,l}(t)|^2dt.
 \end{split}
 \end{equation*}
If $\Phi_j*a_{\mu,l}(s)\neq 0$, then we have $|2^j s-2^j t|\leq 1$ for some $t\in 2Q_{\mu,l}$. Hence, $\Phi_j*a_{\mu,l}(s)=0$ if $ |s-2^{-\mu}l  |>3\cdot2^{-j-1}\sqrt {d}$.
 Consequently,
 \begin{equation*}
\begin{split}
 &  \sum_{j=-\infty}^{-1} 2^{j\alpha} \tau \int_{\R} |\Phi_j*a_{\mu,l}(s)|ds \\
 & \lesssim  \sum_{j=-\infty}^{-1} 2^{j(d+M+\alpha)}\tau \big(\int_{2Q_{\mu,l}} |t-2^{-\mu}l |^{2M} |a_{\mu,l}(t)|^2dt \big)^\frac{1}{2}\\
 &\;\;\;\; \cdot \sum_{|\beta|_1=M}\int_{ |s-2^{-\mu}l  |\leq 3\cdot2^{-j-1}\sqrt {d}}\Big( \int_{2Q_{\mu,l}}  \int_0^1(1-\theta)^{2M} |D^\beta\Phi(2^j s-2^j (\theta t+(1-\theta)2^{-\mu}l))|^2d\theta dt \Big)^\frac{1}{2}ds\\
 & \lesssim \sum_{j=-\infty}^{-1} 2^{j(d+M+\alpha)}\cdot 2^{-\mu M} |Q_{\mu,l}|^{\frac{1}{2}} \tau\big (\int_{2Q_{\mu,l}}|a_{\mu,l}(t)|^2dt\big)^\frac{1}{2}\int_{ |s-2^{-\mu}l  | \leq 3\cdot2^{-j-1}\sqrt {d}} ds\\
  & \lesssim \sum_{j=-\infty}^{-1} 2^{j(d+M+\alpha)}\cdot 2^{-jd}\cdot 2^{-\mu (\alpha+M)} |Q_{\mu,l}|^{\frac{1}{2}}\\
 & = 2^{-\mu (\alpha+M)} \sum_{j=-\infty}^{-1} 2^{j(M+\alpha)} |Q_{\mu,l}|^{\frac{1}{2}}\lesssim 2^{-\mu (\alpha+M)} |Q_{\mu,l}|^{\frac{1}{2}}.
 \end{split}
\end{equation*}
Similarly, we also have 
$$
2^{-k\alpha} \tau \int_{\R} |(\Phi_0)_{-k}*a_{\mu,l}(s)|ds\lesssim 2^{-k(M+\alpha)}2^{-\mu (\alpha+M)}|Q_{\mu,l}|^{\frac{1}{2}}\leq 2^{-\mu (\alpha+M)}|Q_{\mu,l}|^{\frac{1}{2}}.
$$
Thus, by the Cauchy-Schwarz inequality, we get 
\begin{equation*}
\begin{split}
\big\|  (\sum_{j= -\infty}^{-1} 2^{2j\alpha}|  \Phi_j* g | ^2)^\frac{1}{2}\big\| _{1}&  \leq  \sum_{j=-\infty}^{-1} 2^{2j\alpha}\tau \int_{\R} |\Phi_j*g(s)|ds \\
 &  \lesssim \sum_{\mu =0}^{\infty}2^{-\mu (\alpha+M)}\big( \sum_{l} |d_{\mu,l}|^2\big)^{\frac{1}{2}} \big( \sum_{l} |Q_{\mu,l}|\big)^{\frac{1}{2}} \\
&  \lesssim \sum_{\mu =0}^{\infty} 2^{-\mu (\alpha+M)} <\infty, 
\end{split}
\end{equation*}
and 
$$
2^{-k\alpha} \| (\Phi_0)_{-k}* g \|_1 \lesssim  \sum_{\mu =0}^{\infty} 2^{-\mu (\alpha+M)} <\infty.
$$
 Therefore, $\|g\|_{F_1^{\alpha,c}}\lesssim 1$. The proof is complete.
\end{proof}

 We close this section by a very useful result of pointwise multipliers, which can be deduced  from the above atomic decomposition. Let $k\in \mathbb{N}$ and $\mathcal{L}^k(\R,\M)$ be the collection of all $\mathcal{M}$-valued functions on $\mathbb{R}^d$ such that  $D^\gamma h \in L_\infty(\mathcal{N})$ for all $\gamma$ with $0\leq |  \gamma | _1\leq k$.
\begin{cor}\label{Cor:bdd mulitiplier}
Let $\alpha \in \mathbb{R}$ and let  $k\in \mathbb{N}$ be sufficiently large and $h\in \mathcal{L}^k(\R,\M)$. Then the map $f\mapsto hf$ is bounded on $F_1^{\alpha,c}(\mathbb{R}^d,\mathcal{M})$
\end{cor}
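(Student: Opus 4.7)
The plan is to invoke Theorem \ref{thm: atomic decop T_L} and reduce the problem to showing that, for $k$ sufficiently large, multiplication by $h$ sends $(\alpha,1)$-atoms and $(\alpha,Q)$-atoms to bounded multiples of the same type of atoms, with constants depending only on $\|h\|_{\mathcal{L}^k(\R,\M)}$. Given such a claim, if $f=\sum_j (\mu_jb_j+\lambda_jg_j)$ with $\sum_j(|\mu_j|+|\lambda_j|)\lesssim\|f\|_{F_1^{\alpha,c}}$, then $hf = \sum_j (\mu_j(hb_j)+\lambda_j(hg_j))$ converges in $F_1^{\alpha,c}(\R,\M)$ with the required norm bound. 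The $(\alpha,1)$-atom case is routine: multiplication by $h$ preserves the support of $b$, and the Leibniz rule together with the Cauchy-Schwarz inequality gives
$$ \tau\big(\int |D^\gamma(hb)(s)|^2 \, ds\big)^{1/2}\lesssim \sum_{\beta\leq\gamma}\binom{\gamma}{\beta}\|D^\beta h\|_{L_\infty(\N)}\,\tau\big(\int|D^{\gamma-\beta}b(s)|^2\, ds\big)^{1/2}, $$
which is controlled by a constant times $\|h\|_{\mathcal{L}^K}$ for every $|\gamma|_1\leq K$, once $k\geq K$.

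For an $(\alpha,Q_0)$-atom $g=\sum_{(\mu,l)\leq(\mu_0,l_0)}d_{\mu,l}a_{\mu,l}$ with $Q_0=Q_{\mu_0,l_0}$, the natural attempt is to write $hg=\sum_{(\mu,l)\leq(\mu_0,l_0)}d_{\mu,l}(ha_{\mu,l})$ and verify that each $ha_{\mu,l}$ is, up to a uniform constant, an $(\alpha,Q_{\mu,l})$-subatom. The support condition and the derivative size estimate transfer at once: $ha_{\mu,l}$ is supported in $2Q_{\mu,l}$, and the Leibniz argument gives $\tau(\int|D^\gamma(ha_{\mu,l})|^2)^{1/2}\lesssim\|h\|_{\mathcal{L}^K}\,|Q_{\mu,l}|^{\alpha/d-|\gamma|_1/d}$ for $|\gamma|_1\leq K$. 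The first size estimate $\tau(\int|J^\alpha(hg)|^2)^{1/2}\lesssim|Q_0|^{-1/2}$ can be derived by expanding $J^\alpha$ via integer derivatives combined with a complex interpolation argument along the lines of Proposition \ref{prop: lifting}, using the $L_\infty(\N)$-bounds of $D^\beta h$ for $|\beta|_1\leq k$.

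The main obstacle is the moment cancellation requirement $\int s^\beta(ha_{\mu,l})(s)\, ds=0$ for $|\beta|_1\leq L$, which is generally \emph{not} inherited from that of $a_{\mu,l}$. To remedy this, I would decompose $h$ locally on $2Q_{\mu,l}$ as $h=T_{\mu,l}+R_{\mu,l}$, where $T_{\mu,l}$ is the Taylor polynomial of $h$ at the center of $Q_{\mu,l}$ of degree $L$ (requiring $k\geq L+1$) and $R_{\mu,l}$ is the remainder, satisfying the pointwise bound $\|R_{\mu,l}(s)\|_\M\lesssim\|h\|_{\mathcal{L}^{L+1}}\,|s-c_{Q_{\mu,l}}|^{L+1}$. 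The remainder term $R_{\mu,l}\cdot a_{\mu,l}$ acquires enough extra vanishing at the center that it can be rewritten as a bounded multiple of an $(\alpha,Q_{\mu,l})$-subatom possessing the full moment cancellation of order $L$; whereas the polynomial contribution $T_{\mu,l}\cdot a_{\mu,l}$ only inherits partial moment cancellation (of order $L-|\gamma|_1$ on each monomial $(s-c_{Q_{\mu,l}})^\gamma$) and must be redistributed into a bounded combination of subatoms at finer dyadic scales $(\mu',l')\geq(\mu,l)$, arranged so that the $\ell^2$-sum bound required by Definition \ref{def:smooth T atom}(3) is preserved. This redistribution, which is the noncommutative counterpart of the classical wavelet-type argument in \cite{Tri1992}, is the most delicate step of the proof; once it is carried out, summing the resulting estimates over all atoms yields $\|hf\|_{F_1^{\alpha,c}}\lesssim\|h\|_{\mathcal{L}^k}\|f\|_{F_1^{\alpha,c}}$.
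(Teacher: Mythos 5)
Your reduction to showing that $h$ maps atoms to atoms is the right first move, and your treatment of the $(\alpha,1)$-atoms via Leibniz is fine. But the core of your argument --- handling the moment cancellation $\int s^\beta (ha_{\mu,l})(s)\,ds=0$ by Taylor-expanding $h$ and ``redistributing'' the polynomial piece $T_{\mu,l}\cdot a_{\mu,l}$ into subatoms at finer dyadic scales while preserving the $\ell_2$-bound on the coefficients $d_{\mu,l}$ --- is precisely the hard step, and you have not carried it out. In the classical setting this redistribution is done with the Frazier--Jawerth almost-diagonal/molecular machinery, which is exactly what is \emph{not} available here: the paper even remarks (after Definition \ref{def:smooth T atom}) that in the operator-valued setting one cannot recover the size estimate $\tau(\int|J^\alpha g|^2)^{1/2}\lesssim |Q|^{-1/2}$ automatically from the subatom data, which is why it is imposed as an extra hypothesis. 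Your claim that $\tau(\int|J^\alpha(hg)|^2)^{1/2}\lesssim|Q_0|^{-1/2}$ follows ``by expanding $J^\alpha$ via integer derivatives combined with a complex interpolation argument'' is likewise only an assertion; $J^\alpha$ is nonlocal and does not commute with multiplication by $h$, so this commutator estimate needs a proof. As written, the proposal identifies the obstacles correctly but does not overcome them.

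The paper's proof sidesteps both obstacles. For $\alpha>0$ one may take $L=-1$ in Theorem \ref{thm: atomic decop T_L} (since $L\ge\max\{[-\alpha],-1\}=-1$), so \emph{no} moment cancellation is required of the subatoms at all, and the Leibniz argument you gave already shows that $h$ maps (sub)atoms to bounded multiples of (sub)atoms. The case $\alpha\le 0$ is then obtained by downward induction: write $f=J^2 g=(1-(2\pi)^{-2}\Delta)g$ with $g\in F_1^{\alpha+2,c}$, expand
$hf=(1-(2\pi)^{-2}\Delta)(hg)+((2\pi)^{-2}\Delta h)g+(2\pi)^{-2}\nabla h\cdot\nabla g$,
and use the lifting property (Proposition \ref{prop: lifting}) together with the already-established boundedness at the levels $\alpha+2$ and $\alpha+1$. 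If you want to salvage your direct approach you would need to either actually construct the redistribution and verify the $\ell_2$-coefficient bound and the $J^\alpha$ size estimate for $hg$, or, more economically, adopt the $L=-1$ plus $J^2$-induction shortcut.
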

\begin{proof}
First, consider the case $\alpha>0$. We apply the atomic decomposition in Theorem \ref{thm: atomic decop T_L} with $K=k$ and $L=-1$. In this case, no moment cancellation of subatoms is required. We can easily check that, multiplying every (sub)atom in Definition \ref{def:smooth T atom} by $h$, we get another (sub)atom. Moreover,
\begin{equation}\label{eq: hf to f}
\|  hf \| _{F_1^{\alpha,c}}\leq \sum_{| \gamma |\leq k}\sup _{s\in \mathbb{R}^d}\|  D^\gamma h(s) \| _\mathcal{M}\cdot \|  f \| _{F_1^{\alpha,c}}.
\end{equation}

The case $\alpha\leq 0$ can be deduced by induction. Assume that \eqref{eq: hf to f} is true  for $\alpha > N \in \mathbb{Z}$.
Let  $\alpha>N-1$. Any $f\in F_1^{\alpha,c}$ can be represented as $f=J^{2} g =(1-(2\pi)^{-2}\Delta  ) g$ with $g\in F_1^{\alpha+2,c}$ and $\|f\|_{F_1^{\alpha,c}}\approx \|g\|_{F_1^{\alpha+2,c}}$. Since
$$
hf=(1-(2\pi)^{-2}\Delta  ) (hg)+ ((2\pi)^{-2}\Delta h )g +(2\pi)^{-2}\nabla h \cdot \nabla g, $$
we deduce
\begin{equation}\label{eq: pointwise multiplication}
\begin{split}
\| hf\|_{F_1^{\alpha,c}}&\lesssim \| (1-(2\pi)^{-2}\Delta  )(hg)\|_{F_1^{\alpha,c}}+ \| (\Delta h) g\|_{F_1^{\alpha,c}}+\sum_{i=1}^d\| \partial_i h \cdot \partial_i g\|_{F_1^{\alpha ,c}}\\
&\lesssim \| g\|_{F_1^{\alpha+2,c}}+ \| (\Delta h) g\|_{F_1^{\alpha+2,c}}+ \sum_{i=1}^d\| \partial_i h \cdot \partial_i g\|_{F_1^{\alpha+1 ,c}}.
\end{split}
\end{equation}
If $k\in \mathbb{N}$ is sufficiently large, we have
$$ \| (\Delta h) g\|_{F_1^{\alpha+2,c}}\lesssim \| g\|_{F_1^{\alpha+2,c}},\;\;\;\;\| \partial_i h \cdot \partial_i g\|_{F_1^{\alpha+1 ,c}}\lesssim \|   \partial_i g\|_{F_1^{\alpha+1 ,c}}.$$
Continuing the estimate in \eqref{eq: pointwise multiplication}, we obtain 
 $$\| hf\|_{F_1^{\alpha,c}}\lesssim \| g\|_{F_1^{\alpha+2,c}}+\sum_i \|   \partial_i g\|_{F_1^{\alpha+1 ,c}}\lesssim \| g\|_{F_1^{\alpha+2,c}} \lesssim \| f\|_{F_1^{\alpha,c}},$$
which completes the induction procedure.
\end{proof}

\noindent{\bf Acknowledgements.} The authors are greatly indebted to Professor Quanhua Xu for having suggested to them the subject of this paper, for many helpful discussions and very careful reading of this paper. The authors are partially supported by the the National Natural Science Foundation of China (grant no. 11301401).

\end{document}